\documentclass{amsart}
\usepackage{amssymb,epsfig,amsmath,latexsym,amsthm}
\theoremstyle{plain}
\newtheorem{theorem}{Theorem}[section]
\newtheorem{lemma}[theorem]{Lemma}
\newtheorem*{claim}{Claim}
\newtheorem{proposition}[theorem]{Proposition}
\newtheorem{corollary}[theorem]{Corollary}
\newtheorem{question}[theorem]{Question}
\newtheorem{conjecture}[theorem]{Conjecture}
\newtheorem{problem}[theorem]{Problem}
\theoremstyle{definition}
\newtheorem{definition}[theorem]{Definition}
\newtheorem{remark}[theorem]{Remark}

\newtheorem{acknowledgements}[theorem]{Acknowledgements}

\newtheorem{remarks}[theorem]{Remarks}
\newtheorem{correction}[theorem]{Correction}

\newtheorem{notation}[theorem]{Notation}
\newtheorem{example}[theorem]{Example}

\DeclareMathOperator{\length}{length}

\DeclareMathOperator{\Isom}{Isom}

\DeclareMathOperator{\genus}{genus}

\DeclareMathOperator{\textinf}{inf}

\DeclareMathOperator{\textlim}{lim}
\DeclareMathOperator{\Stryker}{Stryker}
\DeclareMathOperator{\st}{st}

\DeclareMathOperator{\sub}{sub}
\DeclareMathOperator{\inte}{int}
\DeclareMathOperator{\cw}{cw}

\newcommand{\finv}{f^{-1}}
 
\newcommand{\hinv}{h^{-1}}
\newcommand{\ginv}{g^{-1}}

\newcommand{\pinv}{p^{-1}}
\newcommand{\piinv}{\pi^{-1}}

\newcommand{\phiinv}{\phi^{-1}}
\newcommand{\psiinv}{\psi^{-1}}
\newcommand{\hatphiinv}{\hat\phi^{-1}}
\newcommand{\BH}{\mathbb H}
\newcommand{\BR}{\mathbb R}

\newcommand{\BN}{\mathbb N}
\newcommand{\BQ}{\mathbb Q}
\newcommand{\BZ}{\mathbb Z}

\newcommand{\mA}{\mathcal{A}}
\newcommand{\mB}{\mathcal{B}}
\newcommand{\mC}{\mathcal{C}}

\newcommand{\mE}{\mathcal{E}}
\newcommand{\mF}{\mathcal{F}}

\newcommand{\mJ}{\mathcal{J}}
\newcommand{\mL}{\mathcal{L}}
\newcommand{\mM}{\mathcal{M}}
\newcommand{\mP}{\mathcal{P}}

\newcommand{\mR}{\mathcal{R}}
\newcommand{\mS}{\mathcal{S}}
\newcommand{\mT}{\mathcal{T}}
\newcommand{\mU}{\mathcal{U}}
\newcommand{\mW}{\mathcal{W}}

\newcommand{\PML}{\mP \mM \mL (S)}
\newcommand{\ML}{\mM \mL (S)}

\newcommand{\LS}{\mL (S)}
\newcommand{\EL}{\mE \mL (S)}

\newcommand{\PMLEL}{\mP\mM\mL\mE\mL(S)}
\newcommand{\MLEL}{\mM\mL\mE\mL(S)}
\newcommand{\FPML}{\mF\PML}
\newcommand{\UPML}{\mU\PML}

\newcommand{\Sk}{S^{k-1}}
\newcommand{\dpts}{d_{PT(S)}}
\newcommand{\Npts}{N_{PT(S)}}
\newcommand{\Npml}{N_{\PML}}
\newcommand{\dpml}{d_{\PML}}

\def\C{\mathcal{C}}

\def\CS{\C (S)}
\def\CSsub{\CS_{\sub}}

\def\UPMLcw{\UPML_{\cw}}

\begin{document}

\title{On the topology of ending lamination space}
\author{David Gabai}\footnote{Partially supported by NSF grants
DMS-0504110 and DMS-1006553.}\address{Department of Mathematics\\Princeton
University\\Princeton, NJ 08544}

\thanks{Version 0.916, May 18, 2011}

\begin{abstract}  We show that if $S$ is a finite type orientable surface of genus $g$ and $p$ punctures where $3g+p\ge 5$, then $\EL$ is $(n-1)$-connected and $(n-1)$-locally connected, where $\dim(\PML)=2n+1=6g+2p-7$.   Furthermore, if $g=0$, then  $\EL$ is homeomorphic to the $p-4$ dimensional Nobeling space.  \end{abstract}

\maketitle

\setcounter{section}{-1}

\section{Introduction}\label{S0}

This paper is about the topology of the space $\EL$ of \emph{ending laminations} on a finite type 
hyperbolic surface, i.e. a complete hyperbolic surface S of genus-g with p punctures.  An ending 
lamination is a geodesic lamination $\mL$ in S  that is minimal and filling, i.e. 
every leaf of $\mL$ is dense in $\mL$ and any simple closed geodesic in S nontrivally intersects 
$\mL$ transversely.  

Since Thurston's seminal work on surface automorphisms in the mid 1970's, laminations in surfaces have played central roles in low dimensional topology, hyperbolic geometry, geometric group theory and the theory of mapping class groups.  From many points of view, the ending laminations are the most interesting laminations.  For example, the stable and unstable laminations of a pseudo Anosov mapping class are ending laminations \cite{Th1} and associated to a degenerate end of a complete  hyperbolic 3-manifold with finitely generated fundamental group is an ending lamination \cite{Th4}, \cite{Bon}.  Also, every ending lamination arises in this manner \cite{BCM}.  

The Hausdorff metric on closed sets induces a metric topology on $\EL$.  Here, two elements $\mL_1$, $\mL_2$ in $\EL$ are \emph{close} if each point in $\mL_1$ is close to a point of $\mL_2$ and vice versa.  In 1988, Thurston \cite{Th2} showed that with this topology $\EL$ is totally disconnected and in 2004 Zhu and Bonahon \cite{ZB} showed that with respect to the Hausdorff metric, $\EL$ has Hausdorff dimension zero.  

It is the \emph{coarse Hausdorff topology}  that makes  $\EL$ important for applications and gives 
$\EL$ a very interesting topological structure.  This is the topology on $\EL$ induced from that of 
$\PML$, the space of projective measured laminations of $ S$.    Let $\FPML$ denote the 
subspace of $\PML$ consisting of those measured laminations whose underlying lamination is an 
ending lamination.  (Fact:  every ending lamination fully supports a measure.)  Then $\EL$ is a 
quotient of $\FPML$ and is topologized accordingly.  Alternatively, a sequence $\mL_1, \mL_2, 
\cdots$ converges to $\mL$ in the coarse Hausdorff topology if after passing to subsequence, the 
sequence converges in the Hausdorff topology to a diagonal extension of $\mL$, i.e. a lamination 
obtained by adding finitely many leaves, \cite{Ha}.  From now on $\EL$ will have the coarse 
Hausdorff topology.

In 1999, Erica Klarreich \cite{K} showed that  $\EL$ is the Gromov boundary of $\CS$, the curve 
complex of S.  (See also \cite{H1}.)  As a consequence of many results in hyperbolic 
geometry e.g. \cite{Ber}, \cite{Th3}, \cite{M}, \cite{BCM}, \cite{Ag}, \cite{CG}; Leininger and 
Schleimer \cite{LS}  showed that the space of doubly degenerate hyperbolic structures on $S
\times \BR$ is  homeomorphic to $\EL\times\EL\setminus \Delta$, where $\Delta$ is the diagonal.   
For other applications see  \cite{RS} and \S 18.

If $S$ is the thrice punctured sphere, then $\EL=\emptyset$.  If $S$ is the 4-punctured sphere or once-punctured torus, then $\EL=\FPML=\BR\setminus\BQ$.  In 2000 Peter Storm conjectured that if $S$ is not one of these exceptional surfaces, then $\EL$ is connected.  Various partial results on the connectivity and local connectivity of $\EL$ were obtained by Leininger, Mj and Schleimer, \cite{LS}, \cite{LMS}.  

Using completely different methods, essentially by bare hands, we showed \cite{G1} that if $S$ is neither the 3 or 4-punctured sphere nor the once-punctured torus, then $\EL$ is path connected, locally path connected, cyclic and has no cut points.   We also asked whether  ending lamination spaces of sufficiently complicated surfaces were $n$-connected.\\

Here are our two main results.

\begin{theorem}   \label{main} Let S be a finite type hyperbolic surface of genus-g and p-punctures.  Then $\EL$ is $(n-1)$-connected and $(n-1)$-locally connected, where $2n+1=\dim(\PML)=6g+2p-7$.\end{theorem}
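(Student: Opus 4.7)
The plan is to reduce both assertions to a uniform extension principle: for each $k$ with $0 \le k \le n-1$, every continuous map $f \colon S^k \to \EL$ bounds a continuous disk $D^{k+1} \to \EL$, and when $f$ has small image the disk can be chosen with small image too. I would argue by induction on $k$. The base case $k=0$ is precisely the path-connectivity and local path-connectivity established in \cite{G1} via explicit ``bare hands'' lamination constructions, and this case tells me what kind of interpolations between nearby ending laminations I have at my disposal.

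For the inductive step I would exploit the fact that $\EL$ is a quotient of $\FPML \subset \PML$ and that $\PML$ is covered by open cells $P(\tau)$ of measured laminations carried by train tracks $\tau$, each a convex PL polytope. Given $f \colon S^k \to \EL$, I would choose a sufficiently fine triangulation of $S^k$ so that each closed simplex's image, after a small coarse-Hausdorff perturbation, lifts into a single train track polytope $P(\tau)$. Inside a single convex cell, every sphere bounds, and linear interpolation provides a canonical filling; the real work is in gluing these local fillings coherently over the overlaps of the $P(\tau)$'s. I would do this skeleton-by-skeleton on an extension triangulation of $D^{k+1}$: for a simplex $\sigma$, first fill $\partial \sigma$ using the inductive hypothesis at dimension $k$, then fill $\sigma$ itself inside the ambient train track cell.

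The step I expect to be the main obstacle is ensuring that the intermediate laminations produced by these fillings are genuinely ending laminations, not degenerate measured laminations carried on proper subtracks, carrying isolated leaves, or failing minimality and filling. The ``bad set'' $\PML \setminus \FPML$ is a countable union of PL cells whose dimensions are bounded by $n$ (half of $\dim \PML = 2n+1$), so in principle a generic $(k+1)$-parameter family with $k+1 \le n$ can be transversally perturbed to avoid it. The difficulty is that the relevant continuity is in the coarse Hausdorff topology on $\EL$ rather than the Hausdorff topology on $\ML$, so standard PL general position in $\PML$ is not enough; one must simultaneously control Hausdorff limits of the perturbed families to ensure they collapse under the quotient $\FPML \to \EL$ in a controlled way. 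Making this transversality compatible across overlapping train track charts, and promoting it into the coarse Hausdorff setting, is the technical heart of the argument. Finally, local connectivity falls out of the same construction by tracking diameters throughout the induction, using that the images of the polytopes $P(\tau)$ form a neighborhood basis around any $\mL_0 \in \EL$.
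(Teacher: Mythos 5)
The central step you propose — transversally perturbing a $(k{+}1)$-parameter family, $k{+}1\le n$, to avoid the bad set $\UPML=\PML\setminus\FPML$ — fails because your dimension count is wrong. For each simple closed geodesic $C$ the set $B_C=\{\lambda\in\PML: i(\lambda,C)=0\}$ is a PL ball of dimension $2n$, i.e.\ codimension $1$ in $\PML\cong S^{2n+1}$ (see Remark~\ref{separable and complete} and \S1--\S2 of \cite{G1}); it is only the image $I(\CS)$ of the curve complex, a much smaller subset of $\UPML$, that has dimension $\le n$. Consequently a generic $k{+}1$-parameter family with $k{+}1\le n$ does \emph{not} miss $\UPML$: it meets each $B_C$ in a set of dimension $k$, and since the $B_C$'s are dense, no perturbation makes the image land in $\FPML$. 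This is precisely the obstruction the paper is built to circumvent, and no amount of ``making the transversality compatible across charts'' repairs it, because the problem is not one of coherence but of codimension.

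The paper's route is essentially the opposite of general position: rather than constructing a single filling in $\FPML$, Proposition~\ref{extension} first extends $g:S^{k-1}\to\EL$ to a map $F:B^k\to\PMLEL$ whose interior lies in $\PML$ (necessarily hitting infinitely many $B_C$'s), and then \S6--\S10 build a sequence $f_1,f_2,\dots$ of such extensions, each obtained from the last by finitely many relative ``push offs'' from the sets $B_{C_i}$, so that $f_j$ misses $B_{C_1},\dots,B_{C_j}$. The marker and cascade machinery (\S3, \S8--\S9) exists to guarantee that pushing off one $B_C$ does not destroy the progress already made and that the resulting sequence satisfies the convergence criterion of Proposition~\ref{continuity criterion}, whose limit is the desired continuous map into $\EL$. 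There is no induction on $k$; Theorem~\ref{marker theorem} handles all $k\le n$ at once, and the induction that does occur is a downward induction on cascade length. Your instinct that the hard part is ensuring the fillings ``collapse correctly under the quotient'' is sound — that is exactly what the markers and the coarse Hausdorff convergence criterion manage — but the mechanism you offer for it does not work.
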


\begin{theorem}   Let $S$ be a $(4+n)$-punctured sphere.  Then $\EL$ is homeomorphic to the $n$-dimensional Nobeling space.\end{theorem}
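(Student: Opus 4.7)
The plan is to invoke the topological characterization of Nobeling spaces due to Ageev, Levin, and Nagórko: a Polish space $X$ is homeomorphic to the $n$-dimensional Nobeling space if and only if $X$ is $n$-dimensional, $(n-1)$-connected, $(n-1)$-locally connected, and satisfies the Strong Discrete Approximation Property (SDAP) in dimension $n$. For $S$ the $(4+n)$-punctured sphere one has $6g+2p-7 = 2n+1$, so the $n$ appearing in Theorem \ref{main} coincides with the $n$ in the Nobeling statement, and Theorem \ref{main} directly supplies the $(n-1)$-connectivity and $(n-1)$-local connectivity. It therefore remains to verify that $\EL$ is Polish, that $\dim\EL = n$, and that $\EL$ satisfies SDAP.

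The Polish property should be essentially immediate: by Klarreich, $\EL$ is the Gromov boundary of the curve complex $\CS$, and a direct check of the coarse Hausdorff description shows that $\EL$ is separable and completely metrizable. For the dimension, the upper bound $\dim \EL \le n$ is where the punctured-sphere hypothesis enters essentially. I would proceed by exhibiting, for each $\varepsilon > 0$, a countable open cover of $\EL$ of multiplicity at most $n+1$ and mesh less than $\varepsilon$, built from train-track splitting sequences adapted to the punctured sphere; the key observation is that the coarse Hausdorff topology collapses the $(n+1)$ transverse-measure directions of $\FPML \subset \PML$, which on a punctured sphere are combinatorially well understood. The lower bound $\dim \EL \ge n$ follows from the $(n-1)$-connectivity of Theorem \ref{main} combined with a local non-triviality argument detecting a nontrivial $(n-1)$-cycle in the link of a generic point of $\EL$.

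The principal obstacle is SDAP. This asks that any finite family of continuous maps $f_i \colon K_i \to \EL$ from $n$-dimensional compacta, together with any positive control function $\varepsilon$ on $\EL$, admit $\varepsilon$-close approximations $g_i$ with pairwise disjoint images. The natural strategy is to upgrade the constructive filling procedures underlying the proof of Theorem \ref{main}. Those procedures produce, for any $\mL \in \EL$ and any $\varepsilon > 0$, explicit nearby ending laminations carried by train-track splitting sequences with many free parameters. Since on a punctured sphere there are only countably many such splitting sequences and the sets of ending laminations carried by distinct sequences intersect in suitably thin subsets of $\EL$, a generic choice of sequences allows the images of the $f_i$ to be perturbed into pairwise disjoint sets. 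The hard step is inductively organizing this genericity -- choosing splitting sequences for $f_1, f_2, \ldots$ in turn while maintaining $\varepsilon$-control uniformly -- and this will require carefully chaining the local approximation machinery developed for Theorem \ref{main} with a transversality argument inside $\PML$ that exploits the $(n+1)$ codimension gap between the image of a single $g_i$ and the ambient projective measured lamination space.
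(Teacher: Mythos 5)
Your overall framework --- invoke the Ageev--Levin--Nag\'orko characterization and verify: Polish, dimension $n$, $(n-1)$-connectivity, $(n-1)$-local connectivity, and a disc-approximation property --- matches the paper's, and the connectivity and Polish inputs are indeed supplied by Theorem~\ref{main} and Klarreich's theorem. But two of your remaining steps have real gaps.

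The most serious one is the lower bound $\dim\EL \ge n$. Your proposed argument, ``detecting a nontrivial $(n-1)$-cycle in the link of a generic point of $\EL$,'' has no clear meaning here: $\EL$ is nowhere locally compact (it is, after all, a Nobeling space), so points have no well-defined links. The paper's argument is not local and requires substantial outside input: Harer's theorem that $\CS$ is homotopy equivalent to a nontrivial wedge of $n$-spheres for a punctured sphere; a linking argument producing an essential PL map $S^n \to \PML\setminus L$ for a suitable finite $n$-subcomplex $L\subset\CS$; the approximation machinery behind Theorem~\ref{main}, together with a controlled-homotopy lemma, to convert this into an essential map $S^n\to\EL$; and finally a Dranishnikov-type observation that a separable metric, $(n-1)$-connected, $(n-1)$-locally connected space with $\pi_n\neq 0$ has dimension at least $n$. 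None of these appear in your sketch, and I do not see how to replace them with a ``local cycle'' argument. A smaller but real issue: the axiom you call SDAP asks for \emph{finite} families with pairwise \emph{disjoint} images, whereas the characterization actually used requires the \emph{locally finite $n$-discs property} --- a condition on \emph{countable sequences} $f_i:B^n\to X$ whose approximations $g_i$ must form a \emph{locally finite} family, which is precisely what makes the uniform control hard. Your sketches for the upper dimension bound (a multiplicity-$(n+1)$ cover) and for the disc property (train-track genericity and transversality in $\PML$) point in reasonable directions, but they omit the concrete constructions that make these steps go through in the paper: a good cellulation sequence of $\PML$ in which every cell meeting $\EL$ has dimension at least $n+1$, followed by a sum-theorem induction over skeleta for the upper bound; and Nag\'orko's attracting-grid-and-retraction argument, transported to $\PML$, for the disc property, whose viability hinges on the dual complex of participating cells having dimension at most $n$ (again special to $g=0$).
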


\begin{remark}Let $T$ denote the compact surface of genus-$g$ with $p$ open discs removed and $S$ the p-punctured genus-g surface.  It is well known that there is a natural homeomorphism between $\EL$ and $\mE\mL(T)$.  In particular, the topology of $\EL$ is independent of the hyperbolic metric and hence all topological results about $\EL$ are applicable to  $\mE\mL(T)$.   Thus the main results of this paper are purely topological and applicable to compact orientable surfaces.\end{remark}

The $m$-dimensional Nobeling space $\mR^{2m+1}_m$ is the space of points in $\mR^{2m+1}$ with at most $m$ rational coordinates.   In 1930 G. Nobeling \cite{N} showed that the $m$-dimensional Nobeling space is a universal space for $m$-dimensional separable metric spaces, i.e. any $m$-dimensional separable metric space embeds in $\mR^{2m+1}_m$.   This extended the work of his mentor K. Menger \cite{Me}  who in 1926 defined the $m$-dimensional Menger spaces $M^{2m+1}_m$, showed that the Menger curve is universal for 1-dimensional compact metric spaces and suggested that $M_m^{2m+1}$ is universal for $m$-dimensional compact metric spaces.  In 1984 Mladen Bestvina \cite{Be} showed that  any map of a compact $\le m$-dimensional space into $M_m^{2m+1}$ can be approximated by an embedding and in 2006 Nagorko \cite{Na} proved that  any map of a $\le m$-dimensional complete separable metric space into $\mR^{2m+1}_m$ is approximable by a closed embedding. 

A recent result of Ageev \cite{Av}, Levin  \cite{Le} and Nagorko \cite{Na}  gave a positive proof of a major long standing conjecture characterizing the m-dimensional Nobeling space.  Nagorko \cite{HP} recast this result to show that the $m$-dimensional Nobeling space is one that  satisfies a series of topological properties that are discussed in detail in \S7, e.g. the space is $(m-1)$-connected, $(m-1)$-locally connected and satisfies the \emph{locally finite $m$-discs property}.   To prove Theorem \ref{nobeling} we will show that $\EL$ satisfies these conditions for $m=n$.

Using \cite{G1},  Sebastian Hensel and Piotr Przytycki \cite{HP} earlier showed  that if $S$ is either the 5-punctured sphere or twice-punctured torus, then $\EL$ is homeomorphic to the one dimensional Nobeling space. They also boldly conjectured that all ending lamination spaces are homeomorphic to Nobeling spaces.

Independently, in 2005 Bestvina and Bromberg \cite{BB} asked whether ending lamination spaces are Nobeling spaces.    They were motivated by the fact that Menger spaces frequently arise as boundaries of locally compact Gromov hyperbolic spaces, Klarreich's theorem and the fact that the curve complex is not locally finite.  

Theorem 0.2 gives a positive proof of the Hensel - Przytycki conjecture for punctured spheres.   In section \S 19 we offer various conjectures on the topology of ending lamination spaces including three duality conjectures relating the curve complex with ending lamination space.

The methods of this paper are essentially also by bare hands.  There 
are two main difficulties that must be overcome to generalize the methods of \cite{G1} to prove 
Theorem \ref{main}.  First of all it is problematic getting started.  To prove path connectivity, given 
$\mu_0$, $\mu_1\in \EL$ we first chose $\lambda_0$, $\lambda_1\in\PML$ such that  $\phi(\lambda_i)
=\mu_i$ where $\phi$ is the forgetful map.  The connectivity of $\PML$ implies there exists a path $
\mu:[0,1]\to \PML$ such that $\mu(0)=\mu_0$ and $\mu(1)=\mu_1$.  In \cite{G1} we found an 
appropriate sequence of generic such paths, projected them into lamination space,  and took an 
appropriate limit which was a path in $\EL$ between $\mu_0$ and $\mu_1$.  To prove simple connectivity, say for $\EL$ where $S$ is the surface of genus 2, the first step is already problematic, for there is a simple closed curve $\gamma$ in $\EL$ 
whose preimage in $\PML$ does not contain a loop projecting  to $\gamma$.  In the general case, 
the preimage is a Cech-like loop and that turns out to be good enough.   The second issue is that points along a 
generic path in $\PML$ project to laminations that are \emph{almost filling almost minimal}, a 
property that was essential in order to take limits in \cite{G1}.  In the general case, the analogous laminations are not 
close to being either almost filling or almost minimal.    To deal with this we introduce the idea of 
\emph{markers} that is a technical device that enables us to take limits of laminations with the 
desired controlled properties.

This paper basically accomplishes two things.  It shows that if $k\le n$, then 
any generic PL map $f:B^k\to \PML$ can be \emph{$\epsilon$-approximated} by a map $g:B^k\to 
\EL$ and conversely any map $g:B^k\to \EL$ can be \emph{$\epsilon$-approximated} by 
a map $
f:B^k\to \PML$.  Here $\dim(\PML)=2n+1$.  See section \S 13 for the precise statements.

  In \S1 we provide some basic information and facts about ending lamination space, point out an omission in  \S7 \cite{G1} and prove the following.  See \cite{LMS} and \cite{LS} for earlier partial results.
  
\begin{theorem}  If $S$ is a finite type hyperbolic surface that is not the 3 or 4-holed sphere or 1-holed torus, then $\FPML$ is connected.\end{theorem}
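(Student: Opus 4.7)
The plan is to deduce the connectivity of $\FPML$ formally from three ingredients: the connectivity of $\EL$ proved in \cite{G1}, the observation that by construction the forgetful map $\phi\colon\FPML\to\EL$ is a quotient map, and the fact that each fiber of $\phi$ is connected. Once these are in place, the result follows from a standard piece of point-set topology.

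For the fiber-connectedness, let $\mu\in\EL$, so $\mu$ is a minimal filling geodesic lamination. The classical theorem (due to Katok, cf.\ also Thurston) says that the cone of transverse measures on a minimal geodesic lamination on $S$ is a finite-dimensional simplicial cone, with vertices given by the ergodic transverse measures.  Projectivizing, the fiber $\phi^{-1}(\mu)\subset\FPML$ is a (closed) simplex, hence convex and in particular connected. The fact noted in the introduction that every ending lamination fully supports a measure guarantees that each fiber is nonempty, so $\phi$ is surjective. For the quotient-map property, I will simply appeal to the definition quoted in the introduction: $\EL$ is topologized as the quotient of $\FPML$ under the equivalence relation identifying projective measured laminations with the same underlying lamination, and the characterization of convergence via diagonal extensions is consistent with this.

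With these ingredients the argument is routine. Suppose for contradiction that $\FPML=A\sqcup B$ is a partition into disjoint nonempty open sets. Since each fiber $\phi^{-1}(\mu)$ is connected, it must lie entirely in $A$ or entirely in $B$; thus $A$ and $B$ are both saturated with respect to $\phi$. Because $\phi$ is a quotient map, the images $\phi(A)$ and $\phi(B)$ are then disjoint nonempty open subsets of $\EL$ whose union is all of $\EL$, contradicting the connectivity of $\EL$ guaranteed by \cite{G1} under the stated hypothesis on $S$. The only nontrivial input is the connectivity of $\EL$ from \cite{G1}; no new geometric construction is required, which is presumably why the theorem fits naturally into the preliminary \S1 alongside the basic facts about $\EL$. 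The ``main obstacle'' is therefore external: it lies entirely in the deep input from \cite{G1}, while the present statement is a formal corollary.
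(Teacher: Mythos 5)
Your proof is correct and structurally the same as the paper's: both push a hypothetical disconnection of $\FPML$ down through $\phi$ using the connectedness of fibers, and then invoke the connectedness of $\EL$ from \cite{G1}. The only difference is in how the disconnection descends: the paper splits $\FPML$ into disjoint nonempty \emph{closed} sets and applies Corollary~\ref{closed map} (that $\phi$ is a closed map, which in turn rests on Lemma~\ref{convergent sequence}), while you split into disjoint nonempty \emph{open} sets, observe they are $\phi$-saturated, and use the definitional fact that $\phi$ is a quotient map so that saturated opens have open image. Your version is marginally more economical, since the quotient-map property is built into Definition~1.5 and requires no auxiliary lemma; also, where the paper cites Lemma~\ref{preimage simplex} for fiber-connectedness, you cite the Katok--Thurston simplex-of-measures description directly, which is the same fact.
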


In \S2 we show that  if $g:S^k\to \EL$ is continuous, then there exists a continuous map $F:B^{k+1} \to \PMLEL$ that extends  $g$.  Here $\PMLEL$ is the disjoint union of $\PML$ and $\EL$ appropriately topologized. In \S3 we introduce \emph{markers}.  In \S 4 we give more facts relating the topologies of $\EL$ and $\PML$.   In  \S 5 we give a criterion for a sequence $f_1, f_2, \cdots$ of maps of $B^{k+1}$ into $\PMLEL$ that restrict to $g:S^{k}\to \EL$ to \emph{converge} to a continuous map $G:B^{k+1}\to \EL$, extending $g$.  The core technical work of this paper is carried out in \S 6-10.  In \S 11-12 we prove Theorem \ref{main}.  In \S 13 we isolate out our $\PML$ and $\EL$ approximation theorems.  In \S 14 we develop a theory of good cellulation sequences of $\PML$, which may be of independent interest. In \S 15 we give various upper and lower estimates of $\dim(\EL)$ and prove that $\dim(S_{0,n+p})=n$ and that $\pi_n(S_{0, n+p})\neq 0$.  In \S 16 we state Nagorko's recharacterization of  Nobeling spaces.  In \S 17 we prove that $S_{0, n+p}$ satisfies the locally finite $n$-discs property.  In \S 18 various applications are given.  In \S 19 we offer some problems and  conjectures.

\begin{acknowledgements}   The author learned about ending lamination spaces from hearing many beautiful lectures by Saul Schleimer and Chris Leininger.  The author is grateful to Andrzej Nagorko for informative communications about Nobeling spaces and to Piotr Przytycki for many discussions and interest in this work.   He thanks Mladen Bestvina, Alexander Dranishnikov and Ross Geoghegan for very helpful conversations and communications.  \end{acknowledgements}

\section{Basic definitions and facts}

 In what follows, $S$ or $S_{g,p}$ will denote a complete hyperbolic surface of genus 
$g$ and $p$ punctures.    We will assume that the reader is 
familiar with the basics of Thurston's theory of curves and laminations on surfaces, e.g. $\mL(S)$ the space of geodesic laminations with topology induced from the Hausdorff 
topology on closed sets, $\ML$ the space of measured geodesic laminations endowed with the 
weak* topology, $\PML$ projective measured lamination space, as well as the standard definitions and properties of 
of train tracks.  For example, see \cite{PH}, \cite{Ha}, \cite{M}, \cite{Th1}, or \cite{FLP}.   
All laminations in this paper will be compactly supported.  See   \cite{G1} for   various ways to view and measure distance between laminations as well as for standard notation.  For example, 
$d_{PT(S)}(x,y)$ denotes the distance between the geodesic laminations $x$ and $y$ as 
measured in the projective unit tangent bundle.  Unless said otherwise, distance between 
elements of $\mL$ are  computed via the Hausdorff topology on closed sets in 
$PT(S)$.  Sections \S1 and \S2 (through Remark 2.5) of \cite{G1} are also needed.  Among 
other things, important aspects of the PL structure of $\PML$ and $\ML$ are described there.

\begin{notation}  We denote by $p:\ML\setminus 0 \to \PML$, the canonical projection and $
\phi:\PML\to \LS$ and $\hat\phi:\ML\to \LS$, the forgetful maps.  If $\tau$ is a train track, then 
$V(\tau)$ will denote the cell of measures supported on $\tau$ and $P(\tau)$ the polyhedron 
$p(V(\tau)\setminus 0)$.  \end{notation}

\begin{definition}  Let $\EL$ denote the set of ending laminations on S, i.e. the set of geodesic laminations that are \emph{filling} and \emph{minimal}.  A lamination $\mL\in \LS$ is \emph{minimal} if every leaf is dense and \emph{filling} if the metric closure (with respect to the induced path metric) of $S\setminus \mL$ supports no simple closed geodesic.  \end{definition}

The following is well known.

\begin{lemma}  \label{filling} If $x\in \PML$, then $\phi(x)$ is the disjoint union of minimal laminations.  In particular $\phi(x)\in \EL$ if and only if $\phi(x)$ is filling.  \end{lemma}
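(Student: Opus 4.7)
The plan is to prove the two assertions in sequence. Let $\mu \in \ML$ be any lift of $x$, so that $\phi(x) = |\mu|$ is the geodesic lamination that is the support of $\mu$.

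\textbf{First assertion.} I would begin with the classical structure theorem for geodesic laminations on a finite-type hyperbolic surface: any such lamination decomposes as a finite disjoint union of its minimal sublaminations together with finitely many isolated bi-infinite leaves, each of which spirals onto minimal sublaminations at both ends. It therefore suffices to rule out isolated leaves inside $|\mu|$. Suppose $\ell$ were such a leaf, spiraling onto a minimal sublamination $\mL_0 \subseteq |\mu|$. Choose a small transversal $\tau$ meeting $\mL_0$; then $\ell \cap \tau$ is an infinite sequence $p_1, p_2, \ldots$ accumulating on $\mL_0 \cap \tau$, and each $p_n$ is isolated in $|\mu| \cap \tau$. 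The support condition forces the transverse measure $\mu$ to have a positive atomic mass $c_n := \mu(\{p_n\}) > 0$ at each $p_n$. Holonomy invariance of $\mu$ along the compact subarc of $\ell$ from $p_n$ to $p_{n+1}$ yields $c_{n+1} = c_n$, so $\mu(\tau) \geq \sum_n c_1 = \infty$, contradicting local finiteness. Hence $|\mu|$ has no isolated leaves and is a finite disjoint union $\mL_1 \sqcup \cdots \sqcup \mL_k$ of minimal sublaminations.

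\textbf{Second assertion.} One direction is immediate from the definitions. For the converse, assume $\phi(x) = \mL_1 \sqcup \cdots \sqcup \mL_k$ is filling. No $\mL_i$ can be a simple closed geodesic $\gamma$, for $\gamma$ would then appear as a simple closed geodesic boundary component of the metric closure of $S \setminus \phi(x)$, contradicting filling. Hence each $\mL_i$ is non-compact minimal. I would next invoke the standard existence of the \emph{supporting subsurface}: for each $i$, an essential subsurface $F_i \subset S$ with geodesic multicurve boundary $\partial F_i$ disjoint from $\mL_i$, such that $\mL_i \subset \inte(F_i)$ and $\mL_i$ fills $F_i$. Pick any component $\gamma$ of $\partial F_1$; it is a simple closed geodesic in $S$. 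For $j \neq 1$, any leaf of $\mL_j$ crossing $\gamma$ into $\inte(F_1)$ would have to meet $\mL_1$ transversely (because $\mL_1$ fills $F_1$), contradicting $\mL_j \cap \mL_1 = \emptyset$; moreover $\mL_j$ cannot contain $\gamma$ as a leaf since $\mL_j$ is non-compact minimal and has no closed leaves. Thus $\gamma$ is disjoint from every $\mL_j$, hence from $\phi(x)$, violating filling. Therefore $\partial F_1 = \emptyset$, so $F_1 = S$, i.e.\ $\mL_1$ fills $S$; a second component $\mL_2$ disjoint from the filling lamination $\mL_1$ would have to sit inside $S \setminus \mL_1$, a disjoint union of ideal polygons carrying no lamination, and is therefore impossible. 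Hence $k = 1$ and $\phi(x) = \mL_1 \in \EL$.

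\textbf{Main obstacle.} The subtlest step is the invariance argument in Part 1: one must realize the first-return map of $\ell$ to $\tau$ via a flow box along the compact subarc of $\ell$ between consecutive returns, so that holonomy invariance of the transverse measure applies and forces all atomic masses $c_n$ to coincide. In Part 2 the main input is the existence and geodesic-boundary property of the supporting subsurfaces $F_i$; once available, the fact that $\partial F_i$ is disjoint from every $\mL_j$ is an immediate consequence of disjointness of geodesic laminations and the absence of closed leaves in a non-compact minimal lamination.
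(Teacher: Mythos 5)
Your proof is correct and follows the same two-step outline as the paper's, which is quite terse. The paper states that full support forces every non-compact leaf to be recurrent (hence no isolated leaves, so the lamination is a disjoint union of minimal pieces) and then asserts that a filling union of minimal pieces has only one component; you supply the underlying atomic-mass/holonomy-invariance argument for the first step and the supporting-subsurface argument for the second.
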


\begin{proof}  Since the measure on $x$ has full support, no non compact leaf $L$ is proper, i.e. non compact leaves limit on themselves.  It follows that $\phi(x)$ decomposes into a disjoint union of minimal laminations.  If $\phi(x)$ is filling, then there is only one such component.\end{proof}

\begin{notation}  Let $\FPML$ denote the subspace of $\PML$ consisting of filling laminations  and $\UPML$ denote the subspace of unfilling laminations.  Thus $\PML$ is the disjoint union of $\FPML$ and $\UPML$.\end{notation}

\begin{definition}  Topologize $\EL$ by giving it the quotient topology induced from the surjective map $\phi:\FPML\to \EL$  where $\FPML$ has the subspace topology induced from $\PML$.   After \cite{H1} we call this the \emph{coarse Hausdorff topology}.  Hamenstadt observed that this topology is a slight coarsening of the Hausdorff topology on $\EL$; a sequence $\mL_1, \mL_2\cdots$  in $\EL$  limits to $\mL\in \EL$ if and only if any convergent subsequence in the Hausdorff topology converges to a diagonal extension $\mL'$ of $\mL$.  A \emph{diagonal extension} of $\mL$ is a lamination obtained by adding finitely many leaves.\end{definition} 

\begin{remark} \label{separable and complete}  It is well known that $\EL$ is separable and complete.  Separability follows from the fact that $\PML$ is a sphere and $\FPML$ is dense in $\PML$.  (E.g. $\FPML$ is the complement of countably many codimension-1 PL-cells in $\PML$.)  Masur and Minsky \cite{MM} showed that the curve complex $\CS$ is Gromov-hyperbolic and  Klarreich \cite {K} (see also \cite{H1}) showed that the Gromov boundary of $\CS$ is 
homeomorphic to $\EL$ with the coarse Hausdorff topology.  Being the boundary of a Gromov 
hyperbolic space, $\EL$ is metrizable.  Bonk and Schramm showed that with 
appropriate constants in the Gromov product, the induced Gromov metric is complete \cite{BS}.  See also \cite{HP}. \end{remark}

The following are characterizations of continuous maps in $\EL$ analogous to Lemmas 1.13-1.15 of \cite{G1}.  Here $X$ is a metric space.  

\begin{lemma} \label{continuity1}A function $f:X\to \EL$ is continuous if 
and only if for each $t\in X$ and each sequence $\{t_i\}$ converging to $t, f(t)$ 
is the coarse Hausdorff limit of the sequence $f(t_1), f(t_2), 
\cdots.$\qed\end{lemma}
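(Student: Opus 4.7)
The plan is to reduce this to the standard ``continuous iff sequentially continuous'' fact for maps between metric spaces, and then simply identify ``coarse Hausdorff convergence'' as the description of convergence in the topology on $\EL$.

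First I would record two ingredients from the preceding material. The target $\EL$ carries the coarse Hausdorff topology, which by Remark \ref{separable and complete} is metrizable (it agrees with the topology on the Gromov boundary of $\CS$, which inherits a complete metric from the work of Bonk--Schramm). Moreover, the sequential description of convergence in this topology has already been recorded in Definition 1.4 following Hamenst\"adt: a sequence $\mL_1,\mL_2,\ldots$ in $\EL$ converges to $\mL$ in $\EL$ if and only if every Hausdorff-convergent subsequence has limit a diagonal extension of $\mL$.

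With these in hand, the argument splits into the two standard directions. For the forward direction, assume $f$ is continuous, fix $t\in X$, and take any sequence $t_i\to t$ in $X$. Since $X$ is metric and $\EL$ is metrizable, $f$ is in particular sequentially continuous at $t$, so $f(t_i)\to f(t)$ in $\EL$; by the sequential characterization just recalled, this is precisely the statement that $f(t)$ is the coarse Hausdorff limit of $f(t_1),f(t_2),\ldots$. For the converse direction, the hypothesis says exactly that $f$ is sequentially continuous at every $t\in X$; because both $X$ and $\EL$ are metric spaces, sequential continuity is equivalent to continuity, so $f$ is continuous.

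There is no real obstacle here: the content of the lemma is bookkeeping between the quotient topology on $\EL$, its metrizability, and Hamenst\"adt's subsequential description of coarse Hausdorff limits. The only thing one has to be a little careful about is not confusing the Hausdorff topology on $\LS$ (in which the sequence $f(t_i)$ need not converge at all, let alone to $f(t)$) with the coarse Hausdorff topology on $\EL$; once the ``pass to a Hausdorff-convergent subsequence, then allow diagonal extensions'' wording is internalized, the proof is immediate.
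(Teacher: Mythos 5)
Your proof is correct, and it is essentially the argument the paper has in mind: the lemma is stated with no proof (the \qed marks it as immediate, by analogy with Lemmas 1.13--1.15 of \cite{G1}), precisely because once one knows $\EL$ is metrizable and that convergence in the coarse Hausdorff topology is described by Hamenst\"adt's subsequential criterion, the statement reduces to ``continuous iff sequentially continuous'' for maps from a metric space. Your closing caution about not conflating Hausdorff convergence in $\LS$ with coarse Hausdorff convergence in $\EL$ is exactly the right point to flag.
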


\begin{lemma}\label{continuity2}  A function $f:X\to \EL$ is 
continuous if and only if for each $\epsilon>0$ and $t\in X$ there exists a 
$\delta >0$ such that $d_X(s,t)<\delta$ implies that the maximal angle of intersection 
between leaves of $f(t)$ and leaves of $f(s)$ is $<\epsilon$. \qed\end{lemma}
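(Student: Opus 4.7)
The plan is to reduce both directions to the coarse Hausdorff characterization of continuity given in Lemma \ref{continuity1}, working in the projective tangent bundle $PT(S)$. The key geometric facts I will use are: (i) Hausdorff convergence $\mL_i\to\mL'$ in $PT(S)$ passes tangent directions to tangent directions, (ii) two distinct leaves of a geodesic lamination cannot cross transversely in $S$, (iii) two distinct minimal filling laminations must cross transversely at some point of $S$, and (iv) complementary regions of a filling minimal lamination are finitely many ideal polygons, each admitting only finitely many bi-infinite geodesic diagonals.

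For the ``only if'' direction, I would argue by contradiction. If the angle condition fails then there exist $t\in X$, $\epsilon_0>0$, and $s_n\to t$ together with crossing points $p_n\in S$ of leaves of $f(s_n)$ with leaves of $f(t)$ at which the tangent angles are at least $\epsilon_0$. Since all laminations in play are compactly supported, one can pass to a subsequence in which $p_n\to p$ in $S$ and the corresponding tangent vectors converge to $v^s,v^t\in T_pS$ with angle$(v^s,v^t)\ge\epsilon_0$. By continuity of $f$ and Lemma \ref{continuity1}, a further Hausdorff-convergent subsequence gives $f(s_n)\to \mL'$ for some diagonal extension $\mL'$ of $f(t)$. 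Then $(p,v^t)\in f(t)\subset\mL'$ and $(p,v^s)\in\mL'$, so $\mL'$ would carry two distinct tangent lines at the single point $p$, forcing two leaves of the geodesic lamination $\mL'$ to cross transversely at $p$, which is impossible.

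For the ``if'' direction, assume the angle condition and take $t_n\to t$ in $X$. By Lemma \ref{continuity1} it suffices to show that every Hausdorff-convergent subsequence $f(t_{n_k})\to\mL'$ has $\mL'$ a diagonal extension of $f(t)$. Discarding indices for which $f(t_{n_k})=f(t)$ (which contribute only $f(t)$ itself to the limit), each $f(t_{n_k})$ crosses $f(t)$ transversely at some $p_k\in S$; the angle condition forces the tangent angles at $p_k$ to tend to zero, and after passing to a subsequence the common limiting tangent direction lies in both $f(t)$ and $\mL'$ in $PT(S)$. Hence $f(t)\cap\mL'$ is a nonempty closed sublamination of the minimal lamination $f(t)$, so $f(t)\subset\mL'$. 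The remaining leaves of $\mL'$ lie in the finitely many ideal polygonal complementary regions of $f(t)$, where only finitely many bi-infinite geodesic diagonals can occur, so $\mL'\setminus f(t)$ is a finite union of leaves and $\mL'$ is a diagonal extension of $f(t)$.

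The main obstacle I expect is verifying the transverse crossing of two distinct minimal filling laminations with enough care to extract, via the angle hypothesis, a genuine common tangent vector surviving to the Hausdorff limit in $PT(S)$; the rest of the argument is essentially bookkeeping with Lemma \ref{continuity1}, compactness of supports, and the standard structure of complementary regions of filling laminations.
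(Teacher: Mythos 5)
The paper states Lemma~\ref{continuity2} with a \qed\ and offers no proof, treating it (like Lemmas~\ref{continuity1} and~\ref{continuity3}) as routine, ``analogous to Lemmas 1.13--1.15 of [G1].'' Your argument is correct and is exactly the argument the paper clearly intends: reduce both implications to the coarse Hausdorff criterion of Lemma~\ref{continuity1}, then extract convergent tangent data in $PT(S)$ by compactness, rule out a positive limiting angle via the no-transverse-self-intersection property of geodesic laminations in the ``only if'' direction, and in the ``if'' direction promote a common limiting tangent to a common leaf, invoke minimality of $f(t)$ to get $f(t)\subset\mL'$, and use finiteness of diagonals in the (possibly once-punctured) ideal polygon complementary regions of the filling lamination $f(t)$ to conclude $\mL'$ is a diagonal extension.
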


\begin{lemma}\label{continuity3} A function $f:X\to \EL$ is continuous 
if and only if for each $\epsilon>0$ and $t\in X$ there exists a $\delta >0$ such 
that $d_X(s,t)<\delta$ implies that  $\dpts(f'(t),(f'(s))<\epsilon$, where $f'(s)$ (resp $f'(t)$) is any diagonal extension of $f(s)$ (resp. $f(t)$). 
\qed\end{lemma}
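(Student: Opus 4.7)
My plan is to deduce the criterion from Lemma \ref{continuity1}, which characterizes continuity of $f$ via coarse Hausdorff convergence: $s_i\to t$ in $X$ forces every Hausdorff-convergent subsequence of $f(s_i)$ to limit to a diagonal extension of $f(t)$. The two directions of Lemma \ref{continuity3} then become translations between this condition and $\dpts$-closeness, resting on the basic finiteness observation that an ending lamination admits only finitely many diagonal extensions, since it has finitely many complementary ideal polygons and each admits only finitely many diagonalizations.

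For the forward direction ($\Rightarrow$) I would argue by contradiction. If the $\dpts$ criterion fails for some $\epsilon,t$, then there is a sequence $s_i\to t$ such that no choice of diagonal extensions yields $\dpts(f'(s_i),f'(t))<\epsilon$; in particular, taking the trivial extension $f'(s_i)=f(s_i)$ we obtain $\dpts(f(s_i),\mathcal{L}')\ge\epsilon$ for every diagonal extension $\mathcal{L}'$ of $f(t)$. Applying Lemma \ref{continuity1} after passing to a Hausdorff-convergent subsequence, the limit $\mathcal{M}'$ is itself a diagonal extension of $f(t)$, so $\dpts(f(s_i),\mathcal{M}')\to 0$, contradicting the inequality at $\mathcal{L}'=\mathcal{M}'$.

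For the backward direction ($\Leftarrow$), I would take $s_i\to t$ and verify the Lemma \ref{continuity1} criterion. By hypothesis I can pick diagonal extensions $f'(s_i)$ of $f(s_i)$ and $f'(t)_i$ of $f(t)$ with $\dpts(f'(s_i),f'(t)_i)<1/i$. Using the finiteness of the set of diagonal extensions of $f(t)$, I pass to a subsequence on which $f'(t)_i\equiv\mathcal{N}$ is constant; then $f'(s_i)\to\mathcal{N}$ in the Hausdorff topology on $PT(S)$, and a further subsequence yields $f(s_i)\to\mathcal{M}'\subseteq\mathcal{N}$. What remains is to prove that $\mathcal{M}'$ is a diagonal extension of $f(t)$.

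The main obstacle is exactly this last identification, and I would handle it via the structure of $\mathcal{N}$. Write $\mathcal{N}=f(t)\cup\{d_1,\dots,d_m\}$, where each $d_j$ is an isolated leaf in a complementary ideal polygon of $f(t)$ whose two ends spiral into $f(t)$; hence the closure of $d_j$ in $PT(S)$ already contains $f(t)$. The set $\mathcal{M}'$ is a nonempty closed sublamination of $\mathcal{N}$, nonempty because the $f(s_i)$ are nonempty and supported in a fixed compact region of $PT(S)$. If $\mathcal{M}'$ meets some $d_j$, closedness forces $f(t)\subseteq\mathcal{M}'$; otherwise $\mathcal{M}'\subseteq f(t)$, and minimality of $f(t)$ together with flow-invariance of $\mathcal{M}'$ gives $\mathcal{M}'=f(t)$. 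Either way $\mathcal{M}'$ is a diagonal extension of $f(t)$, and Lemma \ref{continuity1} concludes the proof.
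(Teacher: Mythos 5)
The paper states Lemma \ref{continuity3} without proof, indicating that it is a routine analogue of Lemmas 1.13--1.15 of \cite{G1}, so there is no explicit argument to compare against; but your deduction from Lemma \ref{continuity1} is correct and is the natural route. Two quantifier points are worth making explicit. First, you read ``any diagonal extension'' as ``some diagonal extension,'' which is the only reading that makes the lemma true: under the ``for all'' reading the condition with the trivial extensions forces Hausdorff-continuity of $f$, which is strictly stronger than coarse Hausdorff continuity (Thurston's theorem that $\EL$ is totally disconnected in the Hausdorff topology). Second, in the backward direction you pass to subsequences twice before identifying the limit, so you are actually proving that every subsequence of $f(s_i)$ has a further subsequence Hausdorff-converging to a diagonal extension of $f(t)$; this does suffice for the criterion of Lemma \ref{continuity1} (a convergent subsequence has the same limit as any of its further subsequences), but the reduction should be stated. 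Beyond that, the ingredients you rely on are all sound: $\mL(S)$ is Hausdorff-compact and ending laminations lie in a fixed compact subset of $PT(S)$ (so $\mM'$ is nonempty), an ending lamination has finitely many diagonal extensions, each added diagonal is asymptotic to a boundary leaf and hence has closure containing the whole minimal lamination, and a nonempty sublamination of $\mathcal{N}$ squeezed between $f(t)$ and $\mathcal{N}$ must be a diagonal extension. No gaps.
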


The forgetful map $\phi:\PML\to \LS$ is 
discontinuous, for any simple closed curve viewed as a point in $\PML$ is the limit of 
filling laminations and any Hausdorff limit of a sequence of filling laminations 
is filling.  

\begin{definition}  Let $X_1, X_2, \cdots$ be a sequence of subsets of the 
topological space $Y$.  We say that the subsets $\{X_i\}$ \emph{super converges} to $X$ 
if for each $x\in X$, there exists $x_i\in X_i$ so that lim$_{i\to \infty} x_i= x$.  
In this case we say $X $ is a \emph{sublimit} of $\{X_i\}$.\end{definition}

We will repeatedly use the following result that first appears in \cite{Th4}. See Proposition 3.2 \cite{G1} for a proof.

\begin{proposition} \label{super convergence}   If  the projective measured laminations $\lambda_1, \lambda_2, \cdots$ 
 converge to  $\lambda \in \PML$, then $\phi(\lambda_1), 
\phi(\lambda_2), \cdots$ super converges to $\phi(\lambda)$ as subsets of $PT(S)$.  \end{proposition}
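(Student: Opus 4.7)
The plan is to work locally and produce, for each $x \in \phi(\lambda) \subset PT(S)$, a sequence $x_i \in \phi(\lambda_i)$ converging to $x$ in $PT(S)$. First I would lift the convergence $\lambda_i \to \lambda$ in $\PML$ to convergence $\tilde\lambda_i \to \tilde\lambda$ in $\ML$ by rescaling so that $p(\tilde\lambda_i) = \lambda_i$ and $p(\tilde\lambda) = \lambda$; then the weak-$*$ topology on $\ML$ is at my disposal, and $\hat\phi(\tilde\lambda) = \phi(\lambda)$, $\hat\phi(\tilde\lambda_i) = \phi(\lambda_i)$. Writing $\pi:PT(S)\to S$ for the bundle projection, let $L$ be the leaf of $\phi(\lambda)$ tangent to $x$ at the base point $\pi(x)$. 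The key observation is that, because $\tilde\lambda$ fully supports $\phi(\lambda)$, every sufficiently short arc $\alpha \subset S$ through $\pi(x)$ transverse to $L$ satisfies $\tilde\lambda(\alpha) > 0$.

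Next I would convert this mass estimate into information about $\phi(\lambda_i)$. After a generic perturbation of the endpoints of $\alpha$ so that they avoid $\phi(\lambda)$ and all $\phi(\lambda_i)$, weak-$*$ convergence $\tilde\lambda_i \to \tilde\lambda$ gives $\tilde\lambda_i(\alpha) \to \tilde\lambda(\alpha) > 0$, so $\tilde\lambda_i(\alpha) > 0$ for all large $i$. Hence $\alpha$ meets $\phi(\lambda_i)$ in some point $p_i \in S$. Choosing a nested family of transverse arcs $\alpha_n$ of length $1/n$ through $\pi(x)$ and applying this step to each $\alpha_n$, a standard diagonal construction produces base points $p_i \in \pi(\phi(\lambda_i))$ with $p_i \to \pi(x)$ in $S$.

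Finally, to promote base-point convergence in $S$ to convergence in the fibered space $PT(S)$, I would control tangent directions using a train track. Choose $\tau$ with $\lambda$ in the relative interior of $P(\tau)$; by openness, $\lambda_i \in P(\tau)$ for all large $i$, so both $\phi(\lambda)$ and $\phi(\lambda_i)$ are carried by $\tau$. On the branch of $\tau$ containing $\pi(x)$, the tangent directions of all carried leaves lie within an angle whose size shrinks with the width of the branch. By taking $\tau$ with arbitrarily narrow branches near $\pi(x)$ (or, equivalently, by splitting $\tau$ enough near $\pi(x)$), this angle can be made arbitrarily small. Hence the tangent direction at $p_i$ approaches that of $L$ at $\pi(x)$, i.e.\ approaches $x$, and the lifted points $x_i \in \phi(\lambda_i) \subset PT(S)$ satisfy $x_i \to x$.

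The main obstacle I expect is this last step: weak-$*$ convergence controls only the transverse mass assigned to arcs and gives no direct information about tangent vectors, so one must exploit the local train-track picture to argue that every leaf of $\phi(\lambda_i)$ crossing a short transverse arc near $\pi(x)$ is nearly parallel to $L$. A subsidiary technical point is the continuity of $\tilde\lambda_i(\alpha)$ under weak-$*$ convergence, which is precisely why the endpoints of $\alpha$ must be chosen generically to avoid the laminations, so that no mass can escape to $\partial \alpha$ in the limit.
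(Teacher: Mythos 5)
The paper does not actually prove this proposition; it cites \cite{Th4} and defers to Proposition 3.2 of \cite{G1}, so your proposal can only be measured against the standard argument. Your first two paragraphs are essentially that argument and are fine: lift to $\ML$, use that the transverse measure has full support so that every short transversal through $\pi(x)$ has positive $\tilde\lambda$-mass, and use weak-$*$ convergence (with endpoints chosen off the countably many laminations, so only the easy lower-semicontinuity $\liminf_i \tilde\lambda_i(\alpha)\ge\tilde\lambda(\inte(\alpha))>0$ is really needed) to produce base points $p_i\in\phi(\lambda_i)$ converging to $\pi(x)$.

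The gap is exactly where you feared, and your proposed mechanism for closing it is false. The claim ``choose $\tau$ with $\lambda$ in the relative interior of $P(\tau)$; by openness, $\lambda_i\in P(\tau)$ for all large $i$'' fails unless $\tau$ is a complete train track and $\lambda$ lies in the interior of a top-dimensional cell: in general $P(\tau)$ is a positive-codimension cell of $\PML$ with empty interior, so it is not a neighborhood of $\lambda$. (Concretely, take $\lambda=\lambda_C$ for a simple closed geodesic $C$ and $\lambda_i=p(\hat\lambda_C+\tfrac1i\hat\mu)$ with $\mu$ filling: then $\phi(\lambda_i)=C\cup\phi(\mu)$ is carried by no train track that is a thin neighborhood of $C$, and the leaves of $\phi(\lambda_i)$ crossing your transversal $\alpha$ include leaves of $\phi(\mu)$ at angles far from the direction of $C$. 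This example also shows that ``some leaf of $\phi(\lambda_i)$ crosses $\alpha$'' cannot by itself control directions.) The standard repair is to replace the short transversal by a long thin flow box: let $\sigma\subset L$ be the length-$2R$ segment centered at $\pi(x)$ and consider the open set $Q_{R,\epsilon}$ of geodesics whose lifts to $\BH^2$ cross the $\epsilon$-transversals at both ends of a lift of $\sigma$. This set has positive $\tilde\lambda$-measure (it contains $L$, which is in the support), hence positive $\tilde\lambda_i$-measure for large $i$, so $\phi(\lambda_i)$ contains a leaf fellow-travelling $\sigma$ over its whole length; hyperbolic geometry (not the width of a train-track branch) then forces that leaf to be $C^1$-close to $L$ at its midpoint, with error tending to $0$ as $R\to\infty$ and $\epsilon\to 0$. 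Diagonalizing over $R$ and $\epsilon$ gives convergence in $PT(S)$. With that substitution your outline becomes the standard proof.
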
 \vskip8pt

The following consequence of super convergence, Proposition \ref{super convergence} is repeatedly used in this paper and in \cite{G1}.

\begin{lemma}  \label{convergent sequence}  If $z_1, z_2, \cdots$ is a convergent sequence in $\EL$ limiting to $z_\infty$ and $x_1, x_2, \cdots$ is a sequence in $\PML$ such that for all $i$, $\phi(x_i)=z_i$, then any convergent subsequence of the $x_i$'s converges to a point of $\phiinv(z_\infty)$.\qed\end{lemma}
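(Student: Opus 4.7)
The plan is to reduce the statement to Proposition \ref{super convergence} together with the characterization of coarse Hausdorff convergence in terms of diagonal extensions, and then to identify the minimal sublaminations of a diagonal extension of an ending lamination.

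Let $x_{i_k}\to x_\infty\in\PML$ be a convergent subsequence. I need to show $\phi(x_\infty)=z_\infty$. First, by Proposition \ref{super convergence}, the underlying geodesic laminations $z_{i_k}=\phi(x_{i_k})$ super converge to $\phi(x_\infty)$ as closed subsets of $PT(S)$. Using that $\mL(S)$ is compact in the Hausdorff topology, pass to a further subsequence (still called $z_{i_k}$) so that $z_{i_k}$ converges in the Hausdorff topology to some geodesic lamination $z'$. Super convergence immediately gives the containment $\phi(x_\infty)\subseteq z'$.

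Next, since $z_{i_k}\to z_\infty$ in the coarse Hausdorff topology on $\EL$, Remark following Lemma \ref{continuity1} (equivalently, Hamenstadt's characterization recalled in the definition of the coarse Hausdorff topology) forces the Hausdorff sublimit $z'$ to be a diagonal extension of $z_\infty$. Thus $z'$ consists of the ending lamination $z_\infty$ together with finitely many additional leaves, each of which runs through a complementary region of $z_\infty$ and spirals onto $z_\infty$ at both ends. In particular, the closure of any such added leaf contains $z_\infty$ strictly, so the added leaves are not contained in any minimal sublamination of $z'$. Since $z_\infty$ is itself minimal, the only minimal sublamination of $z'$ is $z_\infty$.

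Finally, by Lemma \ref{filling}, $\phi(x_\infty)$ is a disjoint union of minimal laminations. Each such component is a minimal sublamination of $z'$, hence equals $z_\infty$. Therefore $\phi(x_\infty)=z_\infty$, and $x_\infty\in\phiinv(z_\infty)$ as required. The step most in need of care is the identification of the minimal sublaminations of $z'$; everything else is a direct appeal to Proposition \ref{super convergence}, Lemma \ref{filling}, and the definition of the coarse Hausdorff topology.
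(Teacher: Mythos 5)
Your proof is correct and follows essentially the same path as the paper's: pass to a Hausdorff-convergent subsequence, use super convergence (Proposition \ref{super convergence}) to see $\phi(x_\infty)$ is a sublamination of the Hausdorff limit $z'$, use the definition of the coarse Hausdorff topology to see $z'$ is a diagonal extension of $z_\infty$, and then observe that the added leaves are proper and non-compact so that $z_\infty$ is the unique minimal sublamination. Your appeal to Lemma \ref{filling} to conclude $\phi(x_\infty)$ is a union of minimal laminations is a slight elaboration of the paper's terser closing sentence, but the argument is the same.
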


\begin{proof}  After passing to subsequence it suffices to consider the case that $x_1, x_2, \cdots$ coverges to $x_\infty$ in $\PML$ and that $z_1, z_2, \cdots$ converges to $\mL\in \LS$ with respect to the Hausdorff topology.  By super convergence, Proposition \ref{super convergence} $ \phi(x_\infty)  $ is a sublamination of $\mL$  and by definition of coarse Hausdorff topology $\mL$ is a diagonal extension of $z_\infty$.    Since $z_\infty$ is minimal and each leaf of $\mL\setminus z_\infty$ is non compact and proper it follows that $\phi(x_\infty)=z_\infty$.  \end{proof}

\begin{corollary}  \label{preimage compact} If $L\subset \EL$ is compact, then $\phiinv(L)\subset \PML$ is compact.    \qed\end{corollary}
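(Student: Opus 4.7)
The plan is to deduce the corollary from Lemma \ref{convergent sequence} together with the fact that $\PML$ is a compact metrizable space (it is a sphere). Since $\PML$ is metrizable, it suffices to show that $\phi^{-1}(L)$ is sequentially closed, from which compactness follows because closed subsets of compact metric spaces are compact.

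So let $x_1, x_2, \ldots$ be any sequence in $\phi^{-1}(L)$. By compactness of $\PML$, we may pass to a subsequence that converges to some $x_\infty \in \PML$; I want to show $x_\infty \in \phi^{-1}(L)$, which amounts to verifying that $x_\infty \in \FPML$ and that $\phi(x_\infty) \in L$. Let $z_i = \phi(x_i) \in L$. Since $L \subset \EL$ is compact (and $\EL$ is metrizable by Remark \ref{separable and complete}), after passing to a further subsequence we have $z_i \to z_\infty$ in the coarse Hausdorff topology for some $z_\infty \in L$. Now Lemma \ref{convergent sequence} applies verbatim to the sequences $\{z_i\}$ and $\{x_i\}$: any convergent subsequence of $\{x_i\}$ in $\PML$ converges to a point of $\phi^{-1}(z_\infty) \subset \FPML$. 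In particular $x_\infty \in \phi^{-1}(z_\infty)$, so $\phi(x_\infty) = z_\infty \in L$ and thus $x_\infty \in \phi^{-1}(L)$.

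There is essentially no obstacle here — all the work is packaged into Lemma \ref{convergent sequence}, which already ensures that limits of preimages behave correctly. The only subtlety is to remember that $\phi$ is only defined on $\FPML$, so one needs Lemma \ref{convergent sequence} precisely to rule out the possibility that the limit $x_\infty$ lands in the unfilling part $\UPML$; that lemma forces $\phi(x_\infty)$ to equal the minimal filling lamination $z_\infty$ rather than some proper diagonal extension.
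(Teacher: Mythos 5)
Your proof is correct and takes precisely the route the paper intends: the corollary is stated with a terminal \qed precisely because it is meant to follow immediately from Lemma \ref{convergent sequence} combined with compactness of $\PML$ and $L$, which is exactly what you spell out. The only cosmetic wrinkle is that you announce you will show $\phi^{-1}(L)$ is sequentially closed but then run a direct sequential-compactness argument (pick any sequence, extract a convergent subsequence, show the limit lies in the set); either framing is fine and the substance is identical.
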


\begin{corollary}  \label{closed map} $\phi:\FPML\to \EL$ is a closed map.\qed\end{corollary}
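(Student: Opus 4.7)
The plan is to prove that $\phi:\FPML\to\EL$ is closed by a direct sequential argument, leveraging the fact (from Remark \ref{separable and complete}) that $\EL$ is metrizable, so closedness can be checked on sequences.

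Let $C\subset\FPML$ be closed. To show $\phi(C)$ is closed in $\EL$, I would take an arbitrary convergent sequence $z_1,z_2,\ldots$ in $\phi(C)$ with limit $z_\infty\in\EL$, and show $z_\infty\in\phi(C)$. For each $i$ choose $x_i\in C$ with $\phi(x_i)=z_i$. Since $\PML$ is a sphere and hence compact, after passing to a subsequence I may assume $x_i\to x_\infty$ for some $x_\infty\in\PML$. Lemma \ref{convergent sequence} then applies and gives $x_\infty\in\phi^{-1}(z_\infty)$. In particular $\phi(x_\infty)=z_\infty\in\EL$, so $\phi(x_\infty)$ is filling and minimal, which means $x_\infty\in\FPML$.

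Now $x_i\in C$ for all $i$ and $x_i\to x_\infty$ in $\PML$, hence in $\FPML$. Because $C$ is closed in $\FPML$, we conclude $x_\infty\in C$, and therefore $z_\infty=\phi(x_\infty)\in\phi(C)$, as desired.

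There is essentially no obstacle here: the content has been packaged into Lemma \ref{convergent sequence}, which already converts Thurston's super convergence together with minimality of ending laminations into the statement that $\PML$-limits of preimages of a convergent $\EL$-sequence land in the $\phi$-preimage of the limit. The only ingredients needed on top of that lemma are compactness of $\PML$ (to extract a subsequential limit), metrizability of $\EL$ (to reduce closedness to a sequential check), and closedness of $C$ in $\FPML$ (to conclude the limit lies in $C$). One minor point worth flagging is that the limit $x_\infty$ a priori lies in $\PML$, not $\FPML$; it is precisely Lemma \ref{convergent sequence} that forces $\phi(x_\infty)=z_\infty$ and hence $x_\infty\in\FPML$, so that one may legitimately use closedness of $C$ inside the subspace $\FPML$.
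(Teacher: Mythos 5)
Your argument is correct and is exactly the intended one: the paper states this as an immediate consequence of Lemma \ref{convergent sequence} (together with compactness of $\PML$ and metrizability of $\EL$), which is precisely what you have spelled out. No gaps.
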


\begin{lemma} \label{pml epsilon closeness}  If $\mu\in \EL$, $x_1, x_2, \cdots\to x$ is a convergent sequence in $\PML$ and $\textlim_{i\to \infty} \dpts(\phi(x_i), \mu')=0$ for some diagonal extension $\mu'$ of $\mu$, then $x\in \phiinv(\mu)$.\end{lemma}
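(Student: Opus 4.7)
The plan is to combine super convergence with the structure of a diagonal extension to force $\phi(x) = \mu$; the argument parallels the proof of Lemma \ref{convergent sequence}.

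First, I would establish the set-theoretic containment $\phi(x) \subseteq \mu'$ in $PT(S)$. Since $x_i \to x$ in $\PML$, Proposition \ref{super convergence} says that $\phi(x_i)$ super converges to $\phi(x)$. That is, for each point $p \in \phi(x)$ there are points $p_i \in \phi(x_i)$ with $p_i \to p$. On the other hand, the hypothesis $d_{PT(S)}(\phi(x_i), \mu') \to 0$ says precisely that $\phi(x_i) \to \mu'$ in the Hausdorff topology on closed subsets of $PT(S)$. Any Hausdorff limit contains every sublimit, so $p \in \mu'$, and therefore $\phi(x) \subseteq \mu'$.

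Second, I would use the structure of the diagonal extension to upgrade this to $\phi(x) = \mu$. Write $\mu' = \mu \cup \ell_1 \cup \cdots \cup \ell_k$, where each $\ell_j$ is a non-compact leaf isolated in $\mu'$ and properly embedded off $\mu$, so $\overline{\ell_j} = \ell_j \cup \mu$ in $\mu'$ while leaves of $\mu$ are not dense in $\ell_j \cup \mu$. By Lemma \ref{filling}, $\phi(x)$ is a disjoint union of minimal sublaminations. If a minimal component of $\phi(x)$ contained any $\ell_j$, then being closed it would contain $\overline{\ell_j} = \mu \cup \ell_j$, but leaves of $\mu$ in such a component would not be dense in it, contradicting minimality. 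Hence no $\ell_j$ lies in $\phi(x)$, so $\phi(x) \subseteq \mu$. Since $x \neq 0$ in $\ML$, $\phi(x)$ is a nonempty closed sublamination of the minimal lamination $\mu$, forcing $\phi(x) = \mu$.

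There is no serious obstacle here; the only point requiring a small amount of care is ruling out that the ``extra'' leaves of the diagonal extension could appear in $\phi(x)$, which is handled by combining the minimality of components of $\phi(x)$ (Lemma \ref{filling}) with the fact that the added leaves spiral onto $\mu$ rather than being closed and recurrent.
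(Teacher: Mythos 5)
Your proof is correct, and it takes a slightly different route than the paper's. The paper passes to a subsequence along which $\phi(x_1),\phi(x_2),\dots$ converges in the Hausdorff topology to some $\mL$, observes via super convergence that $\phi(x)\subset\mL$, and then argues by contradiction: if $\phi(x)\neq\mu$ then $\phi(x)$ (hence $\mL$) is transverse to $\mu$ and so to every diagonal extension $\mu'$, forcing $\dpts(\phi(x_i),\mu')$ to stay bounded away from $0$ and contradicting the hypothesis. You instead argue directly: the hypothesis says $\phi(x_i)\to\mu'$ in the Hausdorff topology, super convergence puts every sublimit of $\{\phi(x_i)\}$ inside that Hausdorff limit, so $\phi(x)\subseteq\mu'$ outright; then you use the structure of the diagonal extension (the added isolated leaves $\ell_j$ spiral onto $\mu$ and so cannot lie in a minimal component of $\phi(x)$) together with Lemma \ref{filling} and the minimality of $\mu$ to conclude $\phi(x)=\mu$. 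The two arguments are close in spirit --- both hinge on super convergence and on $\mu$ being minimal and filling --- but yours avoids the subsequence extraction and the transversality dichotomy, replacing the paper's implicit ``$\phi(x)\neq\mu$ forces $\phi(x)$ transverse to $\mu$'' step with an explicit classification of which sublaminations of $\mu'$ can be supports of measured laminations. That makes the structure of why the extra diagonal leaves cannot appear in $\phi(x)$ fully visible, which is a point the paper's proof handles more tersely.
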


\begin{proof}  After passing to subsequence and super convergence $\phi(x_1), \phi(x_2), \cdots $ converges to $\mL\in \LS$ with respect to the Hausdorff topology where $\phi(x)\subset \mL$.  If $\phi(x)\neq\mu$, then $\dpts(\mL,\mu')>0$ and hence $\dpts(\phi(x_i), \mu')$ is uniformly bounded away from $0$ for every diagonal extension $\mu'$ of $\mu$. \end{proof}

\begin{lemma} \label{train track closeness}  Let  $\tau$ be a train track and $\mu\in \EL$.  Either $\tau$ carries $\mu$ or $\textinf \{\dpts(\phi(t), \mu)|t\in P(\tau)\}>0$.\end{lemma}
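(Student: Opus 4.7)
The plan is to prove the contrapositive: if the infimum is zero, then $\tau$ carries $\mu$. The key ingredients are the compactness of the polyhedron $P(\tau)$ and Lemma \ref{pml epsilon closeness} applied with the trivial diagonal extension $\mu' = \mu$.

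First I would choose a sequence $t_1, t_2, \ldots \in P(\tau)$ realizing the infimum, so that $\dpts(\phi(t_i), \mu) \to 0$. Since $P(\tau) = p(V(\tau) \setminus 0)$ is the projectivization of a finite-dimensional rational polyhedral cone, it is a compact subset of $\PML$, so after passing to a subsequence we may assume $t_i \to t_\infty$ for some $t_\infty \in P(\tau)$. At this point I would invoke Lemma \ref{pml epsilon closeness}, taking $\mu' = \mu$ itself as the diagonal extension: the hypotheses are met, so the conclusion gives $t_\infty \in \phiinv(\mu)$, that is, $\phi(t_\infty) = \mu$.

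It remains to upgrade this to the statement that $\tau$ carries the geodesic lamination $\mu$. By the very definition of $V(\tau)$, every measured lamination represented by a point of $P(\tau)$ is carried by $\tau$; in particular the measured lamination projecting to $t_\infty$ is carried by $\tau$, and a fully supported measured lamination is carried by $\tau$ if and only if its underlying geodesic lamination is. Therefore $\phi(t_\infty) = \mu$ is carried by $\tau$, completing the proof.

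The only step that requires care is verifying that Lemma \ref{pml epsilon closeness} indeed applies; this is immediate once one observes that its hypothesis only demands convergence to \emph{some} diagonal extension of $\mu$, and $\mu$ is trivially a diagonal extension of itself. The compactness of $P(\tau)$ is standard from the PL structure recalled in \S1, so no real obstacle arises — the lemma is essentially a two-line consequence of the previously established machinery.
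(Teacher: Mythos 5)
Your proof is correct and follows exactly the same route as the paper's: extract a minimizing sequence in $P(\tau)$, pass to a convergent subsequence by compactness, and apply Lemma~\ref{pml epsilon closeness} with the trivial diagonal extension $\mu'=\mu$ to conclude that the limit lies in $\phiinv(\mu)\cap P(\tau)$, whence $\tau$ carries $\mu$. The paper states this in one sentence; you have merely spelled out the compactness argument and the (correct) observation that $\mu$ is a diagonal extension of itself.
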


\begin{proof}  If $\textinf \{\dpts(\phi(t), \mu)|t\in P(\tau)\}=0$, then by compactness of $P(\tau)$ and the previous lemma, there exists $x\in P(\tau)$ such that $\phi(x)=\mu$ and hence $\tau$ carries $\mu$.\end{proof}

The following is well known, e.g. it can be deduced from Proposition 1.9 \cite{G1}.

\begin{lemma}  \label{preimage simplex}  If $z\in \EL$, then $\phiinv(z)=\sigma_z$ is a compact convex cell, i.e. if $\tau$ is any train track that carries $z$,  then $\sigma_z=p(V)$, where $V\subset V(\tau)$ is the open cone on a compact convex cell in $V(\tau)$.\qed\end{lemma}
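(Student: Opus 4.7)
The plan is to identify $\sigma_z$ with the projectivization of the cone $C_z$ of transverse measures on $z$, and to read off the cell structure from the fact that $C_z$ embeds as a closed convex sub-cone of $V(\tau)$ for any train track $\tau$ carrying $z$.

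First I would note that $\sigma_z = \phi^{-1}(z)$ consists of the projective classes of non-zero transverse measures on $z$, so that $\sigma_z = p(C_z \setminus \{0\})$, where $C_z \subset \ML$ denotes the set of transverse measures on $z$ together with the zero measure. Convexity and the cone structure of $C_z$ are immediate. The key observation is that $C_z$ is closed in $\ML$: if $x_n \in C_z$ converges to $x$, then $x$ is supported on $z$ (since each $x_n$ is), and the support of $x$, being a closed sub-lamination of the minimal lamination $z$, is either empty (so $x=0$) or all of $z$; in either case $x \in C_z$.

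Next I would fix any train track $\tau$ carrying $z$. The carrying map induces a linear injection $C_z \hookrightarrow V(\tau)$ sending each transverse measure $\mu$ to its weight system (the total mass of $\mu$ along each branch of $\tau$). In particular $C_z$ is finite-dimensional, and as a closed convex cone in a finite-dimensional vector space it is polyhedral with finitely many extreme rays. Let $V \subset V(\tau)$ be the image of $C_z \setminus \{0\}$; picking any affine hyperplane $H$ in the ambient vector space of $V(\tau)$ with $0 \notin H$ and $H$ transverse to $V$, the cross-section $V \cap H$ is a compact convex cell and $V$ is the open cone (with apex removed) on it.

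Finally, since $p$ identifies each ray through the origin with a single point of $\PML$, $p$ restricts to a homeomorphism from $V \cap H$ onto $p(V) = \sigma_z$, whence $\sigma_z$ is a compact convex cell, as required. The main subtle step is the closedness of $C_z$, which rests squarely on the minimality of $z$ to prevent a non-trivial shrinking of the support in the limit; the filling hypothesis enters only in guaranteeing that $z$ is not a single closed curve, so that minimality has genuine content.
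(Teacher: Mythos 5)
Your overall plan is the right one and matches the standard route: identify $\sigma_z=\phiinv(z)$ with the projectivization of the cone $C_z$ of transverse measures on $z$, check that $C_z$ is a closed convex cone, embed it linearly into $V(\tau)$ via the carrying map, and read the cell structure off a cross-section. The closedness argument is correct, and you correctly locate the use of minimality there: the support of a weak-$*$ limit can only shrink, and a proper nonempty closed sublamination of a minimal $z$ would contradict minimality.

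However, there is a genuine gap at the step ``as a closed convex cone in a finite-dimensional vector space it is polyhedral with finitely many extreme rays.'' That is false as a general statement. The second-order (Lorentz) cone $\{(x,y,t)\in\BR^3 : t\ge\sqrt{x^2+y^2}\}$ is a closed convex cone in a finite-dimensional space with a continuum of extreme rays; closedness, convexity, and finite-dimensionality alone do not yield polyhedrality, and without polyhedrality the cross-section $V\cap H$ need not be a cell. Something specific to the situation is required. One route is the classical finiteness of ergodic transverse measures on a minimal geodesic lamination on a finite-type surface (the count is bounded in terms of $3g-3+p$); combined with uniqueness of the ergodic decomposition, this shows $C_z$ is in fact a \emph{simplicial} cone, which is the sharp form of the statement. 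A second route is the train-track structure theory that the paper is implicitly citing via Proposition 1.9 of \cite{G1}: one shows the image of $C_z$ in $V(\tau)$ is the intersection of the polyhedral cone $V(\tau)$ with a linear subspace, hence itself polyhedral. Either of these would close the gap, but one of them is needed; as written, the polyhedrality is simply asserted with an incorrect justification.

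A minor point: your last sentence says the filling hypothesis ``enters only in guaranteeing that $z$ is not a single closed curve.'' In fact the lemma holds, by the same argument, when $z$ is a simple closed geodesic (then $C_z$ is a ray and $\sigma_z$ a point); a simple closed geodesic is itself a minimal lamination, so minimality has content there too. The hypothesis $z\in\EL$ reflects only the context in which the lemma is used, not a logical necessity.
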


\begin{theorem}  \label{fpml connectivity} If $S$ is a finite type hyperbolic surface that is not the 3 or 4-holed sphere or 1-holed torus, then $\FPML$ is connected.\end{theorem}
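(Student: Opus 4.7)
The plan is to deduce connectedness of $\FPML$ from three ingredients already in hand: the connectedness of $\EL$ (established in \cite{G1} under the stated hypotheses on $S$), the closedness of the forgetful map $\phi:\FPML\to \EL$ (Corollary \ref{closed map}), and the fact that each fiber $\phi^{-1}(z)=\sigma_z$ is a convex cell, hence connected (Lemma \ref{preimage simplex}). Together these produce a purely point-set argument: a closed, surjective map with connected fibers onto a connected base has connected total space.

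In detail, I would argue by contradiction. Suppose $\FPML = A\sqcup B$ is a separation into two nonempty subsets, each open (and therefore also closed) in $\FPML$. Since every fiber $\phi^{-1}(z)$ is connected, it is contained entirely in $A$ or entirely in $B$; consequently $A = \phi^{-1}(\phi(A))$, $B = \phi^{-1}(\phi(B))$, and $\EL = \phi(A)\sqcup \phi(B)$ as a disjoint union of nonempty sets. Because $A$ and $B$ are closed in $\FPML$ and $\phi:\FPML\to \EL$ is a closed map by Corollary \ref{closed map}, both $\phi(A)$ and $\phi(B)$ are closed in $\EL$. This exhibits a separation of $\EL$, contradicting the connectedness of $\EL$ established in \cite{G1} (under the hypothesis that $S$ is not the 3- or 4-holed sphere or once-holed torus).

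There is essentially no obstacle beyond invoking the three results cited above. The only subtle point is making sure we are permitted to invoke the connectedness of $\EL$ from \cite{G1}, which is exactly why the surfaces excluded in the statement must be excluded: for the thrice-punctured sphere $\EL = \emptyset$, and for the 4-holed sphere and 1-holed torus $\EL \cong \BR\setminus \BQ$ is totally disconnected, so the conclusion genuinely fails there. I would conclude the proof by explicitly remarking that on the excluded surfaces $\FPML$ coincides with $\PML$ minus a countable dense set of rational points on the circle and is therefore totally disconnected, confirming that the hypothesis is sharp.
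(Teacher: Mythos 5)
Your argument is correct and coincides with the paper's own proof: both use connectedness of the fibers $\phi^{-1}(z)$ from Lemma \ref{preimage simplex}, the closedness of $\phi$ from Corollary \ref{closed map}, and the connectedness of $\EL$ from \cite{G1} to derive a contradiction from a hypothetical separation of $\FPML$. The remark about the excluded surfaces is a nice addition but is not part of the paper's proof.
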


\begin{proof}  If $\FPML$ is disconnected, then it is the disjoint union of non empty closed sets $A$ and $B$.  By the previous lemma, if $\mL\in \EL$, then $\phiinv(\mL)$ is connected.  It follows that $\phi(A)\cap\phi(B)=\emptyset$ and hence by Lemma \ref{closed map}, $\EL$ is the disjoint union of the non empty closed sets $\phi(A)$, $\phi(B)$ and hence $\EL$ is disconnected.  This contradicts \cite {G1}.\end{proof}

\begin{remark}  In the exceptional cases, $\mF\mP\mM\mL(S_{0,3})=\emptyset$, $\mF\mP\mM\mL(S_{0,4})=\mF\mP\mM\mL(S_{1,1})=\BR\setminus\BQ.$\end{remark}

\begin{definition} \label{curve complex}  The curve complex $ \CS$ introduced by Harvey \cite{Ha} is the simplicial complex with vertices the set of simple closed geodesics and $(v_0, \cdots, v_p)$ span a simplex if the $v_i$'s are pairwise disjoint.  

There is a natural injective continuous map $\hat I:\CS\to \ML$.  If $v$ is a vertex, then $I(v)$ is the measured lamination supported on $v$ with transverse measure $1/\length(v)$.  Extend $\hat I$ linearly on simplices.  Define $I:\CS\to \PML$ by $I=p\circ \hat I$.  \end{definition}

\begin{remark} \label{sub is coarser} The map $I$ is not a homeomorphism onto its image.  If $\CSsub$ denotes the topology on $\CS$ obtained by pulling back the subspace topology on $I(\CS)$, then $\CSsub$ is coarser than $\CS$.  Indeed, if $C$ is a vertex, then there exists a sequence $C_0, C_1,\cdots$ of vertices that converges in $\CSsub$ to $C$, but does not have any limit points in $\CS$.  See \S 19 for some conjectures relating the topology of $\CS$ and $\CSsub$.  \end{remark}

\begin{correction}  In \S 7 of \cite{G1} the author states without proof Corollary 7.4 which asserts that if $S$ is not one of the three exceptional surfaces, then $\EL$ has no cut points.  It is   not a corollary of the statement of Theorem 7.1 because as  the figure-8 shows, cyclic does not imply no cut points.  It is not difficult to prove Corollary 7.4 by extending the proof of Theorem 7.1  to show that given $x\neq y\in \EL$, there exists a simple closed curve in $\EL$ passing through both $x$ and $y$.   Alternatively, no cut points immediately follows from the locally finite 1-discs property, Proposition  \ref{m-discs} and Remark \ref{s12}. \end{correction}

\begin{notation}  If $X$ is a space, then $|X|$ denotes the number of components of $X$.  If $X$ and $Y$ are sets, then $X\setminus Y$ is $X$ with the points of $Y$ deleted, but if $X,Y\in \LS$, then $X\setminus Y$ denotes the union of leaves of $X$ that are not in $Y$.
Thus by Lemma \ref{filling} if $X=\phi(x)$, $Y=\phi(y)$ with $x,y\in \PML$, then $X\setminus Y$ is the union of those minimal sublaminations of $X$ which are not sublaminations of $Y$.  \end{notation}

%XXXX

\section{Extending maps of spheres into $\EL$ to maps of balls into $\PMLEL$}

\begin{definition} \label{ml cup el} Let $\PMLEL$ (resp. $\MLEL$) denote the disjoint union $\PML\cup\EL$ (resp. $\ML\cup\EL$).  Define a topology on $\PMLEL$ (resp. $\MLEL$) as 
follows.  A basis consists of all  sets of the form $U\cup V$ where $\phiinv(V)\subset U$ (resp. 
$\hatphiinv(V)\subset U$) where $U$ is open in $\PML$ (resp. $\ML$) and $V$ (possibly $
\emptyset$) is open in $ \EL$.  We will call this the \emph{$PMLEL$ topology} (resp. \emph{$MLEL$ 
topology}).  \end{definition} 

\begin{lemma} \label{pmlel topology} The PMLEL (resp. MLEL) topology has the following properties.

\smallskip i) $\PMLEL$ (resp. $\ML\cup\EL$) is non Hausdorff.  In fact $x$ and $y$ cannot be separated if and only if $x\in \EL$ and $y\in \phiinv(x)$ (resp. $y\in \hatphiinv(x))$, or vice versa.  \smallskip

ii)  $\PML$ (resp. $\ML$) is an open subspace.\smallskip

iii) $\EL$ is a closed subspace.\smallskip

iv) If $U\subset \PML$  is a neighborhood of $\phiinv(x)$ where $x\in \EL$, then there exists an open set $V\subset \EL$, such that $x\in V$ and $\phiinv(V)\subset U$, i.e. $U\cup V$ is open in $\PMLEL$.\smallskip

v)  A sequence $x_1, x_2, \cdots$ in $\PML$ converges to $x\in \EL$ if and only if every limit 
point of the sequence lies in $\phiinv(x)$.  A sequence $x_1, x_2, \cdots$ in $\ML$ bounded 
away from both $0$ and $\infty$ converges to $x\in \EL$ if and only if every limit point of the 
sequence lies in $\hatphiinv(x)$.\end{lemma}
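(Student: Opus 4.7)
The plan is to prove (iv) first, since it is the technical heart of the lemma; the other four parts then follow formally from (iv) together with compactness of $\PML$ and the closedness of $\phi:\FPML\to\EL$ from Corollary \ref{closed map}. Parts (ii) and (iii) are immediate: taking $V=\emptyset$ in the definition shows that every open $U\subset\PML$ is already a basic open of $\PMLEL$, so $\PML$ is open and $\EL=\PMLEL\setminus\PML$ is closed.

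For (iv), given $x\in\EL$ and $U\subset\PML$ open with $\phiinv(x)\subset U$, set $C=\PML\setminus U$. Since $\PML$ is compact so is $C$, hence $C\cap\FPML$ is closed in $\FPML$, and by Corollary \ref{closed map} the image $\phi(C\cap\FPML)$ is closed in $\EL$. Because $\phiinv(x)\subset U$ we have $x\notin\phi(C\cap\FPML)$, so $V:=\EL\setminus\phi(C\cap\FPML)$ is an open neighborhood of $x$ in $\EL$ satisfying $\phiinv(V)\subset U$, as required.

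Part (i) is a case analysis built on (iv). If $x\in\EL$ and $y\in\phiinv(x)$, then every basic neighborhood $U\cup V$ of $x$ contains $y$ (since $y\in\phiinv(V)\subset U$), while every basic neighborhood of $y$ is an open subset of $\PML$ containing $y$, so the two necessarily intersect; no separation is possible. For every other pair of distinct points I produce disjoint basic neighborhoods. Two points in $\PML$ are already separated inside the Hausdorff space $\PML$ itself. For $x_1\neq x_2$ in $\EL$, the fibers $\phiinv(x_i)$ are disjoint compact subsets of the Hausdorff space $\PML$ (Corollary \ref{preimage compact}), so they admit disjoint open neighborhoods $U_i\subset\PML$; applying (iv) to each $U_i$ yields $V_i\ni x_i$ open in $\EL$ with $\phiinv(V_i)\subset U_i$, and the $U_i\cup V_i$ are disjoint because if $z\in V_1\cap V_2$ then $\emptyset\neq\phiinv(z)\subset U_1\cap U_2=\emptyset$. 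The mixed case $x\in\EL$, $y\in\PML\setminus\phiinv(x)$ is similar: separate the point $y$ from the compact $\phiinv(x)$ inside $\PML$ and apply (iv) on the side of $\phiinv(x)$.

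For (v), one direction uses (iv): if $x_n\to x$ in $\PMLEL$ and $y\in\PML$ is a limit point of $\{x_n\}$ with $y\notin\phiinv(x)$, separate $\{y\}$ from $\phiinv(x)$ by disjoint opens in $\PML$ and apply (iv) to the one containing $\phiinv(x)$; this produces a $\PMLEL$-neighborhood of $x$ missed by a subsequence, contradicting convergence. The converse uses compactness: if the $x_n$ fail to eventually lie in a basic neighborhood $U\cup V$ of $x$, then infinitely many lie in the compact set $\PML\setminus U$ and accumulate at a point outside $U$, contradicting the hypothesis that every limit point lies in $\phiinv(x)\subset\phiinv(V)\subset U$. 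The $\ML$ statement follows identically once one notes that a sequence in $\ML$ bounded away from $0$ and $\infty$ lies in a compact subset of $\ML$, so the same compactness argument applies. The main obstacle is (iv) itself; once it is in hand the rest is soft topology, with only the case analysis in (i) requiring bookkeeping.
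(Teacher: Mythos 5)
Your proof is correct and stays within the paper's own circle of ideas. The paper derives (iv) by citing Lemma~\ref{convergent sequence} directly, while you derive it from Corollary~\ref{closed map} (which the paper itself obtains from Lemma~\ref{convergent sequence} via Corollary~\ref{preimage compact}); these are two entry points into the same super-convergence mechanism, and your route via closedness of $\phi$ gives a clean direct argument in place of a sequence-based contradiction. Your ordering—establish (iv) first, then deduce (i) and (v) from it together with the compactness of $\PML$ (resp.\ the compactness of a bounded shell in $\ML$) and normality of $\PML$—is the natural way to unpack the paper's one-line proof, and the case analysis in (i) is handled correctly, including the observation that $V_1\cap V_2=\emptyset$ follows because point preimages under $\phi$ are nonempty.
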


\begin{proof}  Parts i)-iii) and v) are immediate.  Part iv) follows from Lemma \ref{convergent sequence}.   \end{proof}

  \begin{definition}  Let $V$ be the underlying space of a finite simplicial complex.  A \emph{generic PL map} $f:V\to \PML$ is a PL map transverse to each $B_{a_1}\cap\cdots B_{a_r}\cap \delta B_{b_1}\cap\cdots\cap \delta B_{b_s}$, where $a_1, \cdots, a_r, b_1, \cdots, b_s$ are simple closed geodesics.  (Notation as in \cite{G1}.) More generally $f:V\to \PMLEL$ is called  a \emph{generic PL map} if $\finv(\EL)=W$ is a subcomplex of $V$ and $f|(V\setminus W)$ is a generic PL map.\end{definition}
  
  \begin{lemma} \label{missing} Let $L=I(L_0)$ where $L_0$ is a finite $q$-subcomplex of $\CS$.    If $f:V\to \PMLEL$ is a generic PL-map where $\dim(V)=p$ and   $p+q\le 2n$, then $f(V)\cap L=\emptyset$.  If either $p+q\le 2n-1$ or $p \le n$, then for every simple closed geodesic $C$, $f(V)\cap Z=\emptyset$, where $Z=\phiinv(C)*(\partial B_C\cap L)$.  \end{lemma}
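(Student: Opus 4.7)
The plan is to split $V$ into $W=\finv(\EL)$ (a subcomplex, by the definition of generic PL map) and its complement $V\setminus W$, and handle the pieces separately. Since $L,Z\subset\PML$ and Lemma \ref{pmlel topology} gives that $\PML$ and $\EL$ are disjoint subspaces of $\PMLEL$, the image $f(W)\subset\EL$ is automatically disjoint from both $L$ and $Z$. Thus only $f|(V\setminus W)\colon V\setminus W\to\PML$ requires analysis, and by hypothesis it is a generic PL map into $\PML$.

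For the first claim, $L=I(L_0)$ decomposes as a union of projective simplices $I(\sigma)$, $\sigma$ ranging over simplices of $L_0$. If $\sigma$ has vertices $v_0,\dots,v_k$ (pairwise disjoint simple closed geodesics), then $I(\sigma)\subset B_{v_0}\cap\cdots\cap B_{v_k}$, i.e.\ a standard intersection of the type to which a generic PL map is by definition transverse. Transversality to each such intersection, combined with the dimension bound $\dim V+\dim L=p+q\le 2n<2n+1=\dim\PML$, yields $f(V\setminus W)\cap L=\emptyset$.

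For the second claim, $Z=\phiinv(C)*(\partial B_C\cap L)$ is the PL join of the singleton $\phiinv(C)=\{[C]\}$ (a point by Lemma \ref{preimage simplex}) with $\partial B_C\cap L$. Generically $\dim(\partial B_C\cap L)\le q-1$, giving $\dim Z\le q$; in the degenerate case $L\subset\partial B_C$ we could have $\dim Z=q+1$. Under $p+q\le 2n-1$ even this worst-case bound forces $\dim V+\dim Z\le 2n$, and the transversality argument of the first claim again rules out the intersection. Under the alternative hypothesis $p\le n$, I would decompose $Z$ cell by cell as $\{[C]\}*\cirtau$ for $\tau$ a simplex of $\partial B_C\cap L$, show that each such cell is a face of a standard intersection $B_C\cap B_{a_1}\cap\cdots\cap B_{a_r}\cap\delta B_{b_1}\cap\cdots\cap\delta B_{b_s}$, and combine transversality of $f$ to each such intersection with the trivial observation that $f(V)$ misses the $0$-cell $[C]$ whenever $p<2n+1$.

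The main obstacle will be Case B of the second claim: one must verify that every PL cell of the join $Z$ is indeed a face of a standard transversality locus of the form $B_{a_1}\cap\cdots\cap\delta B_{b_1}\cap\cdots$, whose codimension in $\PML$ strictly exceeds $\dim V = p\le n$, so that generic PL transversality produces an empty intersection. Handling uniformly the ``generic'' and ``degenerate'' relative positions of $L$ with respect to $\partial B_C$ is the subtle point, and may require using the structure of $L$ as a subcomplex of $I(\CS)$ rather than just a dimension count.
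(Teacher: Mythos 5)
Your treatment of the first claim, and of the second claim under $p+q\le 2n-1$, agrees with the paper: both reduce to transversality of the generic PL map to standard intersections of $B_a$'s and $\delta B_b$'s together with a dimension count. The paper also notes, as you do, that $f(W)\subset\EL$ is automatically disjoint from $L$ and $Z$ since these live in $\PML$, so only $f|(V\setminus W)$ matters.

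However, for the case $p\le n$ you explicitly stop short, and this is a genuine gap, not merely a detail. Your dimension bound $\dim Z\le q+1$ is not strong enough: taking $q=\dim\CS=n$ gives $\dim Z\le n+1$, so $p+\dim Z\le 2n+1$, which does not beat $\dim\PML=2n+1$ and yields nothing by transversality. The missing input is exactly the structural fact about $\CS$ that you gesture at in your last sentence but do not use: any simplex of $L\cap\partial B_C$ is a simplex $I(\sigma)$ (or an open face thereof) where the vertices of $\sigma$ are simple closed curves pairwise disjoint and disjoint from $C$ but not equal to $C$. Such a $\sigma$ extends to the simplex $\sigma*C$ of $\CS$, so $\sigma$ has at most $3g-3+p-1$ vertices, i.e.\ $\dim(L\cap\partial B_C)\le n-1$ regardless of $q$. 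Hence $\dim Z=\dim\bigl(\{\lambda_C\}*(L\cap\partial B_C)\bigr)\le\min(q+1,n)$, and both hypotheses $p+q\le 2n-1$ and $p\le n$ give $p+\dim Z\le 2n<\dim\PML$, after which genericity finishes it in one stroke. The paper's proof is precisely this: $\dim(L\cap\partial B_C)\le\min(q,n-1)$, so every simplex of the cone has dimension $\le\min(q+1,n)$, and the result follows by genericity. A minor side note: $\phiinv(C)=\{\lambda_C\}$ is indeed a point, but not by Lemma \ref{preimage simplex}, which applies to $z\in\EL$; it is simply because a simple closed geodesic supports a unique projective transverse measure.
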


\begin{proof}  By genericity and the dimension hypothesis, the first conclusion is immediate.  For the second, $L\cap \partial B_C$ is at most $\min(q, n-1) $ dimensional, hence any simplex in the cone $(L\cap \partial B_C)*C$ is at most $\min(q+1, n)$ dimensional.  Thus the result again follows by genericity.\end{proof}
  
  \begin{remark}   In this paper, $V$ will be either a $B^k$ or $S^k\times I$ and $X=\partial B^k$ or $S^k\times 1$.\end{remark}

\begin{notation}  Fix once and for all a map  $\psi:\EL\to \PML$ such that $\phi \circ \psi = 
id_{\EL}$. If $i:\PML\to \ML$ is the map sending a projective measured geodesic lamination to the 
corresponding measured geodesic lamination of length 1, then define $\hat\psi:\EL\to \ML$ by $\hat\psi=i
\circ\psi$.  Let $p:\ML\setminus 0 \to \PML$ denote the standard projection map and let $\hat 
\phi=\phi\circ p$.    \end{notation}

While $\psi$ is very discontinuous, it is continuous enough to carry out the following which is the main result of this section.

\begin{proposition} \label{extension}  Let $g:S^{k-1}\to \EL$ be continuous where $k\le \dim(\PML)=2n+1$.  Then there exists a generic PL map $F:B^{k}\to \PMLEL$ (resp. $\hat F:B^k\to \MLEL$) 
such that $F|S^{k-1}=g$ (resp. $\hat F|S^{k-1}=g$) and $F(\inte(B^{k}))\subset \PML$ (resp. $\hat F(\inte(B^k))\subset\ML)$.\end{proposition}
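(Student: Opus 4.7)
The plan is to construct $F$ first on a collar $S^{k-1} \times [0,1]$ of $\partial B^k = S^{k-1} \times \{1\}$, then extend over the remaining core disk, and finally perturb to genericity. On the collar I build $F$ layer by layer from a sequence of triangulations $T_1, T_2, \ldots$ of $S^{k-1}$ with mesh tending to zero, with $T_i$ placed on the slice $S^{k-1} \times \{1 - 1/i\}$. At each vertex $v$ of each $T_i$ I set $F(v, 1-1/i) := \psi(g(v)) \in \PML$. Between consecutive slices, I triangulate the slab $S^{k-1} \times [1 - 1/i, 1 - 1/(i+1)]$ by a common refinement of $T_i$ and $T_{i+1}$ and interpolate over each simplex inside a suitable polyhedron $P(\tau)$.

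The key local input is that for each $\mu \in \EL$, $\phiinv(\mu)$ is a compact convex cell inside $V(\tau)$ for any train track $\tau$ carrying $\mu$ (Lemma \ref{preimage simplex}), and by standard train-track theory one may choose $\tau$ so that an entire $\PML$-neighborhood of $\phiinv(\mu)$ lies inside $P(\tau)$. Combined with Lemma \ref{convergent sequence} and continuity of $g$ (Lemma \ref{continuity1}), this shows that once the mesh of $T_i$ is small enough, all vertices of any simplex $\sigma$ of $T_i$ have $\psi$-values lying in a single $P(\tau)$; so I define $F$ on $\sigma \times \{1 - 1/i\}$ as the straight-line interpolation in $V(\tau)$ projected down by $p$, and the same recipe works on each simplex of the interpolating slab.

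I then extend $F$ to the boundary by $F(x, 1) := g(x)$. The only nontrivial continuity check is at points $(x, 1)$ in the $\PMLEL$ topology. Given a basic open neighborhood $U \cup V$ of $g(x)$ with $\phiinv(V) \subset U$, I may, by Lemma \ref{pmlel topology}(iv), shrink $V$ to a smaller $\EL$-neighborhood $W \ni g(x)$ and shrink $U$ to lie inside a single piece $P(\tau)$ around $\phiinv(g(x))$. Continuity of $g$ gives a neighborhood $N$ of $x$ in $S^{k-1}$ with $g(N) \subset W$; for all sufficiently large $i$ every simplex of $T_j$ ($j \ge i$) meeting a smaller neighborhood of $x$ lies in $N$, so all vertex values lie in $\phiinv(W) \subset U$, and convexity of $U$ inside $P(\tau)$ ensures the interpolated values also stay in $U$.

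Now $F|_{S^{k-1} \times \{0\}}$ is a PL map into $\PML \cong S^{2n+1}$, and since $k-1 \le 2n$, it extends by general position to a PL map of the remaining core disk into $\PML$. A standard general-position perturbation supported in $\inte(B^k)$ makes $F$ transverse to the countable family of PL subcomplexes $B_{a_1}\cap\cdots\cap\delta B_{b_s}$, rendering $F$ generic PL. The $\MLEL$ version is obtained by using $\hat\psi$ in place of $\psi$. The hardest step is the boundary-continuity check: the train tracks governing interpolation must be chosen coherently enough that their PL cells $P(\tau)$ supply convex $\PML$-neighborhoods of $\phiinv(g(x))$ inside the prescribed $U$, which is precisely where Lemmas \ref{pmlel topology}(iv) and \ref{preimage simplex} earn their keep.
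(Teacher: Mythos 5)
There is a genuine gap at exactly the step the paper explicitly flags as the crux (the remark following the idea-of-proof: ``The key technical issue is making precise the phrase `extend $f_i$ linearly on $\sigma$'''). You assert that for each $\mu\in\EL$ one can choose $\tau$ so that a $\PML$-neighborhood of $\phiinv(\mu)$ lies inside $P(\tau)$, and then interpolate linearly in $V(\tau)$. If the $\tau$'s are ad hoc complete tracks (one per cover element), this containment is true, but the cells $P(\tau_{\mu})$ for different $\mu$ do not assemble into a cellulation of $\PML$: on the overlap of two such cells the transition map is merely PL, so straight-line interpolation in $V(\tau_1)$ and in $V(\tau_2)$ need not agree on a shared face of adjacent simplices, nor across consecutive slabs where the mesh and hence the cover element changes. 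Your proposal never reconciles these charts, and neither of the lemmas you invoke at ``the hardest step'' (Lemma~\ref{pmlel topology}(iv), Lemma~\ref{preimage simplex}) supplies this coherence --- they control where $\psi$-values land, not how linear structures match across charts.

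If instead you insist that $\tau$ be drawn from the fixed cellulation $\Delta$ --- so that adjacent cells meet in common faces and the paper's well-definedness argument for induced maps applies --- then the containment claim fails: $\phiinv(\mu)$ lies in $\inte(\sigma_\mu)$ for a cell $\sigma_\mu$ of $\Delta$ which can be far from top-dimensional (cf.\ Lemma~\ref{dimension n+1}), so $\inte(\sigma_\mu)$ is not open in $\PML$, and nearby $\psi$-values are only confined to the open star of $\sigma_\mu$, spread across several top cells with no single cell of $\Delta$ containing them all. This is precisely what the discontinuous retractions $\pi_\delta$ fix in the paper's proof: postcomposing $\psi\circ g$ with $\pi_\delta$ collapses the star onto the single lower-dimensional cell $\sigma_\mu$, where the vertex values can be interpolated consistently within one chart, and the bookkeeping in Lemma~\ref{pml approximation} (together with the monotone choice of $\delta_j$) keeps consecutive slices and slabs compatible. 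To repair your argument you would need either to incorporate such a retraction step, or to prove a chart-consistency statement for the ad hoc complete tracks --- the latter is false in general, and the former is the content of the paper's proof.
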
 

\noindent\emph{Idea of Proof for $\PML$}:  It suffices to first find  a continuous extension and then 
perturb  to a generic PL map.  To obtain a continuous extension $F$, 
consider a sequence $K_1, K_2, \cdots$ of finer and finer triangulations of $\Sk$.  For each $i$, 
consider the map $f_i:\Sk\to \PML$ defined as follows.  If $\kappa$ is a simplex of $K_i$ with 
vertices $v_0, \cdots, v_m$, then define $f_i(v_j)=\psi(g(v_j))$ and extend $f_i$ linearly on $
\kappa$.  Extend $f_1$ to a map of $ B^k$ into $\PML$ and extend $f_i$ and $f_{i+1}$ to a map 
of $F_i:\Sk\times [i, i+1]$ into $\PML$.  Concatenating these maps and taking a limit yields the 
desired continuous map  $F:B^k\cup\Sk\times [1,\infty]\to \PMLEL$  where $\partial B^k$ is 
identified with $\Sk\times 1, G|\Sk\times [i, i+1]=F_i$ and $H|S^n\times \infty = g$.  \vskip8pt

\begin{remark}  The key technical issue is making precise the phrase ``extend $f_i$ linearly on $\sigma$".  \end{remark}

Before we prove the Proposition we establish some notation and then prove a series of lemmas.

\begin{notation}  Let $\Delta$ be the cellulation on $\PML$ whose cells are the various $P(\tau_i)$'s, where the $ \tau_i$'s are the standard train tracks to some fixed parametrized pants decomposition of $S$.   If $\sigma=P(\tau_i)$ is a cell of $\Delta$ and $Y\subset \sigma$, then define the \emph{convex hull} of $Y$ to be $p(C(\pinv(Y)))$, where $C(Z)$ is the convex hull of $Z$ in $V(\tau_i)$.  If $x\in \EL$, then let $\tau_x$ denote the unique standard train track that fully carries $x$ and $\sigma_x$ the cell $P(\tau_x)$.   

If $\mL\in \mL(S)$, then $\mL^\prime$ will denote a \emph{diagonal extension of $\mL$},   i.e. a lamination obtained by adding finitely many non compact leaves.\end{notation}

\begin{remark}   It follows from Lemma \ref{preimage simplex} that $\phiinv(x)\subset \inte(\sigma_x)$ and is closed and convex.   \end{remark}

\begin{lemma}  \label{psi continuity} Let $x_1, x_2, \cdots \to x$ be a convergent 
sequence in $\EL$.  If $y\in \PML$ is a limit point of $\{\psi(x_i)\}$, then $\phi(y)=x$.   \end{lemma}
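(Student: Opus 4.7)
The plan is that this lemma is essentially an immediate application of Lemma \ref{convergent sequence}. Recall that $\psi:\EL\to\PML$ was fixed as a section of $\phi$, so by construction $\phi(\psi(x_i))=x_i$ for every $i$.

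First, I would pass to a subsequence. Since $y$ is a limit point of $\{\psi(x_i)\}$, there is a subsequence $\psi(x_{i_j})\to y$ in $\PML$. The corresponding subsequence $x_{i_j}$ still converges to $x$ in $\EL$ (since subsequences of convergent sequences converge to the same limit).

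Next, I would apply Lemma \ref{convergent sequence} with the roles $z_j := x_{i_j}$, $z_\infty := x$, and the $\PML$-sequence $\psi(x_{i_j})$. The hypothesis $\phi(\psi(x_{i_j}))=z_j$ holds by the defining property of the section $\psi$, and by the choice of subsequence the $\PML$-sequence actually converges to $y$. Lemma \ref{convergent sequence} then forces $y\in\phiinv(x)$, that is, $\phi(y)=x$, which is exactly the conclusion.

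There is no real obstacle here; the content is entirely packaged in Lemma \ref{convergent sequence}, which in turn rests on Thurston's super convergence (Proposition \ref{super convergence}) together with the fact that $x$ is minimal so no filling measured lamination can have $x$ as a proper sublamination. The point of stating this as a separate lemma is presumably that $\psi$ itself is wildly discontinuous, yet its values still cluster inside the fiber $\phiinv(x)$ over the limit, and this is the form in which the fact will be used repeatedly in the construction of the extension map $F:B^k\to\PMLEL$ in Proposition \ref{extension}.
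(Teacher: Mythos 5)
Your proof is correct and matches the paper's approach exactly: the paper also deduces the lemma directly from Lemma \ref{convergent sequence}, and your subsequence reduction plus the observation that $\phi\circ\psi=\mathrm{id}_{\EL}$ is precisely the unpacking of that one-line citation.
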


\begin{proof}  This follows from Lemma \ref{convergent sequence}.  \end{proof}

\begin{definition} Fix  $\epsilon_1< 1/1000(\min(\{d(\sigma, \sigma')|\sigma, 
\sigma'$ disjoint cells of $\Delta\}$.  For each  cell $ \sigma$ of $\Delta$ define a retraction  $r_
\sigma:N(\sigma,\epsilon_1)\to \sigma$.  For every $\delta<\epsilon_1$ define a discontinuous map $\pi_\delta:\PML\to \PML$ as follows.       Informally, $\pi_\delta$ retracts a closed neighborhood 
of $\delta^0$ to $\delta^0$, then after deleting this neighborhood retracts a closed neighborhood of $
\Delta^1$ to $ \Delta^1$, then after deleting this neighborhood retracts a closed neighborhood of $
\Delta^2$ to $\Delta^2$ and so on.  As $\delta \to 0$, the neighborhoods are required to get smaller.  
More formally,    let $\delta(0)=\delta$.  If $\sigma$ is a 0-cell of $\Delta$, then define $\pi_\delta|N(\sigma,
\delta(0))=r_v|(N(\sigma,\delta(0))$.  Now choose  $\delta(1)\le \delta$ such that if $\sigma, \sigma'$ 
are distinct 1-cells, then $N(\sigma, \delta(1))\cap N(\sigma', \delta(1)) \subset N(\Delta^0, \delta)$.  If $
\sigma$ is a 1-cell, then define $\pi_\delta|N(\sigma, \delta(1))\setminus N(\Delta^{0}, \delta(0))=r_
\sigma|N(\sigma, \delta(1))\setminus N(\Delta^{0}, \delta(0))$.  Having defined $\pi_\delta$ on 
$N(\Delta^0, \delta(0))\cup\cdots\cup N(\Delta^k, \delta(k))$, extend $\pi_\delta$ in a similar way over $N(\Delta^{k+1}, 
\delta(k+1))$ using a sufficiently small $\delta(k+1)$.  We require 
that if $\delta<\delta'$, then for all $i, \delta(i)<\delta'(i)$.  \end{definition}

\begin{remark}  \label{pi delta face}  Let $x\in \sigma$ a cell of $\Delta$.  If $\delta_1<\delta_2
\le \epsilon_1$ and $\sigma_1, \sigma_2$ are the lowest dimensional cells of $\Delta$ respectively containing $\pi_{\delta_1}(x), \pi_{\delta_2}(x)$, then $\sigma_2$ is  a face of $\sigma_1$ which is a face of $\sigma$.  \end{remark} 

\begin{lemma}\label{local convexity}  Let  $f_i:B^k\to \EL$ be a sequence of maps such that $f_i(B^k)\to x\in \EL$ in the Hausdorff topology on closed sets in $\EL$. \smallskip 

i)  Then for i sufficiently large $\psi(f_i(B^k))\subset st(\sigma_x)$, the open star of $\sigma_x$ in $\Delta$. \smallskip

ii)  Given $\epsilon>0$ and  $\delta<\epsilon_1$, then for $i$ sufficiently large, if $t\in C_i$, where $C_i$ denotes the convex hull of $r_{\sigma_x}( \psi(f_i(B^k)))$ in $\sigma_x$,  then $x\in d_{PT(S)}(\phi(t), \epsilon)$. \end{lemma}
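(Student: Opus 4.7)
The plan is to prove both parts by contradiction arguments driven by Lemma~\ref{convergent sequence} (which controls the accumulation behavior of $\psi$), combined with convexity of $\phiinv(x)\subset\inte(\sigma_x)$ and super convergence (Proposition~\ref{super convergence}).

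For (i), suppose otherwise and extract a subsequence with points $y_{i_k}\in f_{i_k}(B^k)$ satisfying $\psi(y_{i_k})\notin\st(\sigma_x)$. Since $f_{i_k}(B^k)\to x$ in the Hausdorff topology on $\EL$, $y_{i_k}\to x$. Passing to a further subsequence using compactness of $\PML$ and applying Lemma~\ref{convergent sequence}, any limit of $\psi(y_{i_k})$ lies in $\phiinv(x)\subset\inte(\sigma_x)\subset\st(\sigma_x)$. Since $\st(\sigma_x)$ is open this contradicts $\psi(y_{i_k})\notin\st(\sigma_x)$.

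For (ii), the first step is to upgrade (i) to the uniform statement: for every open neighborhood $W$ of $\phiinv(x)$ in $\PML$ there exists $i_0$ with $\psi(f_i(B^k))\subset W$ for all $i\ge i_0$; the argument has identical structure. Next, for the given $\epsilon>0$ I would produce $\eta>0$ such that whenever $t$ lies in the $\eta$-neighborhood of $\phiinv(x)$ in $\PML$, $x\subset N_{PT(S)}(\phi(t),\epsilon)$. Indeed, were this to fail one could extract $t_n\to t_\infty\in\phiinv(x)$ together with $p_n\in x$ satisfying $\dpts(p_n,\phi(t_n))\ge\epsilon$; passing to $p_n\to p_\infty\in x=\phi(t_\infty)$, super convergence supplies points of $\phi(t_n)$ converging to $p_\infty$, contradicting the lower bound.

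It remains to show $C_i\subset N(\phiinv(x),\eta)$ in $\sigma_x$ for large $i$. By continuity of $r_{\sigma_x}$ together with $r_{\sigma_x}|\sigma_x=\id$, choose $\eta'>0$ with $r_{\sigma_x}(N(\phiinv(x),\eta'))\subset N(\phiinv(x),\eta/2)\cap\sigma_x$. The uniform form of (i) yields $\psi(f_i(B^k))\subset N(\phiinv(x),\eta')$ for large $i$, hence $r_{\sigma_x}(\psi(f_i(B^k)))\subset N(\phiinv(x),\eta/2)\cap\sigma_x$. Now Lemma~\ref{preimage simplex} asserts $\phiinv(x)$ is convex in $\sigma_x$; working in a fixed affine slice of $V(\tau_x)$, the $\eta/2$-neighborhood of a convex set is itself convex, so the convex hull $C_i$ stays inside it, giving $C_i\subset N(\phiinv(x),\eta)$ and hence $x\subset N_{PT(S)}(\phi(t),\epsilon)$ for every $t\in C_i$. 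The main obstacle is this final convex-hull step: one must confirm that convexity of $\phiinv(x)$ interacts correctly with the projective convex-hull operation $p\circ C\circ p^{-1}$ and with metric $\eta$-neighborhoods in $\PML$. Working in a fixed affine chart on $V(\tau_x)$ renders this routine, but Lemma~\ref{preimage simplex} is indispensable: without convexity of $\phiinv(x)$ the retracted image could lie in a narrow neighborhood whose convex hull balloons out well beyond $\eta$.
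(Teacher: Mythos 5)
Your proof is correct and takes essentially the same approach as the paper: part (i) via Lemma~\ref{convergent sequence} (the paper uses Lemma~\ref{psi continuity}, which is the same statement applied to $\psi$), and part (ii) by combining the uniform version of (i) with super convergence and the convexity of $\phiinv(x)$ from Lemma~\ref{preimage simplex}. If anything, your treatment of the convex-hull step --- passing to an affine slice of $V(\tau_x)$ so that a metric neighborhood of a convex set is itself convex --- is slightly more explicit than the paper's terse ``Since $\phiinv(x)$ is convex it follows that $C_i\subset U$.''
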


\begin{proof}  If i) is false, then after passing to subsequence, for each $i$ there exists $a_i \in B^k$ such that $\psi(f_i(a_i))\notin \st(\sigma_x)$.  This contradicts Lemma \ref{psi continuity} which implies that any limit point of $\psi(f_i(a_i))$ lies in $\phiinv(x)\subset \sigma_x$.  

This argument actually shows that if $U$ is any neighborhood of $\phiinv(x)$, then $\psi(f_i(B^k))\subset U$ for $i$ sufficiently large  and hence $r_{\sigma_x}(\psi(f_i(B^k)))\subset U\cap \sigma_x$ for $ i $sufficiently large.  Since $\phiinv(x)$ is convex it follows that $C_i\subset U$ for $i$\ \  sufficiently large.  By super convergence  there exists a neighborhood $V$ of $\phiinv(x)$ such that if $y\in V$, then $x\in d_{PT(S)}(\phi(y), \epsilon)$.  Now choose $U$ so that the $r_{\sigma_x}(U)\subset V$.\end{proof}

%\begin{lemma} \label{uniform delta}  For every \epsilon>0 there exists \delta>0 such that if \lambda\in \PML then d_{PT(S)}(\phi(\pi_\delta(\lambda),\phi(\lambda))<\epsilon.\end{lemma}

%\begin{proof}  Each \lambda\in \PML has a neighborhood U and a \delta>0  such that if \lambda^\prime\in U then super convergence of supports implies that d_{PT(S)}(\phi(\pi_\delta(\lambda),\phi(\lambda))<\epsilon.   The lemma now follows by compactness of \PML.\end{proof}

\begin{lemma}  \label{pi delta} If $\epsilon > 0$ and $x_1, x_2, \cdots \to x \in \EL$, then there exists $\delta>0$ such that for $i$ sufficiently large $x\subset N_{PT(S)}(\phi(\pi_\delta(\psi(x_i))), \epsilon)$.\end{lemma}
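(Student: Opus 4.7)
The plan is to choose $\delta$ small enough that, for $i$ sufficiently large, the discontinuous map $\pi_\delta$ actually agrees with the continuous retraction $r_{\sigma_x}$ at the point $\psi(x_i)$, placing $\pi_\delta(\psi(x_i))$ in an arbitrarily small $\PML$-neighborhood of $\phiinv(x)\subset\inte(\sigma_x)$. Once this is achieved, super convergence (Proposition \ref{super convergence}) will force $\phi(\pi_\delta(\psi(x_i)))$ to Hausdorff-swallow $x$ uniformly in $PT(S)$.

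Set $D=\dim(\sigma_x)$. Since $\phiinv(x)$ is compact and contained in $\inte(\sigma_x)$ by Lemma \ref{preimage simplex}, the distance $d_0:=d_{\PML}(\phiinv(x),\Delta^{D-1})$ is strictly positive. I will pick $\delta\in(0,\epsilon_1)$ small enough that $\delta(k)<d_0/2$ for every $0\le k\le D-1$. This is possible because $\delta(0)=\delta$ and the monotonicity clause in the definition of the $\delta(k)$ forces each $\delta(k)$ to tend to $0$ as $\delta$ does.

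By Lemma \ref{psi continuity}, every $\PML$-limit point of $\{\psi(x_i)\}$ lies in $\phiinv(x)$, so compactness of $\PML$ yields $\psi(x_i)\in N_{\PML}(\phiinv(x),d_0/4)$ for all $i$ sufficiently large. For such $i$, $d_{\PML}(\psi(x_i),\Delta^{D-1})>3d_0/4>\delta(k)$ for every $k\le D-1$, so $\psi(x_i)\notin N(\Delta^k,\delta(k))$ at any level below $D$; on the other hand $d_{\PML}(\psi(x_i),\sigma_x)<d_0/4$, so after possibly increasing $i$ we also have $\psi(x_i)\in N(\sigma_x,\delta(D))$. By the inductive construction of $\pi_\delta$, this forces $\pi_\delta(\psi(x_i))=r_{\sigma_x}(\psi(x_i))$. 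Since $r_{\sigma_x}$ is continuous and restricts to the identity on $\phiinv(x)\subset\sigma_x$, every $\PML$-limit point of $\{r_{\sigma_x}(\psi(x_i))\}$ again lies in $\phiinv(x)$.

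To extract the conclusion, I will argue by contradiction. If the statement fails, there is a subsequence $i_k$ with $x\not\subset N_{PT(S)}(\phi(\pi_\delta(\psi(x_{i_k}))),\epsilon)$. Passing to a further subsequence using compactness of $\PML$, I may assume $\pi_\delta(\psi(x_{i_k}))\to t$, where necessarily $t\in\phiinv(x)$ and thus $\phi(t)=x$. Proposition \ref{super convergence} then says $\phi(\pi_\delta(\psi(x_{i_k})))$ super converges to $x$ as closed subsets of the compact metric space $PT(S)$; a standard compactness argument upgrades sublimit super convergence of compacta to uniform one-sided closeness, yielding $x\subset N_{PT(S)}(\phi(\pi_\delta(\psi(x_{i_k}))),\epsilon)$ for all large $k$, a contradiction. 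The main obstacle is the second paragraph's verification that $\pi_\delta$ does not prematurely jump $\psi(x_i)$ onto a proper face of $\sigma_x$ far from $\phiinv(x)$; this is exactly what the positivity of $d_0$, i.e.\ the strict containment $\phiinv(x)\subset\inte(\sigma_x)$, makes possible.
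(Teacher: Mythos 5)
Your proof is correct and follows essentially the same route as the paper's (three-line) argument: use Lemma \ref{psi continuity} to place $\psi(x_i)$ in a small neighborhood of $\phiinv(x)$, use the structure of $\pi_\delta$ to ensure its image stays near $\phiinv(x)$, and then invoke super convergence to get the uniform $\epsilon$-containment. Your second paragraph, identifying $\pi_\delta$ with $r_{\sigma_x}$ near $\phiinv(x)$ via the positive distance $d_0$ from $\phiinv(x)\subset\inte(\sigma_x)$ to the lower skeleton, is simply a more explicit justification of the paper's unproved assertion that $\delta$ can be chosen with $\pi_\delta(N(\phiinv(x),\delta))\subset U$.
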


\begin{proof}  Let $U $ be a neighborhood of $\phiinv(x)$ such that $\lambda\in U$ implies $x\subset N_{PT(S)}(\phi(\lambda),\epsilon)$.  Next choose $\delta>0$ so that $\pi_\delta(N(\phiinv(x),\delta))\subset U$.  Now apply Lemma \ref{psi continuity} to show that for $i$ sufficiently large the conclusion holds.\end{proof}

\begin{lemma}  \label{pi convexity} Let $\epsilon >0, g_i:B^k\to \EL$ and $g_i(B^k)\to x$.  There 
exists $N\in \BN, \delta>0$,  such that if $\delta_1, \delta_2\le\delta,  i\ge N$ and $\sigma$ a 
simplex of $\Delta$, then  if $C_i $ is the convex hull of $\sigma\cap( \pi_{\delta_1}(\psi( 
g_i(B^k)))\cup  \pi_{\delta_2}(\psi (g_i(B^k)))\cup (\psi(g_i(B^k))))$ and $ t\in C_i$, then $x\subset 
N_{PT(S)}(\phi(t),\epsilon)$.\end{lemma}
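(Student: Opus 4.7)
The plan is local: I will show that for $i$ large and $\delta$ small, everything in sight lies in a single small convex open neighborhood $U$ of $\phiinv(x)$ inside $\inte(\sigma_x)$, and then conclude by the convexity of $U$. Using super convergence (Proposition \ref{super convergence}), choose an open neighborhood $V$ of $\phiinv(x)$ in $\PML$ such that $\lambda\in V$ implies $x\subset N_{PT(S)}(\phi(\lambda),\epsilon)$. Since $\phiinv(x)\subset\inte(\sigma_x)$ is a compact convex cell (Lemma \ref{preimage simplex}), I can shrink to a convex open $U\subset V\cap\inte(\sigma_x)$ containing $\phiinv(x)$ and lying at positive distance $d>0$ from $\partial\sigma_x$. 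The strengthened form of Lemma \ref{local convexity}(i) recorded in its proof provides $N$ with $\psi(g_i(B^k))\subset U$ for all $i\ge N$.

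Next, for $\delta\le d$, the inductive definition of $\pi_{\delta'}$ forces $\pi_{\delta'}$ to be the identity on $U$ for every $\delta'\le\delta$: points of $U$ lie at distance $\ge d$ from every proper face of $\sigma_x$, so they are missed by every retraction $r_\sigma$ with $\dim(\sigma)<\dim(\sigma_x)$, and at stage $\dim(\sigma_x)$ they are mapped by $r_{\sigma_x}$, which fixes $\sigma_x$ pointwise. Consequently $A_i:=\pi_{\delta_1}(\psi(g_i(B^k)))\cup\pi_{\delta_2}(\psi(g_i(B^k)))\cup\psi(g_i(B^k))\subset U$ whenever $\delta_1,\delta_2\le\delta$ and $i\ge N$. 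Since $U\subset\inte(\sigma_x)$ and the interiors of distinct cells of $\Delta$ are disjoint, for any simplex $\sigma\neq\sigma_x$ the intersection $\sigma\cap A_i$ is empty, so $C_i=\emptyset$ and the statement holds vacuously. For $\sigma=\sigma_x$, the convexity of $U$ in $\sigma_x$ forces $C_i\subset U\subset V$, and every $t\in C_i$ then satisfies $x\subset N_{PT(S)}(\phi(t),\epsilon)$ by the defining property of $V$.

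I expect the main obstacle to be the careful verification that $\pi_{\delta'}$ is the identity on $U$. One must trace the inductive definition of $\pi_{\delta'}$, confirming that a point of $U$ at distance $\ge d$ from $\partial\sigma_x$ is not in any neighborhood $N(\sigma,\delta'(\dim\sigma))$ for $\sigma$ of dimension $<\dim(\sigma_x)$ (faces of $\sigma_x$ are ruled out by the distance bound; other lower-dimensional cells are bounded away from $\overline{\sigma_x}$ in the cellulation $\Delta$), and that $U$, once processed at stage $\dim(\sigma_x)$ by the identity, is untouched by later stages of the construction. With this bookkeeping in hand, the rest of the argument is formal.
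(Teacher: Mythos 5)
Your proposal follows the paper's opening move (choose $V$ by super convergence so that $y\in V$ implies $x\subset N_{PT(S)}(\phi(y),\epsilon)$), but it then diverges in a way that introduces a genuine gap. You construct "a convex open $U\subset V\cap\inte(\sigma_x)$ containing $\phiinv(x)$" and claim $\psi(g_i(B^k))\subset U$ for large $i$. The set $\inte(\sigma_x)$ is the relative interior of the cell $\sigma_x=P(\tau_x)$, and $\tau_x$ need not be a complete train track, so $\dim(\sigma_x)$ can be strictly less than $\dim(\PML)=2n+1$. In that case $\inte(\sigma_x)$ contains no neighborhood of $\phiinv(x)$ in $\PML$, so no such $U$ exists, and the strengthened form of Lemma \ref{local convexity}(i) only gives $\psi(g_i(B^k))\subset W$ for $W$ a genuine $\PML$-neighborhood of $\phiinv(x)$ — equivalently, $\psi(g_i(B^k))$ lies in the open star $\st(\sigma_x)$, spread across the interiors of the various cells having $\sigma_x$ as a face, not in $\sigma_x$ itself. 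Consequently your claims that $\pi_{\delta'}$ acts as the identity on the relevant sets and that $C_i=\emptyset$ for $\sigma\neq\sigma_x$ both fail: the whole purpose of $\pi_{\delta'}$ in this construction is that it genuinely moves points of such a neighborhood down onto faces (cf.\ Remark \ref{pi delta face}), and the convex hulls $C_i$ taken in the higher-dimensional cells $\sigma\supset\sigma_x$ are typically nonempty and are exactly the sets the lemma must control.

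The paper's proof handles this by working with a full-dimensional neighborhood $V_1\subset V$ of $\phiinv(x)$ chosen so that for \emph{every} cell $\sigma$ of $\Delta$ containing $\sigma_x$ as a face, the convex hull of $V_1\cap\sigma$ lies in $V$, and so that $\bar V_1$ misses every cell disjoint from $\sigma_x$; taking $\delta$ smaller than half the distance from $V_1$ to those disjoint cells guarantees that $\pi_{\delta_1}$ and $\pi_{\delta_2}$ keep the images inside $V_1$ (they can only move points to faces of cells meeting $V_1$, hence to cells containing $\sigma_x$ as a face), after which the convexity condition on $V_1\cap\sigma$ yields $C_i\subset V$ for every $\sigma$ simultaneously. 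Your argument does work in the special case where $\sigma_x$ is top-dimensional, but the lemma must cover arbitrary $x\in\EL$, so the cell-by-cell convex-hull control in the paper's choice of $V_1$ is the missing ingredient.
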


\begin{proof}  Let $V$ be a neighborhood of $\phiinv(x)$ such that if $y\in V$, then $x\in N_{PT(S)}(\phi(y), \epsilon)$.  Let $V_1\subset V$ be a neighborhood of $\phiinv(x)$ such that for each cell $\sigma\subset \Delta$ containing $\sigma_x$ as a face,  the convex hull of $V_1\cap \sigma\subset V$.  Also assume that $\bar V_1\cap \kappa=\emptyset$ if $\kappa$ is a cell of $\Delta$ disjoint from $\sigma_x$.  Choose $\delta<\epsilon_1$ so that $d(\kappa, V_1)>2\delta$ for all cells $\kappa$ disjoint from $\sigma_x$. Choose $N$ so that $i\ge N$ and $\delta'\le \delta$, then $\pi_{\delta^\prime}(g_i(B^k))\subset V_1$ and $g_i(B^k)\subset V_1$.  Therefore, if $\delta_1, \delta_2<\delta$ and $i\ge N$ then $C_i$ lies in $V$.\end{proof}

We now address how to approximate continuous maps into $\EL$ by PL maps into $\PML$.

\begin{definition}  Let $\sigma$ be a cell of $\Delta$.  Let $\kappa$ be a 
$p$-simplex and $H^0:\kappa^0\to\sigma$, where $\kappa^0$ are the 
vertices of $\kappa$.  Define the \emph{induced map}  $H:\kappa\to \sigma$, 
such that $H|\kappa^0=H^0$ as follows.  Let $\hat H$ be the linear map of $
\kappa$ into $\hat \sigma = \pinv(\sigma)$ such that $\hat H|\kappa^0=i\circ 
H^0$.  Then define $H = p\circ \hat H$.  In a similar manner, if $K$ is a simplicial 
complex and $h|K^0\to \PML$ is such that for each simplex $\kappa, 
h(\kappa^0)\subset \sigma$, for some cell $\sigma$ of $\Delta$, then $h$ 
extends to a map $H:K\to \PML$  also called the \emph{induced map}.  Since 
the linear structure on a face of a cell of $\ML$ is the restriction  of the linear 
structure of the cell, $H$ is well defined.  \end{definition}

\begin{lemma} \label{pml approximation} Let $g:K_1\to \EL$ be continuous 
where $K_1$ is a finite simplicial complex.  Let $K_1, K_2, \cdots$ be  such that 
mesh$(K_i)\to 0$ and each $K_{i+1}$ is a subdivision of $K_i$.  For every $
\delta<\epsilon_1$ there exists an $i(\delta)\in \BN$, monotonically increasing as 
$\delta\to 0$, such that if $\delta'\le \delta,   i= i(\delta)$ and $\kappa$ is a 
simplex of $K_i$,  then $\pi_{\delta^\prime}(\psi(g(\kappa)))\cup \pi_
\delta(\psi(g(\kappa)))$ is contained in a cell of $\Delta$.  

Given $\epsilon>0$ there exist $\delta(\epsilon)>0$ and $N(\epsilon)\in \BN$ 
such that if $i= N(\epsilon)$,   
$\kappa$ is a simplex of $K_i, \sigma$ a simplex of $\Delta, \delta_1, \delta_2 
\le \delta(\epsilon)$ and 
$C$ is the convex hull of $\sigma\cap(\pi_{\delta_1}(\psi(g(\kappa)))\cup 
\pi_{\delta_2}
(\psi(g(\kappa)))\cup \psi(g(\kappa)))$, then given $z_1\in \kappa, 
z_2\in C_i $ we have $d_{PT(S)}(g(z_1), \phi(z_2))<\epsilon$.

Fix $\epsilon>0$.  If $\delta$ is sufficiently small, $i>i(\delta)$ and  $H_i:K_i\to \PML$ is the induced  map arising 
from $\pi_\delta\circ\psi\circ g|K_i^0$, then for each $z\in K_1, d_{PT(S)}(\phi(H_i(z)),g(z))<
\epsilon$. \end{lemma}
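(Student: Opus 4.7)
The proof will address the three claims in sequence, using continuity of $g$ together with Lemmas \ref{psi continuity}, \ref{local convexity}, and \ref{pi convexity} to convert pointwise limiting data at vertices into uniform control along simplices.

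For the first claim, fix $v\in K_1$, let $x=g(v)\in \EL$, and recall $\phiinv(x)\subset \inte(\sigma_x)$. For any sequence $\{\kappa_i\}$ of simplices with $\kappa_i\subset K_i$ and $\kappa_i\to v$, continuity of $g$ forces $g(\kappa_i)\to\{x\}$ in the coarse Hausdorff sense, so (the proof of) Lemma \ref{local convexity}(i) yields $\psi(g(\kappa_i))\subset U$ eventually, for any preassigned neighborhood $U$ of $\phiinv(x)$. I would choose $U$ (depending on $\delta$) so that, for every $\delta'\le\delta$, $U$ avoids the $\delta'(j)$-neighborhoods of all cells of dimension $j<\dim\sigma_x$ while lying inside the $\delta'(\dim\sigma_x)$-neighborhood of $\sigma_x$; the peeling construction of $\pi_{\delta'}$ then retracts all of $\psi(g(\kappa_i))$ directly onto $\sigma_x$. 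A standard contradiction-and-compactness argument over $K_1$ upgrades this to a single $i(\delta)$ valid for every simplex of $K_{i(\delta)}$, and monotonicity of $i(\delta)$ in $\delta$ follows from the monotonicity built into the sequence $\delta\mapsto\delta(k)$.

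For the second claim, fix $\epsilon>0$ and let $\eta>0$ be a modulus of continuity for $g$ supplied by Lemma \ref{continuity3}: diameter-$\eta$ subsets of $K_1$ have $g$-images within $\epsilon/2$ in $\dpts$ allowing diagonal extensions. For $i$ large, every simplex $\kappa$ of $K_i$ has diameter $<\eta$ and is contained in an $\eta$-ball about any fixed $v\in\kappa$. I would apply Lemma \ref{pi convexity} to the family $g|_{\kappa}$ (viewed as collapsing to $g(v)$) to obtain $\delta(\epsilon),N(\epsilon)$ for which every $t\in C$ (the convex hull in the statement) satisfies $g(v)\subset N_{PT(S)}(\phi(t),\epsilon/2)$; combined with the modulus-of-continuity estimate comparing $g(v)$ and $g(z_1)$, this gives the desired bound $\dpts(g(z_1),\phi(z_2))<\epsilon$. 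Uniformity across simplices follows by a second contradiction-and-compactness argument.

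The third claim follows from the first two together with the linear definition of the induced map. For $z\in\kappa\subset K_i$, the formula $H_i(z)=p(\hat H_i(z))$ with $\hat H_i$ linear on $\kappa$ places $H_i(z)$ in the convex hull, inside the common cell $\sigma$ of the first claim, of the vertex values $\pi_\delta\circ\psi(g(\kappa^0))$; since $\kappa^0\subset\kappa$, this hull is contained in the set $C$ of the second claim, so applying that claim with $z_1=z$ and $z_2=H_i(z)$ yields $\dpts(\phi(H_i(z)),g(z))<\epsilon$. I expect the main obstacle to be the first claim: forcing the entire set $\psi(g(\kappa))$ (rather than a single point, as in Remark \ref{pi delta face}) to be simultaneously retracted by both $\pi_\delta$ and $\pi_{\delta'}$ into the \emph{same} cell of $\Delta$ requires exploiting both the localization of $\psi(g(\kappa))$ near $\inte(\sigma_x)$ and the freedom to tune $\delta\mapsto\delta(k)$ so that the retraction bypasses every proper face of $\sigma_x$ and terminates cleanly on $\sigma_x$.
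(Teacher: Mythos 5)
Your first claim contains a genuine gap. You propose to choose a neighborhood $U$ of $\phiinv(x)$ so that, for every $\delta'\le\delta$, the retraction $\pi_{\delta'}$ carries all of $U$ onto $\sigma_x$. But the tubular neighborhoods in the definition of $\pi_{\delta'}$ shrink with $\delta'$: the requirement that $U$ lie inside the $\delta'(\dim\sigma_x)$-neighborhood of $\sigma_x$ for \emph{every} $\delta'\le\delta$ forces $U\subset\sigma_x$, which is impossible for an open subset of $\PML$ whenever $\sigma_x$ is not a top-dimensional cell of $\Delta$. What actually happens, per Remark \ref{pi delta face}, is that as $\delta'$ decreases the image $\pi_{\delta'}(y)$ migrates into \emph{higher}-dimensional cells: points of $U$ near but not in $\sigma_x$ that were retracted onto $\sigma_x$ by $\pi_\delta$ escape the shrinking neighborhoods and end up fixed in a cell $\sigma_y$ that has $\sigma_x$ only as a proper face. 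So one cannot force both $\pi_\delta(\psi(g(\kappa)))$ and $\pi_{\delta'}(\psi(g(\kappa)))$ onto $\sigma_x$; the correct resolution is to let them land in different cells $\sigma_{x_j}\subset\sigma_y$ and observe that the face relation guarantees the union is contained in the larger cell $\sigma_y$. That is exactly the role of Remark \ref{pi delta face} in the paper's argument, and it is the opposite of your plan of ``tuning $\delta\mapsto\delta(k)$ so the retraction bypasses every proper face of $\sigma_x$ and terminates cleanly on $\sigma_x$.''

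Your treatment of the second claim matches the paper's in spirit (both lean on Lemma \ref{pi convexity} and a compactness argument), and your direct derivation of the third claim from the first two — observing that $H_i(z)$ lies in the convex hull of the vertex values, hence in the set $C$ of the second claim, and then applying the second claim with $z_1=z$, $z_2=H_i(z)$ — is a cleaner route than the paper's contradiction argument, provided you also record that the second claim transfers from simplices of $K_{N(\epsilon)}$ to the smaller simplices of $K_i$ for $i\ge N(\epsilon)$ (and that $\delta$ must be small enough that $i(\delta)\ge N(\epsilon)$). That transfer works because the convex hull $C$ associated to a subsimplex is contained in the one associated to its parent. But since the third claim presupposes the induced map $H_i$ is well-defined, which is exactly the content of the first claim, fixing the gap above is essential before the rest of the argument can stand.
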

%XXXX
%\begin{remark}  Note that $H_i$ depends on $\delta$.\end{remark}

\begin{proof}  Fix $0<\delta<\epsilon_1$.  For each $x\in \EL$, there exists a neighborhood $V_x
$ of $\phiinv(x)$ such that $\pi_\delta(V_x)\subset \sigma_x$.  By Lemma \ref{pmlel topology} iv) 
there exists a neighborhood $U_x$  of $x$ such that $\psi(U_x)\subset V_x$.  By compactness 
there exist $U_{x_1}, \cdots, U_{x_n}$ that cover $g(K)$.  There exists $i(\delta)>0$ such that if $i\ge i(\delta)$ and $\kappa $ 
is a simplex of $K_i$, then $g(\kappa)\subset U_{x_j}$ for some $j$ and so $\pi_\delta(\psi(g(\kappa)))
\subset \sigma_{x_j}$.   Since each $\phiinv(x)$ lies in the interior of a cell, it follows that $\sigma_{x_j}
$ is the lowest dimensional cell of $\Delta$ that contains any point of $\pi_
\delta(\psi(g(\kappa))$.   Now let $\delta'<\delta$.   As above, $\pi_{\delta^\prime}
(\psi(g(\kappa)))\subset \sigma_{y} $ for some $y\in g(K_1)$ where $\sigma_y$ 
is a minimal dimensional cell.  It follows from Remark \ref{pi delta face} that $ 
\sigma_{x_j}$ is a face of $\sigma_y$.

The proof of the second  conclusion follows from that of the first and the proof of Lemma \ref{pi convexity}.

If the third conclusion of the lemma is false, then after passing to subsequence there exist $
(\kappa_1, \omega_{1_j}, \delta_1, t_1), (\kappa_2, \omega_{2_j}, \delta_2, t_2), \cdots$ such that 
for all $i, \kappa_{i+1}$ is a codimension-0 subsimplex of $ \kappa_i$ which is a simplex of $K_i, 
\omega_{i_j}\subset \kappa_i$ is a simplex of $K_{i_j}$ some $i_j\ge i,  \delta_i\to 0$ and for 
some $t_i\in \omega_{i_j}, d_{PT(S)}(\phi(H_{i_j}(t_i)), g(t_i))>\epsilon$, where $H_{i_j}$  is the 
induced map corresponding to $\delta_i$ and $K_{i_j}$.  Also $1_j<2_j<\cdots$.  If $t=\cap_{i=1}^
\infty\kappa_i$ and  $B$ is homeomorphic to $\kappa_1$, then there exists maps $g_i:B\to \EL$ 
such that $g_i(B)=g(\kappa_i)$ and lim$_{i\to \infty} g_i(B)=g(t)$.  Let $\sigma_i$ be a cell of $
\Delta$ that contains $H_{i_j}(t_i)$.  Since $H_{i_j}(t_i)$ lies in the convex hull of $\pi_{\delta_i}
(\psi(g_i(B)))\cap \sigma_i$ it follows by Lemma \ref{pi convexity} that for $i$ sufficiently large 
$g(t)\subset N_{PT(S)}(\phi(H_{i_j}(t_i),\epsilon/2)$.  Convergence in the coarse Hausdorff topology 
implies that for $i$ sufficiently large if $z\in\kappa_i$ then $g(t)\subset N_{PT(S)}(g(z), \epsilon/2)$.  
Taking $z=t_i$ we conclude that  $d_{PT(S)}(\phi(H_{i_j}(t_i), g(t_i))<\epsilon$, a contradiction.  
\end{proof}

\noindent\emph{Proof of Proposition \ref{extension}}:  Let $K_1, K_2, \cdots$ be subdivisions of $\Sk $
such that mesh$(K_i)\to 0$ and each $K_{i+1}$ is a subdivision of $K_i$.   Pick $\delta_j<\delta(1/j)$ and $n_j>\max \{i(\delta_j), N(1/j)\}$ such that for $i\ge n_j $ and $\delta\le \delta_j$ the induced map $H_k:K_i\to \PML$ arising from $\pi_\delta\circ\psi\circ g|K_i^0$ satisfies the third conclusion of Lemma \ref{pml approximation}.  Assume that $n_1<n_2<\cdots$.  Replace the original $\{K_i\}$ sequence by the subsequence $\{K_{n_i}\}$.  With this new sequence, let $f_j:K_j\to \PML$ be the induced map arising from  $\pi_{\delta_j}\circ\psi\circ g|K_i^0$.

Apply Lemma \ref{pml 
approximation}  to find, after passing to a subsequence of the 
$K_i$'s,  a sequence of  maps $f_j:K_j\to \PML$ so that each $f_j$ satisfies the conclusions of 
those results for $\epsilon=1/j$.  Let $\delta_j $ denote the $\delta$ used to define $f_j$.  Note that with this new sequence of $K_i$'s, each $K_j$ satisfies the first two conclusions of Lemma \ref{pml approximation} using $\delta_j$.

Define a triangulation  $\mT$  on $ \Sk\times [0,\infty)$ by first letting  $\mT|\Sk\times j=K_j$ and 
then extending in a standard way to each $\Sk\times [j,j+1]$ so that if $\zeta$ is a simplex of $
\mT|\Sk\times [j,j+1]$, then $\zeta^0\subset (\kappa^0\times j)\cup (\kappa_1^0\times (j+1)) $ 
where $\kappa$ is a simplex of $K_j$ and $\kappa_1\subset \kappa$ is a simplex of $K_{j+1}$.  
By Lemma \ref{pml approximation}, $\pi_{\delta_j}(\psi(g(\kappa^0)))\cup \pi_{\delta_{j+1}}
(\psi(g(\kappa_1^0)))$ lie in the same cell of $\Delta$ so the induced maps on $ \mT|\Sk\times\{j, j
+1\}$ extend to one called $f_{j, j+1}$ on $\mT|\Sk\times [j, j+1]$. 

Since $k\le\dim\PML$, $f_1$ extends to a  map $f_1'$ of $B^k$ into $\PML$.  Define $F:B^k
\cup \Sk\times [1, \infty]$ to $\PMLEL$ so that $F|B^{k}=f_1', F|\Sk\times [i,i+1]=f_{i, i+1}$ and 
$F|\Sk\times \infty=g$.  It remains to show that $F$ is continuous at each $(z,\infty) \in \Sk\times 
\infty$.  Let $(z_1,t_1), (z_2, t_2), \cdots \to (z, \infty)$.  By passing to subsequence we can 
assume that $\phi(F(z_i,t_i))\to \mL\in \mL(S)$ where convergence is in the Hausdorff topology.  
If $\mL$ is not a diagonal extension of $g(z)$, then $\mL$ is transverse to $g(z)$ and hence 
$d_{PT(S)}(F(z_i,t_i),g(z))>\epsilon$ for $i$ sufficiently large and some $\epsilon>0$.  By passing to subsequence we can assume that $z_i\in 
\kappa_i$, where $\kappa_i$ is a simplex in $K_{i}$, where $\kappa_{1}\supset 
\kappa_{2}\supset \cdots$ is a nested sequence of simplicies and $\cap \kappa_{i}=z$.  Let $n_i$ 
denote the greatest integer in $t_i$.  Apply 
the second conclusion of Lemma \ref{pml approximation} to $\kappa_i, \delta_{n_i}, \delta_{n_i+1}$ to conclude that lim$_{i\to 
\infty} d_{PT(S)}(z, \phi(F(z_i, t_i)))=0$, a contradiction.      \qed

\vskip 10pt

\begin{remark}\label{convexity}  If there exists a train track $\tau$ such that each $z\in g(S^{k-1})$ is carried by $\tau$ and $V\subset P(\tau)$ is the convex hull of $\phiinv(g(S^{k-1}))$, then there exists a continuous extension $F:B^k\to \PMLEL$ such that $F(\inte(B^k))\subset V$.  Indeed, since $V$ is convex we can dispense with the use of the $\pi_\delta$'s and directly  construct  the maps $f_i, f_{i,i+1}, f_1'$ to have values within $V$.  \end{remark}

The following local version is needed to prove local $(k-1)$-connectivity of $\EL$, when $k\le n$ and $\dim(\PML)=2n+1$.  

\begin{proposition} \label{local extension}  If $z\in\EL$, then for every neighborhood $U$ of $
\phiinv(z)$ there exists a neighborhood $V$ of $\phiinv(z)$ such that  if $g:\Sk \to \EL$ is continuous and $\psiinv(g(\Sk))\subset V$, then there exists a generic PL map $F:B^k
\to \PMLEL$ such that $F|(\inte(B^k))\subset U$ and $F|\Sk=g$.  \end{proposition}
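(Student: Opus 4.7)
\emph{Proof Plan.} The plan is to run the construction in the proof of Proposition \ref{extension} with enough local control that every intermediate value of the constructed map lies in $U$. First I would shrink $U$ to a PL ball $U_0 \subset U$ containing $\phiinv(z)$ such that: (a) $U_0$ lies in the open star $\st(\sigma_z)$ and meets no cell of $\Delta$ disjoint from $\sigma_z$; and (b) for every cell $\sigma$ of $\Delta$ containing $\sigma_z$ as a face, the convex hull of $U_0 \cap \sigma$ inside $\sigma$ lies in $U$. Property (b) is achievable because $\phiinv(z)$ is compact, convex, and contained in $\inte(\sigma_z)$ by Lemma \ref{preimage simplex}, and only finitely many cells of $\Delta$ are adjacent to $\sigma_z$.

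Next, choose $\delta_0 \leq \epsilon_1$ together with a neighborhood $V$ of $\phiinv(z)$, with $\overline{V} \subset U_0$, so that $\pi_\delta(V) \subset U_0$ for every $\delta \leq \delta_0$. Given $g:S^{k-1}\to \EL$ satisfying the stated hypothesis on $V$, I would execute the construction of Proposition \ref{extension} with this data: take subdivisions $K_1, K_2, \ldots$ of $S^{k-1}$ with mesh tending to $0$, redefine the fixed section $\psi$ on the compact set $g(S^{k-1})$ if necessary so that its image lies in $V$, form the induced maps $f_j: K_j \to \PML$ from $\pi_{\delta_j} \circ \psi \circ g|K_j^0$, and interpolate with maps $f_{j,j+1}$ between consecutive levels, all justified by Lemma \ref{pml approximation}. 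By the choice of $V$ and $\delta_0$, each $\pi_{\delta_j}(\psi(g(v)))$ lies in $U_0$, and each linear interpolation is a convex hull taken inside a single cell $\sigma \supset \sigma_z$ of $\Delta$; by (b) all such values remain in $U$. Since $U$ contains a PL ball of dimension $2n+1 \geq k$, the initial map $f_1: S^{k-1} \to U$ extends to $f_1': B^k \to U$, and concatenation produces $F: B^k \to \PMLEL$ with $F|S^{k-1} = g$ and $F(\inte(B^k)) \subset U$.

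The main obstacle I anticipate is verifying property (b) in the choice of $U_0$, since a priori the convex hull of $U_0 \cap \sigma$ inside a large cell $\sigma$ could stretch far from $\sigma_z$. The resolution is that $\phiinv(z)$, the set we are neighboring, lies entirely in $\inte(\sigma_z)$, so $U_0$ may be shrunk independently within each adjacent cell $\sigma$ to sit in an arbitrarily small convex neighborhood of $\sigma_z \cap \bar\sigma$; the corresponding convex hulls then remain close to $\sigma_z$ and hence in $U$. Beyond this, the verification of continuity at $S^{k-1} \times \infty$ proceeds exactly as in the global argument using the second conclusion of Lemma \ref{pml approximation}, with no additional difficulties.
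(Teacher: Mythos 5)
Your proposal takes a genuinely different route from the paper's proof, and it contains a real gap. The paper's trick is to \emph{re-choose} the pants decomposition so that $z$ is fully carried by a \emph{maximal} standard train track $\tau$; this places $\hatphiinv(z)$ in the interior of the top-dimensional cell $V(\tau)$ of the new $\Delta$, and one simply takes a convex, ray-saturated neighborhood $\hat V \subset \inte(V(\tau)) \cap \hat U$ and sets $V = p(\hat V)$. Remark~\ref{convexity} then produces the extension $F$ directly inside the convex set $V$: because $V$ is convex, the $\pi_\delta$'s are dispensed with entirely, and both the interpolations and the initial extension $f_1'$ over $B^k$ are automatic. Your approach instead keeps the fixed $\Delta$ (where $\sigma_z$ may be low-dimensional) and tries to run the global $\pi_\delta$-based construction while controlling where each piece lands. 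That is a reasonable idea and your conditions (a)--(b) do ensure the induced interpolations land in $U$, but they do not deliver the extension step.

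The gap is in the sentence ``Since $U$ contains a PL ball of dimension $2n+1\ge k$, the initial map $f_1:S^{k-1}\to U$ extends to $f_1':B^k\to U$.'' Dimension has nothing to do with it; what is needed is that $f_1(S^{k-1})$ lie in a contractible subset of $U$. Your conditions only guarantee that $f_1$ takes values in the convex hulls of $U_0\cap\sigma$ over the finitely many cells $\sigma\supset\sigma_z$, and these hulls lie in $U$ by (b) but are \emph{not} asserted to lie in the PL ball $U_0$, nor in any other contractible set. Since $U$ is an arbitrary given neighborhood of $\phiinv(z)$ (which you are not free to replace), there is no reason it should be contractible. You could plug the hole by strengthening (b) to require that $U_0\cap\sigma$ be \emph{convex} in $\sigma$ for each $\sigma\supset\sigma_z$ (then the hulls are $U_0\cap\sigma\subset U_0$ and $U_0$ being a ball gives the extension), or by noting that $\bigcup_\sigma \text{hull}(U_0\cap\sigma)$ can be arranged to lie inside a small mapping cylinder neighborhood of $\phiinv(z)$, which is contractible because $\phiinv(z)$ is a cell. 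But as written the extension step is unjustified, and the paper's recourse to a maximal train track sidesteps all of this. (Also, the redefinition of $\psi$ on $g(S^{k-1})$ is unnecessary — the hypothesis already places $\psi(g(S^{k-1}))$ in $V$, assuming the $\psiinv$ in the statement is a typo for $\psi$, as the paper's own proof treats it.)
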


\begin{proof}  There exists a parametrized pair of pants decomposition of $S$ such that $z$ is fully carried by a maximal standard train track $\tau$.  Thus $\hatphiinv(z)$ is a closed convex set in $\inte(V(\tau))-0$.  If $\hat U=\pinv(U)$, then $\hat U\cap (\inte(V(\tau)))$ is a neighborhood of $\hatphiinv(z)$, since $\tau$ is maximal.  Let $\hat V\subset \inte(V(\tau))$ be a convex neighbohood of $\hatphiinv(z)$ saturated by open rays through the origin such that $\hat V\subset \hat U$.  Let $V=p(\hat V)$.  Then $V$ is a convex neighborhood of $\phiinv(x)$ with $V\subset U$.  By Remark \ref{convexity} if $\psi(g(\Sk)) \subset V$, then there exists a continuous map $F:B^k\to \PMLEL$ such that $F|\Sk=g$ and $F(\inte(B^k))\subset V$.  Now replace F by a generic perturbation.\end{proof}

\section{Markers}

In this section we introduce the idea of a \emph{marker} which is a technical device for controlling geodesic laminations in a hyperbolic surface.  In the next section, using markers, we will show that under appropriate circumstances a sequence of maps $f_i:B^k\to \PMLEL,\ i=1,2, \cdots$ extending a given continuous map $g:S^{k-1}\to \EL$ converges to an extension $f_\infty: B^k\to  \EL$.  As always, $S$ will denote a finite type surface with a fixed complete hyperbolic metric.

\begin{definition}  Let $\alpha_0, \alpha_1$ be open embedded geodesic arcs in $S$.  A \emph{path} from $\alpha_0$ to $\alpha_1$ is a  continuous map $f:[0,1]\to S$ such that for $i=0,1, f(i)\subset \alpha_i$.  Two paths are \emph{path homotopic} if they are homotopic through paths from $\alpha_0$ to $\alpha_1$.   Given two path homotopic paths $f,g$ from $\alpha_0$ to $\alpha_1$, a lift $\tilde\alpha_0$ of $\alpha_0$ to $\BH^2$ determines unique lifts $\tilde f, \tilde g, \tilde \alpha_1$ respectively of $f, g, \alpha_1$ so that $\tilde f, \tilde g$ are homotpic paths from $\tilde \alpha_0$ to $\tilde \alpha_1$.  Define $d_{\tilde H}(f,g)=d_H(\tilde f(I), \tilde g(I))$, where $d_H$ denotes Hausdorff distance measured in $PT(\tilde S)$.  Note that this is well defined independent of the lift of $\alpha_0$.\end{definition}

\begin{definition} \label{marker} A \emph{marker} $\mM$ for the hyperbolic surface $S$ consists of two embedded (though not necessarily pairwise disjoint) open geodesic arcs $\alpha_0, \alpha_1$ called \emph{posts} and a path homotopy class $[\alpha]$ from $\alpha_0$ to $\alpha_1$.  A representative $\beta$ of $ [\alpha]$ is said to \emph{span} $\mM$.  The marker $\mM$ is an \emph{$\epsilon$-marker} if whenever $\beta$ and $\beta'$ are geodesics in $S$ spanning $\mM$, then $d_{\tilde H}(\beta, \beta')<\epsilon$ and length$(\beta)\ge 1$.

Let $C$ be a simple closed geodesic in $S$.  A \emph{$C$-marker} is a marker $\mM$ such that if $\beta$ is a geodesic arc  spanning $\mM$, then $\beta$ is transverse to $C$ and $|\beta\cap C|> 4g+p+1$ where $g=\genus(S)$ and $p$ is the number of punctures.

In a similar manner we define the notion of \emph{closed $\epsilon$} or \emph{closed  $C$-marker}. Here the posts are closed geodesic arcs.   In this case the requirement $d_{\tilde H}(\beta, \beta')<\epsilon$ is replaced by $d_{\tilde H}(\beta, \beta')\le\epsilon$.  If $\mM$ is an $\epsilon$ or $C$-marker, then $\bar\mM$ will denote the corresponding closed $\epsilon$ or $C$-marker.  \end{definition}
\vskip10pt

\begin{definition}   We say that the geodesic $L$ hits the marker $\mM$ if there exists $\ge 3$ distinct embedded arcs in $L$ that span $\mM$.  We allow for 
the possibility that distinct arcs have non trivial overlap.  We say that the geodesic lamination $\mL$ \emph{hits} the marker $ \mM$ if 
there exists a leaf $L$ of $\mL$ that hits $\mM$.  If $b_1, \cdots, b_m$ are simple closed geodesics, then we say that $\mM$ is $\mL$-free of $\{b_1, \cdots, b_m\}$ if some leaf $L\notin  \{b_1, \cdots, b_m\}$ of $\mL$ hits $\mM$.  \end{definition}

\begin{lemma}  \label{finding epsilon markers} Let $S$ be a finite type hyperbolic surface with a fixed hyperbolic metric.  Given $\epsilon>0$ there exists $N(\epsilon)\in \BN$ such that if $\beta$ is an embedded geodesic arc, length$(\beta)\le 2$ and $\mL\in \mL(S)$ is such that $|\mL\cap \beta|>N(\epsilon)$, then there exists an $\epsilon$-marker $\mM$ hit by $\mL$ with posts $\alpha_0, \alpha_1\subset \beta$.\end{lemma}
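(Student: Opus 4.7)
The plan is to use pigeonhole in the projective tangent bundle over $\beta$ to find three nearby tangent directions of a single leaf of $\mL$, and then to take short geodesic arcs of that leaf (of length slightly greater than one) as the three spanning arcs of the marker; the posts will be small subarcs of $\beta$ and of a small arc near the far endpoints.

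The first step is to produce a single leaf $L$ of $\mL$ with many intersection points with $\beta$. Every geodesic lamination on a finite type hyperbolic surface decomposes as a union of finitely many minimal sublaminations together with finitely many isolated non-compact leaves, with the total number of such pieces bounded by a constant $C(S)$ depending only on the topology of $S$. Pigeonhole forces one such piece to contain at least $N/C(S)$ intersection points. From that piece we extract a single leaf $L$ meeting $\beta$ in at least this many points: a simple closed geodesic minimal sublamination is itself a single leaf; an isolated leaf is a single leaf; and in the nontrivial minimal case we pick any leaf through one of the intersection points and use that leaves of a nontrivial minimal lamination are dense, so by a first return argument such a leaf meets $\beta$ in an infinite set.

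Given $\epsilon$, choose $\delta=\delta(\epsilon)>0$ such that, by continuous dependence of geodesics in $\BH$ on initial data and standard hyperbolic distance estimates, (i) two unit tangent vectors within $\delta$ in $PT(\BH)$ generate geodesic arcs of length at most $2$ whose corresponding points agree throughout to within $\epsilon/3$, and (ii) any geodesic whose endpoints lie within $\delta$ of the endpoints of a given arc of length between $1$ and $2$ is within Hausdorff distance $\epsilon$ in $PT(\BH)$ of that arc. Cover the compact space $PT(S)|_\beta$ by $M(\epsilon)$ balls of radius $\delta$ and set $N(\epsilon)=3M(\epsilon)C(S)$. Pigeonhole on the tangent vectors of $L$ at the points of $L\cap \beta$ then yields three points $x_1,x_2,x_3\in L\cap\beta$ whose unit tangent vectors $v_1,v_2,v_3$ lie in a common $\delta$-ball of $PT(S)|_\beta$; since projection to $\beta$ is Lipschitz, the $x_i$ themselves lie within a subinterval of $\beta$ of length at most $\delta$.

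Let $\gamma_i$ be the arc of $L$ of length $T\in(1,2)$ starting at $x_i$ with tangent $v_i$. Choose compatible lifts to $\BH$ so that the $\tilde\gamma_i$ all start within $\delta$ in $PT(\BH)$; their endpoints $\tilde y_i$ then lie within $\epsilon/3$ of each other. Take $\alpha_0$ to be a small open subarc of $\beta$ containing $x_1,x_2,x_3$ and $\alpha_1$ to be a small open embedded geodesic arc in $S$ containing the projected endpoints $y_i$, small enough that their lifts containing the $\tilde x_i$ (resp.\ the $\tilde y_i$) are single short arcs $\tilde\alpha_0,\tilde\alpha_1$. All three $\tilde\gamma_i$ then join $\tilde\alpha_0$ to $\tilde\alpha_1$, so the three distinct embedded arcs $\gamma_i\subset L$ share a common path homotopy class $[\alpha]$; any other geodesic representative of $[\alpha]$ is within $d_{\tilde H}$-distance $\epsilon$ of each $\tilde\gamma_i$, and has length at least $1$ provided $T$ is chosen slightly larger than $1$ and the posts are small enough. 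This yields the desired $\epsilon$-marker $\mM=(\alpha_0,\alpha_1,[\alpha])$ hit by $\mL$ via $L$. The main obstacle is isolating a single leaf: the finite decomposition of a geodesic lamination into minimal sublaminations and isolated leaves is what converts pigeonhole on all intersection points into usable pigeonhole on the tangent vectors of a single leaf.
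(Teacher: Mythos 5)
Your proposal is correct and follows essentially the same approach as the paper: first use a finiteness statement about the lamination to pigeonhole down to a single leaf $L$ meeting $\beta$ many times, then pigeonhole on tangent directions along $\beta$ to produce three nearly parallel arcs of $L$, and build the posts from small subarcs near the two ends. The only difference is cosmetic, in which finite decomposition is invoked: the paper bounds the number of \emph{boundary leaves} of $\mL$ by $6|\chi(S)|$ (noting, implicitly, that when $|\mL\cap\beta|$ is finite every intersection point lies on such a leaf, while an infinite intersection makes the claim trivial), and so gets a single leaf with many hits directly; you instead decompose $\mL$ into its finitely many minimal pieces and isolated leaves, apply pigeonhole to the pieces, and then extract a single leaf from a minimal piece via density of leaves. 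Both reductions land in the same place, and your treatment of the $\delta$-ball cover of $PT(S)|_\beta$ and the compatible lifts to $\mathbb H^2$ simply makes explicit the "nearly parallel" step that the paper states without elaboration.
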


\begin{proof}  $\mL$ has at most $6|\chi(S)|$ boundary leaves.  Thus some leaf  of $\mL$ hits $\beta$ at least $|\mL\cap\beta|/(6 |\chi(S)|)$ times.  Since length$(\beta)$ is uniformly bounded, if $|\mL\cap \beta|$ is sufficiently large, then three distinct segments of some leaf must have endpoints in $\beta$, be nearly parallel and have length $\ge 2$.  Now restrict to appropriate small arcs of $\beta$ to create $\alpha_0$ and $\alpha_1$ and let $[\alpha] $ be the class represented by the three segments.\end{proof}

\begin{lemma}  \label{markers exist} If $\mL\in \LS$ has a non compact leaf $L$, then for every $\epsilon>0$ there exists an $\epsilon$-marker hit by $L\in \mL$.\qed\end{lemma}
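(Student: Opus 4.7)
My plan is to reduce this lemma to Lemma \ref{finding epsilon markers} by producing a short transversal arc that $L$ alone meets sufficiently often. Since all laminations in this paper are assumed to be compactly supported and $L$ is non-compact, $L$ is an infinite-length geodesic trapped in a compact subset of $S$, and is therefore recurrent: there exists a point $p\in \bar L$ at which $L$ accumulates, i.e., $L$ returns arbitrarily close to $p$ infinitely often.

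Choose an embedded geodesic arc $\beta$ of length at most $2$ through $p$, transverse to the leaf of $\bar L$ at $p$. By recurrence, $L\cap \beta$ is an infinite set clustering at $p$, and in particular $|L\cap \beta|\ge N(\epsilon)$, where $N(\epsilon)$ is the constant provided by Lemma \ref{finding epsilon markers}. I would then rerun the argument of Lemma \ref{finding epsilon markers} with $L$ in the role of $\mL$: that argument uses only compactness of $PT(S)$ above $\beta$ to cluster three crossings of the relevant leaf with $\beta$ in both position and tangent direction, and from them to extract three nearly parallel sub-arcs of length $\ge 2$ with endpoints in $\beta$. In our situation all of these crossings lie on $L$ by construction, so the three nearly parallel segments produced are arcs of $L$. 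Taking small sub-arcs $\alpha_0,\alpha_1\subset\beta$ about the endpoints of these segments as posts, and $[\alpha]$ the common path homotopy class carried by any of the three arcs, yields an $\epsilon$-marker $\mM$ hit by $L$.

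The main obstacle is arranging the three segments to satisfy simultaneously (i) length $\ge 2$ and (ii) pairwise $\dtildeH$-distance less than $\epsilon$. For (ii), geodesics in $\BH^2$ starting at nearby points with nearby tangent directions diverge at most exponentially in arclength, so closeness of the lifts on a fixed length-$2$ neighborhood reduces to sufficient closeness of the initial crossings in $PT(S)|_\beta$, which is achievable because $L\cap \beta$ accumulates at $p$. For (i), since $L$ has infinite total length and the crossings clustering at $p$ are infinite in number, a pigeonhole argument on the arclength parameters along $L$ produces three among them whose pairwise arclength separations along $L$ exceed $2$; the corresponding three clustering sub-arcs then serve as the spanning arcs of the required $\epsilon$-marker.
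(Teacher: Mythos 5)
Your strategy is certainly the intended one: the paper states this lemma without proof, and the evident argument is exactly recurrence of the non-compact leaf (an infinite-length half-leaf trapped in the compact support must accumulate on some $(p,v)\in PT(S)$ with $p\in\bar L\subset\mL$) followed by the extraction of three nearly parallel sub-arcs as in Lemma \ref{finding epsilon markers}. Your choice of transversal $\beta$ and your observation that all the relevant crossings lie on $L$ itself, so that the spanning arcs produced are arcs of $L$, are both correct.

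There is, however, one point where the assembly does not quite close up. Your closeness mechanism --- bounded divergence of geodesics in $\BH^2$ with $\delta$-close initial conditions over a \emph{fixed} arclength $2$ --- applies to the three arcs obtained by following $L$ forward for arclength exactly $2$ from three clustered crossings; but the terminal endpoints of those arcs do not in general lie on $\beta$, so you cannot take both posts to be sub-arcs of $\beta$ ``about the endpoints of these segments.'' If instead you take the spanning arcs to be the segments of $L$ \emph{between} clustered crossings with pairwise arclength separation $>2$ (so that they genuinely return to $\beta$), the closeness claim fails: two such return arcs start and end $\delta$-close in $PT(S)$ but may have wildly different lengths and lie in different path homotopy classes rel $\beta$ (returning, say, around different handles), so they need not span a common marker, and $\dtildeH$-closeness is not controlled by the endpoints alone. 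The repair is easy and costs nothing: keep the three length-$2$ forward arcs, let $\alpha_0$ be a small sub-arc of $\beta$ about $p$, and let $\alpha_1$ be a \emph{new} short geodesic arc transverse to $L$ at the terminal point of the first arc; the other two arcs, being $C^1$-close to the first, cross $\alpha_1$ after a tiny extension, all three lifts run between the same lifts $\tilde\alpha_0$, $\tilde\alpha_1$, and shrinking the posts forces any two spanning geodesics to be $\epsilon$-close in $\dtildeH$ and of length $\ge 1$. Nothing in Definition \ref{marker} requires both posts to lie in $\beta$; that feature is particular to the statement of Lemma \ref{finding epsilon markers} and is not needed here.
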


\begin{corollary}  \label{markers exist for el}  If $\mL\in \EL$, then for every $\epsilon>0$ there exists an $\epsilon$-marker hit by $\mL$.\qed\end{corollary}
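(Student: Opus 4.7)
The plan is to deduce this immediately from Lemma \ref{markers exist} by showing that every ending lamination possesses a non-compact leaf. Concretely, I would argue as follows.

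First, I would recall that by definition an element $\mL \in \EL$ is both minimal (every leaf dense in $\mL$) and filling (no simple closed geodesic lives in the metric closure of $S \setminus \mL$). A minimal geodesic lamination on a finite type hyperbolic surface is, by standard structure theory, either a single simple closed geodesic or a lamination whose leaves are all non-compact and dense. So it suffices to rule out the case where $\mL$ consists of a single simple closed geodesic $c$: in that case $S \setminus c$ has a component which is itself a hyperbolic surface of non-zero complexity (since $S$ is not one of the trivial exceptional surfaces that forces $\EL = \emptyset$, but in any case a single simple closed curve fails to be filling, as $c$ itself, or any disjoint simple closed geodesic, witnesses a simple closed geodesic that does not transversely intersect $\mL$). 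This contradicts the filling condition.

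Having ruled out that case, every leaf of $\mL$ is non-compact. Pick any such leaf $L$. Now I would directly invoke Lemma \ref{markers exist} applied to $\mL$ with this non-compact leaf $L$: for the given $\epsilon > 0$ we obtain an $\epsilon$-marker $\mM$ that is hit by $L$, and since $L$ is a leaf of $\mL$, by the definition of \emph{hits} the lamination $\mL$ itself hits $\mM$. This yields the desired $\epsilon$-marker and completes the proof.

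The only non-routine step is the exclusion of the compact-leaf case, and even this is immediate from the definition of \emph{filling}; I do not anticipate a genuine obstacle. The corollary is essentially a one-line reduction to Lemma \ref{markers exist} once one observes that the minimality plus filling hypothesis forces non-compact leaves.
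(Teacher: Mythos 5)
Your proposal is correct and follows exactly the route the paper intends: the corollary is stated as an immediate consequence of Lemma \ref{markers exist}, with the (unstated) observation that a minimal filling lamination cannot be a simple closed geodesic and therefore has a non-compact leaf. Your explicit justification of that observation is exactly the right one-line reduction.
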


The next lemma states that hitting a marker is an open condition.

\begin{lemma} \label{markers open} If $\mL\subset \mL(S)$ hits the marker $\mM$, then there exists a $\delta>0$ such that if $\mL'\in \mL(S) $and $\mL\subset N_{PT(S)}(\mL',\delta)$, then $\mL'$ hits $\mM$.  \qed\end{lemma}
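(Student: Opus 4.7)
The plan is a shadowing argument based on continuity of the geodesic flow. First I would fix a leaf $L$ of $\mL$ hitting $\mM$ and three distinct embedded sub-arcs $\beta_1, \beta_2, \beta_3 \subset L$ that each span $\mM$. Parameterizing $L$ by arc length so that $\beta_i = L|_{[a_i, b_i]}$ with $a_1 < b_1 \le a_2 < b_2 \le a_3 < b_3$, let $\gamma = L|_{[a_1 - 1, \, b_3 + 1]}$ be a compact sub-arc containing all three $\beta_i$ in its interior, and lift $\gamma$ to a geodesic segment $\tilde\gamma \subset \BH^2$.

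Next I would quantify the openness of the spanning condition. Each $\beta_i$ has endpoints lying in the interior of the open posts $\alpha_0$ and $\alpha_1$, so by a straight-line homotopy in $\BH^2$ any geodesic arc sufficiently Hausdorff-close to $\beta_i$ in $PT(S)$ and whose endpoints are sufficiently close to those of $\beta_i$ still represents the homotopy class $[\alpha]$ and hence still spans $\mM$. Fix one $\eta > 0$ that makes this work simultaneously for all three $\beta_i$. By continuity of the geodesic flow on $PT(\BH^2)$ over the compact time interval of length $\length(\gamma)$, choose $\delta > 0$ small enough that any $\tilde w \in PT(\BH^2)$ within $\delta$ of the initial tangent vector of $\tilde\gamma$ has its geodesic trajectory $\eta$-shadow $\tilde\gamma$ for the full duration of $\gamma$.

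Given $\mL' \in \mL(S)$ with $\mL \subset N_{PT(S)}(\mL', \delta)$, pick a point of $\mL'$ within $\delta$ of $L(a_1-1) \in PT(S)$, lift it to some $\tilde w' \in PT(\BH^2)$ close to the initial tangent of $\tilde\gamma$, and let $\tilde L'$ be the leaf of the lifted lamination through $\tilde w'$. Shadowing then forces $\tilde L'$ to contain three sub-arcs, at parameter intervals near the $[a_i,b_i]$, that $\eta$-approximate $\tilde\beta_1, \tilde\beta_2, \tilde\beta_3$ respectively; projecting these down yields three distinct embedded sub-arcs of a single leaf $L' \subset \mL'$, each spanning $\mM$. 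Hence $\mL'$ hits $\mM$. The main subtlety is ensuring that all three approximating arcs lie in \emph{one} leaf of $\mL'$ rather than in three different leaves: the hypothesis only provides some point of $\mL'$ near each point of $\mL$, with no a priori coherence, and the shadowing argument supplies this coherence by using a single initial condition and the determinism of the geodesic flow to produce a single leaf tracking $\gamma$ across all three $\beta_i$.
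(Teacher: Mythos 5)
The paper states this lemma without a proof, treating it as routine, so there is no written argument to compare against. Your write-up supplies the expected shadowing argument: openness of the spanning condition, continuity of the geodesic flow on $PT(\BH^2)$ over a bounded time interval, and --- the point you rightly flag as the crux --- determinism of the flow, which lets a single initial tangent vector of $\mL'$ near $L(a_1-1)$ pull all three approximating spanning arcs onto a single leaf $L'$. The one step you leave implicit is that the three projected sub-arcs of $L'$ are genuinely \emph{distinct} arcs: this could conceivably fail if $L'$ were a short closed geodesic identifying two of the parameter intervals under the covering $\tilde L' \to L'$, but it follows from the fact that the original spanning arcs meet $\alpha_0$ at distinct points of the embedded leaf $L$, so for $\eta$ small enough the approximating arcs meet $\alpha_0$ at nearby, hence still distinct, points.
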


By super convergence we have

\begin{corollary}  If $x\in \PML$ is such that $\phi(x)$ hits the marker $\mM$, then there exists an open set $U$ containing $x$ such that $y\in U$ implies that $\phi(y)$ hits $\mM$.\qed\end{corollary}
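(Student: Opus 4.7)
The plan is to reduce the statement directly to the combination of Lemma \ref{markers open} (hitting a marker is open under one-sided Hausdorff approximation in $\mL(S)$) and Proposition \ref{super convergence} (convergence in $\PML$ forces the underlying lamination to super converge in $PT(S)$). The crux is simply aligning the direction of the inclusion in the definition of super convergence with the hypothesis that Lemma \ref{markers open} needs.

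First I would fix $\mL = \phi(x)$ and apply Lemma \ref{markers open} to obtain a $\delta > 0$ such that any $\mL' \in \mL(S)$ satisfying $\mL \subset N_{PT(S)}(\mL',\delta)$ hits $\mM$. Next I would claim that there is a $\PML$-neighborhood $U$ of $x$ such that $\phi(x) \subset N_{PT(S)}(\phi(y), \delta)$ for every $y \in U$. Assuming this claim, applying Lemma \ref{markers open} with $\mL' = \phi(y)$ yields that $\phi(y)$ hits $\mM$, giving the corollary.

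To establish the claim I would argue by contradiction. If no such neighborhood exists, there is a sequence $y_i \to x$ in $\PML$ with $\phi(x) \not\subset N_{PT(S)}(\phi(y_i),\delta)$ for every $i$. By Proposition \ref{super convergence}, $\phi(y_i)$ super converges to $\phi(x)$, i.e.\ every point of $\phi(x)$ is a limit of points of $\phi(y_i)$. Since $\phi(x)$ is compact in $PT(S)$, a standard finite-cover argument on an open cover of $\phi(x)$ by $\delta/2$-balls then forces $\phi(x) \subset N_{PT(S)}(\phi(y_i),\delta)$ for all sufficiently large $i$, contradicting the choice of the $y_i$.

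The argument is essentially bookkeeping; there is no real obstacle. The only subtlety worth flagging is that super convergence gives inclusion in one direction only ($\phi(x) \subset \textliminf \phi(y_i)$ in the Hausdorff sense), and this is exactly the direction required by Lemma \ref{markers open}, so the two statements fit together without any need for full Hausdorff convergence of $\phi(y_i)$ to $\phi(x)$.
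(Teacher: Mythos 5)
Your argument is correct and is exactly what the paper intends: the paper proves this corollary in a single phrase (``By super convergence'') immediately after Lemma \ref{markers open}, leaving precisely the contradiction-plus-compactness bookkeeping you carried out to the reader. Your observation that super convergence gives the one-sided inclusion $\phi(x)\subset N_{PT(S)}(\phi(y),\delta)$, which is the exact hypothesis Lemma \ref{markers open} requires, is the whole point.
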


\begin{lemma}  \label{finding c markers} Let $C\subset S$ a simple closed geodesic.  There exists a $k>0$ such that if  $\mL\in \mL(S)$ and $|C\cap \mL|>k$, then there exists a $C$-marker  that is hit by $\mL$.  
\end{lemma}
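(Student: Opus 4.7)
The plan is to mimic the pigeonhole of Lemma~\ref{finding epsilon markers}, arranging that the three nearly parallel leaf-segments I extract each intersect $C$ more than $4g+p+1$ times. I will set $M_0=4g+p+3$. A counting argument using the bound on isolated leaves and minimal sublaminations in $\mL$ (with a brief separate analysis for the diagonal-leaf case, where one restricts to a tail whose tangents accumulate on a minimal sublamination meeting $C$) will show that once $|C\cap \mL|$ is large enough, some leaf $L$ of a minimal sublamination of $\mL$ satisfies $|L\cap C|\ge M$ for any prescribed $M$; I shall replace $\mL$ by this minimal sublamination. I enumerate $L\cap C=\{p_0,\dots,p_M\}$ along $L$, write $v_j\in PT(S)|_C$ for the tangent to $L$ at $p_j$, and let $\delta_j\subset L$ be the subarc from $p_j$ to $p_{j+M_0}$, so that $|\delta_j\cap C|=M_0+1>4g+p+1$.

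Next I invoke the compact invariant set $X := PT(S)|_C\cap PT(\mL)$ under the first return map $R$ of the geodesic flow to $C$. Since $\mL$ is minimal with $\mL\cap C\ne\emptyset$, $R$ is defined and continuous on all of $X$, so $R^{M_0}$ is uniformly continuous and the associated flow-time function $\tau_{M_0}:X\to\BR_{>0}$ is bounded by some $T$. A pigeonhole on the $v_j$'s (taking $M$ larger than twice the number of balls in a finite $\delta$-cover of $X$) yields three indices $j_1<j_2<j_3$ for which the $v_{j_r}$, and hence by uniform continuity also the $v_{j_r+M_0}$ and the flow times $\tau_{M_0}(v_{j_r})$, are mutually $\epsilon$-close, for any prescribed $\epsilon>0$. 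Lifting to $\BH^2$, I fix a lift $\tilde C_0$ of $C$ carrying close lifts $\tilde p_{j_r}$ of the starting points and nearly parallel tangents $\tilde v_{j_r}$. The lifts of $\delta_{j_r}$ are geodesic arcs in $\BH^2$ evolving from $\epsilon$-close initial data over time at most $T+\epsilon$, so their endpoints lie mutually within distance $\eta=\eta(\epsilon,T)$, with $\eta\to 0$ as $\epsilon\to 0$. I will pick $\epsilon$ so small that $\eta$ is strictly less than the minimum separation of distinct lifts of $C$ inside any ball of radius $T+1$ about a lift of a point of $C$ (a positive quantity by compactness of $C$ and discreteness of the $\pi_1(S)$-orbit of lifts of $C$); then the three endpoints all lie on a single lift $\tilde C_1$ of $C$.

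Projecting now yields short subarcs $\alpha_0,\alpha_1\subset C$ containing the three projected endpoint triples, which I take as posts, and the three $\delta_{j_r}$ become mutually path-homotopic as paths from $\alpha_0$ to $\alpha_1$, representing a common class $[\alpha]$. The resulting marker $\mM$ is hit by $\mL$ via these three arcs. Any spanning geodesic $\beta$ of $\mM$ lies in the same path-homotopy class rel $\alpha_0,\alpha_1$ as the geodesic arc $\delta_{j_1}$; since geodesic arcs realize the geometric intersection number with $C$ in their homotopy class, $|\beta\cap C|=M_0+1>4g+p+1$, confirming that $\mM$ is a $C$-marker. The main obstacle will be the passage from ``close return data'' to ``same path-homotopy class'': without the boundedness of $\tau_{M_0}$ on the compact invariant $X$ and the discreteness of lifts of $C$ in $\BH^2$, three arcs of $L$ with nearby endpoints could lie in entirely different homotopy classes, so that no pigeonhole on tangent data alone would force path-homotopy.
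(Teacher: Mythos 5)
Your argument takes a genuinely different route from the paper's. You replace the paper's ``elementary topological argument'' (a combinatorial pigeonhole on the finitely many types of subarcs of a leaf in the cut surface $S|C$, producing five path-homotopic leaf arcs and a geodesic rectangle) with a dynamical argument: a first-return map $R$ of the geodesic flow to $C$ on a compact invariant set, uniform continuity of $R^{M_0}$ and of the flow time $\tau_{M_0}$, a pigeonhole on tangent data, and a lift-separation estimate in $\BH^2$ forcing a common lift $\tilde C_1$. The second half of your argument is sound: using three arcs with a slightly enlarged post, together with the observation that any two geodesic arcs running between the same pair of lifts $\tilde C_0$, $\tilde C_1$ cross exactly the lifts of $C$ separating them, is a correct and slightly cleaner substitute for the paper's rectangle-and-five-arcs device.

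The gap is the very first reduction. You assert that if $|C\cap\mL|$ is large then some leaf of a minimal sublamination of $\mL$ meets $C$ (and hence meets it infinitely often), with a ``separate analysis for the diagonal-leaf case'' that restricts to a tail accumulating on a minimal sublamination meeting $C$. This fails: there need not be any minimal sublamination of $\mL$ meeting $C$. For instance, let $\gamma$ be a simple closed geodesic disjoint from $C$ and $L$ a simple geodesic spiraling onto $\gamma$ at both ends that crosses $C$ transversely $n$ times (such $L$ exist for every $n$, e.g.\ by twisting an arc about $C$ and closing it up with spirals onto $\gamma$). Then $\mL=\overline{L}=L\cup\gamma$ satisfies $|C\cap\mL|=n$, arbitrarily large, while its unique minimal sublamination $\gamma$ is disjoint from $C$, so there is no minimal piece for your tail to accumulate on. This matters because the entire dynamical engine depends on minimality: it is minimality that makes $R$ everywhere defined and $\tau_{M_0}$ bounded on the compact set $X=PT(\mL)|_C$. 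If instead you try to run the pigeonhole directly on the finitely many crossings of the isolated leaf $L$, the bound $T$ on $\tau_{M_0}$ (hence your $\epsilon$, hence $\delta$, hence how many crossings the pigeonhole needs) depends on $\mL$, so you do not obtain a threshold $k$ depending only on $S$ and $C$ as the lemma requires. The paper's combinatorial pigeonhole avoids this because the number of parallel classes of arcs of $L$ in $S|C$ is bounded by the topology of $S$ alone, giving a uniform $k$ whether or not $L$ lies in a minimal sublamination meeting $C$.
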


\begin{proof}  An  elementary topological argument shows that if $k$ is sufficiently large, then there 
exists a leaf  $L$ containing 5 distinct, though possibly overlapping embedded subarcs $u_1, \cdots, u_5$ with endpoints in $C$ which 
represent the same path homotopy class rel $C$ such that the following holds.   Each arc $u_j$ intersects $C$ more than $4g+p+1$ times and fixing a preimage 
$\tilde C$ of $C$ to $\tilde S$, these arcs have lifts to arcs $\bar u_1, \cdots, \bar u_5$ in $\tilde S$ starting at $\tilde C$ and 
ending at the same preimage $\tilde C'$.  After reordering we can assume that $\bar u_1$ and $\bar u_5$ are outermost.

Let $\hat \alpha_0$ and $\hat \alpha_1$ be the maximal closed 
arcs respectively in $\tilde C$ and $\tilde C'$ with endpoints in $\cup \bar u_i$ and for $i=0,1$ let $\bar \alpha_i=\pi(\hat
\alpha_i)$, where $\pi$ is the universal covering map.

 This gives rise to a $C$-marker $\mM$ with posts $\alpha_0, \alpha_1$ where $\alpha_i=\inte(\bar\alpha_i)$ and $u_2, u_3, u_4$ represent the path homotopy class.  Note that if the 
 geodesic arc $\beta$ spans $\bar\mM$, then $\beta$ lifts to $\hat \beta$ with endpoints in $\hat\alpha_0$ and $\hat\alpha_1$.  Being a geodesic it lies in the geodesic rectangle 
 formed by $\hat\alpha_0, \hat\alpha_1, \bar u_1, \bar u_5$.  Thus it intersects 
 $C$ more than $4g+p+1$ times.\end{proof}

In the rest of this section  $V$ will denote the underlying space of a finite simplicial complex.  In application, $V=B^k$ or $S^k\times I$. 

\begin{definition}  A \emph{marker family} $\mJ$ of $V$ is a finite collection $(\mM_1, W_1), \cdots, (\mM_m, W_m)$ where each $\mM_i$ is a marker and each $W_j$ is a compact subset of $V$.   Let $f:V\to \PMLEL$.  
We say that $f$ \emph{hits} the marker family $\mJ$ if for each $1\le i\le m$ and  $t\in W_i$,  $\phi(f(t))$ hits $\mM_i$.  
Let $C=\{b_1,\cdots, b_q\}$ be a set of simple closed 
geodesics.  We say that $\mJ$ is $f$-\emph{free of $C$} if for each $1\le i\le m$ and $t\in W_i$, $\mM_i$ is $\phi(f(t))$-free of $C$.  More generally, if $U\subset V$, then we say that $f$ \emph{hits $\mJ$ along $U$} (resp. \emph{$\mJ$ is $f$-free of $C$ along $U$}) if for each $1\le i\le m$ and $t\in U\cap W_i$, $\phi(f(t))$ hits $\mM_i$ (resp. $\mM_i$ is $\phi(f(t))$-free of $C$).  We say that the homotopy $F:V\times I\to \PMLEL$ is $\mJ$-marker preserving if for each $t\in I$, $F|V\times t$ hits $\mJ$.  

Note that if $\mJ$ is $F$-free of $C$, then $F$ is in particular a $\mJ$-marker preserving homotopy.

An \emph{$\epsilon$-marker cover} (resp. \emph{$C$-marker cover}) of $V$ is a marker family $(\mM_1, W_1),\cdots , (\mM_m, W_m)$ where each $\mM_i$ is an $\epsilon$-marker (resp. $C$-marker)   and the interior of the $W_i$'s form an open cover of $V$. 
  
 \end{definition}
 
 The next lemma gives us conditions for constructing $\epsilon$ and $C$-marker families.

\begin{lemma}  \label{finding markers}  Let $S$ is a finite type hyperbolic surface such that $\dim(\PML)=2n+1$.  Let $V$ be a finite simplical complex.

i)  If $\epsilon>0$ and $f:V\to \PMLEL$ is a generic PL map such that $\dim(V)\le n$, then there exists an $\epsilon$-marker family $\mE$ hit by $f$.

ii)  Given the simple closed geodesic $C$, there exists $N(C)\in \BN$ such that if $f:V\to \PMLEL$ is such that  for all $t\in V$, $|\phi(f(t))\cap C|\ge N(C)$, then there exists a $C$-marker family $\mS$ hit by $f$.\end{lemma}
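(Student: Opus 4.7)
The plan for both parts is the same: at each $t\in V$ produce a marker of the specified type hit by $\phi(f(t))$, then show the hitting condition is open in $t$, and finally extract a finite subcover of $V$ and shrink to compact sets $W_i$. Part (ii) is immediate in this framework: taking $N(C):=k+1$ for $k$ the constant from Lemma \ref{finding c markers}, the pointwise hypothesis $|\phi(f(t))\cap C|>k$ yields at each $t$ a $C$-marker $\mM_t$ hit by $\phi(f(t))$.

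For (i), if $f(t)\in\EL$, then Corollary \ref{markers exist for el} gives an $\epsilon$-marker $\mM_t$ hit by $\phi(f(t))=f(t)$. If $f(t)\in\PML$, the worry is that $\phi(f(t))$ could be a pure multicurve with no non-compact leaf, in which case no $\epsilon$-marker is hit. The hypothesis $\dim(V)\le n$ rules this out: since $\dim(\CS)=n$, Lemma \ref{missing} applied to each simplex of $\CS$ (so $p+q\le 2n$) yields $f(V)\cap I(\CS)=\emptyset$; hence by Lemma \ref{filling} at least one minimal component of $\phi(f(t))$ is not a simple closed curve and therefore has non-compact leaves, so Lemma \ref{markers exist} produces the desired $\epsilon$-marker $\mM_t$.

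In both cases I would show $W_t:=\{s\in V:\phi(f(s))\text{ hits }\mM_t\}$ is a neighborhood of $t$. When $f(t)\in\PML$ this follows from the corollary to Lemma \ref{markers open} and continuity of $f$. When $f(t)\in\EL$, I take any sequence $s_n\to t$ and pass to a Hausdorff-subconvergent subsequence of $\phi(f(s_n))$; combining Lemma \ref{pmlel topology}(v) with super convergence (Proposition \ref{super convergence}) when $f(s_n)\in\PML$, and using the coarse Hausdorff definition directly when $f(s_n)\in\EL$, the Hausdorff limit contains $f(t)$. Thus for the $\delta>0$ supplied by Lemma \ref{markers open} one has $f(t)\subset N_{PT(S)}(\phi(f(s_n)),\delta)$ eventually, forcing $\phi(f(s_n))$ to hit $\mM_t$ for large $n$. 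Compactness of $V$ then yields finitely many $W_{t_i}$ covering $V$; shrinking to compact $W_i\subset W_{t_i}$ whose interiors still cover $V$ gives the desired marker family $\mE$ (resp.\ $\mS$).

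The main obstacle is the uniform openness statement at $\EL$-valued points of $f$: $f$ may oscillate between $\EL$- and $\PML$-values in any neighborhood of such a $t$, and the only available convergence is in the coarse $\PMLEL$-topology, so the argument must thread super convergence and the coarse Hausdorff definition through Lemma \ref{markers open} in a single uniform step. The other subtle ingredient is the use of Lemma \ref{missing} in (i) to exclude the pure-multicurve case, which is the one place in the proof where the dimension bound $\dim(V)\le n$ is essential.
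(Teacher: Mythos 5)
Your proof is correct and follows essentially the same route as the paper's (quite terse) argument: rule out the pure-multicurve case using genericity and $\dim(V)\le n$ so that $\phi(f(t))$ always has a non-compact leaf, produce a marker at each $t$, observe openness of the hitting condition, and finish by compactness of $V$. You unpack the genericity step via Lemma \ref{missing} and supply the openness argument at $\EL$-valued points in more detail than the paper does (the paper compresses these into a single sentence), but the underlying content is the same.
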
  

\noindent{Proof of i)}.  Since $k\le n$ and $f$ is generic, for each $t\in V$,  $
\mA(\phi(t))\neq\emptyset$, where $\mA(\phi(t))$ is the arational sublamination 
of $\phi(t)$.   By Lemma \ref{finding epsilon markers} for each $t\in V$ there 
exists an $\epsilon$-marker $\mM_t$ and compact set $W_t$ such that $t\in 
\inte(W_t)$ and for each $s\in W_t$, $\phi(f(s))$ hits $\mM_t$.  The result follows 
by compactness of $V$.\qed

\vskip 10 pt

\noindent{Proof of ii)}.  Given $C$, choose $N(C)$ as in Lemma \ref{finding c markers}.  Thus for each $t\in V$ there exists a $C$-marker $\mM_t$ and compact set $U_t$ such that $t\in \inte(U_t)$ and for each $s\in U_t$, $\phi(f(s))$ hits $\mM_t$.  The result follows by compactness of $V$.\qed

\section{convergence lemmas}

This section establishes various criteria to conclude that a sequence of ending laminations converges to a particular ending lamination or to show that two ending laminations are close in $ \EL$.  We also show that markers give neighborhood bases of elements $\mL\in \EL$ and sets in $\PML$ of the form $\phiinv(\mL)$, where $\mL\in \EL$.

\begin{lemma} \label{one closeness} Let $\mu\in \EL$ and $$W_\epsilon (\mu)=\{\mL\in \EL | 
\dpts(\mL, \mu')<\epsilon, \mu' \textrm{is a diagonal extension of } \mu\}.$$  
Then $\mW(\mu)=\{W_\epsilon(\mu)|\epsilon>0\}$  is a neighborhood basis of $
\mu\in \EL$.\end{lemma}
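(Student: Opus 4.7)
My plan is to verify two claims, after which the lemma follows: (i) every $W_\epsilon(\mu)$ is a neighborhood of $\mu$ in $\EL$, and (ii) every open neighborhood of $\mu$ in $\EL$ contains some $W_\epsilon(\mu)$. Since $\EL$ is metrizable by Remark \ref{separable and complete}, its topology is determined by sequential convergence; throughout I would exploit the characterization of convergence in the coarse Hausdorff topology: $\mL_n\to\mL$ in $\EL$ precisely when every Hausdorff-convergent subsequence in $\LS$ has limit a diagonal extension of $\mL$.

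The first step would be a structural observation: $\mu$ admits only \emph{finitely many} diagonal extensions. Being minimal and filling on a finite-type surface, the closure of each component of $S\setminus\mu$ is an ideal polygon, possibly with a single cusp, and each such region admits only finitely many noncrossing geodesic diagonals. Hence the set of diagonal extensions of $\mu$ is finite. This is the key combinatorial fact that converts ``coarse Hausdorff close to $\mu$'' into ``$\dpts$-close to some diagonal extension of $\mu$''.

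For (i), suppose for contradiction that $\mu\notin\inte(W_\epsilon(\mu))$. Then there exists a sequence $\mL_n\in\EL\setminus W_\epsilon(\mu)$ with $\mL_n\to\mu$ in $\EL$. By compactness of $\LS$ in the Hausdorff topology, extract a subsequence $\mL_{n_k}$ converging in Hausdorff to some $\mL^*\in\LS$; by the convergence characterization, $\mL^*$ is a diagonal extension of $\mu$. Then $\dpts(\mL_{n_k},\mL^*)\to 0$, so $\dpts(\mL_{n_k},\mL^*)<\epsilon$ for large $k$, placing $\mL_{n_k}$ in $W_\epsilon(\mu)$ and contradicting the choice of the sequence.

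For (ii), let $U$ be an open neighborhood of $\mu$ in $\EL$. If no $W_\epsilon(\mu)$ lies in $U$, for each $n$ pick $\mL_n\in W_{1/n}(\mu)\setminus U$, so $\dpts(\mL_n,\mu'_n)<1/n$ for some diagonal extension $\mu'_n$ of $\mu$. By finiteness of diagonal extensions, a subsequence satisfies $\mu'_n\equiv\mu'$, whence $\mL_n\to\mu'$ in the Hausdorff topology. Since $\mu'$ is a diagonal extension of $\mu$ and all Hausdorff-convergent subsequences then have limit $\mu'$, the characterization of coarse Hausdorff convergence gives $\mL_n\to\mu$ in $\EL$, so $\mL_n\in U$ eventually, contradicting $\mL_n\notin U$. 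The main obstacle is essentially organizational: one must keep straight which Hausdorff limit lies in $\EL$ versus $\LS$, and apply the finiteness of diagonal extensions to pass between the purely metric $\dpts$ condition defining $W_\epsilon(\mu)$ and the coarse Hausdorff convergence that defines the topology of $\EL$.
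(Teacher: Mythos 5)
Your proof is correct and follows essentially the same route as the paper: extract Hausdorff-convergent subsequences in $\LS$ and apply the characterization of coarse Hausdorff convergence via diagonal extensions. The one point of divergence is in step (i): the paper simply asserts "$W_\epsilon(\mu)$ is open in $\EL$ by definition of the coarse Hausdorff topology," which is not self-evident (a small perturbation of $\mL\in W_\epsilon(\mu)$ can be Hausdorff-close to a \emph{different} diagonal extension of $\mL$, and there is no triangle-inequality control linking that to the diagonal extensions of $\mu$), whereas you only prove the weaker and sufficient claim that $W_\epsilon(\mu)$ contains a neighborhood of $\mu$, via exactly the same compactness argument used in (ii). You also make explicit the finiteness of diagonal extensions, which the paper uses implicitly in taking a uniform lower bound over "every diagonal extension $\mu'$"; both are legitimate, and your version is the more careful write-up.
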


\begin{proof}  By definition of coarse Hausdorff topology, $W_\epsilon(\mu)$ is open in $\EL$.  
Therefore if the lemma is false, then there exists a sequence $
\mL_1, \mL_2, \cdots$ such that $\mL_i\in W_{1/i}(\mu)  $ all $i$ 
and a $c>0$ such that for all $i, \mL_i\notin \Npts(\mu', c)$ for all 
diagonal extensions $\mu'$ of $\mu$.  After passing to subsequence 
we can assume that $\{\mL_i\}\to \mL_\infty$ with respect to the 
Hausdorff topology.  If $ \mL_\infty$ is not a diagonal extension of $
\mu$, then $\mL_\infty$ is transverse to each diagonal extension of 
$\mu$ and hence there exists an $\epsilon>0$ such that $\dpts(\mu',
\mL_i)>\epsilon$ for all i sufficiently large and every diagonal 
extension of $\mu$, a contradiction.\end{proof}

\begin{lemma} \label{pml one closeness}  Let $\mu\in \EL$, $\mu'$ a diagonal extension  and $x_1, x_2, \cdots \in \PML$ such that $\lim_{i\to\infty} \dpts(\phi(x_i), \mu')=0$, then after passing to subsequence $x_i\to x_\infty\in \phiinv(\mu)$.\end{lemma}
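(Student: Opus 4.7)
The plan is to reduce this statement to the already-established Lemma \ref{pml epsilon closeness} by using compactness of $\PML$. Since $\PML$ is homeomorphic to a sphere (of dimension $2n+1$), it is in particular a compact metric space, so every sequence in $\PML$ admits a convergent subsequence.

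First I would pass to a subsequence $x_{i_k}$ converging to some $x_\infty\in\PML$. Restricted to this subsequence, the hypothesis $\lim_{i\to\infty}\dpts(\phi(x_i),\mu')=0$ still holds, i.e. $\lim_{k\to\infty}\dpts(\phi(x_{i_k}),\mu')=0$ for the fixed diagonal extension $\mu'$ of $\mu$. Then I would directly invoke Lemma \ref{pml epsilon closeness} applied to the convergent sequence $x_{i_k}\to x_\infty$ to conclude that $x_\infty\in\phiinv(\mu)$, which is exactly the desired conclusion.

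There is no real obstacle here; the only subtlety worth flagging is that the hypothesis on $\dpts(\phi(x_i),\mu')$ is preserved under passage to subsequences (since the entire sequence of distances tends to zero), so applying Lemma \ref{pml epsilon closeness} is legitimate. Essentially, the present lemma is just the ``extract a convergent subsequence'' version of the earlier statement, which assumed convergence as a hypothesis. The whole argument is one short paragraph.
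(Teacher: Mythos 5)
Your proposal is correct, and it takes a genuinely shorter route than the paper's own proof. The reduction works exactly as you describe: $\PML$ is a compact metric space (it is homeomorphic to $S^{2n+1}$), so you may extract a convergent subsequence $x_{i_k}\to x_\infty$; the hypothesis $\dpts(\phi(x_i),\mu')\to 0$ passes to this subsequence; and Lemma \ref{pml epsilon closeness} then gives $x_\infty\in\phiinv(\mu)$ directly. The paper instead re-derives the conclusion from scratch: after passing to the same convergent subsequence, it assumes for contradiction that $\phi(x_\infty)\neq\mu$, picks a transverse intersection point $p$ of $\phi(x_\infty)$ with a leaf $L$ of $\mu$ at angle $\theta>0$, uses super convergence (Proposition \ref{super convergence}) to produce intersections of $\phi(x_i)$ with $\mu$ near $p$ at angle near $\theta$, and then uses density of the leaves of $\mu'$ in $\mu$ to conclude that $\dpts(\phi(x_i),\mu')$ is uniformly bounded away from $0$, a contradiction. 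That direct argument is geometrically more explicit but essentially reproduces the content already established in Lemma \ref{pml epsilon closeness}, so your citation buys brevity at no cost in rigor; you also correctly flagged the one point that requires care, namely that the earlier lemma requires a convergent sequence as input, which is why the compactness/subsequence step must come first.
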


\begin{proof}  After passing to subsequence we can assume that $x_i\to x_\infty
\in \PML$.  If $\phi(x_{\infty})\neq \mu$, then $\phi(x_\infty)$  intersects $\mu$ transversely.  Let $p \in \phi(x_\infty)\cap \mu$ and $L$ the leaf of $\mu$ containing $p$.  Then $\phi(x_\infty)$ intersects $L$ at $p$ at some angle $\theta>0$.   By super convergence, for $i$ sufficiently large $\phi(x_i)$ intersects $\mu$ at $p_i\in L$ at angle $\theta_i$, where $p_i$ is very close to $p$ (distance measured intrinsically in $L$) and $\theta_i$ is very close to $\theta$.  Since every leaf of $\mu'$ is dense in $\mu$, it follows that there exists $N>0$ such that if $J$ is a segment of a leaf of $\mu'$ at least length $N$ and $i$ is sufficiently large, then $J\cap\phi(x_i)\neq\emptyset$ with angle of intersection at some point at least $ \theta/2$.  Thus  $\dpts(\phi(x_i), \mu')$ must be uniformly bounded below, else some $\phi(x_i)$ would have a transverse self intersection.  \end{proof}

\begin{lemma}  \label{pts separation}   If $K\subset \PML$ and $L\subset \EL$ are compact and $K\cap \phiinv(L)=\emptyset$, then there exists $\delta>0$ such that if $\dpts(\phi(x),\mu)<\delta$ where $x\in \PML$ and $\mu\in L$, then $x\notin K$.\end{lemma}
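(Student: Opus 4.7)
The plan is to argue by contradiction, using compactness of both $K$ and $L$ to extract limits, and then invoking Lemma \ref{pml epsilon closeness} to derive a contradiction with the hypothesis $K\cap \phiinv(L)=\emptyset$.

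Suppose no such $\delta>0$ exists. Then for every $i\in\BN$ I can find $x_i\in K$ and $\mu_i\in L$ with $\dpts(\phi(x_i),\mu_i)<1/i$. Since $K$ is compact in $\PML$ and $L$ is compact in $\EL$, I pass to a subsequence so that $x_i\to x_\infty\in K$ in $\PML$ and $\mu_i\to \mu_\infty\in L$ in the coarse Hausdorff topology on $\EL$. Moreover, since $\LS$ is compact in the Hausdorff topology, a further subsequence ensures that $\mu_i\to \mu_\infty'$ in the Hausdorff topology for some lamination $\mu_\infty'\in\LS$. By the description of coarse Hausdorff convergence in $\EL$, $\mu_\infty'$ must be a diagonal extension of $\mu_\infty$.

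Next I observe that $\dpts$ is just Hausdorff distance in $PT(S)$, so it satisfies the triangle inequality. Combining $\dpts(\phi(x_i),\mu_i)\to 0$ with $\dpts(\mu_i,\mu_\infty')\to 0$ gives $\dpts(\phi(x_i),\mu_\infty')\to 0$. Now Lemma \ref{pml epsilon closeness} applies directly to the convergent sequence $x_i\to x_\infty$ in $\PML$ with the diagonal extension $\mu_\infty'$ of $\mu_\infty\in\EL$, and yields $x_\infty\in\phiinv(\mu_\infty)\subset \phiinv(L)$. Since $x_\infty$ also lies in $K$ (because $K$ is closed and each $x_i\in K$), this contradicts the hypothesis $K\cap\phiinv(L)=\emptyset$, completing the proof.

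The argument is essentially a routine compactness/triangle-inequality reduction; there is no real obstacle once one notices that a further Hausdorff subsequence of $\mu_i$ is available and that Lemma \ref{pml epsilon closeness} does the heavy lifting. The only subtle point worth highlighting is the replacement of $\mu_i$ (which only converges in the coarse Hausdorff sense) by a genuine Hausdorff limit $\mu_\infty'$, which is exactly what allows the triangle inequality step to deliver a hypothesis in the form required by Lemma \ref{pml epsilon closeness}.
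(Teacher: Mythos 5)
Your proof is correct and follows essentially the same route as the paper's: negate, extract convergent subsequences from $K$ and $L$, pass to a further Hausdorff-convergent subsequence of the $\mu_i$ to obtain a diagonal extension $\mu_\infty'$ of $\mu_\infty$, and conclude via the triangle inequality that $\dpts(\phi(x_i),\mu_\infty')\to 0$, contradicting $K\cap\phiinv(L)=\emptyset$. The only cosmetic difference is that you invoke Lemma \ref{pml epsilon closeness} where the paper invokes the essentially equivalent Lemma \ref{pml one closeness}.
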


\begin{proof}  Otherwise there exists  sequences $x_1, x_2, \cdots \to x_\infty$, $\mu_1, \mu_2, \cdots \to \mu_\infty$ such that the $x_i's \in K$ and the $\mu_j's \in L$ and $lim_{i\to \infty} \dpts(\phi(x_i),\mu_i)=0$.  After passing to subsequence we can assume that the $\mu_i$'s converge to a diagonal extension of $\mu_\infty$.  This contradicts Lemma \ref{pml one closeness}.\end{proof}  

\begin{lemma}  \label{preimage close}  If $L\subset \EL$ is compact and $U\subset \PML$ is open such that $\phiinv(L)\subset U$, then there exists a neighborhood $V$ of $L$ such that $\phiinv(V)\subset U$.\end{lemma}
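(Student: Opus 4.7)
The plan is to convert this into a separation problem inside $\EL$ and leverage the closed-map property of $\phi\colon \FPML \to \EL$ established in Corollary~\ref{closed map}. The point is that, although the hypothesis is phrased entirely in $\PML$, the natural way to produce the desired $V$ is downstairs in $\EL$.

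First I would set $K := \PML \setminus U$. Since $\PML$ is a sphere and $U$ is open, $K$ is closed in $\PML$, hence compact. Then $K \cap \FPML$ is closed in $\FPML$ with its subspace topology, so by Corollary~\ref{closed map} the image $A := \phi(K \cap \FPML)$ is a closed subset of $\EL$. The next step is to check $A \cap L = \emptyset$: if some $\mu$ lay in the intersection, pick $x \in K \cap \FPML$ with $\phi(x)=\mu$; then $x \in \phi^{-1}(L) \subset U$, contradicting $x \in K$.

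Now I would invoke metrizability of $\EL$ (Remark~\ref{separable and complete}), which in particular makes $\EL$ regular. Since $L$ is compact and $A$ is a disjoint closed set, standard point-set topology produces an open neighborhood $V$ of $L$ with $V \cap A = \emptyset$. Finally I would verify $\phi^{-1}(V) \subset U$: any $x \in \phi^{-1}(V)$ must lie in $\FPML$ because $\phi(x) \in V \subset \EL$; if such an $x$ also lay in $K$, then $x \in K \cap \FPML$ and $\phi(x) \in A$, contradicting $V \cap A = \emptyset$.

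I do not expect a real obstacle here; the argument is essentially a two-line consequence of Corollary~\ref{closed map} plus regularity of $\EL$. The one subtlety worth tracking is that $\phi$ is defined on all of $\PML$ but the closed-map property is only available on $\FPML$, so one must pass to $K \cap \FPML$ before applying the corollary; points of $K \cap \UPML$ are harmless because their images automatically avoid any neighborhood $V \subset \EL$.
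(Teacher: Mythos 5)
Your argument is correct, but it follows a genuinely different route from the paper. The paper's proof sets $K=\PML\setminus U$ and applies Lemma~\ref{pts separation} (a quantitative $PT(S)$-distance separation statement) to the explicit neighborhood basis $W_\epsilon(L)=\{\mL\in\EL \mid \dpts(\mL,\mu')<\epsilon,\ \mu' \text{ a diagonal extension of } \mu\in L\}$, showing that $\phiinv(W_\epsilon(L))\subset U$ for $\epsilon$ small. You instead bypass the metric apparatus entirely: you push the closed set $K\cap\FPML$ forward under the closed map $\phi|\FPML$ (Corollary~\ref{closed map}), check it is disjoint from $L$, and then invoke normality of the metrizable space $\EL$ to separate. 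Both arguments ultimately rest on super convergence (yours through Corollary~\ref{closed map}, the paper's through Lemma~\ref{pml one closeness}), but yours is a cleaner piece of abstract point-set topology, while the paper's proof has the side benefit of exhibiting a concrete $V$ of the form $W_\epsilon(L)$, which fits the metric neighborhood-basis toolkit used elsewhere in Section~4. Your observation that points of $K\cap\UPML$ are harmless (because $\phi$ sends them outside $\EL$, hence outside any $V\subset\EL$) is exactly the right bookkeeping to justify restricting to $\FPML$ before applying the closed-map property.
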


\begin{proof}  Let $W_\epsilon (L)=\{\mL\in \EL | 
\dpts(\mL, \mu')<\epsilon, \mu' \textrm{is a diagonal extension of } \mu\in L\}.$  Then $W_\epsilon (L)$ is open and  for $\epsilon$ sufficiently small $\phiinv(W_\epsilon(L))\subset U$ otherwise taking $K=\PML\setminus U$ one obtains a contradiction to the previous lemma.\end{proof}

\begin{lemma}  \label{marker closeness}  Let $\mu\in \EL$ and $\mM_1, \mM_2, 
\cdots$ a sequence of markers such that for every $i\in \BZ$, $\mM_i$ is a $1/i
$-marker hit by $\mu$.  If $U_i=\{\mL\in \EL|\mL $ \textrm{hits} $ \mM_i\}$, then $\mU=\{U_i
\}$ is a neighborhood basis of $\mu$ in $\EL$.\end{lemma}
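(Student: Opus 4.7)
The plan is to verify two things: (a) each $U_i$ is an open subset of $\EL$ containing $\mu$, and (b) every neighborhood of $\mu$ in $\EL$ contains some $U_i$.  Part (a) is quick.  The containment $\mu \in U_i$ is exactly the hypothesis.  For openness I would argue that the complement of $U_i$ is closed: if $\mL \in U_i$ and $\mL_j \to \mL$ in $\EL$, then by the definition of the coarse Hausdorff topology any Hausdorff-convergent subsequence has limit $\mL_\infty$ that is a diagonal extension of $\mL$, hence $\mL_\infty \supseteq \mL$ still hits $\mM_i$, and Lemma~\ref{markers open} forces $\mL_j \in U_i$ for $j$ large.

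For part (b), invoke Lemma~\ref{one closeness} (which says $\{W_\epsilon(\mu)\}$ is already a neighborhood basis) to reduce to showing that for every $\epsilon>0$ one has $U_i \subseteq W_\epsilon(\mu)$ once $i$ is large.  Assume for contradiction that this fails, so for each $i$ there exists $\mL_i \in U_i$ with $\dpts(\mL_i,\mu') \ge \epsilon$ for every diagonal extension $\mu'$ of $\mu$.  Pass to a subsequence with $\mL_i \to \mL_\infty$ in the Hausdorff topology on closed subsets of $PT(S)$.  The core of the proof is the claim $\mu \subseteq \mL_\infty$, and I expect this to be the main obstacle.

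To establish the claim, use that $\mu$ and $\mL_i$ both hit $\mM_i$ to pick arcs $a_i$ of a leaf of $\mu$ and $b_i$ of a leaf of $\mL_i$, each spanning $\mM_i$.  Each arc has length at least $1$, and the $1/i$-marker condition provides compatible lifts with $\dtildeH(\tilde a_i, \tilde b_i) < 1/i$.  Because $\mu$ is compactly supported on a finite-area surface, the lift of $\mu$ to $PT(S)$ is compact, so after passing to a further subsequence the midpoint-with-tangent data of $a_i$ converges to some $(p,v) \in \mu \subset PT(S)$; this yields a length-$1$ geodesic arc $a \subset \mu$ that is the limit of the $a_i$ after rechoosing lifts compatibly.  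The bound $\dtildeH(\tilde a_i, \tilde b_i) < 1/i$, together with the fact that $PT(\tilde S) \to PT(S)$ is a local isometry, forces $b_i \to a$ as well, so $a \subseteq \mL_\infty$.  Because $a$ is an arc of a leaf of the minimal lamination $\mu$, the full geodesic containing $a$ is a leaf of both $\mu$ and $\mL_\infty$; by minimality its closure equals $\mu$, so $\mu \subseteq \mL_\infty$.

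Since $\mu$ is filling, the complementary regions of $\mu$ in $S$ are ideal polygons, so any leaf of $\mL_\infty \setminus \mu$ must be a diagonal of one of these, and there are only finitely many such diagonals.  Hence $\mL_\infty = \mu'$ for some diagonal extension $\mu'$ of $\mu$, and Hausdorff convergence gives $\dpts(\mL_i,\mu') \to 0$, contradicting the choice of the $\mL_i$.  The only genuinely delicate step is the construction of the limit arc $a$ lying simultaneously in $\mu$ and in $\mL_\infty$: one must coordinate the choices of lifts of $a_i$ and $b_i$ so that the marker bound translates into Hausdorff closeness downstairs and survives passage to the limit.
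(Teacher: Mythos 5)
Your argument is correct, and structurally it mirrors the paper's: show each $U_i$ is open, then show that the $U_i$ refine the neighborhood basis $\{W_\epsilon(\mu)\}$ from Lemma \ref{one closeness}. Where you differ is in how the refinement is established. The paper asserts in one line that $\mL\in U_i$ implies $\dpts(\mL,\mu)<1/i$, hence $U_i\subset W_{1/i}(\mu)$; read literally this is too strong, since hitting a $1/i$-marker only controls $\mL$ near the spanning arcs, and a lamination that is $PT(S)$-Hausdorff-close to a proper diagonal extension $\mu'$ of $\mu$ will hit $\mM_i$ once $i$ is large while staying a definite Hausdorff distance from $\mu$ itself. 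Your compactness argument proves the weaker but sufficient fact that for each $\epsilon>0$ one has $U_i\subset W_\epsilon(\mu)$ for $i$ large, and it does so with the same device the paper uses to prove Lemma \ref{one closeness} itself (pass to a Hausdorff-convergent subsequence, identify the limit as a diagonal extension of $\mu$, contradict the choice of $\mL_i$). So this is less a different route than a correct and complete rendering of the paper's sketch. One reassurance: the lift-coordination step you flag as delicate is harmless, since $\dtildeH(\tilde a_i,\tilde b_i)<1/i$ already asserts the existence of compatible lifts, and projection to $PT(S)$ is distance non-increasing, so $\dpts(a_i,b_i)<1/i$ downstairs and the common Hausdorff limit $a\subset\mu\cap\mL_\infty$ follows directly; since two complete geodesics sharing a nondegenerate arc coincide, $\mu$ and $\mL_\infty$ share a leaf, and minimality plus fillingness finish the job as you say.
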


\begin{proof}  By definition of $1/i$-marker, if $\mL$ hits $\mM_i$, then $\dpts(\mL, \mu)<1/i$. Therefore, for all $i$, $U_i\subset W_{1/i}$.  Since each $U_i$ is open in $\EL$ the result follows.\end{proof}

\begin{lemma} \label{two closeness}  Let $\mu\in \EL$.  For each $\epsilon>0$ there exists $ \delta>0$ such that if $\{\mL^1, \mL^2, \cdots, \mL^k, z\} \subset \mL(S)$, $\dpts(\mL^k, z)<\delta$, $\dpts(\mL^i, \mL^{i+1})<\delta$ for $1\le i\le k-1$ and $\dpts(\mu', \mL^1)<\delta$  for some diagonal extension $\mu'$ of $\mu$, then $\dpts(\mu,z)<\epsilon$.\end{lemma}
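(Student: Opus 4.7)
The plan is a compactness-based contradiction argument, passing to Hausdorff limits and using the minimal, filling nature of $\mu$. Suppose the conclusion fails for some $\epsilon_0 > 0$. Then for every $j \in \BN$ we can choose a chain $\mL^{1,j}, \dots, \mL^{k_j,j}, z_j \in \mL(S)$ and a diagonal extension $\mu'_j$ of $\mu$ satisfying the hypotheses with $\delta = 1/j$, yet with $\dpts(\mu, z_j) \ge \epsilon_0$.

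The first reduction fixes the diagonal extension and extracts a limiting endpoint. Since $\mu$ is filling, $S \setminus \mu$ is a finite disjoint union of ideal polygons, and each polygon admits only finitely many non-crossing collections of ideal diagonals, so the set of diagonal extensions of $\mu$ is finite. Passing to a subsequence we may therefore assume $\mu'_j = \mu'$ is a fixed diagonal extension. Chabauty compactness of closed subsets of $PT(S)$ in the Hausdorff topology lets us pass to a further subsequence with $z_j \to z_\infty$ in the Hausdorff topology, so $\dpts(\mu, z_\infty) \ge \epsilon_0$, and in particular $z_\infty$ carries a leaf transverse to $\mu$.

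Next I take a Hausdorff limit of the ``chain unions'' $X_j := \mL^{1,j} \cup \cdots \cup \mL^{k_j,j} \cup \{z_j\} \subset PT(S)$. Each $X_j$ is closed and invariant under the geodesic flow $\phi_t$, so after another subsequence $X_j \to X_\infty$ in the Hausdorff topology, with $X_\infty$ closed, $\phi_t$-invariant, and containing $\mu' \cup z_\infty$. The $1/j$-consecutive-closeness of the chain forces a form of limit connectedness: any point $p \in X_\infty$ can be traced through $1/j$-jumps across consecutive $\mL^{i,j}$'s back to a point of $\mL^{1,j}$, which converges into $\mu'$. Combined with super convergence (Proposition \ref{super convergence}) and the $\PML$ rigidity used in Lemma \ref{pml one closeness} (adapted to limits in $\mL(S)$), this forces every leaf of $X_\infty$ either to be a leaf of $\mu$ or to lie entirely in a complementary ideal polygon of $\mu$. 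Hence $X_\infty$ is contained in some diagonal extension $\mu''$ of $\mu$, and in particular $z_\infty \subset \mu''$, contradicting the fact that $z_\infty$ has a leaf transverse to $\mu$.

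The main obstacle is the structural rigidity of $X_\infty$. Triangle inequality on $\dpts$ alone cannot deliver the conclusion because the chain length $k_j$ is \emph{a priori} unbounded, so naive error accumulation would swamp $\epsilon_0$. The argument must instead exploit the fact that $\mu$ is minimal and filling: a geodesic transverse to a leaf of $\mu$ has tangent directions uniformly separated in $PT(S)$ from those of $\mu$, so it cannot appear as a Hausdorff limit of points along a chain of laminations rooted in $\mu'$. Making this rigidity argument precise in the limit $X_\infty$ — in particular, ruling out the appearance of spurious transverse leaves coming from short $1/j$-jumps — is the essential use of $\mu \in \EL$ and is the heart of the proof.
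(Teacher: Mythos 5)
Your compactness setup (pass to a failing sequence with $\delta = 1/j$, fix one of the finitely many diagonal extensions, extract Hausdorff limits) matches the paper, but the core of your argument — that the limit $X_\infty$ of the chain-unions $X_j = \mL^{1,j}\cup\cdots\cup\mL^{k_j,j}\cup z_j$ must be contained in a diagonal extension of $\mu$ — is the step you leave unproved, and it is both the wrong mechanism and, as stated, probably false. The set $X_\infty$ is a Hausdorff limit of unions of laminations, not of a single lamination, so it is just a closed flow-invariant subset of $PT(S)$ with no embeddedness constraint; nothing prevents it from containing wildly transverse geodesics, and already in the $k=1$ case $X_\infty \supset \mu' \cup z_\infty$ makes it non-laminar. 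Your ``tracing back'' step is also inaccurate: points of $X_\infty$ trace back into $\Lim \mL^{1,j}$, which contains $\mu'$ but may be strictly larger. Finally, the tools you invoke (Proposition~\ref{super convergence}, Lemma~\ref{pml one closeness}) are $\PML$-statements that crucially use transverse measures; you cannot cite them for raw Hausdorff limits in $\LS$ without a new argument. So the ``heart of the proof,'' as you yourself flag, is missing.

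What the paper actually does is entirely local to a \emph{single} lamination in the chain and never forms a union: for $k=1$, after extracting $\mL_i\to\mL_\infty$ and $z_i\to z_\infty$, it uses that $\mu$ is minimal and filling to find $K>0$ so that any length-$K$ arc in a leaf of $z_\infty$ (transverse to $\mu$) crosses any length-$K$ arc in a leaf of any diagonal extension of $\mu$ at an angle bounded away from $0$; then, since $\mL_i$ is Hausdorff-close to $\mu'$ \emph{and} to $z_i$, $\mL_i$ must contain long arcs nearly parallel to a leaf of $\mu'$ and long arcs nearly parallel to a leaf of $z_i$, and these two arcs are forced to cross transversely — impossible inside the single geodesic lamination $\mL_i$. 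This ``two nearly-parallel segments in the same lamination must cross'' mechanism is the essential use of $\mu\in\EL$ and is what your proposal needs but does not contain. I recommend abandoning the chain-union limit and instead running the paper's self-crossing argument; for the general-$k$ assertion you would then need to explain how closeness to $\mu'$ propagates along the chain (the paper only asserts ``the general case is similar''), but at minimum the $k=1$ case should go through the transversality argument, not a global rigidity claim.
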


\begin{proof}  We give the proof for $k=1$, the general case being 
similar.  If the Lemma is false, then there exists a sequence $(\mL_i, 
z_i, \delta_i)$ for which the lemma fails, where $\delta_i\to 0$.  After 
passing to subsequence we can assume that $\mL_i\to \mL_\infty$ 
and $z_i\to z_\infty$ with respect to the Hausdorff topology.  Since 
$z_\infty$ is nowhere tangent to $\mu$, it is transverse to $\mu$.  
Since the ends of every leaf of every diagonal extension of $\mu$ is 
dense in $\mu$ and $\mu$ is filling,  there exists $K>0$ such that 
any length $K$ immersed segment lying in a leaf of $z_\infty$ intersects any 
length $K$ segment lying in any diagonal extension of $\mu$ at 
some angle uniformly bounded away from $0$.  Thus a similar 
statement holds for each $z_i, i$ sufficiently large, where $K$ is 
replaced by $K+1$.  Therefore if $\delta_i$  is sufficiently small and 
$i$ is sufficiently large, then $\mL_i$ has length $K+2$ immersed segments $
\sigma_1, \sigma_2$ such that $\sigma_1$ is nearly parallel  to a 
leaf of $\mu'$ and $\sigma_2$ is nearly parallel to a leaf of $z_i$.  
This implies that $\sigma_1$ nontrivially intersects $\sigma_2$ 
transversely, a contradiction.\end{proof}

\begin{lemma}  \label{unzipping} Let $\epsilon>0$.  Let $\tau_1, 
\tau_2, \cdots$ be a full unzipping sequence of the transversely 
recurrent train track $\tau_1$.  If each $\tau_i$ fully carries the 
geodesic lamination $\mL$, then there exists $N>0$ such that if $
\mL_1$ is carried by $ \tau_i$, for some $i\ge N$, then $
\dpts(\mL_1, \mL)<\epsilon$.\end{lemma}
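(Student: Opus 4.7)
The plan is to exploit the geometric content of a full unzipping sequence of a transversely recurrent train track. By the standard theory (Thurston; Penner--Harer), each $\tau_i$ admits a fibered neighborhood $N(\tau_i)$ foliated by short geodesic ties with $N(\tau_{i+1})\subset N(\tau_i)$, and the defining feature of a \emph{full} unzipping sequence is that the maximum tie length $\delta_i$ of $N(\tau_i)$ tends to $0$ as $i\to\infty$. Since $\mL$ is fully carried by every $\tau_i$, this forces $\tau_i$ (viewed as a subset of $PT(S)$ with its canonical tangent structure) to approximate $\mL$ arbitrarily well.

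First I would show that $\dpts(\tau_i,\mL)\to 0$. Full carrying of $\mL$ by $\tau_i$ means that every branch of $\tau_i$ is traversed transversely by leaves of $\mL$ whose tangent directions differ from the branch direction by at most some $\eta_i\to 0$ controlled by $\delta_i$; this yields $\tau_i\subset N_{PT(S)}(\mL,\eta_i)$. Conversely, $\mL\subset N(\tau_i)\subset N_{PT(S)}(\tau_i,\delta_i)$. Next, for any $\mL_1$ carried by $\tau_i$ one has $\mL_1\subset N(\tau_i)$ with each point of $\mL_1$ tangent to the corresponding branch up to error $\delta_i$, so $\dpts(\mL_1,\tau_i)\le\delta_i$. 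The triangle inequality for Hausdorff distance in $PT(S)$ then gives $\dpts(\mL_1,\mL)\le 2\delta_i+\eta_i$, which is less than $\epsilon$ once $N$ is chosen so that $2\delta_i+\eta_i<\epsilon$ for all $i\ge N$.

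The main obstacle is the uniform tie-shrinking $\delta_i\to 0$: an individual unzipping only shortens ties along a single branch, so one must use the combinatorics of the full unzipping together with the transverse recurrence hypothesis on $\tau_1$ to conclude that \emph{all} tie widths shrink uniformly. By Thurston's theorem, transverse recurrence provides a transverse measure on the dual bundle bounding the widths, and full unzipping unzips every branch cofinally often; combining these gives the uniform bound. A secondary subtlety is verifying the lower-bound side of the Hausdorff estimate, namely that every portion of $\mL$ is approximated by $\mL_1$, which reduces to comparing both laminations to the common approximating train track $\tau_i$ via the triangle inequality. I would cite the tie-shrinking property as a black box from Penner--Harer, Chapter 1.
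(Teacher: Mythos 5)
Your argument rests on two steps, and both have real problems.

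First, the claim that ``the maximum tie length $\delta_i$ of $N(\tau_i)$ tends to $0$'' is not the defining feature of a full unzipping sequence --- full unzipping is a combinatorial operation --- and it is not what Penner--Harer provide. Lemma 1.7.9 of \cite{PH} says that a \emph{single} transversely recurrent track can be \emph{isotoped} so as to have a fibered neighborhood with arbitrarily short (nearly geodesic) ties and nearly geodesic branches; it does not say that the nested neighborhoods $N(\tau_1)\supset N(\tau_2)\supset\cdots$ of a fixed full unzipping sequence, in a fixed position, have tie lengths going to zero. In fact the assertion that the $N(\tau_i)$ collapse metrically onto $\mL$ is essentially equivalent to the lemma you are trying to prove (it would immediately force every lamination carried by all $\tau_i$ to equal $\mL$), so black-boxing it assumes the conclusion. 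What the proof of Lemma 1.7.9 actually yields, and what the paper uses, is that bi-infinite train paths of every $\tau_i$ are \emph{uniform} quasi-geodesics, together with the statement that for any $L$ there is $N$ so that every length-$L$ leaf segment of a lamination carried by $\tau_i$, $i\ge N$, is isotopic into a leaf of $\mL$ with uniformly bounded tracks; the $PT(S)$-closeness then comes from hyperbolic geometry (two geodesic segments that fellow-travel over a long distance are nearly tangent in the middle), not from short ties.

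Second, even granting shrinking ties, your inequality $\dpts(\mL_1,\tau_i)\le\delta_i$ is only valid in one direction. Short ties give that every point of $\mL_1$ is $\delta_i$-close to $\tau_i$, and full carrying of $\mL$ gives that every tie meets $\mL$, so $\mL_1\subset N_{PT(S)}(\mL,\cdot)$ is fine. But $\mL_1$ is only assumed \emph{carried}, not fully carried, by $\tau_i$: it may omit entire branches of $\tau_i$, and those branches are long (that is the whole point of unzipping), so the points of $\mL$ riding over them need not be anywhere near $\mL_1$. This is exactly the ``lower-bound side'' you dismiss as a secondary subtlety reducible to the triangle inequality; it does not so reduce, and it is where a genuine argument is needed (one must show that, for $i$ large, any bi-infinite train path of $\tau_i$ that $\mL_1$ does follow comes $\epsilon$-close in $PT(S)$ to every branch of $\tau_i$, hence to all of $\mL$). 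As written, the proposal has a gap at both the main estimate and the step it relies on as a black box.
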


\begin{proof}  This follows from the proof of Lemma 1.7.9 \cite{PH} 
(see also Proposition 1.9 \cite{G1}).  That argument shows that each 
biinfinite train path of each $\tau_i$ is a uniform quasi-geodesic and 
that given $L>0$, there exists $N>0$ such that any length $L$ 
segment lying in a leaf of a lamination carried by  $\tau_i, i\ge N$, is 
isotopic to a leaf of $\mL$ by an isotopy such that the track of a 
point has uniformly bounded length. \end{proof}

\begin{lemma}  \label{carries diagonal extension} If $\tau$ is a train track that carries $\mu\in \EL$, then $\tau$ fully carries a diagonal 
extension of $\mu$.\end{lemma}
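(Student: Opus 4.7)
The plan is to let $\tau'\subseteq\tau$ be the sub-train-track consisting of those branches of $\tau$ which are traversed by some leaf of $\mu$; then $\tau'$ fully carries $\mu$ and the set $B:=\tau\setminus\tau'$ of ``extra'' branches is finite. For each $b\in B$ I will build a biinfinite train path $\gamma_b$ in $\tau$ that uses $b$ and whose forward and backward tails eventually enter $\tau'$ and remain there; its geodesic realization $g_b$ in $S$ will be a cusp-to-cusp diagonal of a complementary region of $\mu$, and the lamination $\mu^+:=\mu\cup\bigcup_{b\in B} g_b$ will be the desired diagonal extension fully carried by $\tau$.

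The crucial step is constructing $\gamma_b$. Extending $b$ forward one branch at a time, at each switch of $\tau$ I can always continue, and once the path has entered $\tau'$ I stay in $\tau'$ by always choosing a $\tau'$-outgoing branch (such a choice exists at every switch of $\tau'$ because $\tau'$ fully carries the minimal lamination $\mu$). The only obstruction is a forward extension trapped in $B$ forever; by pigeonhole on the finite set of (switch, incoming-branch, outgoing-branch) triples, such a path contains a closed train path $c$ entirely in $B$. The integer weights on $B$ given by $c$ satisfy the switch conditions separately from those of $\mu$, so $\mu+c$ is a measured lamination on $\tau$; its geodesic support is then a geodesic lamination with pairwise disjoint leaves, forcing the closed geodesic realization $c^\ast$ of $c$ to be disjoint from $\mu$. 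But this contradicts the filling hypothesis on $\mu$. The backward extension is symmetric.

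With $\gamma_b$ constructed, a similar switch-condition analysis, applied to $\mu$ together with the integer weights given by a long finite symmetric sub-path of $\gamma_b$ and followed by passage to the limit of such sub-paths, shows that $g_b$ is disjoint from every leaf of $\mu$, and a like argument shows the $g_b$'s are pairwise disjoint. Being biinfinite and disjoint from $\mu$, each $g_b$ lies in a single complementary region $P_b$ of $\mu$; disjointness from $\partial P_b\subseteq\mu$ forces its two ideal endpoints to be ideal vertices of $P_b$, so $g_b$ is a cusp-to-cusp diagonal of the ideal polygon $P_b$ (possibly once-punctured). Therefore $\mu^+$ is $\mu$ together with finitely many non-compact leaves each of which accumulates only on $\mu$, and so is a closed geodesic lamination --- a diagonal extension of $\mu$ in the paper's sense. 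Finally every branch of $\tau$ is traversed by a leaf of $\mu^+$ (branches of $\tau'$ by leaves of $\mu$, branches of $B$ by the corresponding $g_b$), so $\tau$ fully carries $\mu^+$.

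The principal obstacle is the existence of $\gamma_b$, where the filling hypothesis on $\mu$ enters essentially via the measured-lamination argument ruling out closed train paths in $B$; the disjointness and cusp-endpoint statements for $g_b$ are then standard consequences of the switch conditions for train-track-carried measures.
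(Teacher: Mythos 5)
Your overall plan --- let $\tau'\subset\tau$ be the sub-track traversed by $\mu$, and for each branch $b$ of $B=\tau\setminus\tau'$ add a biinfinite geodesic through $b$ that spirals onto $\mu$ at both ends --- is exactly what the paper means by analyzing the restriction of $\tau$ to the complementary regions of $\mu$ and adding diagonals. Your argument that no closed train path $c$ can be trapped in $B$ is correct in conclusion, but as written it silently uses the nontrivial fact that the geodesic support of the measured lamination with weights $w_\mu+w_c$ is $\mu\sqcup c^\ast$; the support of a sum of weights on $\tau$ is not in general the union of supports. What saves you is that $w_\mu$ and $w_c$ have disjoint branch supports and each satisfies the switch equations separately, which by a short case-check forces $\tau'$ and the branches of $c$ to share no switch either, so they lie in disjoint connected sub-tracks of $\tau$ and their realizations in $N(\tau)$ are disjoint for elementary reasons. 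This step should be made explicit.

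The real gap is the disjointness of $g_b$ from $\mu$ (and of the $g_b$'s from each other). A biinfinite non-closed train path has no natural weight function satisfying the switch conditions, the weights of the long symmetric sub-paths blow up, and any normalized limit is a transverse measure supported on the $\omega$-limit of $\gamma_b$'s tails --- a sublamination of $\mu$ --- not on the isolated geodesic $g_b$; so the proposed argument does not establish anything about $g_b$. Worse, the disjointness simply fails for an arbitrary $\gamma_b$ built by your rule. Once the tails of $\gamma_b$ are in $\tau'$, their branches coincide with branches of $\mu$ and the switch-separation observation no longer applies; and if you realize $\mu$ in $N(\tau)$ and start the realization of $\gamma_b$ with the strand over $b$, that strand sits in a definite complementary interval of the $\mu$-Cantor set, so at each switch where $\mu$ splits the realization is forced to the outgoing branch on the side of the split that its interval occupies. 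Your instruction ``stay in $\tau'$ by choosing a $\tau'$-outgoing branch'' permits the other side of the split, and taking it forces the realized strand to cross $\mu$-strands in that branch. The cure is to construct the realization first and let it dictate the abstract path: extend the strand over $b$ forward and backward inside $N(\tau)\setminus\mu$, at each switch following a boundary leaf of $\mu$ bounding its complementary interval (such a leaf exists in every $\tau'$-branch). Then the tails of $\gamma_b$ eventually agree with half-leaves of $\mu$, so $g_b$ spirals onto $\mu$, and putting all the $b$-strands in the complementary intervals in general position gives pairwise disjointness. With that modification, the remainder of your argument (the filling hypothesis rules out the tails getting trapped in $B$; $g_b$ is then a diagonal of a complementary, possibly once-punctured, ideal polygon; every branch of $\tau$ is covered) is fine.
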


\begin{proof}  By analyzing the restriction of $\tau$ to each closed 
complementary region of $\mu$, it is routine to add diagonals to $
\mu$ to obtain a lamination fully carried by $ \tau$.  \end{proof}

\begin{lemma}  \label{splitting}  Let $\kappa$ be  a
transversely recurrent train track that carries $\mu\in \EL$.  Given $\delta >0$ there exists 
$N>0$ so that if $\tau$ is obtained from $\kappa$ by a 
sequence of $\ge N$ full splittings (i.e. along all the large branches) 
and  $\tau$ carries both $\mu$ and $\mL\in \mL(S)$, then $
\dpts(\mL, \mu)<\delta$.\end{lemma}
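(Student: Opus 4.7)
The plan is to reduce to Lemma \ref{unzipping} via Lemma \ref{carries diagonal extension}, which lets us upgrade ``carries'' to ``fully carries''. First I would invoke Lemma \ref{carries diagonal extension} to pick a diagonal extension $\mu'$ of $\mu$ that is fully carried by $\kappa$, so that $\mu'$ assigns positive weight to every branch of $\kappa$. Next I would refine the given sequence of $\ge N$ full splittings into a sequence of individual splittings along single large branches. Because $\mu'$ puts positive weight on each branch and because the full splittings are made so as to preserve the carrying of $\mu$, each individual splitting can be chosen compatibly with the weights of $\mu'$, after at worst updating the added diagonal leaves of $\mu'$ at each step. This exhibits the passage from $\kappa$ to $\tau$ as the initial portion of a full unzipping sequence $\kappa=\tau_0,\tau_1,\ldots,\tau_M=\tau$ of length $M\ge N$, in which each $\tau_i$ fully carries some diagonal extension $\mu_i'$ of $\mu$.

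Then I would apply Lemma \ref{unzipping} to this full unzipping sequence, with $\mL$ in that lemma taken to be $\mu_M'$ (or a subsequential tracking of the $\mu_i'$), and with a preassigned $\epsilon>0$: for $N$ (and hence $M$) sufficiently large, every $\mL\in \mL(S)$ carried by $\tau$ satisfies $\dpts(\mL,\mu_M')<\epsilon$.

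The remaining step, and the main obstacle, is passing from closeness to $\mu_M'$ to closeness to $\mu$ itself, since a diagonal extension can differ from $\mu$ in $PT(S)$ by leaves whose middles lie in the interior of complementary ideal polygons of $\mu$ and are therefore bounded away from $\mu$ in $PT(S)$. I expect to resolve this by a contradiction-plus-super-convergence argument. Assuming the lemma failed, one extracts a sequence of counterexamples $(\tau_{N_j},\mL_j)$ with $\dpts(\mL_j,\mu)\ge\delta$; after passing to a Hausdorff subsequential limit $\mL_\infty$ and using Proposition \ref{super convergence} together with the fact that $\mu$ is ending (so minimal and filling), one forces $\mL_\infty$ to contain $\mu$ and to be a diagonal extension of $\mu$. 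The delicate point is showing that $\mu$-preserving full splittings progressively shrink the cone of measures carried by $\tau_{N_j}$ onto the ray through $\mu$, so that the diagonal leaves of the $\mu_{N_j}'$ (and hence of any Hausdorff subsequential limit of the $\mL_j$) are forced to lie in arbitrarily thin $PT(S)$-neighborhoods of $\mu$; this contradicts the lower bound $\dpts(\mL_j,\mu)\ge\delta$ and finishes the proof.
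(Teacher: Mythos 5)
Your overall route matches the paper's: use Lemma \ref{carries diagonal extension} to pass to a diagonal extension $\mu'$ fully carried by the track, realize the splitting sequence as (part of) a full unzipping sequence, and invoke Lemma \ref{unzipping}. However, there are two concrete problems.

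On Step 2: Lemma \ref{unzipping} needs a \emph{single} geodesic lamination fully carried by every $\tau_i$ in the full unzipping sequence. Your phrase ``after at worst updating the added diagonal leaves of $\mu'$ at each step'' produces a possibly varying family $\mu_i'$, which does not satisfy the hypothesis as written. The clean way to get one $\mu'$ is the pigeonhole observation the paper uses: $\mu$ has only finitely many diagonal extensions, so on any infinite splitting sequence (extracted, via K\"onig's lemma, from a putative family of counterexamples) some fixed $\mu'$ is fully carried by infinitely many $\tau_i$, and then by all of them past that point since carrying is monotone. Your ``subsequential tracking'' phrase gestures at this but leaves it vague.

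On Step 4, there is an internal contradiction. You correctly point out that a proper diagonal extension $\mu'$ differs from $\mu$ by leaves whose middles sit in the interior of complementary ideal polygons of $\mu$ and are hence at a \emph{fixed} positive $PT(S)$-distance from $\mu$. Then you propose to close the argument by showing that these same diagonal leaves ``are forced to lie in arbitrarily thin $PT(S)$-neighborhoods of $\mu$.'' Those two assertions cannot both hold: the diagonal leaves of $\mu'$ are specific geodesics whose $PT(S)$-distance from $\mu$ is determined by the geometry of the complementary regions of $\mu$ and does not shrink as the train track is split. The heuristic ``full splittings shrink the cone of measures carried by $\tau_{N_j}$ onto the ray through $\mu$'' is also off target: the diagonal leaves are isolated, non-closed, and carry no invariant transverse measure, so they are invisible to $V(\tau)\subset\ML$; moreover $\phiinv(\mu)$ is a convex cell, not a ray, when $\mu$ is not uniquely ergodic. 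So the proposed fill for the $\mu'$-to-$\mu$ gap does not go through.

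For comparison, the paper opens by reducing the goal to $\dpts(\mL,\mu')<\delta_1$ for some diagonal extension $\mu'$ and then derives exactly that bound from Lemma \ref{unzipping} by contradiction; it does not carry out the kind of ``squeeze the diagonal leaves onto $\mu$'' argument you sketch, and indeed no such argument is available. Note also that throughout the surrounding lemmas (e.g.\ Lemma \ref{one closeness}, Lemma \ref{continuity3}, Lemma \ref{two closeness}) ``closeness'' in the coarse Hausdorff sense is systematically phrased via closeness to some diagonal extension; that is the form in which Lemma \ref{unzipping} delivers the estimate and in which the downstream applications (Proposition \ref{convergence}, Lemma \ref{refinement2}) actually use it. If you insist on the literal Hausdorff bound $\dpts(\mL,\mu)<\delta$ as an intermediate statement, you need a genuinely different mechanism than the one you describe.
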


\begin{proof} It suffices to show that  $\dpts(\mL,\mu')<\delta_1$  for some  diagonal extension $\mu'$ of $\mu$ and some $\delta_1>0$, that depends on $\delta$ and $\mu$.  Since  there are only finitely many train tracks obtained from a given finite number of full splittings of $\kappa$ it follows that if the lemma is false, then there exist $\tau_1, \tau_2, \cdots$ such that $\tau_1=\kappa,  \tau_i$ is a full splitting of $\tau_{i-1}$ and for each $ i\in \BN$ there exists $\mL_i\in \mL(S)$ carried by some splitting of $\tau_{n_i}$ with $\dpts(\mL_i, \mu)>\delta$ and $n_i\to\infty$.  Note that $\mL_i$ is also carried by $\tau_{n_i}$.  Since $\mu$ has only finitely many diagonal extensions we can assume from the previous lemma that each $\tau_i$ fully carries a fixed diagonal extension $\mu'$ of $\mu$.  

On the other hand, there is a full unzipping sequence $\tau_1'=\tau_1, \tau_2', \cdots$ with the property that each $\tau_i'$ carries exactly the same  laminations as some $\tau_{m_i}$ and $m_i\to \infty$.  Thus by Lemma \ref{unzipping} it follows that for $i$ sufficiently large $\dpts(\mL_i, \mu')<\delta_1$, a contradiction.\end{proof}

For once and for all fix a parametrized pants decomposition of $S$, with the corresponding finite set of standard train tracks.

\begin{proposition} \label{convergence} Given $\epsilon>0, \mu\in \EL$, there exists $N>0, \delta>0$ such that if $\tau$ is obtained from a standard train track by $N$ full splittings and $\tau$ carries $\mu$, then the following holds.  If $\mL\in \mL(S)$ is carried by $\tau$, $z\in \mL(S)$ and $\dpts(\mL, z)<\delta$, then $\dpts(\mu, z)<\epsilon$.  In particular, if $\mL$ is carried by $\tau$, then $\dpts(\mu, \mL)<\epsilon$. \end{proposition}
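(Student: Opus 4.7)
The plan is to package together Lemma \ref{splitting} with Lemma \ref{two closeness}, applied with chain length $k=1$. Lemma \ref{splitting} gives us that any lamination carried by a sufficiently split train track is automatically $\dpts$-close to $\mu$, while Lemma \ref{two closeness} upgrades a chain of $\dpts$-closeness (ultimately to a diagonal extension of $\mu$) into a single $\dpts$-closeness bound to $\mu$ itself. Combining these gives exactly the statement of the proposition.

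Concretely, given $\epsilon > 0$, I would first apply Lemma \ref{two closeness} with $k=1$ to the pair $(\mu,\epsilon)$ to produce $\delta_0 > 0$ with the property that if $\mL, z\in \mL(S)$ satisfy $\dpts(\mu',\mL) < \delta_0$ for some diagonal extension $\mu'$ of $\mu$ and $\dpts(\mL,z) < \delta_0$, then $\dpts(\mu, z) < \epsilon$. Take $\delta = \delta_0$. Next, since the fixed parametrized pants decomposition has only finitely many associated standard train tracks, for each standard train track $\kappa$ that carries $\mu$, Lemma \ref{splitting} applied to $\delta_0$ furnishes an integer $N_\kappa$ such that any $\tau$ obtained from $\kappa$ by at least $N_\kappa$ full splittings and still carrying $\mu$ has $\dpts(\mL, \mu) < \delta_0$ whenever $\mL$ is carried by $\tau$. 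Let $N = \max_\kappa N_\kappa$, where the max ranges over those standard $\kappa$ carrying $\mu$. Then if $\tau$ is obtained from a standard train track $\kappa$ by $\ge N \ge N_\kappa$ full splittings and carries $\mu$, any $\mL$ carried by $\tau$ satisfies $\dpts(\mL,\mu) < \delta_0$; taking the trivial diagonal extension $\mu' = \mu$, and combining with $\dpts(\mL,z) < \delta$, Lemma \ref{two closeness} yields $\dpts(\mu,z) < \epsilon$. For the ``in particular'' clause, just set $z = \mL$ so that $\dpts(\mL,z) = 0 < \delta$.

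The only real obstacle is a mild bookkeeping point: the starting standard train track $\kappa$ must itself carry $\mu$ so that Lemma \ref{splitting} applies, but this is automatic since any intermediate track in a splitting sequence ending at $\tau$ carries everything $\tau$ carries, and we have assumed $\tau$ carries $\mu$. Finiteness of the standard train tracks ensures the maximum defining $N$ is legitimate, and the choice of $\delta$ depends only on $\epsilon$ and $\mu$ via Lemma \ref{two closeness}, so no uniformity issues arise.
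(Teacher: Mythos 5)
Your proof is correct and matches the paper's approach exactly: the paper's proof of Proposition \ref{convergence} is literally the one-line ``Apply Lemmas \ref{splitting} and \ref{two closeness},'' and you have simply filled in the routine details (taking $k=1$, choosing $\mu'=\mu$, and maximizing $N_\kappa$ over the finitely many standard train tracks carrying $\mu$).
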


\begin{proof}  Apply Lemmas \ref{splitting} and \ref{two closeness}.  \end{proof}

 %CCCC
 
 \section{A criterion for constructing continuous maps of compact manifolds into $\EL$}

This section is a generalization of the  corresponding one of \cite{G1} where a criterion was established for constructing continuous paths in $\EL$.     Our main result is much more general and  is technically much simpler to verify.  It will give a criterion for extending a 
continuous map $g:S^{k-1}\to \EL$ to a continuous map $\mL:B^k\to \EL$, though it is stated in a somewhat more general form.

Recall that our compact surface $S $ is endowed with a fixed hyperbolic metric. 
Let $\{C_i\}_{i\in \BN}$ denote the set of simple closed geodesics in $S$.

\begin{notation}   If $\mU_j$ is a finite open cover of a compact set $V$, then its elements will be denoted by $U_j(1),\cdots U_j(k_j)$. \end{notation}

\begin{proposition} \label{continuity criterion} Let $V$ be the underlying space of a finite simplicial complex and $W$ the subspace of a subcomplex.   Let $g:W\to \EL$ and for $i\in \BN$ let $f_i:V\to \PMLEL$ be continuous extensions of $g$.  Let $\mL_m(t)$ denote $\phi(f_m(t))$.  Let $\epsilon_1, \epsilon_2, \cdots$ be such that for all $i$, 
$\epsilon_i/2>\epsilon_{i+1}>0$. Let   $\mU_1,\mU_2,\cdots$ be a sequence of finite open covers of $V$.     Suppose that each $U_j(k)$ is assigned both an $\epsilon_j$-marker $\alpha_j(k)$ and a $C_j$-marker $\beta_j(k)$.   Assume that the following two conditions hold.
   
\vskip 10pt

\noindent \textrm{(sublimit)}   For each $t\in U_j(k)$ and $m\ge j$, $\mL_m(t)$ hits $\alpha_j(k)$.  

\vskip 10pt

\noindent (\text{filling})   For each $t\in U_j(k)$ and $m\ge j$, $\mL_m(t)$ hits $\beta_j(k)$. 

\vskip 10pt

Then there exists a continuous map $\mL:V\to \EL$ extending $g$ 
so that   for $t\in V$, $\mL(t)$ is the coarse Hausdorff limit of $\{\mL_m(t)\}_{m\in \BN}$.
\end{proposition}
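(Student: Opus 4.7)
The plan is to define $\mL(t)$ pointwise as the common ending lamination isolated from every Hausdorff subsequential limit of $\{\mL_m(t)\}$, verify coarse Hausdorff convergence at each point, and then deduce continuity from the uniformity of the marker data on each cover $\mU_j$. First I would fix $t\in V$ and, for each $j$, choose $k_j$ with $t\in U_j(k_j)$. For any Hausdorff-convergent subsequence $\mL_{m_i}(t)\to \mL_\infty$, the filling hypothesis together with the definition of $C_j$-marker gives $|\mL_m(t)\cap C_j|>4g+p+1$ for every $m\ge j$, and this intersection count passes to the Hausdorff limit, so $\mL_\infty$ meets every simple closed geodesic $C_j$ and is filling. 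Simultaneously, the sublimit hypothesis places three arcs of a single leaf of $\mL_m(t)$ inside the closed $\epsilon_j$-neighborhood of any geodesic $\beta_j$ spanning $\alpha_j(k_j)$; passing to $\mL_\infty$ produces an arc of $\mL_\infty$ of length at least $1$ lying within $d_{\tilde H}$-distance $\epsilon_j$ of $\beta_j$.

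Next I would extract a unique ending lamination from each such $\mL_\infty$. A filling geodesic lamination decomposes as a finite disjoint union of minimal sublaminations plus finitely many isolated non-recurrent leaves, and two distinct minimal sublaminations of a single geodesic lamination must be disjoint. If $\mL_\infty$ contained no minimal infinite sublamination, every minimal component would be a simple closed geodesic and $\mL_\infty$ would reduce to a finite union of closed geodesics and isolated leaves; but then, for $j$ large enough, the marker-arcs produced above would have to lie within $\epsilon_j$ of a closed geodesic component, forcing isolated leaves to accumulate on the closed component in a way incompatible with the sublimit condition at all smaller scales $\epsilon_{j'}<\epsilon_j$. This yields a minimal infinite sublamination $\mu\subset\mL_\infty$, and filling of $\mL_\infty$ forces $\mu$ itself to be filling (any simple closed geodesic disjoint from $\mu$ would sit in a complementary region of $\mL_\infty$), so $\mu\in\EL$. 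Two distinct ending laminations cannot coexist as sublaminations of one geodesic lamination (they are mutually transverse), so $\mu$ is the unique minimal sublamination of $\mL_\infty$. Across different Hausdorff subsequential limits, the resulting $\mu$ must agree: if distinct $\mu^1,\mu^2\in\EL$ arose, pick $j$ so that $\epsilon_j$ is smaller than the transverse intersection angle between $\mu^1$ and $\mu^2$; then $\alpha_j(k_j)$ would force leaves of both within $d_{\tilde H}$-distance $\epsilon_j$ of the same $\beta_j$, contradicting transversality. Define $\mL(t):=\mu$.

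By construction every Hausdorff subsequential limit $\mL_\infty$ is a diagonal extension of $\mL(t)\in\EL$, so Lemma \ref{one closeness} gives $\mL_m(t)\to \mL(t)$ in the coarse Hausdorff topology. For continuity at $t$, given $\epsilon>0$ I would pick $j$ large enough that Lemma \ref{marker closeness} places every $\nu\in\EL$ hitting $\alpha_j(k_j)$ inside $W_\epsilon(\mL(t))$; then for every $s\in U_j(k_j)$ the analysis above applied to $s$ in place of $t$ shows $\mL(s)$ also hits $\alpha_j(k_j)$, whence $\mL(s)\in W_\epsilon(\mL(t))$. Agreement with $g$ is immediate, since for $t\in W$ we have $f_m(t)=g(t)$ for all $m$, so $\mL_m(t)=g(t)$ and therefore $\mL(t)=g(t)$.

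The hard part will be the middle step: isolating a single ending lamination inside each $\mL_\infty$ and certifying its invariance across subsequences. The filling and sublimit hypotheses are calibrated precisely for this, but linking the $d_{\tilde H}$-approximation from the shrinking $\epsilon_j$-markers to the structure of the minimal sublaminations of $\mL_\infty$ requires care; I would invoke Lemma \ref{markers open} (hitting a marker is an open condition), Proposition \ref{super convergence}, and the neighborhood-basis property of Lemma \ref{marker closeness} to carry out the extraction and the uniqueness argument. Once that structural step is in place, the uniformity of each cover $\mU_j$ at the fixed scale $\epsilon_j$ yields continuity with no further work.
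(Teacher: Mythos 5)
Your overall architecture matches the paper's proof: fix $t$, take a Hausdorff subsequential limit $\mL_\infty$, extract a minimal filling sublamination $\mL(t)\subset\mL_\infty$, show it is independent of the subsequence, and read off continuity from the shrinking $\epsilon_j$-markers. However, the step where you certify that the extracted minimal sublamination $\mu$ is filling contains a genuine error. You assert that ``any simple closed geodesic disjoint from $\mu$ would sit in a complementary region of $\mL_\infty$.'' This is false: a simple closed geodesic $C$ disjoint from $\mu$ can perfectly well intersect the finitely many isolated leaves of $\mL_\infty\setminus\mu$, and so need not lie in any complementary region of $\mL_\infty$. Indeed, a filling lamination can have a non-filling minimal sublamination, with the isolated diagonal leaves doing the extra work of cutting all curves in the complement. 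The paper closes exactly this hole with a counting argument: a curve $C$ disjoint from $\mu$ can be isotoped into any neighborhood of $\mu$, after which an elementary topological estimate shows $|C\cap\mL_\infty|\le 4g+p+1$; this is what contradicts the inherited bound $|\mL_\infty\cap C_i|>4g+p+1$ coming from the filling hypothesis. Without this count, filling of $\mL_\infty$ alone does not give filling of $\mu$.

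There is a second, less serious gap in your subsequence-independence argument. You propose choosing $\epsilon_j$ smaller than ``the transverse intersection angle between $\mu^1$ and $\mu^2$,'' but two distinct ending laminations meet in infinitely many points, and the infimum of intersection angles need not be positive, so that quantity may not exist. The paper instead looks at the shrinking posts of $\bar\alpha_{i_j}$, passes to a convergent subsequence of unit tangent vectors at the initial points of spanning arcs, and shows the limiting vector $v$ is simultaneously tangent to a leaf of both Hausdorff limits; sharing a common leaf then forces the minimal sublaminations to coincide. That tangent-vector mechanism (rather than a global lower bound on intersection angles) is also what makes the continuity step of Lemma~\ref{continuity3} go through. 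I would also flag that your phrase ``this intersection count passes to the Hausdorff limit'' quietly needs $\mL_\infty$ to have no closed leaves (so that spanning arcs are embedded and do not wrap around a closed geodesic); the paper establishes this explicitly before making the count, and you should too.
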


\begin{proof}    Fix $t$.  We first construct a minimal and filling $\mL(t)$.  After passing to subsequence we can assume that the sequence $\mL_{m_i}(t)$ converges in the Hausdorff topology to a lamination $\mL'(t)$.  If $t \in U_i(j)$, then the filling and sublimit conditions imply that if $k>i$, then some arcs $\gamma_i(k), \sigma_i(k)$ in leaves of $\mL_k(t) $ respectively span the $\epsilon_i$ and $C_i$-markers $\alpha_i(j)$ and $\beta_i(j)$.  This implies that arcs in $\mL'(t)$ span the corresponding closed markers and hence, $\mL'(t)$ intersects each $C_i$ transversely and hence $\mL^\prime(t)$ contains no closed leaves.  Thus spanning arcs in $\mL'(t)$ are embedded (as opposed to wrapping around a closed geodesic) and hence $|\mL'(t)\cap C_i|>4g+p+1$ for all  $i$.  Let $\mL(t)$ be a minimal sublamination of $\mL^\prime(t)$.  If 
$\mL(t)$ is not filling, then there exists a simple closed geodesic $C$, disjoint from 
$\mL(t)$ that can be isotoped into any neighborhood of  $\mL(t)$ in $S$.    An 
elementary topological argument shows that  $|C\cap \mL^\prime(t)| \le 4g +p +1$, contradicting the filling condition.

We next show that $\mL(t)$ is 
independent of subsequence.    Let $\mL_0^\prime(t)\in \EL$ be a  lamination that is the Hausdorff limit of the 
subsequence $\{\mL_{k_i}(t)\}$ and $\mL_0(t)$ the sublamination of $\mL_0'(t)$ in $\EL$.  By the sublimit condition each of $\mL'(t), \mL_0'(t)$ have arcs that span the same set $\{\bar\alpha_i\}$ of closed markers, where $\alpha_i$ is an $\epsilon_i$-marker with associated open set $U_i\subset V$, where $t\in U_i$.  Since $\epsilon_i\to 0$, the lengths of the initial posts $\{\alpha_{i_0}\}$ go to $0$.  Thus after passing to a subsequence of the initial posts, $\{\alpha_{i_{j_0}}\}\to x\in S$.  Now let $v_{i_j}$ be the unit tangent vector to the initial point of some spanning arc of $\alpha_{i_j}$.  After passing to another subsequence, $v_{i_j}\to v$ a unit tangent vector to $x$.  The sublimit condition implies that $v$ is tangent to a leaf of both $\mL'(t)$ and $\mL_0'(t)$ and hence $\mL'(t)$ and $\mL_0'(t)$ have a leaf in common.  It follows that $\mL(t)=\mL_0(t)$.

 We apply Lemma \ref{continuity3} to show that $ f$ is continuous at $t$.  Let $v$ and $\{\alpha_{i_j}\}_{i\in 
 \BN}$ be as in the previous paragraph, where $\{\alpha_{i_j}\}_{i\in 
 \BN}$ is the final subsequence produced in that paragraph.  Fix 
 $\epsilon>0$.  There exists $N\in \BN$ such that for $i\ge N$, $d_{PT(S)}
 (v_{i_j}',v)\le\epsilon$ where $v_{i_j}'$ is any unit tangent vector to the initial point of a spanning arc of $\bar
 \alpha_{i_j}$.   Therefore if $m\ge N_j$ and $s\in U_{N_j}$, then $d_{PT(S)}
  (\mL_m(s), v)\le\epsilon$. Since this is true for all $m\ge N_j$ it follows that for all $s\in U_{N_j}$, \ $d_{PT(S)}(\mL'(s), \mL'(t))\le \epsilon.$\end{proof}

%XXXX

 \section{Pushing off of $B_C$}

Given a generic PL map $f:B^k\to \PMLEL$ and a simple closed geodesic $C$, this section will describe homotopies of $f$ such that if $f_1$ is a resulting map, then $\finv_1(B_C)=\emptyset$.  The map $f_1$ is said to be obtained from $f$ by \emph{pushing off of $C$}.  Various technical properties associated with such push off's will be obtained.  The concept of \emph{relatively pushing $f$ off of $C$} will be introduced and analogous technical results will be established.  In subsequent sections we will produce a sequence $f_1, f_2, \cdots$ satisfying the hypothesis of Proposition \ref{continuity criterion}, where $f_{i+1}$ is obtained by relatively pushing $f_i$ off of a finite set of geodesics, one at a time.

\begin{remark} Recall the convention that $n$ is chosen so that $\dim(\PML)=2n+1$.    Let $C$ be a simple closed geodesic.  Let $\lambda_C$ denote the projective measure lamination with support $C$.  As in \cite{G1}, we denote by $B_C$ the PL $2n$-ball consisting of those projective measured laminations that have intersection number 0 with $\lambda_C$.  Recall that $B_C$ is the cone  of the PL $(2n-1)$-sphere $\delta B_C$ to $\lambda_C$, where $\delta B_C$ consists of those points of $B_C$ which do not have $C$ as a leaf.  Furthermore,  if $x\in B_C\setminus \lambda_C$, then $x=p((1-t)\hat\lambda+t\hat \lambda_C)$, for some $\hat\lambda\in\ML$ representing a unique $\lambda\in \delta C$, some $\hat\lambda_C\in \ML$ representing $\lambda_C$ and  some $t<1$.  \end{remark}

\begin{definition}  The \emph{ray through $x\in B_C\setminus \lambda_C$} is the set of points $r(x)$ in $\PML$ represented by measured laminations of the form $\{t\hat\lambda+(1-t)\hat\lambda_C|0\le t\le1\}$ where $\hat\lambda$ and $\hat\lambda_C$ are as above.      If $K\subset B_C$  and $K\cap \lambda_C=\emptyset$, then define $r(K)=\cup_{x\in K} r(x)$.    \end{definition} 

\begin{remarks} \label{cone neighborhood} Note that $r(x)$ is well defined and $r(K)$ is compact if $K$ is compact.

Using the methods of  \cite{Th1}, \cite{PH} or \cite{G1} it is routine to show that there exists a neighborhood of $B_C$ homeomorphic to $2B^{2n}\times [-1,1]$, where $2B^{2n}$ denotes the radius-2 $2n$-ball about the origin in $\BR^{2n}$, such that $B_C$ is identified with $B^{2n}\times 0$, \ $\lambda_C$ is identified with $(0,0)$ and for each $x\in B_C\setminus \lambda_C$,\  $r(x)$ is identified with a ray through the origin with an endpoint on $S^{2n-1}\times 0$.

While the results in this section are stated in some generality, on first reading one should imagine that if $f:V\to \PMLEL$, then $V=B^k$ and $\finv(\EL)=S^{k-1}$, where $k\le 2n$.  \end{remarks}

\begin{definition}  \label{pushing off} Let $V$ be the underlying space of a finite simplicial complex and $W$ that of a subcomplex.  If  $f:V\to \PMLEL$ and $W=\finv(\EL)$, then  the generic PL map $f_1:V\to \PMLEL$ is said to be obtained from $f$ by  \emph{$\delta$-pushing  off of $B_C$}  if  there exists a homotopy $F:V\times I\to \PMLEL$, called a \emph{$(C, \delta)$ push off homotopy}  such that

i)  $\finv_1(B_C)=\emptyset$,

ii)  $F(t,s)=f(t)$ if either $s=0$ or $d_{V}(t, \finv(B_C))\ge \delta$ or $ d_{V}(t, W)\le \delta$ or  $d_{\PML}(f(t), B_C))\ge \delta$  and

iii)  for each $t\in V$ such that $d_{\PML}(f(t), B_C)<\delta$ there exists an $x\in f(V)\cap B_C$ such that for all $s\in [0,1]$, \ $d_{\PML}(F(t,s), r(x))<\delta$.  Furthermore if $F(t,s)\in B_C$, then $F(t,s)\in r(f(t))$.\end{definition}

\begin{lemma} \label{push} If $f:V\to \PMLEL$ is a generic PL map, $\dim(V)\le 2n$ and $C$ is a simple closed geodesic, then for all sufficiently small $ \delta>0$ there exists a $(C, \delta)$ push off homotopy of $f$.   \end{lemma}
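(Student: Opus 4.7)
The key idea is the linearity of the intersection pairing on $\ML$: if $\hat\mu\in\ML$ satisfies $i(\hat\mu,\hat\lambda_C)>0$, then for any $\hat\lambda\in\ML$ and any $s>0$,
\[
i(\hat\lambda+s\hat\mu,\hat\lambda_C) = i(\hat\lambda,\hat\lambda_C) + s\,i(\hat\mu,\hat\lambda_C) > 0,
\]
so $p(\hat\lambda+s\hat\mu)\notin B_C$. The plan is to add a small multiple of such a fixed $\hat\mu$ to the length-one lift $\hat f = i\circ f$ of $f$, cut off by a bump function supported near $\finv(B_C)$. Two preliminary facts guide the choice of parameters. First, since $\{\lambda_C\}$ is $0$-dimensional in $\PML^{2n+1}$ and $\dim V\le 2n$, genericity of $f$ forces $\finv(\lambda_C)=\emptyset$, so $f(V)\cap B_C$ is compact and bounded away from $\lambda_C$ in $\PML$. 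Second, since every ending lamination is filling but no element of $B_C$ is (Lemma~\ref{filling}), $\phiinv(\EL)\cap B_C=\emptyset$, and Lemma~\ref{preimage compact} then gives that $\finv(B_C)$ and $W=\finv(\EL)$ are disjoint closed subsets of $V$. Using uniform continuity of $f$, fix $\delta>0$ so small that $d_{\PML}(f(t),B_C)<\delta$ implies $d_V(t,\finv(B_C))<\delta$, that $d_V(\finv(B_C),W)>2\delta$, and that the modulus of continuity of $f$ satisfies $\omega_f(\delta)<\delta/3$.

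Fix $\hat\mu\in\ML$ with $i(\hat\mu,\hat\lambda_C)=1$, pick a PL bump $\chi\colon V\to[0,1]$ equal to $1$ on $\finv(B_C)$ and supported in the $\delta/2$-neighborhood of $\finv(B_C)$, and pick $\epsilon\in(0,\delta/3)$ so small that $d_{\PML}(p(\hat f(t)+s\chi(t)\epsilon\hat\mu),f(t))<\delta/3$ for all $(t,s)$. Define $F(t,s)=p(\hat f(t)+s\chi(t)\epsilon\hat\mu)$ for $t\in V\setminus W$ and $F(t,s)=g(t)$ for $t\in W$. Continuity of $F$ at points of $W\times I$ follows from Lemma~\ref{pmlel topology}(v): $\chi$ vanishes in a neighborhood of $W$, so any limit of $\hat f(t_n)$ for $t_n\to t_0\in W$ lies in $i(\phiinv(g(t_0)))$, and hence any limit of $F(t_n,s)$ lies in $\phiinv(g(t_0))$. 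The three conditions of Definition~\ref{pushing off} are then verified as follows. For (i), $i(\hat f(t)+\chi(t)\epsilon\hat\mu,\hat\lambda_C)=i(\hat f(t),\hat\lambda_C)+\chi(t)\epsilon>0$ at every $t\in V\setminus W$ (on $\finv(B_C)$ the second summand is $\epsilon$; off it the first summand is positive), so $F(t,1)\notin B_C$. For (ii), observe $F(t,s)=f(t)$ whenever $s\chi(t)=0$, and each of the four triggers either gives $s=0$ directly or, by the support of $\chi$ combined with the choice of $\delta$, gives $\chi(t)=0$. For (iii), given $t$ with $d_{\PML}(f(t),B_C)<\delta$, pick $t'\in\finv(B_C)$ with $d_V(t,t')<\delta$ and set $x=f(t')\in f(V)\cap B_C$; since $x\in r(x)$,
\[
d_{\PML}(F(t,s),r(x))\le d_{\PML}(F(t,s),f(t))+d_{\PML}(f(t),x)<\delta/3+\omega_f(\delta)<\delta.
\]
The ``furthermore'' clause holds because $F(t,s)\in B_C$ forces $s\chi(t)=0$ and $t\in\finv(B_C)$, so $F(t,s)=f(t)\in r(f(t))$. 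A final small PL perturbation (using that conditions (i)--(iii) are open) makes $f_1=F(\cdot,1)$ generic PL.

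The main obstacle this approach avoids is the sign ambiguity of pushing $f$ off $B_C$ via a bicollar on $V$: even when the normal bundle of $\finv(B_C)$ in $V$ is trivial, there is no continuous way to push points on both sides of $\finv(B_C)$ to a single side of $B_C$ in $\PML$, because some points with ``normal coordinate'' of the wrong sign would necessarily be forced back across $B_C$. Working in $\ML$ and using the additivity and non-negativity of $i(\cdot,\hat\lambda_C)$ lets us instead push uniformly in the single direction $+\hat\mu$, quantifying the pushoff by intersection numbers rather than by a geometric normal direction.
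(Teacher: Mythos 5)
Your approach is genuinely different from the paper's. The paper builds the push-off inside the bicollar neighborhood $N(B_C)\cong 2B^{2n}\times[-1,1]$ of Remark~\ref{cone neighborhood}: it takes a model homotopy $F_\epsilon$ that, within the slice $2B^{2n}\times 0$ and a thin neighborhood of it, pushes points \emph{radially outward} along the rays $r(x)$ past $\delta B_C$, then transports this by the chart $g$ and interpolates with the trivial homotopy near $\finv(B_C)$. The point of a radial push is that $F(t,\cdot)$ stays on a ray $r(\cdot)$ essentially by construction, which is exactly what condition~(iii) of Definition~\ref{pushing off} demands; this in turn is what Lemma~\ref{marker preserving} exploits (the underlying lamination is constant along $r(x)\setminus\lambda_C$). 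You instead work in $\ML$ and use the additivity and non-negativity of $i(\cdot,\hat\lambda_C)$: adding a small multiple of a fixed $\hat\mu$ with $i(\hat\mu,\hat\lambda_C)>0$ makes the intersection strictly positive wherever the bump function is nonzero, so the pushed map leaves $B_C$. This is a clean, more algebraic route to condition~(i), and your remark that $\ML$ is a cone --- so there is only one direction available to add --- is a pleasant independent explanation for why there is nothing to choose.

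The verification of conditions~(ii) and (iii), however, has a concrete gap. You "fix $\delta>0$ so small that \dots the modulus of continuity of $f$ satisfies $\omega_f(\delta)<\delta/3$." This inequality has \emph{no} solution $\delta>0$ whenever $f$ restricted to some simplex is affine with operator norm at least $1/3$, which is the typical situation for a PL map: there $\omega_f(\delta)\ge\frac13\delta$ for every $\delta$ smaller than the diameter of that simplex, so you can never make $\omega_f(\delta)<\delta/3$. The companion claim that "$d_{\PML}(f(t),B_C)<\delta$ implies $d_V(t,\finv(B_C))<\delta$" suffers from the same problem: the ratio of these two quantities near $\finv(B_C)$ is essentially the transversality slope of $f$ along $B_C$, a fixed positive number that need not be $\ge 1$. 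The underlying reason you are forced into these comparisons is that your push is \emph{transverse} to $B_C$ rather than along the rays, so the only estimate you have for $d_{\PML}(F(t,s),r(x))$ is the crude one $d_{\PML}(F(t,s),f(t))+d_{\PML}(f(t),x)$, and the second summand must then be beaten by $\delta$; the paper's radial push never needs to compare $d_{\PML}(f(t),x)$ with $\delta$, because $F(t,s)$ lands on a ray near $r(x)$ by design, and closeness of rays is measured by an angle, not by $d_{\PML}(f(t),x)$. The additive idea is salvageable, but it needs a sharper argument for (iii) --- for instance, showing directly that a point $\delta$-close to $B_C$ is also $O(\delta)$-close to $r(x)$ for $x$ the appropriate nearby point of $f(V)\cap B_C$, using transversality of $f$ rather than a modulus-of-continuity bound --- before the reduction to a small $\epsilon$ is enough.

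One further point: your closing paragraph misdescribes the alternative you are improving on. The paper does not push perpendicular to $B_C$ (so there is no normal direction, hence no sign, to choose); it pushes radially inside the slice $2B^{2n}\times 0$, carrying points of $B_C$ outward past $\delta B_C$. Your sign-ambiguity observation is a good reason to like the additive construction on its own terms, but it is not an obstacle the paper's method has to overcome.
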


\begin{proof}  It follows by super convergence and compactness that if $C$ is a simple closed geodesic,  then $\finv(B_C)$ is a compact set disjoint from some neighborhood of $W$. By genericity of $f$, i.e. Lemma \ref{missing}, there exists an $\epsilon_1>0$ such that $d_{\PML}(f(V), \lambda_C)\ge \epsilon_1$. Consider a natural  homotopy $F_\epsilon: ((2B^{2n}\setminus \epsilon B^{2n})\times [-1,1])\times I\to 2B^{2n}\times [-1,1]$ from the inclusion to a map whose image is disjoint from $B^{2n}\times 0$, which is supported  in an $\epsilon$-neighborhood of $(B^{2n}\setminus \epsilon B^{2n})\times 0$ and where points in $(B^{2n}\setminus \epsilon B^{2n})\times 0$ are pushed radially out from the origin.  Let $g:N(B_C)\to 2B^{2n}\times [-1,1]$ denote the parametrization given by Remark \ref{cone neighborhood}.  The desired $(C,\delta)$-homotopy is obtained by appropriately interpolating the trivial homotopy outside of a very small neighborhood of $\finv(B_C)$ with  $\ginv\circ F_\epsilon\circ g\circ f$  restricted to a small neighborhood of $\finv(B_C)$ where $\epsilon$ is sufficiently small and then doing a small perturbation to make $f_1$ generic.  \end{proof}

\begin{lemma}  \label{marker preserving}Let $C$ be a simple closed geodesic and $f:V\to \PMLEL$ be a generic PL map with $\dim(V)\le 2n$.  Let $\mJ$ be a marker family of $V$ hit by $f$ that is free of $C$.  If $\delta$ is sufficiently small, then any $(C, \delta)$ homotopy $F$ from $f$ to $f_1$ is $\mJ$ marker preserving, free of $C$.\end{lemma}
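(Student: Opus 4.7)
The plan is a compactness-and-contradiction argument, reducing to a single marker at a time.  Since $\mJ = \{(\mM_j, W_j)\}$ is finite, it will suffice to produce, for each fixed index $i$, a $\delta_i > 0$ that works and then take $\delta = \min_i \delta_i$.  The key preliminary observation is that for any $(C,\delta)$ push off homotopy $F$, property (iii) of Definition \ref{pushing off} places both $F(t,0) = f(t)$ and $F(t,s)$ within $\delta$ of a common ray $r(x)$ whenever the push at $t$ is nontrivial; in particular $d_{\PML}(F(t,s), f(t)) \le 2\delta$ for all $(t,s)$, so as $\delta \to 0$ the track $s \mapsto F(t,s)$ is trapped in a shrinking $\PML$-neighborhood of $f(t)$.

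Supposing the conclusion fails, I would pass to a subsequence to obtain $\delta_n \to 0$, $(C, \delta_n)$ push off homotopies $F_n$, a fixed index $i$, and pairs $(t_n, s_n) \in W_i \times I$ with $t_n \to t_\infty \in W_i$, $s_n \to s_\infty$, and $F_n(t_n, s_n) \to f(t_\infty)$ in $\PML$ (using the $2\delta_n$-closeness and continuity of $f$), such that no leaf other than $C$ of $\phi(F_n(t_n, s_n))$ hits $\mM_i$.  Because $\mJ$ is $f$-free of $C$, there is a leaf $L \ne C$ of $\phi(f(t_\infty))$ containing three embedded arcs $\alpha_1, \alpha_2, \alpha_3$ that span $\mM_i$.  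I would then choose a connected compact sub-arc $\gamma \subset L$ whose interior contains $\alpha_1 \cup \alpha_2 \cup \alpha_3$.

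Now I would apply Proposition \ref{super convergence}: together with the fact that leaves of geodesic laminations are geodesic and tangent directions converge in $PT(S)$, for $n$ sufficiently large one extracts a single-leaf arc $\gamma_n$ of $\phi(F_n(t_n, s_n))$ with $\gamma_n \to \gamma$ in the Hausdorff topology on $PT(S)$.  Since spanning $\mM_i$ is an open condition on the path homotopy class of an arc between its posts, $\gamma_n$ contains three sub-arcs spanning $\mM_i$ for $n$ large, so the leaf $L'_n$ of $\phi(F_n(t_n, s_n))$ that carries $\gamma_n$ hits $\mM_i$.  The failure hypothesis then forces $L'_n = C$, hence $\gamma_n \subset C$; since $C$ is closed in $PT(S)$, Hausdorff convergence gives $\gamma \subset C$, so $L \cap C \supseteq \gamma$ is infinite, and as $L$ and $C$ are distinct complete geodesics this forces $L = C$, contradicting $L \ne C$.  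The hard part will be the arc-level upgrade of pointwise super convergence to a single approximating leaf $L'_n$ realizing the connected model arc $\gamma$; this is standard for geodesic laminations (compare the use of Lemma 1.7.9 of \cite{PH} in Lemma \ref{unzipping}), but it must be carried out so that the approximating sub-arcs remain in the correct path homotopy class of spanners of $\mM_i$.
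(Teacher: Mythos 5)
Your proposal has a genuine gap at its first step. You claim that property (iii) of Definition~\ref{pushing off} forces $d_{\PML}(F(t,s), f(t)) \le 2\delta$, because both points lie within $\delta$ of a common ray $r(x)$. But $r(x)$ is a ray, not a point: it is the whole arc in $\PML$ running from $\lambda_C$ out through $x$ to $\delta B_C$, and two points each within $\delta$ of that ray can lie at widely separated positions along it. This is exactly what a push-off does --- it drags a point that starts near (or on) $B_C$ out along its ray a macroscopic distance --- so the inference ``$F_n(t_n,s_n)\to f(t_\infty)$'' is unjustified and will typically fail. The subsequential limit $y$ of $F_n(t_n,s_n)$ will instead lie on a ray $r(x_\infty)$ for some $x_\infty\in f(V)\cap B_C$, in general far from $f(t_\infty)$.

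The missing observation, around which the paper's proof is organized, is that every point of $r(x)\setminus\{\lambda_C\}$ has the \emph{same} underlying lamination $\phi(x)$. Hence if $x\in f(K)\cap B_C$ has a leaf $L\neq C$ hitting $\mM_i$ (which it must, since $\mJ$ is $f$-free of $C$), then $L$ remains present and hits $\mM_i$ at \emph{every} point of $r(x)\setminus\{\lambda_C\}$, and by super convergence (Lemma~\ref{markers open}) this persists in an open $\PML$-neighborhood of the ray. The paper then takes the union $U$ of such neighborhoods over $x\in f(K)\cap B_C$ and uses compactness of $K$ and $B_C$ to extract $\eta>0$ so that any $(C,\delta)$ homotopy with $\delta<\eta$ confines its track over $K$ to $U$; genericity (Lemma~\ref{missing}) keeps the track away from $\lambda_C$, where the underlying lamination degenerates to $C$. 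With the ray-constant-lamination fact in hand, your compactness-and-contradiction scaffolding can be repaired: replace ``$y = f(t_\infty)$'' by ``$y\in r(x_\infty)$, so $\phi(y)=\phi(x_\infty)$,'' after which your final, rather involved arc-tracking argument can be replaced by a direct appeal to Lemma~\ref{markers open}.
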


\begin{proof}  Since there are only finitely many markers in a marker family, it suffices to show that if $K\subset V$ is compact and $\phi(f(t))$ hits the marker $\mM$ free of $C$ at all $t\in K$, then for $\delta$ sufficiently small  $\phi(F(t,s))$ hits $\mM$ free of $C$ at all  $t\in K$ and $s\in I$.  This is a consequence of super convergence and 
compactness.   Indeed, if $x\in f(K)\cap B_C$, then there exists a leaf  of $\phi(x)$ distinct from $C$ hits $\mM$.  Since  all points in $r(x)\setminus \lambda_C
$ have the same underlying lamination 
this fact holds for all $y\in r(x)$.  By super convergence it holds at all points in a neighborhood of $r(x)$ in $\PML$.  Let $U$ be the union of these neighborhoods over all $x\in f(K)\cap B_C$.  By compactness of $K$ and $B_C$, there exists a $ \eta>0$ such that if $y\in B_C$ and $d_{\PML}(y,f(t))\le \eta$ for some $t\in K$, then $N_{\PML}(r(y), \eta)\subset U$.    Any $(C,\delta)$ homotopy with $\delta<\eta$ satisfies the conclusion of the lemma.
 \end{proof}

\begin{definition}  If $x\in \PML$ and $A$ is a simple closed geodesic, then define $g(x,A)\in \BZ_{\ge 0}\cup \infty$ the 
\emph{geometric intersection number} of $x$ with $A$ by $g(x,A)= \min\{|\phi(x)\cap A'|| A'$ is 
isotopic to $A\}$.  If $f:V\to \PMLEL$  define the \emph{geometric intersection 
number of $f$ with $A$} by $g(f,A)=\min\{g(\phi(f(t)), A)|t\in V\}$.  If 
$0<g(f,A)<\infty$, then we say that the multi-geodesic $J$ is a \emph{stryker curve for $A$} if for some 
$t\in V$, $J\subset \phi(f(t))$ and $|J\cap A|=g(f,A)$. We call $J$ the \emph{$f(t)$-stryker curve} or 
sometimes the \emph{stryker curve at $f(t)$.}     \end{definition}

\begin{remark}  Note that  $|\phi(x)\cap A'|$ is minimized when $A=A'$ unless $A$ is a leaf of $\phi(x)$ in which case $g(x,A)=0$.\end{remark}

\begin{lemma} \label{stryker finiteness} If $f:V\to \PMLEL,  A$ is a simple closed geodesic and  $0<g(f,A)<\infty$, then the set of stryker curves is finite.  Also $m(f,A)=\{t\in V||\phi(f(t))\cap A|=g(f,A)\}$ is compact. Finally $m(f,A)$ is the disjoint union of the compact sets $m_{J_1}(f,A), \cdots, m_{J_m}(f,A)$ where $t\in m_{J_i}(f,A)$ implies that $J_i$ is the stryker curve at $f(t)$. \end{lemma}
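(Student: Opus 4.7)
The plan is to prove the three parts in the order (2), (1), (3).

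For (2), first observe $m(f,A) \cap W = \emptyset$ where $W = \finv(\EL)$: for $t \in W$, $f(t) \in \EL$ is filling and minimal, so $|\phi(f(t)) \cap A| = \infty > g(f,A)$. To show $m(f,A)$ is closed, let $t_k \in m(f,A)$ with $t_k \to t$. If $t \in W$, then $f(t_k) \to f(t) \in \EL$ in the $\PMLEL$ topology; Proposition \ref{super convergence} and Lemma \ref{convergent sequence} give that, after passing to a subsequence, $\phi(f(t_k))$ super-converges to $f(t)$, which is filling. An intermediate-value argument on leaves of $\phi(f(t_k))$ close to the infinitely many transverse intersections of $f(t)$ with $A$ then forces $|\phi(f(t_k)) \cap A|$ to be arbitrarily large, contradicting $|\phi(f(t_k)) \cap A| = g(f,A)$. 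Hence $f(t) \in \PML$; super convergence gives $|\phi(f(t)) \cap A| \leq g(f,A)$ by the same intermediate-value reasoning, and minimality of $g(f,A)$ forces equality. So $m(f,A)$ is closed in the compact $V$, hence compact.

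For (1), $f(m(f,A))$ is compact in $\PML$ by (2), so it lies in finitely many cells $P(\tau_1), \ldots, P(\tau_r)$ of the cellulation $\Delta$. Each simple closed curve component $C$ of $\phi(x)$ for $x \in P(\tau_j) \cap f(m(f,A))$ is carried by an embedded simple cycle in $\tau_j$, which must be a proper sub-train-track: by genericity of $f$ and Lemma \ref{missing} (applied with $q=0$), $f(V)$ is disjoint from every vertex $\lambda_C$ of the curve complex, so no $\phi(f(t))$ is a single simple closed curve. Since a finite graph has only finitely many embedded simple cycles, only finitely many simple closed geodesics arise as components of stryker curves, and hence only finitely many stryker curves (each a disjoint subcollection of this finite set).

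For (3), for $t \in m(f,A)$ every minimal component of $\phi(f(t))$ that is not a simple closed curve must be disjoint from $A$ (otherwise $|\phi(f(t)) \cap A| = \infty$). So $J(t) :=$ union of simple closed curve components of $\phi(f(t))$ meeting $A$ is a multi-geodesic with $|J(t) \cap A| = g(f,A)$, i.e., a stryker curve. By (1), $J(t)$ lies in the finite set $\{J_1, \ldots, J_m\}$, and $m_{J_i}(f,A) := \{t \in m(f,A) : J(t) = J_i\}$ gives a disjoint partition. For closedness of $m_{J_i}$: take $t_k \to t$ in $m_{J_i}$ and choose $\ML$-lifts $\hat f(t_k) \to \hat f(t)$; writing $\hat f(t_k) = \sum_\ell \beta_\ell^k \delta_{D_\ell} + R_k$ where $D_1, \ldots, D_p$ are the components of $J_i$, the residual $R_k$ has support disjoint from $A$ (the remaining minimal components of $\phi(f(t_k))$ being disjoint from $A$). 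Passing to a subsequence, $\beta_\ell^k \to \beta_\ell^\infty \geq 0$ and $R_k \to R_\infty$ with $R_\infty \cdot \lambda_A = 0$; since $g(f,A) > 0$ precludes $A$ being a leaf of $\phi(f(t))$, the support of $R_\infty$ is disjoint from $A$. Hence the simple closed curve components of $\phi(f(t))$ meeting $A$ are exactly $\{D_\ell : \beta_\ell^\infty > 0\}$, and $\sum_{\beta_\ell^\infty > 0} |D_\ell \cap A| = g(f,A) = \sum_\ell |D_\ell \cap A|$ forces every $\beta_\ell^\infty > 0$, so $J_i \subset \phi(f(t))$. Any additional simple closed curve component of $\phi(f(t))$ meeting $A$ would push $|\phi(f(t)) \cap A|$ past $g(f,A)$, so $J(t) = J_i$ and $t \in m_{J_i}$.

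The main obstacle is the closedness argument in (3): ruling out the degeneration in which some $\beta_\ell^k \to 0$, causing $D_\ell$ to drop out of $\phi(f(t))$. The key observation is that $|\phi(f(t)) \cap A|$ is counted set-theoretically (a cardinality, not a weighted number), so with $R_\infty$'s support disjoint from $A$, the loss of any $D_\ell$ would strictly reduce the count below $g(f,A)$, contradicting minimality.
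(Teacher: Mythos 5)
There is a genuine gap in your part~(1). The claim that a simple closed curve component $C$ of $\phi(x)$, for $x$ carried by a train track $\tau_j$, must be ``carried by an embedded simple cycle in $\tau_j$'' is false. A simple closed curve carried by $\tau_j$ can traverse a branch of $\tau_j$ with arbitrarily high multiplicity (think of a $(p,q)$ curve on a standard once-punctured-torus track with $p,q$ large), and in general a fixed birecurrent train track carries infinitely many distinct simple closed geodesics --- it carries every rational ray in the cone $V(\tau_j)$. So ``only finitely many simple cycles'' does not translate into ``only finitely many candidate components.'' The side remark about Lemma~\ref{missing} ensuring $\phi(f(t))$ is not a single simple closed curve is also a red herring: it controls whether $\phi(f(t))$ \emph{equals} a curve, not the combinatorics of its closed leaves inside $\tau_j$. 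What actually forces finiteness is the same compactness-plus-super-convergence mechanism you already deploy in part~(3): if stryker curves $J_k$ at $t_k$ were all distinct, pass to $t_k \to t \in m(f,A)$, decompose $\hat f(t_k)$ in $\ML$ as in your part~(3), and the intersection-number bookkeeping ($R_\infty\cdot\lambda_A=0$, $A$ not a leaf, exact equality $|\phi(f(t))\cap A| = g(f,A)$) shows the stryker curve must stabilize. This is exactly the paper's argument for~(1), phrased via super convergence rather than $\ML$-lifts. So the fix is available and nearby, but as written the finite-cycle count is not a valid step.

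Your parts~(2) and~(3) are sound. Part~(2) is essentially the paper's argument (super convergence makes $V\setminus m(f,A)$ open), spelled out in more detail and correctly handling the $W=f^{-1}(\EL)$ case. Part~(3) replaces the paper's one-line appeal to super convergence with an explicit $\ML$-level decomposition $\hat f(t_k)=\sum_\ell\beta_\ell^k\delta_{D_\ell}+R_k$ and a careful limit analysis; this is a different and more concrete route that correctly identifies the crucial point (the set-theoretic count $|\phi(f(t))\cap A|$ forces every $\beta_\ell^\infty>0$). That same computation, run on a hypothetical sequence of distinct stryker curves, is the natural way to repair~(1).
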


\begin{proof}  Super convergence implies that $V\setminus m(f,A)$ is open, hence $m(f,A)$ is compact. If 
the first assertion is false, then there exists $t_1,t_2, \cdots$ converging to $t$ such that if $J_i$ 
denotes the stryker curve at $t_i$, then the $J_i$'s are distinct.  By compactness, $t\in m(f,A)$, so let 
$J$ be the stryker curve at $t$.   Super convergence implies that if  $s$ is sufficiently close to $t$, then 
either $s\notin m(f,A)$ or $J$ is the stryker curve at s, a contradiction.  The final assertion again 
follows from super convergence.  \end{proof}

\begin{lemma}  \label{stryker preserving} Let $f:V\to \PMLEL$ be a generic PL map and $A$ and $C$ disjoint simple closed geodesics such that $0<g(f,A)<\infty$.  Then for $\delta$ sufficiently small, any $(C,\delta)$ push off $f_1$ satisfies $g(f_1, A)\ge g(f,A)$.  If equality holds and  $J$ is a stryker curve for $f_1$, then $J$ is a stryker curve for $f$.  \end{lemma}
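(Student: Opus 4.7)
The approach rests on the observation that since $A$ and $C$ are disjoint simple closed geodesics, $|\phi(y) \cap A|$ is constant along each ray $r(x) \setminus \{\lambda_C\}$ for $x \in B_C$: the underlying lamination at any point of this ray is $\phi(\lambda_x) \cup C$ (where $\lambda_x \in \delta B_C$ is the far endpoint), and $C$ contributes nothing to intersection with $A$. Since $g(f, A) > 0$, $A$ is not a leaf of any $\phi(f(t))$. Genericity together with Lemma \ref{missing} places $f(V)$ at positive distance from $\lambda_C$, so $K := f(V) \cap B_C$ is compact and bounded away from $\lambda_C$.

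First I prove $g(f_1, A) \ge g(f, A) =: k$. Super convergence (Proposition \ref{super convergence}) shows that each transverse intersection with $A$ persists under $\PML$-perturbations, so $U_k := \{y \in \PML : |\phi(y) \cap A| \ge k\}$ is open on the locus where $A$ is not a leaf. The ray observation gives $r(x) \setminus \{\lambda_C\} \subset U_k$ for every $x \in K$. Choosing $\eta < \tfrac12 d_{\PML}(f(V), \lambda_C)$, the compact set $R := \bigcup_{x \in K} \bigl(r(x) \setminus N_{\PML}(\lambda_C, \eta)\bigr)$ lies in $U_k$, so $N_{\PML}(R, \delta_1) \subset U_k$ for some $\delta_1 > 0$. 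Because the push-off of Lemma \ref{push} moves points radially outward from $\lambda_C$, for $\delta$ sufficiently small any $(C,\delta)$ push-off satisfies $f_1(t) \in N_{\PML}(R, \delta) \subset U_k$ whenever $f_1(t) \ne f(t)$, while $f_1(t) = f(t) \in U_k$ otherwise. Hence $g(f_1, A) \ge k$.

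Assume now $g(f_1, A) = k$ and let $J$ be a stryker for $f_1$ witnessed by $t \in V$, so $J \subset \phi(f_1(t))$ and $|J \cap A| = k$. If $f_1(t) = f(t)$ then $t \in m(f, A)$ and $J$ is a stryker for $f$. Otherwise $f_1(t)$ is $\delta$-close to some $r(x)$ with $x = f(t_0) \in K$; the first step together with lower semicontinuity forces $|\phi(x) \cap A| = k$, so $t_0 \in m(f, A)$, and there is some stryker $J^* \subset \phi(\lambda_x) \cup C$ of $f$ satisfying $|J^* \cap A| = k$. It remains to show $J = J^*$ for $\delta$ sufficiently small.

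I argue this claim by contradiction and compactness, and this is the main obstacle. Suppose $\delta_n \to 0$ admits push-offs $f_{1,n}$ with strykers $J_n$ none of which is a stryker of $f$. Passing to subsequences, $t_n \to t_\infty$, the ray basepoints $x_n = f(t_n')$ satisfy $x_n \to x_\infty = f(t_\infty') \in K$, and $f_{1,n}(t_n) \to y_\infty \in r(x_\infty)$. Super convergence combined with ray invariance yields $|\phi(y_\infty) \cap A| = |\phi(x_\infty) \cap A| = k$, so $t_\infty' \in m(f, A)$ carries some stryker $J^*_\infty$. The $k$ transverse intersection points of $\phi(x_\infty) \cap A = J^*_\infty \cap A$ arise in $PT(S)$ (together with tangent vectors of the closed leaves of $J^*_\infty$) as limits of the $k$ points of $J_n \cap A = \phi(f_{1,n}(t_n)) \cap A$ (with tangent vectors of leaves of $J_n$). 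The crux is to upgrade this tangent-vector convergence into a genuine equality of the simple closed leaves of $J_n$ and $J^*_\infty$ through these $k$ intersection points; this relies on Lemma \ref{stryker finiteness} (so that the strykers of $f$ at $t_\infty'$ form a finite, hence discrete, set of candidates) together with the rigidity that a simple closed geodesic is determined by any one of its tangent vectors. Once this rigidity step is in hand, $J_n = J^*_\infty$ for $n$ large, a stryker of $f$, contradicting the hypothesis.
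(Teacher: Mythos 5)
Your overall architecture matches the paper's: the first conclusion comes from ray-invariance of $|\phi(\cdot)\cap A|$ along $r(x)$ (since $A\cap C=\emptyset$), lower semicontinuity of transverse intersection number via super convergence, and compactness of $f(V)\cap B_C$; the second conclusion reduces to showing that the stryker curve is locally constant near $r(f(V)\cap B_C)$ on the locus where the intersection count stays equal to $k=g(f,A)$. The paper gets this last point by producing, for each $t\in m(f,A)\cap \finv(B_C)$, a neighborhood $U'$ of $r(f(t))$ on which any $x$ with $g(x,A)=k$ has the same stryker curve, while you run a sequential compactness argument with $\delta_n\to 0$; that difference is cosmetic.

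The gap is in the step you yourself call the crux, and neither tool you cite does the job. Finiteness of the strykers of $f$ (Lemma \ref{stryker finiteness}) is irrelevant here: the closed leaves of $J_n\subset\phi(f_{1,n}(t_n))$ are a priori arbitrary simple closed geodesics, not drawn from the finite list of strykers of $f$, so discreteness of that list gives no purchase. And ``a simple closed geodesic is determined by one of its tangent vectors'' does not apply, because the relevant leaf of $\phi(f_{1,n}(t_n))$ only has a tangent vector \emph{close to} a tangent vector of $J^{*}_\infty$ in $PT(S)$, not equal to one. The argument that actually closes this step is a fellow-traveling and counting argument: let $J^{*,i}_\infty$ be the component of $J^{*}_\infty$ through $p\in J^{*}_\infty\cap A$, and let $L_n$ be the leaf of $\phi(f_{1,n}(t_n))$ with a tangent vector $\epsilon_n$-close in $PT(S)$ to the tangent vector of $J^{*,i}_\infty$ at $p$. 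By hyperbolicity $L_n$ stays in a thin annular neighborhood of $J^{*,i}_\infty$ for length on the order of $\log(1/\epsilon_n)\to\infty$, hence wraps around $J^{*,i}_\infty$ arbitrarily many times as $n\to\infty$, picking up at least one crossing of $A$ near $p$ per circuit. So either $|\phi(f_{1,n}(t_n))\cap A|>k$, contradicting $t_n\in m(f_{1,n},A)$, or $L_n$ closes up inside that thin annulus, is therefore freely homotopic to $J^{*,i}_\infty$ (being simple and essential in the annulus), and hence equals it as a geodesic. With this in place your contradiction goes through; this counting argument is also what lies behind the paper's one-line appeal to super convergence here and in Lemma \ref{stryker finiteness}.
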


\begin{proof}  If $t\in \finv(B_C)$, then $|(\phi(f(t))\setminus C)\cap A|=|\phi(f(t))\cap A|\ge g(f,A)$.  By super convergence there exists a neighborhood $U'$ of $r(f(t))$  such that $y\in U'$ implies that 
$g(\phi(y),A)\ge g(f,A)$ and hence  there exists a neighborhood $U$ of $r(f(V)\cap 
B_C)$ with the same property.   If $\delta$ is sufficiently small to have any $(C,\delta)$ push off 
homotopy supported in $U$, then $g(f_1,A)\ge g(f,A)$.  

Now assume that equality holds. If $t\in m(f,A)\cap \finv(B_C)$ and $J $ is the stryker curve at $f(t)$, then by super convergence there exists a neighborhood $U'$ of $r(f(t))$ such that if $x\in U'$ and $g(x, A)=g(f,A)$, then $J$ is the stryker curve at $x$.  Thus, there exists a neighborhood $U$ of $f(V)\cap B_C$ such that $x\in U$ and $g(x, A)=g(f,A)$, then the stryker curve at $x$ is a stryker curve of $f$.  
If $\delta$ is sufficiently small to have any $(C,\delta)$ push off of $f$ supported in $U$, then the second conclusion holds.  \end{proof}

This argument proves the following sharper result.

\begin{lemma} \label {refined partial stryker preserving} If $f:V\to \PMLEL$ and $\eta>0$, then there exists $\delta>0$ such that if $\delta$ is sufficiently small and $f_1$ is the result of  a $(C,\delta)$ push off homotopy of $f$, then $m(f_1, A)\subset N_{V}(m(f,A), \eta)$ and if $t\in m(f_1,A)$ and $d_{V}(t, m_{J_i}(f,A))<\eta$, then $J_i$ is the stryker curve at $f_1(t)$.\qed\end{lemma}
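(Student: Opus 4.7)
\medskip

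\noindent\emph{Proof proposal.}  The plan is to refine the compactness/super convergence argument of Lemma \ref{stryker preserving} by localizing in $V$ rather than globally.  Throughout, recall from Lemma \ref{stryker finiteness} that $m(f,A)$ decomposes into finitely many disjoint compact pieces $m_{J_1}(f,A),\dots,m_{J_m}(f,A)$, one for each stryker curve, and that $V\setminus m(f,A)$ is open by super convergence.

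\medskip

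First, I will handle the containment $m(f_1,A)\subset N_V(m(f,A),\eta)$.  Let $K=V\setminus N_V(m(f,A),\eta/2)$, which is compact.  Since $g(\phi(f(t)),A)$ takes integer values and is $>g(f,A)$ on $K$, we have $g(\phi(f(t)),A)\ge g(f,A)+1$ for all $t\in K$.  By super convergence applied pointwise on $K$ and by compactness of $K$ and of the rays $r(f(V)\cap B_C)$ (Remark \ref{cone neighborhood}), there exists $\delta_1>0$ and a neighborhood $U_1$ of $f(K)\cup r(f(K)\cap B_C)$ in $\PML$, open and $\delta_1$-thickened, on which $g(\phi(\cdot),A)\ge g(f,A)+1$ continues to hold.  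If $\delta<\delta_1$, then any $(C,\delta)$-push off homotopy $F$ keeps $F(K\times I)\subset U_1$: points with $d_\PML(f(t),B_C)\ge\delta$ are fixed, while points with $d_\PML(f(t),B_C)<\delta$ stay within $\delta$ of some $r(x)$, $x\in f(V)\cap B_C$.  Hence $m(f_1,A)\cap K=\emptyset$, giving $m(f_1,A)\subset N_V(m(f,A),\eta)$ after slightly enlarging.

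\medskip

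Next, for the stryker-curve assignment, fix $i$ and consider $t_0\in m_{J_i}(f,A)$.  By super convergence there is a neighborhood $U'_{t_0}$ of $f(t_0)$ (if $t_0\notin \finv(B_C)$) or of the entire ray $r(f(t_0))$ (if $t_0\in\finv(B_C)$) such that every $x\in U'_{t_0}$ with $g(\phi(x),A)=g(f,A)$ has $J_i$ as its stryker curve; the ray case uses that all points of $r(f(t_0))\setminus \lambda_C$ have the same underlying lamination, exactly as in the proofs of Lemmas \ref{marker preserving} and \ref{stryker preserving}.  By continuity of $f$ and compactness of $m_{J_i}(f,A)$, we may enlarge the $V$-neighborhoods to a uniform open set $V_i\supset m_{J_i}(f,A)$ with $V_i\subset N_V(m_{J_i}(f,A),\eta)$ and a corresponding open neighborhood $W_i\subset\PML$ of $f(V_i)\cup r(f(V_i)\cap B_C)$ on which the stryker curve is forced to be $J_i$ whenever $g(\cdot,A)=g(f,A)$.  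Since the $m_{J_i}(f,A)$'s are pairwise disjoint compact sets, I can shrink $\eta$-neighborhoods so that $V_i\cap V_j=\emptyset$ for $i\ne j$, and then choose $\delta_2>0$ small enough that any $(C,\delta_2)$-push off homotopy satisfies $F(V_i\times I)\subset W_i$.

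\medskip

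Taking $\delta=\min(\delta_1,\delta_2)$, and further requiring $\delta$ to be smaller than the thresholds of Lemmas \ref{push} and \ref{stryker preserving}, yields the desired conclusion: the first containment controls where $m(f_1,A)$ can live, and the second guarantees that a point $t\in m(f_1,A)$ with $d_V(t,m_{J_i}(f,A))<\eta$ lies in $V_i$, hence $f_1(t)\in W_i$, forcing $J_i$ to be the stryker curve at $f_1(t)$.  The main obstacle is the bookkeeping in the second paragraph: one must simultaneously localize in $V$ (to separate the $m_{J_i}$'s), in $\PML$ (to apply super convergence), and along the rays $r(x)$ (since the push off moves $f(t)$ near $r(x)$ for $x\in f(V)\cap B_C$, not near $f(t)$ itself).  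Once these three neighborhood systems are aligned via compactness, the conclusion is immediate.
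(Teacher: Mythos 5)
Your proposal is correct and is essentially the proof the paper intends: the paper gives no separate argument for this lemma, merely noting that ``this argument proves the following sharper result,'' i.e.\ the super convergence/compactness argument of Lemma \ref{stryker preserving} localized in $V$, which is exactly what you carry out (localizing on the compact complement of $N_V(m(f,A),\eta/2)$ for the containment, and on disjoint neighborhoods of the $m_{J_i}(f,A)$'s for the stryker-curve assignment). The only caveats are ones inherited from the paper's own statement: the conclusion implicitly assumes $g(f_1,A)=g(f,A)$ and that $\eta$ is small enough that the sets $N_V(m_{J_i}(f,A),\eta)$ are pairwise disjoint, both of which hold in the applications.
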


We need \emph{relative versions} of generalizations of the above results.

\begin{definition}  \label{partial pushing} Let $V$ be the underlying space of a finite simplicial complex and $W$ that of a finite subcomplex.  Let $f:V\to \PMLEL$ be a generic PL map with $\finv(\EL)=W$.  Let $K\subset\finv(B_C)$ be closed.  We say that the generic PL map $f_1:V\to \PMLEL$ is  obtained from $f$ by \emph{$(C, \delta, K)$ pushing off } if there exists a homotopy $F:V\times I\to \PMLEL$ called a $(C, \delta, K)$ \emph{ push off homotopy} such that 

i)  $f_1(K)\cap B_C=\emptyset$,

ii)  $F(t,s)=f(t)$ if either $s=0$ or $d_{V}(t, K)\ge \delta$ or  $d_{V}(t, W)\le \delta$ or $d_{\PML}(f(t), f(K))\ge \delta$ .

iii)  for each $t\in V$ such that $d_{\PML}(f(t), f(K))<\delta$ there exists an $x\in r(f(K))$ such that for all $s\in [0,1]$, \ $d_{\PML}(F(t,s), r(x))<\delta$.  Furthermore, if $F(t,s)\in B_C$, then $F(t,s)\in r(f(t))$ and if $f_1(t)\in\inte(B_C)$, then $f(t)\in \inte(B_C)$.\end{definition}

\begin{lemma} \label{partial push} If $f:V\to \PMLEL$ is a generic PL map, $\dim(V)\le 2n$, $C$  a simple closed geodesic and $K$ a closed subset of $\finv(B_C)$, then for every sufficiently small $\delta>0$ there exists a $(C, \delta, K)$ push off homotopy of $f$.   \end{lemma}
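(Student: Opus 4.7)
The plan is to repeat the construction of Lemma \ref{push}, but modulate the radial push by cutoff functions supported near $K$ in the domain $V$ and near $f(K)$ in the target $\PML$, so the homotopy only disturbs points in a prescribed neighborhood of $K$ rather than all of $\finv(B_C)$. Because $f$ is generic PL with $\dim V\le 2n$, Lemma \ref{missing} gives $f(V)\cap\lambda_C=\emptyset$, and compactness of $K$ yields $d_{\PML}(f(K),\lambda_C)>0$; since $K\subset\finv(B_C)$ is disjoint from $W=\finv(\EL)$, continuity of $f$ also gives $d_V(K,W)>0$. Fix $\delta_0>0$ smaller than these two distances and smaller than the radius of the product chart $g:N(B_C)\to 2B^{2n}\times[-1,1]$ of Remark \ref{cone neighborhood}. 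The lemma will be proved for all $\delta\in(0,\delta_0]$.

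Next, I would choose PL cutoffs $\eta_1:V\to[0,1]$ with $\eta_1\equiv 1$ on $K$, $\mathrm{supp}(\eta_1)$ contained in the $V$-distance $\delta/2$-neighborhood of $K$, and $\eta_1\equiv 0$ on the $\delta/2$-neighborhood of $W$; together with $\eta_2:\PML\to[0,1]$ equal to $1$ on $f(K)$ and supported in $\Npml(f(K),\delta/2)$. Set $\eta(t)=\eta_1(t)\,\eta_2(f(t))$. Let $F_\epsilon$ be the local radial push from the proof of Lemma \ref{push}, arranged so that its normal $[-1,1]$-coordinate is monotone nondecreasing along each trajectory. Define
$$F(t,s)=\begin{cases}\ginv\!\bigl(F_\epsilon(g(f(t)),\ \eta(t)\,s)\bigr), & f(t)\in N(B_C),\\ f(t), & \text{otherwise,}\end{cases}$$
and take $f_1$ to be a small generic PL perturbation of $F(\cdot,1)$.

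The verification splits into three parts. Condition (i) is immediate because $\eta\equiv 1$ on $K$ and $F_\epsilon(\cdot,1)$ has image disjoint from $B^{2n}\times 0$. Condition (ii) holds because $\eta$ vanishes whenever $s=0$, or $d_V(t,K)\ge \delta/2$, or $d_{\PML}(f(t),f(K))\ge\delta/2$, or $d_V(t,W)\le\delta/2$, which subsumes the four alternatives listed in the definition. For (iii), given $t$ with $\eta(t)>0$, pick $t'\in K$ with $d_V(t,t')<\delta/2$; then $d_{\PML}(f(t),f(t'))$ is small, and since $F_\epsilon$ moves $g(f(t))$ along a near-radial segment through the origin, $F(t,s)$ stays within $\delta$ of $r(f(t'))$, which is a ray $r(x)$ for $x=f(t')\in f(K)\subset r(f(K))$. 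When $f(t)\in B_C$ the chart identifies the radial ray through $g(f(t))$ with $r(f(t))$ itself, giving the sub-clause ``$F(t,s)\in B_C\Rightarrow F(t,s)\in r(f(t))$''. Monotonicity of the normal coordinate ensures that if $f_1(t)\in \inte(B_C)$, then no pushing off the slice $B^{2n}\times 0$ occurred, forcing $f(t)\in\inte(B_C)$.

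I expect the main obstacle to be the ray-closeness clause of (iii). The absolute Lemma \ref{push} uses effectively a single cutoff around $B_C$, whereas here the product cutoff $\eta_1(t)\,\eta_2(f(t))$ is required, and one must check that on its support the rays $\{r(x):x\in r(f(K))\}$ still foliate a $\delta$-tube around $F(t,s)$ for every $s\in[0,1]$. In the chart, $r(x)$ is a line segment through the origin, so this reduces to elementary Euclidean geometry controlled by shrinking $\delta$ and $\epsilon$. Since all three push-off conditions are open conditions, a sufficiently small generic PL perturbation in the final step preserves them.
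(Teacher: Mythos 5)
Your proposal is correct and takes essentially the same approach as the paper, which also produces the $(C,\delta,K)$ homotopy by modulating the push-off construction with a cutoff supported in a small neighborhood of $K$ (simultaneously controlled in $V$-distance to $K$ and $W$ and in $\PML$-distance to $r(f(K))$). The only cosmetic difference is that the paper reparametrizes the already-constructed $(C,\delta)$ push off homotopy as $F(t,\rho(t)s)$ rather than rebuilding $F$ directly from the local radial push $F_\epsilon$, and it collects all the constraints into a single open set $U$ instead of splitting the cutoff into the product $\eta_1(t)\,\eta_2(f(t))$; these are equivalent.
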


\begin{proof}  For $\delta$ sufficiently small let  $U\subset V\setminus N_{V}(W,\delta)$ be open such that $K\subset  U\subset N_{V}(K,\delta/10)\cap 
\finv(N_{\PML}(r(f(K)),\delta/10))$.  Let $\rho:V\to [0,1]$ be a continuous function such that $\rho(K)=1$ and $\rho(V\setminus U)=0$.  If $F(t,s)$ defines  a $(C,\delta)$ push off 
homotopy, then $F(t, \rho(t) s)$ suitably perturbed defines a $(C, \delta, K)$ push off homotopy.\end{proof}

\begin{lemma} \label{avoiding}  Let $V$ be a finite $p$-complex and $f:V\to \PMLEL$ a generic PL map, $C$  a simple closed geodesic, $L\subset \PML$ a finite $q$-subcomplex of $\CS$  and $K$ a closed subset of $\finv(B_C)$.  If $p+q\le 2n-1$ or $p\le n$, then for every sufficiently small $\delta>0$ any $(C, \delta, K)$ push off homotopy of $f$ is supported away from $L$.\end{lemma}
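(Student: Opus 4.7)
The plan is to apply Lemma \ref{missing} to conclude $f(V)\cap Z=\emptyset$, where $Z=\lambda_C*(\partial B_C\cap L)$, and then convert this into a statement about the push-off rays via a simplicial observation. The rest is a compactness argument combined with the outward-pushing nature of the homotopy.

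The one step requiring real work is the observation that $B_C\cap L\subset Z$. Let $y\in B_C\cap L$ and let $\sigma$ be the simplex of $L$ carrying $y$ in its relative interior, with vertices $v_0,\ldots,v_m$ (simple closed geodesics, at most one equal to $C$). Writing $y=\sum t_j v_j$ with $t_j>0$, the equation $i(y,C)=\sum t_j\, i(v_j,C)=0$ together with non-negativity of each term forces every $v_j$ to satisfy $i(v_j,C)=0$, i.e., to be either $C$ itself or disjoint from $C$. Hence $\sigma$ is either contained in $\partial B_C$ (when $C$ is not a vertex of $\sigma$) or equals $\lambda_C*\sigma'$ with $\sigma'\subset \partial B_C\cap L$ (when it is), and in either case $y\in Z$. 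Consequently $f(V)\cap B_C\cap L=\emptyset$, so the compact sets $f(V)\cap L$ and $B_C$ are disjoint and hence separated by some $\eta>0$. Moreover, for each $x\in f(K)$ the base point $x'\in \partial B_C$ of the ray $r(x)=\lambda_C*x'$ cannot lie in $\partial B_C\cap L$ (else $x\in r(x)\subset Z$, contradicting $x\in f(V)$). Thus $r(x)\cap Z=\{\lambda_C\}$, and since $r(x)\subset B_C$ combined with $B_C\cap L\subset Z$ gives $r(x)\cap L\subset \{\lambda_C\}$, we get $r(f(K))\cap L\subset \{\lambda_C\}$.

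Now the conclusion follows from the definitions and compactness. Condition (ii) of Definition \ref{partial pushing} gives $F(t,s)=f(t)$ whenever $d_{\PML}(f(t),f(K))\geq \delta$; since $f(K)\subset B_C$, any $t$ with $f(t)\in L$ satisfies $d_{\PML}(f(t),f(K))\geq \eta$, so choosing $\delta<\eta$ ensures the support of $F$ in $V$ is disjoint from $f^{-1}(L)$. By Condition (iii), the perturbed part of the homotopy lies in $N_\delta(r(f(K)))$. Because $\lambda_C\in Z$, Lemma \ref{missing} also gives $\alpha:=d_{\PML}(f(V),\lambda_C)>0$; since the push-off is built by pushing points radially outward from $\lambda_C$ (proof of Lemma \ref{push}), the homotopy image stays at distance $\geq\alpha/2$ from $\lambda_C$ once $\delta<\alpha/2$. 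The compact set $r(f(K))\cap\{d(\cdot,\lambda_C)\geq \alpha/4\}$ is disjoint from $L$ by the previous paragraph, hence at positive distance from $L$, so shrinking $\delta$ further makes the portion of $N_\delta(r(f(K)))$ outside $N_{\alpha/2}(\lambda_C)$ disjoint from $L$. Therefore the support of the homotopy in $V\times I$ maps into $\PML\setminus L$, which is the desired conclusion. The substantive step is the simplicial identification $B_C\cap L\subset Z$; the rest is an unpacking of definitions combined with compactness arguments.
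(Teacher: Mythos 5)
Your proof is correct and takes the same basic route as the paper (Lemma \ref{missing} plus the structure of the rays $r(f(K))$), but you have worked it out more carefully than the paper does, and in fact you have closed a real gap. The paper's one-line argument asserts that $r(f(K))\cap(Z\cup L)=\emptyset$ and then finishes by taking $\delta<d_{\PML}(r(f(K)),Z\cup L)$; but $\lambda_C=\phiinv(C)$ lies on every ray $r(x)$ (at $t=0$ in the defining formula, consistent with the stated compactness of $r(K)$) and is the apex of the cone $Z$, so $r(f(K))\cap Z$ contains $\lambda_C$ whenever $K\neq\emptyset$ and the paper's distance is zero, making the stated condition on $\delta$ vacuous. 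Your explicit simplicial verification that $B_C\cap L\subset Z$, which the paper leaves implicit, and your observation that the homotopy stays a definite distance from $\lambda_C$ (so that only the portion of $r(f(K))$ outside a neighborhood of $\lambda_C$ matters) is exactly the repair. One caveat worth flagging: the claim that the homotopy image stays at distance $\geq\alpha/2$ from $\lambda_C$ is drawn from the radially-outward construction in the proof of Lemma \ref{push}, not from Definition \ref{partial pushing} itself---conditions (ii) and (iii) only bound the distance to the rays $r(x)$ and do not by themselves forbid the homotopy from drifting toward $\lambda_C$---so the quantifier ``any $(C,\delta,K)$ push-off homotopy'' is being read, as the paper evidently intends, as ``any push-off homotopy as produced by Lemma \ref{partial push}.'' Finally, the detour through $f^{-1}(L)$ in your middle paragraph is redundant, since Lemma \ref{missing} already gives $f(V)\cap L=\emptyset$ outright.
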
   

\begin{proof}  Let $Z=(\partial B_C\cap L)*C$.  By Lemma \ref{missing},  $f(V)\cap (Z\cup L)=\emptyset$ and hence $r(f(K))\cap (Z\cup L)=\emptyset$.  Thus  the conclusion of the lemma holds provided $\delta<d_{\PML}(r(f(K)),Z\cup L)$.  \end{proof}

%\begin{remarks}  The $p+q\le 2n-1$ condition is needed to keep $f(V)$ from slamming into $L$ during the push off homotopy.  In practice, the condition can be weakened to $p+q\le 2n$ provided that $L\cap \partial B_C$ collapses to  a $q-1$ complex $R$.  The idea is that a generic PL map will avoid $R*C$, so then the collapsibility can be used to isotope $f$ off of $L\cap \partial B_C*C$.  Then one relatively pushes off of $B_C$ as before.  Of course this is not a classic $(C,K)$ push off homotopy, indeed $f(K)$ has been changed, but we expect that under appropriate circumstances it can serve the same function.  If $\dim(R)<q-1$, then one has a chance of obtaining better bounds.\end{remarks}.

%XXXX

We have the following relative version of  Lemma \ref{marker preserving}.

\begin{lemma}  \label{partial marker preserving} Let $b_1, b_2, \cdots, b_r, C$ be simple closed 
geodesics, $f:V\to \PMLEL$ a generic PL map and $K\subset \finv(B_C)$ be compact.  Let $\mJ$ be a marker family  that is $f$-free of $\{b_1, \cdots, b_r, C\}$.  If $\delta$ is sufficiently small, then $
\mJ$ is $F$-free of $\{b_1, \cdots, b_r, C\}$ for any  $(C, \delta, 
K)$  push off homotopy $F$ of $f$. \qed\end{lemma}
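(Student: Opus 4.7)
The plan is to adapt the proof of Lemma \ref{marker preserving} to the relative setting of Definition \ref{partial pushing}. Since the marker family is finite, it suffices to fix a single marker $\mM$ with compact set $W'\subset V$ and show that for $\delta$ small, $\phi(F(t,s))$ hits $\mM$ via a leaf not in $\{b_1,\dots,b_r,C\}$ for every $t\in W'$ and $s\in I$.

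First I would dispose of the trivial region: for $t\in W'$ with $d_V(t,K)\ge\delta$, $d_V(t,\finv(\EL))\le\delta$, or $d_{\PML}(f(t),f(K))\ge\delta$, clause (ii) of Definition \ref{partial pushing} gives $F(t,s)=f(t)$, so the hypothesis on $f$ suffices. For the remaining $t$, clause (iii) provides $x\in r(f(K))$ with $F(t,s)\in N_{\PML}(r(x),\delta)$ for all $s$, and in particular $f(t)\in N_{\PML}(r(x),\delta)$.

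The main step is a contradiction argument. Assume the conclusion fails, producing $\delta_n\to 0$, corresponding homotopies $F_n$, and $(t_n,s_n)$ with $t_n\in W'$ such that $\phi(F_n(t_n,s_n))$ does not hit $\mM$ via a leaf outside $\{b_1,\dots,b_r,C\}$. Extracting subsequences, $t_n\to t_\infty\in W'\cap K$; the associated points $x_n\in r(y_n)$ with $y_n\in f(K)$ satisfy $y_n\to y_\infty\in f(K)$ and $r(x_n)\to r(y_\infty)$; the bound $d_{\PML}(f(t_n),r(x_n))<\delta_n$ yields $f(t_\infty)\in r(y_\infty)$. By Lemma \ref{missing}, $\lambda_C\notin f(V)$, and the radial-outward form of the push-off homotopy used in Lemmas \ref{push} and \ref{partial push} keeps $F_n(t_n,s_n)$ a uniform distance from $\lambda_C$; hence, after passing to a further subsequence, $F_n(t_n,s_n)\to z_\infty\in r(y_\infty)\setminus\{\lambda_C\}$.

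Because $t_\infty\in W'$ and $\mJ$ is $f$-free of $\{b_1,\dots,b_r,C\}$, some leaf $L\notin\{b_1,\dots,b_r,C\}$ of $\phi(f(t_\infty))$ spans $\mM$. Since both $f(t_\infty)$ and $z_\infty$ lie in $r(y_\infty)\setminus\{\lambda_C\}$, their underlying laminations both contain the non-$C$ leaves of $\phi(y_\infty)$, so $L\subset\phi(z_\infty)$. It remains to upgrade this to the nearby point $F_n(t_n,s_n)$. The property ``$\phi(\cdot)$ hits $\mM$ via a leaf outside $\{b_1,\dots,b_r,C\}$'' is open in $\PML$: by super convergence (Proposition \ref{super convergence}) a spanning arc in $L$ produces perturbed spanning arcs for $\phi(y')$, $y'$ near $z_\infty$; any such arc lying in $C$ (or some $b_j$) for arbitrarily nearby $y'$ would force the limit arc into $C$ (respectively $b_j$), impossible since distinct geodesic leaves share no arc. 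Thus for large $n$, $\phi(F_n(t_n,s_n))$ hits $\mM$ via a leaf outside $\{b_1,\dots,b_r,C\}$, a contradiction. The main technical point is establishing this openness statement while simultaneously ensuring $z_\infty\ne\lambda_C$; both are handled by the genericity of $f$ and the specific radial form of the push-off homotopy.
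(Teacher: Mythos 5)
The paper states Lemma \ref{partial marker preserving} with a \qed, presenting it as the ``relative version of Lemma \ref{marker preserving},'' whose proof proceeds directly: for each $x\in f(K)\cap B_C$ a leaf distinct from $C,b_1,\dots,b_r$ hits $\mM$, this persists along $r(x)\setminus\lambda_C$ since the underlying laminations away from $C$ coincide, super convergence gives a neighborhood on which it persists, and compactness of $K$ and $B_C$ yields a uniform $\eta$ so that any $(C,\delta,K)$ push-off with $\delta<\eta$ stays inside. Your proof is the sequential-compactness contrapositive of exactly this argument, and it is correct; the two are logically interchangeable. One small point in your favor: you explicitly address why the limit $z_\infty$ stays off $\lambda_C$ (the one point of $r(y_\infty)$ where the freedom property necessarily fails, since $\phi(\lambda_C)=C$), whereas the paper's Lemma \ref{marker preserving} argument glosses over this when it asserts $N_{\PML}(r(y),\eta)\subset U$. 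Do note, though, that neither the abstract clauses (i)--(iii) of Definition \ref{partial pushing} nor the abstract clauses of Definition \ref{pushing off} actually guarantee that the homotopy tracks avoid a fixed neighborhood of $\lambda_C$; both you and the paper are implicitly using the specific radial construction in Lemmas \ref{push} and \ref{partial push}. Since the paper only ever applies the lemma to push-offs built that way, this is not a gap in practice, but if you wanted a proof valid for an arbitrary homotopy satisfying Definition \ref{partial pushing} you would need either to strengthen that definition or to argue differently near $\lambda_C$.
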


\begin{definition}  Let $f:V\to \PMLEL$.  Let $Y$ be a compact subset of $V$ and $A$ a simple closed geodesic.  
Define $g(f,A;Y)=\min\{g(f(t),A)|t\in Y\}$, the \emph{$Y$-geometric intersection number of $f$ and $A$}.  If $0<g(f,A;Y)<\infty$, 
define the \emph{$Y$-stryker curves for $f$ and $A$} to be those multi-geodesics $\sigma$ such that for some $t\in Y$, 
$\sigma\subset \phi(f(t))$ and $|\sigma\cap A| = g(f,A,Y)$.  Define $m(f,A;Y)=\{t\in Y|g(f(t),A)=g(f,A;Y)\}$ and if $\sigma$ is a Y-stryker curve then define $m_\sigma(f,A;Y)=\{t\in Y| \sigma\subset \phi(f(t))$ and $ |\sigma\cap A|=g(f,A,Y) \}$.  We say that the set $B$ of simple closed geodesics 
 \emph{solely hits the marker $\mM$ at $t$} if  each leaf of the lamination $\phi(f(t))$ that hits $\mM$ lies in $B$.    If  $Z\subset V$ then let 
$S(f,\mM, B, Z)$ denote the set of points in  $Z$ where $f$ solely hits $\mM$.       \end{definition}

The proof of Lemma \ref{stryker finiteness} holds for in the relative case.

\begin{lemma} If $f:V\to \PMLEL$, $A$ is a simple closed geodesic, $Y\subset V$ is compact and $0<g(f,A;Y)$, then the set of $Y$-stryker curves is finite.   Also $m(f,A; Y)$ is compact and is the disjoint union of compact sets $m_{J_1} (f,A;Y), \cdots , m_{J_m}(f,A;Y)$ where $t\in m_{J_i}(f,A;Y)$ implies that $J_i$ is a $Y$-stryker curve at $f(t)$.\qed\end{lemma}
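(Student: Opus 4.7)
The plan is to mirror verbatim the three-step proof of the absolute version, Lemma \ref{stryker finiteness}, replacing $V$ by the compact subset $Y$ throughout. All three conclusions hinge on super convergence (Proposition \ref{super convergence}) together with compactness of $Y$.

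First I would establish that $m(f,A;Y)$ is compact. If $t\in Y$ with $g(f(t),A)>g(f,A;Y)$, then any sequence $t_i\to t$ in $V$ satisfies $\phi(f(t))\subset \liminf \phi(f(t_i))$ as subsets of $PT(S)$ by super convergence. Since $g(f(t),A)>0$ forces $A$ to be transverse to $\phi(f(t))$, each of the $g(f(t),A)$ intersection points of $\phi(f(t))$ with $A$ is approached by a transverse intersection point of $\phi(f(t_i))$ with $A$, so $\liminf g(f(t_i),A)\ge g(f(t),A)>g(f,A;Y)$. Thus a neighborhood of $t$ in $V$ misses $m(f,A;Y)$, showing $m(f,A;Y)$ is closed in the compact set $Y$ and hence compact.

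Next I would prove finiteness of the set of $Y$-stryker curves. Suppose, for contradiction, there are infinitely many, and choose $t_1,t_2,\ldots\in Y$ whose associated stryker curves $J_1,J_2,\ldots$ are pairwise distinct; after passing to a subsequence, $t_i\to t\in Y$, and by the first step $t\in m(f,A;Y)$. Let $J$ be a stryker curve at $t$: it is a union of closed leaves of $\phi(f(t))$ accounting for all of $\phi(f(t))\cap A$, noting that any minimal sublamination of $\phi(f(t))$ which meets $A$ in finitely many points must be a simple closed geodesic (a non-closed minimal lamination either misses $A$ or meets it in infinitely many points). Super convergence, together with the constancy of $|\phi(f(\cdot))\cap A|=g(f,A;Y)$ on $m(f,A;Y)$, forces each closed-geodesic component of $J$ to persist as a leaf of $\phi(f(s))$ for $s\in m(f,A;Y)$ sufficiently close to $t$, and the fixed total intersection count prevents any new component from appearing nearby. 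Hence the stryker curve at $s$ equals $J$ for all $s\in m(f,A;Y)$ near $t$, contradicting the distinctness of the $J_i$ for $i$ large.

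Finally, the disjoint union decomposition follows at once: the argument above shows the map $t\mapsto $ (stryker curve at $t$) is locally constant on $m(f,A;Y)$ with finite image $\{J_1,\dots,J_m\}$, so each $m_{J_i}(f,A;Y)$ is both open and closed in $m(f,A;Y)$, hence compact, and they partition $m(f,A;Y)$. The main obstacle is the middle step, specifically the claim of local constancy: one must carefully combine super convergence with the fixed intersection count $g(f,A;Y)$ to rule out a closed-leaf component of the stryker curve being suddenly replaced by a different nearby simple closed geodesic of the same total intersection. This is the same super convergence argument that closes the absolute case, now applied pointwise on the compact set $Y$.
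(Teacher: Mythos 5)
Your proposal is correct and takes essentially the same route as the paper: the paper disposes of this relative lemma by asserting that the proof of the absolute stryker-finiteness lemma carries over verbatim, and that proof is exactly your three steps --- closedness of $m(f,A;Y)$ in the compact set $Y$ via super convergence, local constancy of the stryker curve on $m(f,A;Y)$ via super convergence combined with the fixed intersection count, and the resulting clopen (hence compact) partition into the $m_{J_i}(f,A;Y)$. The delicate local-constancy step you flag is treated no more explicitly in the paper than in your write-up.
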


By super convergence we have the following result.

\begin{lemma}  If f:$V\to \PMLEL$ is a generic PL map, $B$ is a finite set of simple closed geodesics, $Z\subset V$ is compact,  then $S(f,\mM, B, Z)$ is compact.    \qed\end{lemma}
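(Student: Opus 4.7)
The plan is to show that $S(f,\mM,B,Z)$ is closed in $Z$; combined with compactness of $Z$, this yields the conclusion. So let $t_i \to t$ be a convergent sequence in $Z$ with every $t_i \in S(f,\mM,B,Z)$, and argue by contradiction that $t$ also lies in $S(f,\mM,B,Z)$. Suppose instead there exists a leaf $L$ of $\phi(f(t))$ that hits $\mM$ but satisfies $L \notin B$. By the definition of ``hits,'' $L$ contains at least three distinct embedded arcs spanning $\mM$; fix one such spanning arc $\gamma \subset L$ and an interior tangent vector $v$ to $\gamma$, viewed as a point of $PT(S)$.

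By continuity of $f:V \to \PMLEL$ together with Lemma \ref{pmlel topology}(v), after passing to a subsequence we may assume the values $f(t_i) \in \PML$ converge to a point $y \in \PML$ with $\phi(y) = \phi(f(t))$ (if $f(t)\in\EL$) or $y = f(t)$ (if $f(t)\in\PML$). Proposition \ref{super convergence} then provides super convergence $\phi(f(t_i)) \to \phi(f(t))$ in $\LS$. Consequently there exist $v_i \in \phi(f(t_i))$ with $v_i \to v$ in $PT(S)$, and analogous approximations can be performed near interior tangent vectors to the other two spanning arcs of $L$. Letting $L_i$ be the leaf of $\phi(f(t_i))$ through $v_i$, continuity of the geodesic flow gives that for any prescribed length $T$ the subarc of $L_i$ of length $T$ about the basepoint of $v_i$ converges to the corresponding subarc of $L$. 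Choosing $T$ large enough that this subarc of $L$ engulfs all three arcs of $L$ spanning $\mM$, we obtain for large $i$ three distinct embedded subarcs of $L_i$ each $C^1$-close to one of the spanning arcs of $L$ and therefore each representing the homotopy class $[\alpha]$ underlying $\mM$. Hence $L_i$ itself hits $\mM$.

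Because $t_i \in S(f,\mM,B,Z)$, the leaf $L_i$ must lie in the finite collection $B$. By the pigeonhole principle, after a further subsequence we may assume $L_i = L_*$ for some fixed closed geodesic $L_* \in B$. The approximating subarcs $\gamma_i \subset L_*$ converge to $\gamma$ in the Hausdorff topology on compact subsets of $S$, and since $L_*$ is closed in $S$ this forces $\gamma \subset L_*$. But $\gamma$ is also a geodesic subarc of $L$, and two complete geodesics in $S$ sharing an open subarc must coincide as complete geodesics. Thus $L = L_* \in B$, contradicting $L \notin B$, and the lemma follows.

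The main technical subtlety is in the middle paragraph: one must ensure that after super convergence the approximating leaf $L_i$ carries enough of the spanning data of $L$ to qualify as hitting $\mM$ in the technical sense of having three distinct spanning arcs. Using a single tangent vector $v$ and the continuity of geodesic flow on a large time interval (large enough to capture all three spanning arcs simultaneously along the bi-infinite or closed leaf $L$) is what converts the abstract super convergence into the concrete existence of $L_i$ hitting $\mM$.
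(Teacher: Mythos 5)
Your argument is correct and is essentially the paper's intended proof: the paper proves this lemma with a single appeal to super convergence, and you have simply supplied the details of that appeal (approximate a spanning tangent vector of the offending leaf $L$, use continuity of geodesic flow to transport a long segment of $L$ to leaves $L_i$ of $\phi(f(t_i))$, force $L_i \in B$ by hypothesis, then use finiteness of $B$ and the fact that distinct complete geodesics cannot share a nondegenerate arc to derive a contradiction). One small presentational point: you treat only the case $f(t_i)\in\PML$; when $f(t_i)\in\EL$ one instead uses the coarse Hausdorff convergence $\phi(f(t_i)) = f(t_i) \to f(t)$ directly, which still yields super convergence, so the rest of the argument goes through unchanged.
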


We have the following analogy of Lemma \ref{stryker preserving}.

\begin{lemma} \label{partial stryker preserving} Let $f:V\to \PMLEL$,  be a generic PL map, $\eta>0$ and $A
$ and $C$ disjoint simple closed geodesics.    Let $K$ be a closed 
subset of $\finv(B_C)$ and $Y\subset V$ be compact.  If $0<g(f,A;Y)<\infty$, then there exists a neighborhood $U$ of $Y$ such that for $\delta$ sufficiently small, any $(C, \delta, K)$ push off $f_1$ satisfies 
$g(f_1, A; \bar U)\ge g(f,A; Y)$.  If equality holds, then $m(f_1, A; \bar U)\subset N_{V}
(m(f,A;Y), \eta)$ and if $t\in m(f_1, A;\bar U)$ and $d_{V}(t, m_{J_i}(f,A;Y))<\eta$, then $J_i
$ is the $\bar U$-stryker curve to $A$ at $f_1(t)$.    \qed\end{lemma}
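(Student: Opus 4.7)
The plan is to adapt the arguments in Lemmas \ref{stryker preserving} and \ref{refined partial stryker preserving} to the relative setting by localizing to a suitable neighborhood $U$ of $Y$. The basic auxiliary fact is that by Proposition \ref{super convergence}, the function $t\mapsto g(\phi(f(t)),A)$ is integer valued and lower semicontinuous on $V$, so its superlevel sets are open. Setting $N=g(f,A;Y)$, the set $\{t\in V:g(\phi(f(t)),A)\ge N\}$ is open and contains $Y$, so I choose $U$ with $\bar U$ inside it; this forces $g(f,A;\bar U)=N$. A compactness argument (take shrinking neighborhoods of $Y$ and extract a limit point) additionally lets me arrange $m(f,A;\bar U)\subset N_V(m(f,A;Y),\eta/3)$.

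For the main inequality $g(f_1,A;\bar U)\ge N$, fix $t\in\bar U$. If $d_V(t,K)\ge\delta$ then $f_1(t)=f(t)$ by Definition \ref{partial pushing}, and the bound holds trivially. Otherwise the push off moves $f(t)$ within $\delta$ of a ray $r(x)$ with $x\in r(f(K))$, so $r(x)=r(f(\tilde s))$ for some $\tilde s\in K$. Since $A$ and $C$ are disjoint, $g(\cdot,A)$ is constant on $r(x)\setminus\{\lambda_C\}$ and equal to $g(f(\tilde s),A)$. Let $y^*$ be the nearest point of $r(x)$ to $f(t)$. Lower semicontinuity at $f(t)$ gives $g(y^*,A)\ge g(f(t),A)\ge N$ for $\delta$ small, and by genericity (Lemma \ref{missing}) $y^*\neq\lambda_C$, so $g(y^*,A)=g(f(\tilde s),A)$. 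A second application of super convergence in a uniform $\PML$-neighborhood of the compact set $r(f(K))$ gives $g(f_1(t),A)\ge g(f(\tilde s),A)\ge N$.

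In the equality case $g(f_1,A;\bar U)=N$, consider $t\in m(f_1,A;\bar U)$. If $f_1(t)=f(t)$, then $t\in m(f,A;\bar U)\subset N_V(m(f,A;Y),\eta/3)$, and by a super convergence argument on the compact stratum $\{g(\cdot,A)=N\}$ in a neighborhood of $f(Y)$, the stryker curve at $f(t)$ agrees with that of a nearby $u\in m_{J_i}(f,A;Y)$. In the pushed case, the inequality chain collapses to equalities, so in particular $g(f(t),A)=N$ and $t\in m(f,A;\bar U)$, reducing us to the previous case up to the additional super convergence near $r(f(K))$ that transfers the stryker curve from $f(\tilde s)$ to $f_1(t)$ via $y^*$.

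The main technical obstacle is managing the order of quantifiers and converting pointwise super convergence into uniform super convergence over the compact sets $\bar U$, $K$, $f(\bar U)$, and $r(f(K))$. Once $U$ is chosen (depending on $\eta$, $Y$, $f$, $A$), $\delta$ must be chosen so that (i) the push off is supported in a thin neighborhood of $K$ in $V$ and of $r(f(K))$ in $\PML$, (ii) lower semicontinuity of $g(\cdot,A)$ applies uniformly near $f(\bar U)$, and (iii) stryker curves are preserved in a uniform $\PML$-neighborhood of the stratum $\{g(\cdot,A)=N\}$ close to $r(f(K))$. All three follow from compactness combined with the templates already worked out in the non-relative Lemmas \ref{stryker preserving} and \ref{refined partial stryker preserving}.
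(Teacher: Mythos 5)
Your proposal is correct and matches what the paper intends: Lemma 6.16 is stated with only a \qed, the implicit proof being exactly the localization of the arguments in Lemmas \ref{stryker preserving} and \ref{refined partial stryker preserving} to a neighborhood of $Y$, which is what you carry out. The only point worth making explicit is that the two applications of lower semicontinuity of $g(\cdot,A)$ run in opposite directions (at $f(t)$ to bound $g(y^*,A)$ from below, then at $y^*$ to bound $g(f_1(t),A)$ from below), and that the uniformity of $\delta$ is obtained from compactness of $\bar U$, $K$, and $r(f(K))$ together with the fact that the homotopy is supported away from $\lambda_C$ by Lemma \ref{missing}; you note both, so the argument is complete.
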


\section{Marker tags}

  \begin{definition} \label{tag} Let $A$ be a simple closed multi-geodesic in  $S$.  We say that $\tau$ is a \emph{tag} for $A$, if $\tau$ is a compact embedded geodesic curve (with $\partial \tau$ possibly empty) transverse to $A$ such that $\partial \tau\subset A$ and $\inte(\tau)\cap A\neq\emptyset$.  
  
    Let $\mM$ be a marker hit by the simple closed multi-geodesic $A$.  Then $r\ge 3$ distinct subarcs of $A$ span $\mM$, where $r\in \BN$ is maximal.  These arcs run from $\alpha_0$ to $\alpha_1$, the posts  of $\mM$.  Suppose that the initial points of these arcs intersect  $\alpha_0$ at  $c_1, \cdots, c_r$.  Let $\tau$ be the maximal subarc of $\alpha_0$ with endpoints in $ \{c_1,\cdots, c_r\}$.    Such a tag is called a \emph{marker tag}.\end{definition}

Given $ f:V\to \PMLEL$ that hits the marker $\mM$ we may need to find a new $f$ that hits the marker $\mM$ free of a particular multi-geodesic.   Tags are introduced to measure progress in that effort.  We will find a sequence of push off homotopies whose resulting maps intersect a given tag more and more so that we can ultimately invoke the following result.

\begin{lemma}  \label{tag freedom} Let $f:V\to \PMLEL$, $\mM$ a marker, $A$ a simple closed multi-geodesic that hits $\mM$ and $\tau$ the corresponding marker tag.    Let $b_1, \cdots, b_r$ be  simple closed geodesics such that for all   $t\in \finv(B_A)$,  $ |(\phi(f(t))\setminus ( \cup_{i=1}^r b_i\cup A))|\cap \tau\ge 3(3g-3 +p)$, then $\mM$ is $f$-free of $\{A, b_1, \cdots, b_r\}$ along $\finv (B_A)$. \end{lemma}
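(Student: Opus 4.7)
The plan is to show that for each $t\in\finv(B_A)$, some leaf of $\mL:=\phi(f(t))\setminus(b_1\cup\cdots\cup b_r\cup A)$ hits $\mM$; since every such leaf is distinct from $A$ and each $b_i$, this yields the conclusion. The factor $3(3g-3+p)$ in the hypothesis is geared to a pigeonhole argument over the minimal sublaminations of $\mL$.

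First, decompose $\mL=\mM_1\sqcup\cdots\sqcup\mM_k$ into minimal sublaminations (precisely the minimal components of $\phi(f(t))$ that are not equal to a component of $A$ nor to some $b_i$). A standard ``replacement'' argument bounds $k\le 3g-3+p$: for each $\mM_j$ take either $\mM_j$ itself (if it is a closed geodesic) or an essential non-peripheral boundary curve of a thin regular neighborhood of $\mM_j$; choosing neighborhoods pairwise disjoint and pruning parallel duplicates yields pairwise disjoint, pairwise non-parallel, non-peripheral essential simple closed curves, and therefore at most the size of a pants decomposition. Combined with $|\mL\cap\tau|\ge 3(3g-3+p)$, pigeonhole produces some $\mM_j$ with $|\mM_j\cap\tau|\ge 3$. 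I then extract a single leaf $L\subset\mM_j$ with $|L\cap\tau|\ge 3$: if $\mM_j$ is a simple closed geodesic it is its own unique leaf; otherwise $\mM_j$ is non-closed and minimal, and a flow-box argument (a dense leaf enters any flow box infinitely often, and each entry crosses the transversal exactly once) shows that any leaf of $\mM_j$ meeting $\tau$ meets it infinitely often.

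Second, show that such an $L$ hits $\mM$ via a rectangle argument in $\BH^2$, in the spirit of Lemma \ref{finding c markers}. Fix a lift $\tilde\alpha_0$ of $\alpha_0$; since the subarcs of $A$ through $c_1,\dots,c_r$ all represent the common path-homotopy class $[\alpha]$, they lift consistently to pairwise disjoint geodesic arcs $\tilde A_1,\dots,\tilde A_r$ from $\tilde\alpha_0$ to a common lift $\tilde\alpha_1$ of $\alpha_1$. Let $R$ be the geodesic rectangle bounded by $\tilde\tau$ (the segment of $\tilde\alpha_0$ between the outermost endpoints $\tilde c_1,\tilde c_r$), the corresponding segment of $\tilde\alpha_1$, and the outermost arcs $\tilde A_1,\tilde A_r$. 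A lift of $L$ entering $R$ through $\tilde\tau$ cannot reexit through $\tilde\alpha_0$ (distinct geodesics in $\BH^2$ meet at most once) and cannot cross $\tilde A_1$ or $\tilde A_r$ (since $t\in\finv(B_A)$ forces $\phi(f(t))$ to have zero transverse intersection with $A$, so $L$, not being contained in $A$, is disjoint from $A$ in $S$). Hence the lift exits through $\tilde\alpha_1$, and its projection is an embedded subarc of $L$ spanning $\mM$ in class $[\alpha]$. The three distinct crossings of $L$ with $\tau$ produce three such subarcs with distinct initial points on $L$, hence three distinct embedded spanning arcs, so $L$ hits $\mM$.

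I expect the main obstacle to be the bookkeeping in the first step: verifying the $3g-3+p$ bound on the number of minimal components (ensuring that the replacement curves can be chosen pairwise disjoint, non-parallel, and non-peripheral in $S$) and spelling out the flow-box recurrence for non-closed minimal sublaminations. The rectangle argument in the second step is essentially the geometric mechanism already present in the proof of Lemma \ref{finding c markers}, so it is mostly a matter of careful packaging.
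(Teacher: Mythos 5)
Your proof is correct and uses the same core ingredients as the paper's: a pigeonhole over the bound $3g-3+p$, the observation that a non-closed minimal leaf meeting $\tau$ must meet it infinitely often, and the rectangle-in-$\BH^2$ argument (which the paper states without proof, citing the mechanism from the $C$-marker construction). One organizational difference worth noting: the paper orders the case split so that the pigeonhole is only needed over the set of \emph{closed geodesic} components of $\phi(f(t))$ (for which the bound $3g-3+p$ is immediate from the pants-decomposition bound). It first dispatches the case where any non-compact leaf meets $\tau$ — giving infinitely many intersections by non-properness — and only then, when all intersections lie on closed leaves, applies the pigeonhole. This avoids invoking the general bound of $3g-3+p$ on the number of minimal components (closed or not) of a measured lamination, which is the step you flag as the most delicate bookkeeping in your write-up; if you prefer to sidestep the replacement argument, you can simply adopt the paper's ordering.
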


\begin{proof} If $f(t)\in B_A$, then any leaf $L $ of $\phi(f(t))$ distinct from $A$ with $L\cap \tau\ge m$ has at least $ m$ distinct subarcs that span $\mM$.  If $L$ is a non compact leaf of $\phi(f(t))$ and $L\cap \tau\neq\emptyset$, then $|L\cap \tau|=\infty$, since $L$ is non proper.  If only closed geodesics of $\phi(f(t))$ intersect $\tau$, then since $\phi(f(t))$ can have at most $3g-3+p$ such geodesics, one of them $L$ distinct from $\{b_1, \cdots, b_r, A\}$ must satisfy $|L\cap \tau|\ge 3$.\end{proof}

\begin{definition}   Let $\tau$ be a tag for the multi-geodesic $A$,  $f:V\to \PMLEL$ a generic PL map and $Y$ a compact subset of $\finv(B_A)$.  Define $g(f,\tau, Y)= \min\{|(\phi(f(t))\setminus A)\cap \tau||t\in Y\}$  the \emph{geometric intersection number of $f $ with $\tau$ along $Y$}.

If $0<g(f,\tau,Y)<\infty$, then define the multi-geodesic $J$ to be a \emph{$Y$-stryker curve} for $\tau$ if $J\subset \phi(f(t))$, $J\cap A=\emptyset$ and $|J\cap \tau|=g(f,\tau, Y)$.  \end{definition}

The proof of Lemma \ref{stryker finiteness}  readily generalizes to the following result.

\begin{lemma} \label{tag stryker finiteness} If $f:V\to \PMLEL$ is a generic PL map, $\tau$ is a tag for the simple closed geodesic $A$ and $Y$ is closed in $\finv(B_A)$, then the set of $Y$-stryker curves is finite.   Also the set $m(f,\tau, Y)=\{t\in Y|g(f,\tau,Y)=|(\phi(f(t))\setminus A)\cap\tau|\}$ is compact and and canonically partitions as the disjoint union of the compact sets $m_{J_1}(f,\tau, Y), \cdots, m_{J_k}(f,\tau,Y)$ where $J_i$ is the $Y$-stryker curve to $\tau$ at all $t\in m_{J_i}(f,\tau, Y)$.  \qed\end{lemma}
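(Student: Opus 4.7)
The plan is to adapt the proof of Lemma \ref{stryker finiteness} almost verbatim, replacing the count $|\phi(f(t))\cap A|$ by $h(t):=|(\phi(f(t))\setminus A)\cap\tau|$. First I would show that $m(f,\tau,Y)$ is compact. Note that $Y$, being closed in the compact set $\finv(B_A)\subset V$, is itself compact. Super convergence (Proposition \ref{super convergence}) gives that $h$ is lower semicontinuous on $Y$: if $t_i\to t$, then $\phi(f(t))$ is a sublimit of the sequence $\{\phi(f(t_i))\}$ in $PT(S)$, and since each transverse intersection of $\phi(f(t))\setminus A$ with $\tau$ is isolated and $A$ is a fixed geodesic, each such intersection point is approximated by intersection points of $\phi(f(t_i))\setminus A$ with $\tau$; hence $h(t)\le\liminf h(t_i)$. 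Therefore $\{t\in Y: h(t)>g(f,\tau,Y)\}$ is relatively open in $Y$, so $m(f,\tau,Y)$ is closed in $Y$ and thus compact.

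Next I would establish finiteness of $Y$-stryker curves by contradiction. Suppose there are infinitely many distinct stryker curves $J_1,J_2,\dots$, and pick $t_i\in m_{J_i}(f,\tau,Y)$. After extracting a subsequence assume $t_i\to t$; lower semicontinuity places $t\in m(f,\tau,Y)$. Let $J$ be the $Y$-stryker curve at $t$. By super convergence each simple closed geodesic leaf of $J$ is a Hausdorff limit of leaves of $\phi(f(t_i))$, and the only geodesic Hausdorff-approximating a given simple closed geodesic is that geodesic itself, so $J\subset \phi(f(t_i))$ for all large $i$. Combined with $J\cap A=\emptyset$ and $|J\cap\tau|=g(f,\tau,Y)=h(t_i)$, this exhibits $J$ as a stryker curve at $t_i$. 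Since the stryker curve at each point of $m(f,\tau,Y)$ is unique (being the union of all simple closed geodesic leaves of $\phi(f(t))\setminus A$ that meet $\tau$), we conclude $J_i=J$ for large $i$, contradicting distinctness.

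Finally, the same super convergence argument shows that each $m_{J_k}(f,\tau,Y)$ is closed in $m(f,\tau,Y)$, hence compact; and uniqueness of the stryker curve at each point makes $\{m_{J_k}(f,\tau,Y)\}$ pairwise disjoint, establishing the canonical partition. The main subtlety, which I expect to be essentially definitional once unpacked, is justifying existence and uniqueness of the stryker curve at each $t\in m(f,\tau,Y)$. By Lemma \ref{filling}, $\phi(f(t))$ is a disjoint union of minimal sublaminations; a non-compact minimal component has every leaf dense, so if such a component met $\tau$ off $A$ it would contribute infinitely many intersection points, forcing $h(t)=\infty$. Since $h(t)=g(f,\tau,Y)<\infty$ on $m(f,\tau,Y)$, only simple closed geodesic leaves of $\phi(f(t))\setminus A$ can meet $\tau$, and their union is a well-defined multi-geodesic — the canonical stryker curve at $t$.
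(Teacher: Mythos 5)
The paper offers no separate proof here---it simply asserts that the proof of Lemma \ref{stryker finiteness} readily generalizes---and your argument is exactly that generalization, with the same skeleton: lower semicontinuity of the intersection count via super convergence gives compactness of $m(f,\tau,Y)$, a convergent-subsequence contradiction gives finiteness of the stryker curves, and the same observations yield the partition into compact pieces; your closing paragraph on why the stryker multi-geodesic at each point exists and is unique (non-compact minimal components meeting $\tau$ would force infinitely many intersections) is a worthwhile addition that matches the reasoning in Lemma \ref{tag freedom}. The one phrase to tighten is ``the only geodesic Hausdorff-approximating a given simple closed geodesic is that geodesic itself,'' which is false in general (leaves can spiral onto or wrap many times around a closed geodesic while being Hausdorff-close to it); the correct reason a component $C$ of $J$ must actually be a leaf of $\phi(f(t_i))$ for large $i$ is that any other leaf fellow-traveling $C$ for a long time would cross $\tau$ once per circuit near each point of $C\cap\tau$ and hence contribute more than $g(f,\tau,Y)$ intersections, which is excluded at points of $m(f,\tau,Y)$.
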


Similarly, Lemma \ref{partial stryker preserving} generalizes to the following result.

\begin{lemma} \label{partial tag stryker preserving} Let $f:V\to \PMLEL$ be a generic PL map, $\tau$ a tag for the simple closed geodesic $A$, 
$Y$ a closed subset of $\finv(B_A)$ and $0<g(f,\tau,Y)<\infty$.  Let $C$ be a simple closed geodesic such that $C\cap(A\cup\tau)=\emptyset$ and $K$ a closed 
subset of $\finv(B_C)$.  If $\eta>0$, then there exists a neighborhood $U$ of $Y$ such that for $\delta$ sufficiently small, any $(C, \delta, K)$ push off $f_1$ satisfies 
$g(f_1, \tau, Y_1)\ge g(f, \tau, Y)$, where $Y_1=\finv_1(B_A)\cap \bar U$.   If equality holds, then $m(f_1, \tau, Y_1)\subset N_{V}(m(f,\tau,Y),\eta)$ and if $t\in m(f_1,\tau, Y_1)$ and $d_{V}(t, m_{J_i}(f,\tau,Y))<\eta$, then $J_i$ is the $Y_1$ stryker curve to $\tau$ at $f_1(t)$. In particular if $J$ is a $Y_1$-stryker curve for $f_1$ and $\tau$, then $J$ is a $Y$-stryker 
curve for $f$ and $\tau$.  \qed\end{lemma}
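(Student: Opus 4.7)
The plan is to mirror the proof of Lemma \ref{partial stryker preserving} essentially verbatim, with the simple closed geodesic $A$ there replaced by the tag $\tau$. The two structural inputs that make the substitution work are: (a) $\tau$ is a compact geodesic arc, so the super convergence result (Proposition \ref{super convergence}) applied to any convergent sequence $y_i\to y_\infty$ in $\PML$ yields $|\phi(y_i)\cap\tau|\ge |\phi(y_\infty)\cap\tau|$ transverse hits for large $i$; and (b) the disjointness hypothesis $C\cap(A\cup\tau)=\emptyset$ guarantees that on each ray $r(x)$ with $x\in \partial B_C\setminus \lambda_C$ the underlying laminations differ only in whether $C$ is adjoined, so intersection counts with $\tau$ and inclusion/exclusion of $A$ as a leaf are constant along $r(x)\setminus\{\lambda_C\}$.

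First I would set up the neighborhood $U$. By Lemma \ref{tag stryker finiteness} we have the compact decomposition $m(f,\tau,Y)=\sqcup m_{J_i}(f,\tau,Y)$. For each $t\in Y$ pick a $\PML$-neighborhood $W_t$ of $f(t)$ on which $|(\phi(\cdot)\setminus A)\cap\tau|\ge |(\phi(f(t))\setminus A)\cap\tau|$ and, if moreover $t\in m_{J_i}(f,\tau,Y)$, on which $J_i\subset\phi(\cdot)$; this uses super convergence together with $A\cap\tau\subset\partial\tau$ so that a leaf $A$ remaining in $\phi(\cdot)$ does not contribute to the count. By compactness of $Y$ and of $r(f(K))$ (which is compact by Remark \ref{cone neighborhood}), together with (b) above, the collection $\{W_t\}$ covers an $\epsilon$-neighborhood in $\PML$ of $f(Y)\cup r(f(Y\cap K))$. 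Now take $U$ a neighborhood of $Y$ so small, and $\delta$ so small, that any $(C,\delta,K)$ push off $f_1$ has $f_1(\bar U)$ contained in this $\epsilon$-neighborhood; this is possible since condition (iii) of Definition \ref{partial pushing} forces $f_1$ to stay within $\delta$ of rays through $f(K)$, while outside a $\delta$-neighborhood of $K$ the map is unchanged. This yields $g(f_1,\tau,Y_1)\ge g(f,\tau,Y)$.

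For the equality case, on the compact set $Y\setminus N_V(m(f,\tau,Y),\eta/2)$ the strict inequality $|(\phi(f(s))\setminus A)\cap \tau|>g(f,\tau,Y)$ holds pointwise, hence by super convergence and compactness it extends to a $\PML$-neighborhood of the image; shrinking $U$ and $\delta$ again, $|(\phi(f_1(t))\setminus A)\cap\tau|>g(f,\tau,Y)$ for $t\in \bar U\setminus N_V(m(f,\tau,Y),\eta)$. Hence under equality, $m(f_1,\tau,Y_1)\subset N_V(m(f,\tau,Y),\eta)$. For the stryker identification, if $t\in m(f_1,\tau,Y_1)$ with $d_V(t,m_{J_i}(f,\tau,Y))<\eta$, then choosing $U$ small enough that $f_1(t)$ lies in the $W_s$ produced above for some $s\in m_{J_i}(f,\tau,Y)$, we get $J_i\subset \phi(f_1(t))$; since $J_i\cap A=\emptyset$ and $|J_i\cap \tau|=g(f,\tau,Y)=g(f_1,\tau,Y_1)$, the curve $J_i$ accounts for all of the intersections $(\phi(f_1(t))\setminus A)\cap \tau$, so $J_i$ is the $Y_1$-stryker curve at $f_1(t)$. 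The ``In particular'' assertion is immediate: any $Y_1$-stryker curve is realized at some $t\in m(f_1,\tau,Y_1)$, which by what was just shown is near some $m_{J_i}(f,\tau,Y)$, and then the curve equals $J_i$, a $Y$-stryker curve for $f$.

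The only genuine subtlety, and the point at which I expect to need the most care, is keeping track of the discrepancy between $\finv(B_A)$ and $\finv_1(B_A)$: the push off might move points of $Y$ in or out of $B_A$ (since it is only constrained to move off $B_C$, not off $B_A$). This is where observation (b) is essential: along rays $r(x)$ with $x\in f(K)\subset B_C$, adjoining or deleting $C$ does not affect whether the lamination carries $A$ or the count $(\phi(\cdot)\setminus A)\cap \tau$, so super convergence controls the new intersection with $\tau$ uniformly on $Y_1$, regardless of whether individual points of $Y$ crossed $\partial B_A$ during the homotopy.
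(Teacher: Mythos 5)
Your proposal is essentially the approach the paper intends: the paper states Lemma~\ref{partial tag stryker preserving} with a \qed\ as an unproved ``generalization'' of Lemma~\ref{partial stryker preserving}, itself an unproved ``analogue'' of Lemma~\ref{stryker preserving}, so the only argument actually written out in this chain is the proof of Lemma~\ref{stryker preserving}, and your two structural inputs --- lower semicontinuity of $|(\phi(\cdot)\setminus A)\cap\tau|$ via super convergence, and constancy of that count along rays $r(x)$ because $C\cap(A\cup\tau)=\emptyset$ --- are exactly the two ingredients of that proof, correctly transported to the tag and relative setting. Your final remark about $\finv(B_A)$ versus $\finv_1(B_A)$ is the right extra point to address for this version; the only small imprecision is that the rays one needs to control are those through $f(K)$ that pass near $f(\bar U)$, not literally $r(f(Y\cap K))$, but since in the application $K\subset m_v(\mC)\subset Y$ this distinction evaporates.
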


\section{Marker Cascades}

This very technical section begins to address the following issue.  To invoke Proposition 
\ref{continuity criterion} we need to find a sequence $f_1, f_2, \cdots$ satisfying the sublimit and filling conditions, 
in particular satisfying the property that $\finv_j(B_{C_i})=\emptyset$ for $i\le j$.  We cannot just create $f_{i}$ from 
$f_{i-1}$ by pushing off of $C_i$, because $C_i$ may be needed to hit previously constructed markers.   To make $C_i$ 
free of these markers we may need to relatively push off of other curves.  We may not be able to push off of 
those curves because they in turn are needed to hit markers.  In subsequent sections we shall see that finiteness of $S$, genericity of $f $ and 
the $k\le n$ condition will force this process to terminate.  Thus before we push off of $C_i$ we will do a sequence of 
relative push offs of other curves.  

We introduce the notion of \emph{marker cascade} to keep track of progress.   Given $f:V\to \PMLEL$, markers $\mM_1, \cdots, \mM_m$ and pairwise disjoint simple closed curves $a_1, \cdots, a_v$ a marker cascade is a (complicated) measure of how far $\mM_1, \cdots, \mM_m$ are from being free of $a_1, \cdots, a_v$.  At the end of this section we will show that under appropriate circumstances  relative pushing 
preserves freedom as measured by a marker cascade. The next section shows that judicious relative pushing increases the level of freedom.   See Proposition \ref{finished}.

\begin{definition}  \label{cascade} Let $V$ be the underlying space of a finite simplicial complex.  Associated to  $f:V\to \PMLEL$ a generic PL map, $\mJ=(\mM_1,W_1), \cdots, (\mM_m, W_m)$ a marker family hit by $f$, $\mM_1<\cdots < \mM_m$ the ordering induced from this enumeration and $a_1, \cdots, a_v$ a sequence of pairwise disjoint simple closed geodesics we define a \emph{marker cascade $\mC$}  which is a $v+1$-tuple  $ (\mA_1, \cdots,  \mA_v, \mP)$ where each $\mA_i$ is a 3-tuple $(\mA_i(i), \mA_i(ii), \mA_i(iii))$ that is defined below and $\mP$ is a finite set of $v$-tuples defined in Definition \ref{packet}.  To start with $\mA_i$ is organized as follows.\vskip10pt

\noindent$\mA_i(i)$ is either a marker $\mM_{i_j}$ or $\infty$.

\vskip8pt
\noindent$\mA_i(ii) \in \BZ_{\ge 0}\cup \infty$ is the geometric intersection number of $f$ with the tag $\tau_i
$ associated to  $a_i$ and $\mM_{i_j}$ along the compact set 
$m_i(\mC)\subset V$, unless $ \mA_i(i)=\infty$ in which case $\mA_i(ii)=\infty$.
\vskip8pt

\noindent$\mA_i(iii)$ is the set of stryker curves for $\tau_i$ along the compact set $m_i(\mC)\subset V$ unless $\mA_i(ii)=\infty$ in which case $\mA_i(iii)=\infty$.\vskip10pt

We define the $\mA_i$'s and the auxiliary $m_i(\mC)$'s as follows. \vskip10pt

\noindent$\mA_1(i)$ is defined to be the maximal marker $\mM_{1_j}$ such that $\mM_i$ is $f$-free of $a_1$ along $W_i$ for all $i<1_j$.  If $a_1$ is free of $\mJ$, then define $\mA_1(i)=\infty$.
\vskip 10 pt

If $\mM_{1_j}$ exists, then define $\tau_1$ to be the marker tag associated to $a_1$ and $\mM_{1_j}$ and  define  $m_1(\mC)=\{t\in S(f, a_1, \mM_{1_j}, W_{1_j})| g(f,\tau_1, S(f, a_1, \mM_{1_j}, W_{1_j}))=|(\phi(f(t))\setminus a_1)\cap \tau_1|\}$.\vskip8pt

\noindent$\mA_1(ii)= g(f,\tau_1, m_1(\mC))$ if $m_1(\mC)\neq\emptyset$ or $\mA_1(ii)= 
\infty$ otherwise.
\vskip8pt

\noindent$\mA_1(iii)$ is defined to be $\Stryker_1$  the set of $m_1(\mC))$-stryker curves 
for $\tau_1$, unless $\mA_1(ii)=\infty$ in which case $\mA_1(iii)=\infty$.
\vskip 10pt

Having defined $\mA_i, i< u$, then $\mA_u$ is defined as follows.  (The reader is encouraged to first read  Remark \ref{a(i)}).  To start with define $B^1_u, \cdots, B^m_u$ where $B^r_u=\{b^r_1, \cdots, b^r_u\}$, where $b^r_u=a_u$ and for $q<u, b^r_q=a_q$ if $r<q_j$ and $b^r_q=\emptyset$ otherwise.  
\vskip10pt

\noindent $\mA_u(i)$ is defined to be either the maximal marker $\mM_{u_j}$, such that $r<u_j$ implies that $\mM_r$ is free of $B^r_u$ along $ m_{u-1}(\mC)\cap W_r$ or $\mA_u(i)=\infty$ if for all $r\le m$, $\mM_r$ is free of $B^r_u$ along $ m_{u-1}(\mC)\cap W_r$.  \end{definition}

\begin{remark}  \label{a(i)} In words $\mA_u(i)=\mM_{u_j}$ is the maximal 
marker  such that all lower markers are free of $\{a_1, \cdots, a_u\}$, 
\emph{where applicable}.  Where applicable means two things.  First, the only 
relevant points are those of $m_{u-1}(\mC)$.  Second if say  $\mM_1, \mM_2, 
\mM_3$ are free of $a_1$ but $\mM_4$ is not and along $m_1(\mC)$, $\mM_1, 
\mM_2, \mM_3$ are  free of $\{a_1, a_2\}$ and  $\mM_4, \mM_5$ are free of 
$a_2$ but $\mM_6$ is not free of $a_2$, then $\mM_{2_j}=\mM_6$.  In 
particular, $a_1$ is irrelevant when considering $\mM_p$, for $p\ge 4$.  In this 
case $B^1_2=B^2_2=B^3_2=\{a_1, a_2\}$ and for $r>3$, $B^r_2=\{a_2\}$.  Note 
that if  $\mM_1$ is free of $\{a_1, a_2\}$ along $m_1(\mC)$ but $\mM_2$ is not, 
then $\mM_{2_j}=\mM_2$.  \end{remark}

\noindent\emph{Definition \ref{cascade} continued}  If $\mM_{u_j}$ exists, then define $\tau_u$ to be the marker tag arising from $a_u
$ and $\mM_{u_j}$.  Let $S_u=S(f,B^{u_j}_u, \mM_{u_j}, W_{u_j})\cap m_{u-1}
(\mC)$.  Define $m_u(\mC)= \{t\in S_u| g(f,\tau_u, S_u)=|\phi(f_t)\setminus a_u)
\cap \tau_u|\}$.

\vskip10pt
\noindent$\mA_u(ii)$ is defined to be either $g(f,\tau_u, m_u(\mC))$ or $\infty$ if 
$m_u(\mC)=
\emptyset$.
\vskip8pt
\noindent$\mA_u(iii)$ is defined to be 
the set $\Stryker_u$ which is either the set of $m_u(\mC))$-stryker curves for $\tau_u$ if $m_u(\mC)\neq\emptyset$ or  $\infty$ otherwise.

\vskip10pt

We say that the cascade $\mC$ is \emph{finished} if $m_v(\mC)=\emptyset$ and \emph{active} otherwise.  
We say that the cascade is \emph{based} on $\{a_1, \cdots, a_v\}$ and has \emph{length} $v$.   For $r\le v$, then the length-r cascade  based on $\{a_1,\cdots, a_r\} $ is called the length-$r$ \emph{subcascade} and denoted $\mC_r$.  Note that $\mC_r$ and $\mC$ have the same values of $\mA_1, \cdots, \mA_r$.

\begin{notation} The data corresponding to a cascade depends on $f$.  When the function must to be explicitly stated, we will use notation such as $\mC(f), m_i(\mC, f), \mA_p(f)$ or $\mA_r(ii,f)$.   \end{notation}

We record for later use the following result.

\begin{lemma} \label{leaf presence} Let $\mJ$ be a marker family hit by the generic PL map $f:V\to \PMLEL$.  If $\mC$ is an active cascade based on $a_1, \cdots, a_v$, then for every $t\in m_v(\mC)$, each $a_i$ is a leaf of $\phi(f(t))$.\end{lemma}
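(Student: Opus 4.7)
The plan is to show, for each $u\in\{1,\dots,v\}$ and each $t\in m_u(\mC)$, that $a_u$ is a leaf of $\phi(f(t))$; the lemma then follows from the nesting $m_v(\mC)\subset m_u(\mC)$. Fix such $u$ and $t$, and write $\mM_{u_j}$ for the marker $\mA_u(i)$. Three structural facts will drive the argument: (a) since $\mJ$ is $f$-hit and $t\in S_u\subset W_{u_j}$, at least one leaf of $\phi(f(t))$ hits $\mM_{u_j}$; (b) $t\in S_u$ forces every leaf of $\phi(f(t))$ hitting $\mM_{u_j}$ to lie in $B^{u_j}_u=\{a_u\}\cup\{a_q:q<u,\ u_j<q_j\}$; (c) for any $q<u$ with $u_j<q_j$, the maximality condition defining $\mA_q(i)=\mM_{q_j}$ implies that $\mM_{u_j}$ is free of $B^{u_j}_q$ along $m_{q-1}(\mC)\cap W_{u_j}$ (since $u_j<q_j$ means $r=u_j$ satisfies $r<q_j$ in that definition).

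The argument is then a bounded iteration. By (a) and (b), pick a leaf $L_1$ of $\phi(f(t))$ hitting $\mM_{u_j}$; if $L_1=a_u$ we are done, and otherwise $L_1=a_{q_1}$ for some $q_1<u$ with $u_j<q_{1,j}$, where $q_{1,j}$ denotes the index satisfying $\mA_{q_1}(i)=\mM_{q_{1,j}}$. Apply (c) with $q=q_1$: using the nesting $m_u(\mC)\subset m_{q_1-1}(\mC)$ and $t\in W_{u_j}$, there is a leaf $L_2$ of $\phi(f(t))$ hitting $\mM_{u_j}$ with $L_2\notin B^{u_j}_{q_1}$. Combined again with (b), $L_2\in B^{u_j}_u\setminus B^{u_j}_{q_1}=\{a_u\}\cup\{a_q:q_1<q<u,\ u_j<q_j\}$. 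Either $L_2=a_u$ (done) or $L_2=a_{q_2}$ with $q_1<q_2<u$ and $u_j<q_{2,j}$, in which case we repeat. The sequence $q_1<q_2<\cdots$ is strictly increasing and bounded above by $u-1$, so after at most $u-1$ steps we are forced to have $L_k=a_u$.

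The main obstacle is keeping the bookkeeping of the nested sets $B^r_u$ straight and verifying, at each step $k$, that the freedom statement coming from $\mA_{q_k}(i)$ actually applies to the \emph{previously} fixed marker $\mM_{u_j}$. This is precisely what the inequality $u_j<q_{k,j}$ encodes (inherited at each stage from $L_k\in B^{u_j}_u\setminus B^{u_j}_{q_{k-1}}$), together with $m_u(\mC)\subset m_{q_k-1}(\mC)$ ensuring that $t$ lies in the domain on which that freedom statement holds. Once this indexing is untangled, the proof reduces to a purely combinatorial termination argument of length at most $u$; notably, no input from the tag $\tau_u$, the $g(f,\tau_u,\cdot)$ minimum, or genericity of $f$ is needed beyond what is already built into $S_u$.
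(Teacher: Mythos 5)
Your proof is correct and rests on the same three observations the paper uses: that $\mJ$ is hit, so some leaf of $\phi(f(t))$ hits $\mM_{u_j}$; that for $t\in m_u(\mC)\subset S_u$ every leaf of $\phi(f(t))$ hitting $\mM_{u_j}$ lies in $B^{u_j}_u$; and that the maximality built into the definition of $\mA_q(i)$ forces $\mM_{u_j}$ to be free of $B^{u_j}_q$ along $m_{q-1}(\mC)\cap W_{u_j}$ whenever $u_j<q_j$.  The paper compresses your bounded iteration $q_1<q_2<\cdots$ into a single step by asserting, ``by definition,'' that $\mM_{u_j}$ is $f$-free of $B^{u_j}_u\setminus\{a_u\}$ along $m_{u-1}(\mC)$---this is your fact (c) applied with the maximal $q<u$ satisfying $u_j<q_j$, for which one checks $B^{u_j}_q=B^{u_j}_u\setminus\{a_u\}$---and it wraps the whole thing as an induction on cascade length, but as your argument makes clear the inductive hypothesis is never actually invoked.
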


\begin{proof}  By definition $m_1(\mC) \subset S(f,a_1, \mM_{1_j}, W_{1_j})$, hence $a_1$ is a leaf of $\phi(f(t))$ at all $t\in m_1(\mC)$.  

Now assume the lemma is true for all subcascades of length $<u$.  Let $ t\in 
m_u(\mC)$.   Let $A = B^{u_j}_u$.  By definition, $\mM_{u_j}$ is not $f$-free of 
$A$ along $m_{u-1}(\mC)$, but $\mM_{u_j}$ is $f$-free of $A\setminus a_u$ 
along $m_{u-1}(\mC)$.  Since $m_u(\mC)\subset S(f,A, W_{u_j})\cap 
m_{u-1}(\mC)$ it follows that $a_u$ is a leaf of $\phi(f(t))$.\end{proof}

\begin{definition}  \label{packet} Let $\mC$ be an active cascade.  To each $t\in m_v(\mC)$ corresponds a $v$-tuple $(p_1, \cdots, p_v)$ 
where $p_j$ is the (possibly empty) stryker multi-geodesic  for $\tau_j$ at $t$.  Such a $(p_1, \cdots, 
p_v)$ is called a \emph{packet}. There are only finitely many packets, by the 
finiteness of stryker curves.  Thus $m_v(\mC)$ canonically decomposes into a disjoint union of 
closed sets $S_1, \cdots, S_r$ such that each point in a given $S_j$ has the same packet.  Let $\mP=\{P_1, \cdots, P_r\}$ denote the set of 
packets, the last entry in the definition of $\mC$.    We will use the notation $\mP(f)$, when needed to clarify 
the function on which this information is based.\end{definition}

\begin{definition}\label{cascade order}  In what follows all cascades use the same set of markers and simple closed geodesics, however the function $f:V\to \PMLEL$ will vary.  We put an equivalence relation on this set of cascades and  then partially order the classes.  We say that $\mC(g)$ is equivalent to $\mC(f)$ if $\mP(f)=\mP(g)$ and for all $r$,  $\mA_r(f)=\mA_r(g)$.  We  lexicographically partial order the equivalence classes by comparing the $v$-tuples $(\mA_1(\mC(f)), \cdots, \mA_v(\mC(f)), \mP(f))$ using the rule that $\mA_r(i,f)\le\mA_r(i,g)$ if $\mM_{r_j}(f)\le \mM_{r_j}(g)$, with $\infty$ being considered the maximal value and  $\mA_r(ii,f)\le\mA_r(ii,g)$ if their values satisfy that inequality and $\mA_r(iii,f)\le\mA_r(iii,g)$ if  $\Stryker_r(g)\subset \Stryker_r(f)$.  Finally, $\mP(f)\le \mP(g)$ if $\mP(g)\subset \mP(f)$.  \end{definition}

\begin{remark}  \label{cascade ordering}  More or less, $\mC(f)<\mC(g)$ means 
that the markers's are freer with respect to the function $g$ than  with respect to 
$f$.  In particular, this inequality holds if  $\mM_1$ and $\mM_2$ are $g$-free of 
$a_1$, but only $\mM_1$ and $f$-free of $a_1$.     If  $\mM_1$ is both $g$-free 
and $f$-free of $a_1$ but $\mM_2$ is neither $f$-free nor $g$-free of $a_1$, 
then $\mA_1(ii)$ is a measure of how close $\mM_2$ is from being free of 
$a_1$.   The bigger the number, the closer to freedom as motivated by Lemma 
\ref{tag freedom}.   If this number is the same with respect to both $f$ and $g$, 
then $\mA(iii)$ measures how much work is needed to raise the number.  More 
stryker multi-geodesics with respect to $f$, than $g$, means more needs to be 
done to $f$, so again $\mC(f)<\mC(g)$. If $\mA_1(f)=\mA_1(g)$, then  $\mA_2$ 
is used to determine the ordering.  Finally, if all the $\mA_i$'s are equal, then $f$ having more  packets means that \emph{more sets} of $V$ need 
to be cleaned up to finish the cascade.  \end{remark}

\begin{proposition}\label{preservation}  Let $V$ be the underlying space of a finite simplical complex.  Let $f:V\to \PMLEL$ a generic PL map, $\dim(V)\le n$ and $\mJ=(\mM_1,W_1), \cdots, (\mM_m, W_m)$ a marker family hit by $f$.  Let $\mC$ be an active cascade based on $\{a_1, \cdots, a_v\}$,  $C$ a simple closed geodesic 
disjoint from the $ a_i$'s and the $\tau_i$'s and $K\subset m_v(\mC)\cap \finv(B_C)$ compact.  For $1\le i\le m$, let $B^i_C=\{a_u|   i<u_j\}\cup\{C\}$.   Assume that   for $1\le i\le m$, $\mM_i$ is $f$-free of $B^i_C$ along  $K\cap W_i$.  If $\delta$ is sufficiently small and $f_1$ is obtained from $ f$ by a $(C, \delta, K)$ push off homotopy, then $[\mC(f)]\le [\mC(f_1)]$  and the homotopy from $f $ to $f_1$ is $\mJ$-marker preserving.    
\end{proposition}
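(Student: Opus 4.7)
The plan is to address the two conclusions separately: the $\mJ$-marker preservation is a soft openness argument, while $[\mC(f)] \le [\mC(f_1)]$ proceeds by induction on the cascade level.

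First, for $\mJ$-marker preservation I would use that hitting a marker is an open condition in $\PML$ (Lemma \ref{markers open}) together with compactness of each $W_i$. For each $i$ there is $\eta_i > 0$ such that every $y \in \Npml(f(W_i), \eta_i)$ has $\phi(y)$ hitting $\mM_i$. Since a $(C,\delta,K)$ push-off moves points of $\PML$ by at most $O(\delta)$ (from the cone-neighborhood model of Remark \ref{cone neighborhood} used in the proof of Lemma \ref{partial push}), taking $\delta < \min_i \eta_i /2$ forces $\phi(F(t,s))$ to hit $\mM_i$ for all $t \in W_i$ and $s \in I$, so $F$ is $\mJ$-marker preserving.

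For the cascade inequality, I would induct on $u = 1,\dots,v$ with inductive hypothesis $\mA_r(f) = \mA_r(f_1)$ for $r < u$. Two observations drive the step. First, because $F$ is supported in an arbitrarily small neighborhood of $K$, everything away from $K$ is unchanged, and in particular $m_{u-1}(\mC(f))$ and $m_{u-1}(\mC(f_1))$ agree outside such a neighborhood. Second, the containment $B^r_u \cup \{C\} \subset B^r_C$ is immediate from the definitions for $r < u_j(f)$, since $a_u$ lies in $B^r_C$ precisely when $r < u_j$, and the other $a_q$'s appear in both under identical conditions. Hence the freedom hypothesis of the proposition is exactly what is needed to invoke Lemma \ref{partial marker preserving} one marker at a time, yielding $\mA_u(i, f_1) \ge \mA_u(i, f)$. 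If strict inequality holds we are done; otherwise I compare $\mA_u(ii)$ and $\mA_u(iii)$ by invoking Lemma \ref{partial tag stryker preserving} with $A = a_u$, tag $\tau_u$, and appropriate $Y$ (disjointness of $C$ from $a_u \cup \tau_u$ is given by hypothesis), so either the tag intersection number strictly grows or it stays equal with a weakly smaller stryker set. At level $u = v$ with all $\mA_u$ equal, the packet set $\mP$ can only shrink by the same stryker argument, which improves $\mP$ in the specified ordering.

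The main obstacle is bookkeeping. Several compact subsets ($K \subset m_v \subset \cdots \subset m_1$) must be tracked simultaneously under the homotopy, and each of the finitely many pairs (level, marker or tag) produces its own smallness threshold on $\delta$; the plan is to take $\delta$ below the minimum of those finitely many thresholds. A more subtle point is that the comparison set $m_u(\mC(f_1))$ is defined using $f_1$, not $f$, so at each inductive step one needs super convergence (Proposition \ref{super convergence}) together with upper semicontinuity of geometric intersection numbers to reconcile it with $m_u(\mC(f))$ outside a small neighborhood of $K$. This mirrors the reasoning behind Lemmas \ref{refined partial stryker preserving} and \ref{partial tag stryker preserving} and should introduce no essentially new difficulty beyond careful coordination.
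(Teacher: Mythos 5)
The marker-preservation half of your argument has a real gap. You claim that a $(C,\delta,K)$ push-off moves points of $\PML$ by at most $O(\delta)$, so that $F(t,s)$ stays in the $\eta_i$-neighborhood of $f(W_i)$ where the openness of marker-hitting applies. That is not true. Condition iii) of Definition \ref{partial pushing} only constrains $F(t,s)$ to stay within $\delta$ of some \emph{ray} $r(x)$; the displacement along the ray is not small. Indeed, the construction in Lemma \ref{push} pushes points of $B^{2n}\times 0$ radially out to and past $S^{2n-1}\times 0$, a motion of macroscopic size even when $\delta$ is tiny. A point $f(t)$ near $\lambda_C$ can be carried almost to $\delta B_C$. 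So $F(t,s)$ need not lie in $\Npml(f(W_i),\eta_i)$, and your argument does not close. It is a symptom of the problem that your marker-preservation step never uses the stated hypothesis that each $\mM_i$ is $f$-free of $B^i_C$ along $K\cap W_i$; since $C\in B^i_C$, that hypothesis is precisely what rules out the bad case where $C$ is the only leaf hitting $\mM_i$ at some $t\in K$, in which pushing off of $B_C$ would destroy the marker.

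The correct mechanism, which the paper packages as Lemma \ref{partial marker preserving}, replaces your ``small displacement'' claim with two observations: the underlying lamination $\phi(\cdot)$ is constant on $r(x)\setminus\lambda_C$, and the $f$-freedom of $\mM_i$ from $C$ at points of $K$ produces a leaf other than $C$ that hits $\mM_i$ and hence persists along the whole ray; super convergence then spreads the hitting to a neighborhood of $r(f(K))$, inside which the homotopy is supported. So the fix is not more epsilonics but an appeal to the lamination-constancy along rays together with the $B^i_C$ hypothesis. The second half of your proposal (the inductive bookkeeping for the cascade ordering via Lemmas \ref{partial marker preserving}, \ref{partial stryker preserving}, and \ref{partial tag stryker preserving}, together with the containment $B^r_u\subset B^r_C$ for $r<u_j$) matches the paper's argument in substance and is fine.
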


\begin{proof}    Since $\mJ$ is f-free of $C$ along $K$,  it follows by Lemma \ref{partial marker preserving} that  for $\delta$ sufficiently small, any $(C, \delta, K)$ homotopy is $\mJ$ marker preserving.    It remains to show that if $\delta$ is sufficiently small and $f_1$ is the resulting map, then $[\mC(f)]\le [\mC(f_1)]$.  

From $\mC(f)$ we conclude that for all $u\in\{1, \cdots , v\}$ and $q<u_j$, $\mM_q$ is $f$-free of $B^q_u$ along $m_v(\mC)\cap W_q$.  By hypothesis each $\mM_q$ is also $f$-free of $B^q_C$ along $W_q\cap K$.  Note that $B^q_u\subset B^q_C$ when $q<u_j$.  By Lemma \ref{partial marker preserving},  if $\delta$ is sufficiently small, then there exists a neighborhood $V$ of $ K$ such that any $(C, \delta, K)$ homotopy is supported in $V$ and each $\mM_q$ is $F$-free of $B^q_C$ along $W_q\cap \bar V$.  It follows that with respect to the lexicographical ordering $(\mA_1(i,f), \cdots, \mA_v(i,f))\le (\mA_1(i,f_1), \cdots, \mA_v(i,f_1))$.  

A similar argument using Lemma \ref{partial stryker preserving} shows that if $\delta$ is sufficiently small and $\mA_i(i,f)=\mA_i(i,f_1)$ for $1\le i\le u$, then $\mA_i(ii,f)\le \mA_i(ii,f_1)$ for $1\le i\le u$.  

By Lemma \ref{tag stryker finiteness} there exists  $\eta>0$ such that if $d_{V}
(t_1, t_2)<2 \eta$  and $t_1, t_2\in m_u(\mC)$ for some $u$, then $f(t_1), 
f(t_2)$ have the same stryker curve to $
\tau_u$.  Let $\delta$ be sufficiently small to satisfy Lemma \ref{partial tag stryker preserving} with this $\eta$ in addition to the 
previously required conditions.   That lemma implies that if  $\mA_i(i,f)=\mA_i(i,f_1)$ and $\mA_i(ii,f)=
\mA_i(ii,f_1)$ for all $i\le u$, then $\Stryker_i(f_1)\subset \Stryker_i(f)$ for all $i\le 
u$.  

Finally, Lemma \ref{partial tag stryker preserving} with this choice of $\eta$ also implies that if $(\mA_1(f), \cdots, \mA_v(f))=(\mA_1(f_1), \cdots, \mA_v(f_1))$, then $\mP(f_1)\subset \mP(f)$. It follows that $[\mC(f)]\le [\mC(f_1)]$.\end{proof}

\begin{remark}  Note that $[\mC(f)]\le [\mC(f_1)]$ holds with respect to the lexicographical ordering, but may not hold entry-wise, since there may be no direct comparison between later entries once earlier ones differ.  For example, say $2=\mA_1(i,f)<\mA_1(i,f_1)=3$, then showing $\mA_2(i,f)=3$ involves verifying that $\mM_2$ is $f$-free of $\{a_2\}$ while showing $\mA_2(i,f_1)=3$ involves verifying that $\mM_2$ is $f_1$-free of $\{a_1, a_2\}$.\end{remark}

 \section{Finishing Cascades}
 
The main result of this section is the following.
 
\begin{proposition}\label{finished}  Let $V$ be the underlying space of a finite simplicial complex$h:V\to \PMLEL$ be a generic PL map such that $k=\dim(V)\le n
$ where dim$(\PML)=2n+1$.  Let $\mJ$ be a marker family hit by $h$ and $\mC$ be an active 
cascade. Then there exists a marker preserving homotopy of $h$ to $h'$ such that $\mC$ is 
finished with respect to $h'$ and $[\mC(h)]< [\mC(h')]$.    The homotopy is a concatenation of 
relative push offs.   If $L\subset \PML$ is a finite subcomplex of $\CS$, then the homotopy can be chosen to be disjoint from $L$.    \end{proposition}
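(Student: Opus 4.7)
The proof proceeds by well-founded induction on the equivalence class $[\mC(h)]$ under the partial order of Definition~\ref{cascade order}. The relevant portion of this order is well-founded for four reasons: $\mA_u(i)$ ranges over the finite set $\{\mM_1,\ldots,\mM_m,\infty\}$; by Lemma~\ref{tag freedom}, once $\mA_u(ii)$ exceeds $3(3g-3+p)$ the marker $\mM_{u_j}$ becomes free of $B^{u_j}_u$ and $\mA_u(i)$ strictly advances, so the values of $\mA_u(ii)$ encountered along any ascending chain are bounded; the stryker sets $\mA_u(iii)$ form a finite poset under reverse inclusion; and $\mP$ is finite. Hence an ascending chain of cascade classes cannot increase strictly infinitely often.

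For the inductive step, assume $\mC(h)$ is active. The plan is to construct a single relative push-off strictly increasing $[\mC]$; iterating then reduces to a map with finished cascade via finitely many further push-offs. Since $m_v(\mC(h))\neq\emptyset$, partition it into compact strata $S_1,\ldots,S_r$ by packet assignment (Definition~\ref{packet}). Fix a stratum $S_j$ and a point $t_0\in S_j$. By Lemma~\ref{leaf presence}, each $a_i$ is a leaf of $\phi(h(t_0))$, and the packet contributes further stryker multi-geodesics $p_1,\ldots,p_v$. Using the dimension hypothesis $k\le n$ together with the genericity of $h$ (Lemma~\ref{missing}) to supply enough non-cascade leaf structure, select a simple closed geodesic $C$ that is a leaf of $\phi(h(t_0))$, disjoint from every $a_i$ and every tag $\tau_i$, and distinct from the components of $p_1,\ldots,p_v$. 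By super convergence (Proposition~\ref{super convergence}), $C$ remains a leaf of $\phi(h(t))$ for all $t$ in some compact neighborhood $K\subset S_j$ of $t_0$.

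Lemma~\ref{partial push} supplies $(C,\delta,K)$-pushoff homotopies for all small $\delta>0$; Proposition~\ref{preservation} ensures that for $\delta$ sufficiently small the resulting $h'$ satisfies $[\mC(h)]\le[\mC(h')]$ and the homotopy is $\mJ$-marker preserving, while Lemma~\ref{avoiding} with $p=k\le n$ arranges the homotopy to be disjoint from $L$. Strict increase of the cascade class is verified by case analysis at points of $K$, where $C$ has been removed as a leaf: depending on how $C$ entered the cascade data at $t_0$, its removal either shrinks the packet family $\mP$, reduces some stryker set $\Stryker_u$, increases some $\mA_u(ii)$, or — after finitely many iterations — drives $\mA_u(ii)$ past the threshold of Lemma~\ref{tag freedom} and advances some $\mA_u(i)$. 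By induction applied to $h'$, the cascade can be completely finished in finitely many further relative push-offs.

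The main obstacle is selecting $C$ so that all hypotheses of Proposition~\ref{preservation} hold. Beyond arranging $C$ to be disjoint from every $a_i$ and $\tau_i$ and to be a leaf uniformly across $K$, the preservation proposition requires that each $\mM_i$ be $h$-free of $B^i_C=\{a_u:i<u_j\}\cup\{C\}$ along $K\cap W_i$, which forces $C$ to avoid being the sole marker-spanning leaf at any earlier, already-stabilized level of the cascade. I expect verifying these freedom conditions, together with producing a uniform choice of $C$ across a whole stratum $S_j$, to be the technically delicate part, and this is precisely where the dimension bound $k\le n$ is used: a generic $k$-parameter family with $\dim\PML=2n+1$ carries enough independent leaves at each point to make the selection possible without perturbing earlier cascade data.
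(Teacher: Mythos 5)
Your outer iteration (produce one strict increase $[\mC(h_1)]<[\mC(h_2)]$, then repeat, with termination guaranteed by well-foundedness of the order) matches the paper, which packages the termination as Lemma \ref{push finiteness}. But the inductive step has two genuine gaps. First, the choice of $C$: you select $C$ to be a simple closed geodesic \emph{leaf} of $\phi(h(t_0))$ disjoint from the cascade data, and claim by super convergence that it stays a leaf on a neighborhood of $t_0$. Neither step works. Genericity plus $k\le n$ gives a nonempty arational sublamination, not a spare closed leaf (the only closed leaves available may be exactly the $a_i$'s and the packet curves), and being a closed leaf is not an open condition --- super convergence says leaves of the limit are sublimits of leaves of nearby laminations, not that a closed leaf persists. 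The paper instead takes $Y$ to be a neighborhood of $a_1\cup\cdots\cup a_L\cup\tau_1\cup\cdots\cup\tau_L\cup\sigma$ together with its complementary discs, annuli and pants, and lets $C$ be a boundary component of $Z=S\setminus\inte(Y)$; the substance is that $Z\neq\emptyset$ (this is where genericity and $k\le n$ enter, via $|\phi(h(t))\cap\tau_i|<\infty$), and that $S_P\subset\finv(B_C)$ comes for free from the packet structure, since at $t\in S_P$ everything in $\phi(h(t))$ meeting the tags lies in $\bigl(\cup_i a_i\bigr)\cup\sigma$, so $C$ is either a leaf of $\phi(h(t))$ or disjoint from it.

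Second, and more seriously, you defer the verification of the freedom hypothesis of Proposition \ref{preservation} --- that each $\mM_i$ is $h$-free of $B^i_C$ along $K\cap W_i$ --- calling it the technically delicate part. That is not a detail to be deferred; it is the engine of the proof, and induction on $[\mC(h)]$ alone gives you no handle on it. The paper's mechanism is a \emph{downward induction on the length} of the cascade: adjoin $C$ as $a_{L+1}$ to form a length-$(L+1)$ cascade $\mC'$, finish $\mC'$ by the inductive hypothesis, and observe that $m_{L+1}(\mC')=\emptyset$ is exactly the statement that the markers are free of $B^i_C$ along $m_L(\mC)$, i.e.\ the hypothesis of Proposition \ref{preservation}. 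The base case of that induction --- a cascade of length $\ge n$ is automatically finished, because the $a_i$'s are disjoint and the nonempty arational part of $\phi(h(t))$ must then meet the subsurface they fill, forcing the relevant markers to be hit freely via Lemma \ref{tag freedom} --- is where the dimension bound $k\le n$ does its real work, and it is absent from your argument. Without the length induction your proof does not close.
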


\begin{lemma}  \label{push finiteness} Let  $f_i:V\to \PMLEL$, $i\in \BN$ be 
generic PL maps, $\mJ$ be a marker family and let  $\{\mC(f_i)\}$ be active 
cascades based on the same set of simple closed geodesics.  Any sequence 
$[\mC(f_1)]\le [\mC(f_2)]\le \cdots$ has only finitely many terms that are strict 
inequalities.\end{lemma}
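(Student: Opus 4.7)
The plan is to show that the lexicographic order on cascade equivalence classes is Noetherian on the image of generic PL maps, i.e., every weakly increasing chain has only finitely many strict increases. Since the defining tuple $(\mA_1, \ldots, \mA_v, \mP)$ has finitely many coordinates and the lex-product of Noetherian orders is Noetherian, I reduce to showing each coordinate takes values in a finite set once the preceding coordinates have been fixed.

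The coordinate $\mA_r(i)$ lies in the finite totally ordered set $\{\mM_1, \ldots, \mM_m\} \cup \{\infty\}$, so can strictly increase only finitely often. The main obstacle is bounding $\mA_r(ii) \in \BZ_{\ge 0} \cup \{\infty\}$. Once $\mA_1(i), \ldots, \mA_r(i)$ are fixed, so are the finite collection $B^{r_j}_r$ of closed geodesics and the tag $\tau_r$. I claim that at any $t \in S_r$ (where $\mM_{r_j}$ is solely hit by leaves in $B^{r_j}_r$) no non-closed leaf of $\phi(f(t))$ meets $\tau_r$: indeed a non-closed leaf $L$ is dense in its closure, a minimal sublamination $\Lambda$, and if $\Lambda$ meets $\tau_r$ at a single point then, by density together with a transverse-product chart, $L$ crosses $\tau_r$ infinitely often, so by the argument in the proof of Lemma \ref{tag freedom} it has at least three subarcs spanning $\mM_{r_j}$. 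Thus $L$ would hit $\mM_{r_j}$ and so lie in $B^{r_j}_r$, which consists only of closed geodesics --- a contradiction. Similarly, any closed leaf of $\phi(f(t))$ outside $B^{r_j}_r$ can cross $\tau_r$ at most twice, for otherwise it would span $\mM_{r_j}$ three times and thus lie in $B^{r_j}_r$. Since $\phi(f(t))$ has at most $3g-3+p$ closed components, this yields the uniform bound
\[
\mA_r(ii) \le \sum_{a_q \in B^{r_j}_r \setminus \{a_r\}} |a_q \cap \tau_r| + 2(3g-3+p),
\]
a constant depending only on the previously fixed data, so $\mA_r(ii)$ lies in a finite set.

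Once $\mA_r(i)$ and $\mA_r(ii)$ are fixed, the same leaf analysis forces every stryker curve for $\tau_r$ to be a simple closed multi-geodesic of fixed intersection $\mA_r(ii)$ with the fixed arc $\tau_r$, and there are only finitely many such multi-geodesics. The coordinate $\mA_r(iii)$, a subset of this finite set ordered by reverse inclusion, thus lives in a finite set; the packet data $\mP$ is treated identically as a subset of the finite set of $v$-tuples of stryker multi-geodesics. Applying the lex-product argument coordinate by coordinate --- first $\mA_1(i)$ stabilizes, then $\mA_1(ii)$, then $\mA_1(iii)$, then $\mA_2(i)$, and so on through $\mA_v(iii)$ and finally $\mP$ --- shows that only finitely many strict increases can occur in the chain $[\mC(f_1)] \le [\mC(f_2)] \le \cdots$. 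The main subtlety, as indicated, is the leaf-analysis bound on $\mA_r(ii)$; once it is in hand, the remaining coordinates are handled formally.
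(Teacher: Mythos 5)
Your overall strategy coincides with the paper's: the coordinates $\mA_r(i)$ range over the finite set of markers together with $\infty$; the coordinates $\mA_r(ii)$ are uniformly bounded by the leaf-counting argument underlying Lemma \ref{tag freedom} (the paper simply cites that lemma to obtain the bound $|(\cup_{j} a_j)\cap \tau_u|+3(3g-3+2p)$, whereas you unwind the same argument: a non-closed leaf meeting the tag would recur, span $\mM_{r_j}$, and violate the ``solely hits'' condition defining $S_r$); and the remaining coordinates are handled via finiteness of stryker data. So the architecture and the key mechanism are the same.

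There is, however, one false assertion in your treatment of $\mA_r(iii)$ and $\mP$: it is not true that there are only finitely many simple closed multi-geodesics having a prescribed geometric intersection number with the fixed arc $\tau_r$ (Dehn twisting about a curve disjoint from $\tau_r$ and from the $a_i$'s already produces infinitely many), so these coordinates do not ``live in a finite set'' in the sense you claim, and your reduction ``each coordinate takes values in a finite set'' fails for them. Fortunately this does not sink the argument: by Definition \ref{cascade order} the order on $\mA_r(iii)$ is reverse inclusion, so once the earlier coordinates have stabilized, a weakly increasing chain of $\mA_r(iii)$-values is a weakly decreasing chain $\Stryker_r(f_{i_0})\supset \Stryker_r(f_{i_0+1})\supset\cdots$ of sets, each finite by Lemma \ref{tag stryker finiteness}; such a chain admits at most $|\Stryker_r(f_{i_0})|$ strict decreases, and the same remark applies to $\mP$. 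This is what the paper's appeal to ``the finiteness of stryker curves'' amounts to: what is needed is that each stryker set is finite, not that the ambient universe of candidate multi-geodesics is.
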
 

\begin{proof}  There are only finitely many possible values for $\mA_u(i,f_j)$.

It follows from Lemma \ref{tag freedom} that each $\mA_u(ii,f_j)$ is uniformly bounded above.  Indeed, there are only finitely many $a_i$'s and only finitely many markers in $\mJ$, thus only finitely many marker tags arising from them.  If $\tau_u$ is such a tag, then for $i\in \BN$, $g(f_i, \tau, m_u(\mC(f_i)))\le |(\cup_{j=1}^v a_j)\cap \tau_u| + 3(3g-3+2p)$.

The finiteness of stryker curves (Lemma 4.5) shows that the number of possibilities for both $\mP(f_j)$ and each $\mA_u(iii, f_j)$  is bounded.\end{proof}

%begin{definition} \label{cascade leaves} Suppose that \mC is cascade, t\in D_r(\mC), where r\le length(\mC)=L.  We define a sequence A_1,\cdots, A_r of pairwise disjoint multi-geodesics that are leaves of \phi(f(t)) called  \emph{cascade leaves of \phi(f(t))}.  Define A_i=a_i if T_i is a marker and \gamma_1^i otherwise.     Thus the cascade \mC gives rise to a sequence (A_1, \tau_1), \cdots, (A_L, \tau_L) where  A_1,\cdots, \A_L are the cascade leaves and \tau_i is either a marker or stryker tag.  Such a sequence is called a \emph{system of cascade leaves and tags}.   \end{definition}

% \begin{remark} \label{done} Note that if g:B^k\to \PMLEL,  \tau_i is a marker tag and |(\phi(g(t))\setminus\beta_i)\cap \inte(\tau_i)|\ge 9|\chi(S)|, then a_i does not solely hit M^i_{i_j}.  If \tau_i= is a stryker tag and |\phi(g(t))\cap \tau_i|>i(f,a_i, D_{i-1}(\mC, f), then a_i is not solely hit by a stryker curve at t.  In either case t\notin D_L(\mC,g).  \end{remark}

\noindent\emph{Proof of Proposition \ref{finished}}.  It  suffices to show that given any active cascade $\mC(h_1)$, there exists a $\mJ$ marker preserving homotopy from $h_1$ to $h_2$, that is a concatenation of relative push offs,  such that $[\mC(h_1)]<[\mC(h_2)]$.  For if $\mC(h_2)$ is not finished, then we can similarly produce an $h_3$ with $[\mC(h_2)]<[\mC(h_3)]$.  Eventually we obtain a finished $\mC(h_q)$ else we contradict Lemma \ref{push finiteness}.

We retain the convention 
that dim$(\PML)=2n+1$.  We will assume that $k=n $ leaving the easier $k<n$ 
case to the reader.  The proof is by downward induction on $L=\length(\mC)$.    
Suppose that $\mC(h)$ is an active cascade based on $\{a_1, \cdots, a_L\}$.  
Since the $a_i$'s are pairwise disjoint, it follows that $L\le n+1$.   Let $\tau_i$ 
denote the marker tag associated to $a_i$ and $\mM_{i_j}(h)$.  Let $R' = 
N(a_1\cup\cdots\cup a_L\cup \tau_1\cdots\cup \tau_L)$.  Let $R$ be $R' $ 
together with all its complementary discs, annuli and pants.  Let $T=S
\setminus(\inte(R))$.  Note that $T=\emptyset$ if $L\ge n$.   

Let $ \mA(h(t))$ denote the arational sublamination of $\phi(h(t))$, i.e. the sublamination obtained by deleting all the compact leaves.   Since $h$ is generic and $k= n$, it follows that $\mA(h(t))\neq\emptyset$ for all $t\in B^n$.   The key observation is that if $\mA(h(t))\cap R\neq \emptyset$, then  $t\notin m_L(\mC(h))$.  To start with, Lemma \ref{leaf presence} implies that $a_1, \cdots, a_L$ are leaves of $\phi(h(t))$.  This implies that $\mA(h(t))\cap \tau_i \neq\emptyset$, where $\tau_i $ is  the marker tag  between $a_i$ and $\mM_{i_j}$.  By Lemma \ref{tag freedom} $\phi(h(t))$ hits $\mM_{i_j}$ and hence $t\notin m_i(\mC(h)) $contradicting the fact that $m_L(\mC(h))\subset m_i(\mC(h))$. It follows that $\mC(h)$ is finished if $L\ge n$.

Note that if $k<n$, then this argument shows that $\mC$ is finished if $L\ge k$.  In particular, if $k=1$, (the path connectivity case) all cascades are finished.

Now assume that the Proposition is true for all cascades of length greater than $L$, where $L<n$.  Let $\mC$ be an active cascade of length $L$.

Let $P=(p_1, \cdots, p_L)\in \mP(h)$ and $S_P$ the  closed subset of $m_L(\mC)$ consisting of points whose packet is $P$.  Let $\sigma$ be the possibly empty multi-geodesic $p_1\cup\cdots\cup p_L$.  By definition, if $t\in S_P$, then each $p_i$ is a leaf of $\phi(h(t))$ and by Lemma \ref{leaf presence} each of $a_1, \cdots, a_L$ is a leaf of $\phi(h(t))$.    Let $Y'=N(a_1\cup\cdots\cup a_L\cup \tau_1\cdots\cup \tau_L\cup \sigma)$ and $Y$ be $Y'$ together with all complementary discs, annuli and pants.  Let $Z=S\setminus\inte(Y)$.  Note that $Z\neq\emptyset$, else for all $t\in S_P$, $|\phi(h(t))\cap\tau_{i(t)}| =\infty$ for some $i(t)$ which is a contradiction as before.  Let $C$ be a simple closed geodesic isotopic to some component of $\partial Z$.

Observe that $ C$ is neither an $a_i$ nor is $C\subset \sigma$.  Indeed, since each $a_i$ is 
crossed transversely by a tag at an interior point it follows that no $a_i$ is isotopic to a component of $\partial Y'$ 
and hence $\partial Y$.  Similarly, each component of $\Stryker_i$ is transverse to $
\tau_i$ at an interior point, hence no component of $\sigma$ is  isotopic to a component of $\partial Y$.  Next observe that $S_P\subset \finv(B_C)$.  Indeed, $t\in S_P$ implies that  for all $i, \phi(h(t))\cap \tau_i\subset a_i\cup \sigma$, hence $C$ is either a leaf of $\phi(h(t))$ or $C\cap \phi(h(t))=\emptyset$.

 Extend the cascade $\mC(h)$ to the length $L+1$ cascade $\mC'(h)$  by letting 
 $C$ be our added $a_{L+1}$.  If  $\mC'(h)$ is active, then by induction, there is 
 a marker preserving homotopy of $h$ to $h_1$ so that $\mC'(h_1)$ is finished and $[\mC'(h)]<[\mC'(h_1)]$.  If $\mC'(h)$ is finished, then let's unify 
 notation by denoting $h$ by $h_1$.  In both cases, by restricting to the length $L$ subcascade either $[\mC(h)]<[\mC(h_1)]
 $ or $[\mC(h)]=[\mC(h_1)]$ and each $\mM_i$ is $h_1$-free of $B^i_C$ for all $t\in m_L(\mC(h_1))\cap W_i$, where $B^i_C$ is as in the statement of 
 Lemma \ref{preservation}.    Since freedom is an open condition and $S_P(h_1)$ is 
 compact as are all the $W_i$'s, there exists $U\subset V$ open such that 
 $m_L(\mC(h_1))\subset U$ and each $\mM_i$ is $h_1$-free of $B^i_C$ 
 for all $t\in \bar U\cap W_i$.

If $[\mC(h)]=[\mC(h_1)]$, then invoke Lemma  \ref{preservation} by taking $f = h_1$,  keeping the original $\mJ$ and $\mC$, using the above constructed $C$ and letting $K=S_P(h_1)$.  Choose $\delta$ sufficiently small to satisfy the conclusion of Lemma  \ref{preservation} and so that any $(C,\delta, K)$ push off homotopy is supported in $U$.  

To complete the proof it suffices to show that if $\delta$ is sufficiently small and $h_2$ is the resulting map, then $[\mC(h_1)]<[\mC(h_2)]$.   Since Lemma  \ref{preservation} implies that $[\mC(h_1)]\le[\mC(h_2)]$ it suffices to show that $P\notin\mP(h_2)$ if for all $i$, $\mA_i(h_1)=\mA_i(h_2)$.    
If $P\in \mP(h_2)$, then let $t\in S_P(h_2)$.  As above $h_2(t)\in B_C$.  Since 
$h_2$ is the result of a relative push off homotopy it follows from the last 
sentence of Definition \ref{partial pushing} iii) that either $\phi(h_2(t))=\phi(h_1(t))$ 
or $\phi(h_2(t))\neq \phi(h_1(t))$ but $C\cup \phi(h_2(t))=\phi(h_1(t))$.  In the 
former case, $t\in S_P(h_1)$ and hence both $h_1(t), h_2(t)\in B_C$, 
contradicting the fact that $h_2$ is the result of a $(C, \delta, K)$ push off 
homotopy.  In the latter case $t\in U\setminus S_P(h_1)$, hence there exists an $
\mM_i$ that is $h_1$-free of $B^i_C\setminus C$ at $t\in W_i$, but $\mM_i$ is 
not $h_2$-free of $B^i_C\setminus C$ at $t$.  This implies that $\mM_i$ is not 
$h_1$-free of $B^i_C$ at $t$ which contradicts the fact that $t\in U$.

Since $k\le n$ Lemma \ref{missing} it follows that $h(V)\cap K=\emptyset$.  By Lemma \ref{avoiding}, by using sufficiently small $\delta$'s all $(C, \delta, K)$ pushoff homotopies in the above proof could have been done to avoid $L$.  \qed

\begin{corollary}  \label{free}Let $V$ be the underlying space of a finite simplicial complex.  Let $S$ be a finite type surface and $f:V\to \PMLEL$ be a generic PL map and  $\dim(V)\le n$, where $\dim(\PML)=2n+1$.  Let $\mJ$ be a marker family hit by $f$ and $C$ a simple closed geodesic.  Then $f$ is homotopic to $f_1$ via a marker preserving homotopy such that $g(f,C)>0$.  If $L\subset \PML$ is a finite subcomplex of $\CS$, then the homotopy can be chosen to be disjoint from $L$.\end{corollary}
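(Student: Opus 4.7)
The plan is to apply Proposition~\ref{finished} to the length-one cascade based on $\{C\}$ in order to free $\mJ$ from $C$, and then to finish with a single marker-preserving $(C,\delta)$ push-off.

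First I would form the length-one marker cascade $\mC$ based on the single simple closed geodesic $a_1 := C$.  If $\mC(f)$ is active, I would apply Proposition~\ref{finished} to obtain a $\mJ$ marker-preserving homotopy (a concatenation of relative push-offs, chosen disjoint from $L$ when required) from $f$ to a generic PL map $h$ such that $\mC(h)$ is finished; otherwise I set $h := f$.  In either case, $m_1(\mC(h)) = \emptyset$.

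The key observation is that $m_1(\mC(h)) = \emptyset$ forces $\mA_1(i,h) = \infty$, equivalently that $\mJ$ is $h$-free of $\{C\}$.  Indeed, suppose instead $\mA_1(i,h) = \mM_{1_j}$ for some finite index $1_j$.  By the definition of $\mA_1(i)$, the marker $\mM_{1_j}$ fails to be $h$-free of $\{C\}$ along $W_{1_j}$, so some $t \in W_{1_j}$ has the property that every leaf of $\phi(h(t))$ hitting $\mM_{1_j}$ equals $C$.  Since the homotopy is marker-preserving, $\phi(h(t))$ does hit $\mM_{1_j}$, and hence $\{C\}$ solely hits $\mM_{1_j}$ at $t$; in other words $t \in S(h,\{C\},\mM_{1_j},W_{1_j})$.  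Unwinding the definition, $m_1(\mC(h))$ is the (nonempty) subset of this set on which the tag intersection $|(\phi(h(\cdot))\setminus C) \cap \tau_1|$ attains its minimum, contradicting $m_1(\mC(h)) = \emptyset$.

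With $\mJ$ now $h$-free of $\{C\}$, I would apply Lemma~\ref{push} to produce, for every sufficiently small $\delta > 0$, a $(C,\delta)$ push-off homotopy from $h$ to a generic PL map $f_1$ with $f_1^{-1}(B_C) = \emptyset$; Lemma~\ref{marker preserving} ensures that for $\delta$ small enough this homotopy is $\mJ$ marker-preserving (and free of $\{C\}$).  In particular $g(f_1,C) > 0$.  Avoidance of $L$ in this final step follows by further shrinking $\delta$: since $\dim(\CS) = n = 3g+p-4$, Lemma~\ref{missing} gives $h(V) \cap L = \emptyset$, and a push-off supported in a sufficiently small neighborhood of $h^{-1}(B_C)$ stays disjoint from $L$, exactly as in Lemma~\ref{avoiding}.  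Concatenating this push-off with the Proposition~\ref{finished} phase yields the required $f_1$.  The main obstacle is purely a bookkeeping one, namely verifying the equivalence ``length-one cascade finished $\Longleftrightarrow$ $\mJ$ is free of $\{C\}$''; once that is granted, the corollary is an immediate combination of the cascade machinery of Sections~8--9 with the basic push-off technology of Section~6.
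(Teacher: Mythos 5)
Your proof is correct and follows essentially the same route as the paper's: case on whether $\mJ$ is already free of $C$, and if not, finish the length-one cascade via Proposition~\ref{finished} and then do a single $(C,\delta)$ push-off guarded by Lemmas~\ref{push} and \ref{marker preserving}. The one thing you add is the explicit verification that ``$m_1(\mC(h))=\emptyset$ iff $\mA_1(i,h)=\infty$ iff $\mJ$ is $h$-free of $C$,'' a bookkeeping equivalence the paper uses implicitly when it says ``now argue as in the first paragraph''; your unwinding of it (via the definition of $\mM$ being $\phi(h(t))$-free and the ``solely hits'' set $S(h,\{C\},\mM_{1_j},W_{1_j})$) is accurate and a worthwhile clarification rather than a different argument.
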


\begin{proof}  By Lemma \ref{marker preserving}, if $\mJ$ is free of $C$ and $\delta$ is sufficiently small, then any $(C,\delta)$ push off homotopy is $\mJ$-marker preserving.  If $f_1$ is the resulting map, then $ g(f_1,C)>0$.

If $\mJ$ is not free of $C$, then let  $\mC$ be the active length-1 cascade based on $C$.  By Proposition \ref{finished} $f$ is homotopic to $f'$ by a marker preserving homotopy such that $\mC(f')$ is finished and the homotopy is a concatenation of relative push off homotopies.   Now argue as in the first paragraph. \end{proof}

\section{Stryker Cascades}

The main result of this section is 

\begin{proposition}  \label{increasing} Let $V$ be the underlying space of a finite simplicial complex.  Let $f:V\to \PMLEL$ be a generic PL map such that $\dim(V)\le n$ where $\dim(\PML)=2n+1$.  If $a_1$ is a simple closed geodesic such that $\infty>g(f,a_1)>0 $ and $\mJ$ a marker family hit by $f$, then there exists then there exists a marker preserving homotopy of $f$ to $f_1$ such that $g(f_1,a_1)>g(f,a_1).$  The homotopy is a concatenation of relative push offs.  If $L\subset \PML$ is a finite subcomplex of $\CS$, then the homotopy can be chosen to be disjoint from $L$.\end{proposition}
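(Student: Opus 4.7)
The plan is to prove Proposition \ref{increasing} by iteratively eliminating the stryker data of $f$ with respect to $a_1$ through marker-preserving relative push offs, in the spirit of the proof of Proposition \ref{finished}. Let $g_0 = g(f, a_1)$. By Lemma \ref{stryker finiteness}, the compact set $m(f, a_1)$ decomposes as the disjoint union $m_{J_1}(f, a_1) \sqcup \cdots \sqcup m_{J_r}(f, a_1)$ over the finitely many stryker multi-geodesics $J_1, \ldots, J_r$. The target of the reduction step is to produce a marker-preserving homotopy from $f$ to some $f'$ with either $g(f', a_1) > g_0$ or $f'$ having strictly fewer stryker curves; iterating at most $r$ times then yields the claim, since eliminating the last stryker curve forces $m(f, a_1) = \emptyset$ and hence $g(f, a_1) > g_0$.

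For the reduction step, fix $J = J_1$ and $K = m_J(f, a_1)$. A key structural observation, available because $\dim(V) \le n$ and $f$ is generic, is that the arational sublamination of $\phi(f(t))$ is disjoint from $a_1$ for every $t \in K$; otherwise its dense leaves would force $|\phi(f(t)) \cap a_1| = \infty$, contradicting $g_0 < \infty$. Hence near $a_1$ the lamination $\phi(f(t))$ splits as $J$ together with leaves disjoint from $a_1$. Choose an auxiliary simple closed geodesic $C$ with $C \cap a_1 = \emptyset$ adapted to this decomposition, so that $f(K) \subset B_C$ and pushing off $C$ disrupts $J$ as a sublamination at points of $K$. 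By compactness of $K$ and finiteness of the local combinatorial types, $K$ is covered by finitely many compact pieces on each of which a single such $C$ works, and these pieces are processed one at a time in the manner of the packet decomposition of Section 8.

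On each piece $K'$, perform a $(C, \delta, K')$ push off of $f$, first invoking Proposition \ref{finished} on the length-$1$ active cascade based on $C$ whenever $\mJ$ is not already $f$-free of $C$ along the relevant $W_j$'s. The resulting homotopy is $\mJ$-marker-preserving by Lemma \ref{partial marker preserving}, and Lemma \ref{partial stryker preserving}, using critically that $C \cap a_1 = \emptyset$, yields $g(f_1, a_1) \ge g_0$. If strict, we are done. Otherwise Lemma \ref{partial stryker preserving} localises any stryker curve of $f_1$ near $K'$ to still be a stryker curve of $f$, and the choice of $C$ ensures that $J$ is no longer a sublamination of $\phi(f_1(t))$ for $t \in K'$, so the stryker count strictly drops. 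Lemma \ref{avoiding} arranges that all push offs remain disjoint from $L$.

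The main obstacle is identifying the auxiliary curve $C$ in the delicate situation when every component of the stryker multi-geodesic $J$ meets $a_1$ (for instance, when $J$ is a single simple closed geodesic with $g_0 = |J \cap a_1|$). In this case $C$ cannot be taken from $J$ itself, since a push off of a curve transverse to $a_1$ would lower $g$, so $C$ must be extracted from the structure of $\phi(f(t))$ disjoint from $a_1$ (using the arational disjointness observation) together with the combinatorics of a regular neighborhood of $a_1 \cup J$. Verifying that such a $C$ actually interrupts $J$ as a sublamination of $\phi(f_1(t))$, and making the choice uniform across $K$, are the principal technical points, to be handled by the packet-style bookkeeping of Section 8.
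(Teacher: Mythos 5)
Your high-level strategy --- iteratively kill stryker multi-geodesics by marker-preserving relative push offs, tracking a finiteness measure --- matches the paper's idea, and the arational-disjointness observation (that the arational sublamination of $\phi(f(t))$ misses $a_1$ whenever $t\in m(f,a_1)$, since otherwise a minimal component would cross $a_1$ infinitely often) is correct and in fact underlies the paper's choice of the auxiliary curve. But the paper's actual proof is a one-line application of Proposition~\ref{stryker finished}: one takes the length-$1$ \emph{stryker} cascade based on $a_1$ and iterates that proposition, with Lemma~\ref{push finiteness stryker} guaranteeing termination. Your proposal in effect re-derives the length-$1$ case of Proposition~\ref{stryker finished}, but with a substitution that breaks the argument.

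The critical gap is the invocation of Proposition~\ref{finished} to first free $\mJ$ of $C$. Proposition~\ref{finished} works with \emph{marker} cascades, which track nothing about $g(\cdot,a_1)$; the homotopy it produces is a long concatenation of relative push offs along auxiliary curves that the marker-cascade machinery generates, and those curves need not be disjoint from $a_1$. Lemma~\ref{partial stryker preserving} only controls $g(\cdot, a_1)$ for push offs along curves \emph{disjoint} from $a_1$, so after the Proposition~\ref{finished} cleanup $g(\cdot, a_1)$ may have dropped and the set of stryker multi-geodesics may have grown, invalidating your "iterate at most $r$ times" claim. This is exactly the issue Section~10 of the paper exists to solve: a \emph{stryker} cascade is a marker cascade whose highest-priority coordinate is $g(\cdot,a_1)$ itself and whose auxiliary curves are chosen disjoint from $a_1$, so any cascade-increasing homotopy in that machinery cannot decrease $g(\cdot,a_1)$. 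Replacing your use of Proposition~\ref{finished} with Proposition~\ref{stryker finished}, applied to the length-$2$ stryker cascade based on $\{a_1,C\}$, repairs this.

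The secondary gap --- the construction of $C$ --- is handled concretely in the paper: with $P$ a packet (stryker multi-geodesic), set $Y'=N(a_1\cup P)$, let $Y$ be $Y'$ together with its complementary discs, annuli and pants, and take $C$ a component of $\partial Y$. Then $g(f,a_1)>0$ forces $C$ not isotopic to $a_1$; each component of $P$ meets $a_1$, so $C$ is not a component of $P$; and genericity together with $\dim(V)\le n$ (your arational observation) forces $Y\neq S$, so $C$ exists. Your proposed further cover of $K$ by "finitely many pieces on each of which a single $C$ works" is also unnecessary: once the packet $P$ is fixed, $Y$ and hence $C$ depend only on $a_1\cup P$, so a single $C$ serves all of $S_P$.
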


\begin{remarks}  The proof is very similar to that of Corollary \ref{free}, except that \emph{stryker cascades} are used in place of marker cascades.   A stryker cascade is essentially a marker cascade except that the first term is a curve $a_1$ with $g(f,a_1)>0$.   

Closely following the previous two sections, we give the definition of stryker cascade and prove various results about them.  \end{remarks}

\begin{definition}  \label{cascade stryker} Associated to  $f:V\to \PMLEL$ a 
generic PL map, $\mJ=(\mM_1,W_1), \cdots, (\mM_m, W_m)$ a marker family 
hit by $f$, $\mM_1<\cdots < \mM_m$ the ordering induced from this 
enumeration and $a_1, \cdots, a_v$ a sequence of pairwise disjoint simple 
closed geodesics with $g(f,a_1)>0$ we define a \emph{stryker cascade $\mC$}  
which is a $v+1$-tuple  $ (\mA_1, \cdots,  \mA_v, \mP)$ where $\mA_1$ is a 2-tuple $(\mA_1(i), \mA_1(ii))$ and for $i\ge 2$, each $\mA_i$ is a 3-tuple $
(\mA_i(i), \mA_i(ii), \mA_i(iii))$ and  $\mP$ is a finite set of $v$-tuples defined in 
Definition \ref{packet stryker}. 
\vskip 10pt

Define $\mA_1=(g(f,a_1), \Stryker_1)$  where $\Stryker_1$ is the set of stryker curves to $f$ and $a_1$.\vskip10pt

Define $m_1(\mC)=\{t\in V|g(f,a_1)=|\phi(f(t))\cap a_1|\}$.
\vskip10pt

Having defined $\mA_i, 1\le i< u$, then $\mA_u$ is defined as follows.    To start with define $B^1_u, \cdots, B^m_u$ where $B^r_u=\{b^r_2, \cdots, b^r_u\}$, where $b^r_u=a_u$ and for $2\ge q<u, b^r_q=a_q$ if $r<q_j$ and $b^r_q=\emptyset$ otherwise.  
\vskip10pt

\noindent $\mA_u(i)$ is defined to be either the maximal marker $\mM_{u_j}$, such that $r<u_j$ implies that $\mM_r$ is free of $B^r_u$ along $ m_{u-1}(\mC)\cap W_r$ or $\mA_u(i)=\infty$ if for all $r\le m$, $\mM_r$ is free of $B^r_u$ along $ m_{u-1}(\mC)\cap W_r$.

If $\mM_{u_j}$ exists, then define $\tau_u$ to be the marker tag arising from $a_u
$ and $\mM_{u_j}$.  Let $S_u=S(f,B^{u_j}_u, \mM_{u_j}, W_{u_j})\cap m_{u-1}
(\mC)$.  Define $m_u(\mC)= \{t\in S_u| g(f,\tau_u, S_u)=|\phi(f_t)\setminus a_u)
\cap \tau_u|\}$.

\vskip10pt
\noindent$\mA_u(ii)$ is defined to be either $g(f,\tau_u, m_u(\mC))$ or $\infty$ if 
$m_u(\mC)=
\emptyset$.
\vskip8pt
\noindent$\mA_u(iii)$ is defined to be 
the set $\Stryker_u$ which is either the set of $m_u(\mC))$-stryker curves for $\tau_u$ if $m_u(\mC)\neq\emptyset$ or  $\infty$ otherwise.

\vskip10pt

We say that the cascade $\mC$ is \emph{finished} if $m_v(\mC)=\emptyset$ and \emph{active} otherwise.  
We say that the cascade is \emph{based} on $\{a_1, \cdots, a_v\}$ and has \emph{length} $v$.   For $r\le v$, then length-q cascade  based on $\{a_1,\cdots, a_r\} $ is called the length-$r$ \emph{subcascade} and denoted $\mC_r$.  Note that $\mC_r$ and $\mC$ have the same values of $\mA_1, \cdots, \mA_r$.\end{definition}

\begin{notation} The data corresponding to a cascade depends on $f$.  When the function must to be explicitly stated, we will use notation such as $\mC(f), m_i(\mC, f), \mA_p(f)$ or $\mA_r(ii,f)$.   \end{notation}

We record for later use the following result whose proof is essentially identical to that of Lemma \ref{leaf presence}.

\begin{lemma} \label{leaf presence stryker} Let $\mJ$ be a marker family hit by the generic PL map $f:V\to \PMLEL$.  If $\mC$ is an active  stryker cascade based on $a_1, \cdots, a_v$, then for every $t\in m_v(\mC)$ and 
$i\ge 2$, each $a_i$ is a leaf of $\phi(f(t))$.\qed\end{lemma}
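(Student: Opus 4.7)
The plan is to replicate the inductive proof of Lemma~\ref{leaf presence} almost verbatim, adjusting only the base of the induction. In the cascade version the induction starts at $u=1$, where $B^{1_j}_1=\{a_1\}$ and the inclusion $m_1(\mC)\subset S(f,a_1,\mM_{1_j},W_{1_j})$ forces $a_1$ to be a leaf. In the stryker setting $a_1$ enters through $g(f,a_1)$ rather than through a ``solely hits'' condition, so I would drop the conclusion for $a_1$ and start the induction at $u=2$. This matches the statement, which asserts the leaf conclusion only for $i\ge 2$.

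For the base case $u=2$ I would note that Definition~\ref{cascade stryker} yields $B^{r}_{2}=\{a_{2}\}$ for every $r$, because the range $2\le q<2$ is empty. Since $m_2(\mC)\subset S(f,B^{2_j}_{2},\mM_{2_j},W_{2_j})$, every leaf of $\phi(f(t))$ hitting $\mM_{2_j}$ lies in $\{a_2\}$, while the hypothesis that $\mJ$ is hit by $f$ together with $t\in W_{2_j}$ guarantees some leaf does hit $\mM_{2_j}$. That leaf must therefore be $a_2$.

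For the inductive step, given $t\in m_u(\mC)\subset m_{u-1}(\mC)$, the curves $a_2,\dots,a_{u-1}$ appear as leaves of $\phi(f(t))$ by induction, so I only need $a_u$. Setting $A=B^{u_j}_u$, the inclusion $m_u(\mC)\subset S(f,A,\mM_{u_j},W_{u_j})$ forces every leaf hitting $\mM_{u_j}$ at $t$ into $A$, and at least one exists since $\mJ$ is hit by $f$. To show that leaf is $a_u$, it suffices to establish that $\mM_{u_j}$ is $f$-free of $A\setminus a_u$ along $m_{u-1}(\mC)\cap W_{u_j}$. This reduces to the bookkeeping argument implicit in the original proof: letting $q^{*}=\max\{q<u:u_j<q_j\}$ (if nonempty), one unpacks the definition to see $B^{u_j}_{q^{*}}=A\setminus a_u$, and the maximality built into the choice of $q^{*}_j$ in Definition~\ref{cascade stryker} delivers precisely freedom of $\mM_{u_j}$ from $B^{u_j}_{q^{*}}$ along $m_{q^{*}-1}(\mC)\cap W_{u_j}$, which contains $m_{u-1}(\mC)\cap W_{u_j}$. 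If the set over which $q^{*}$ ranges is empty, then $A\setminus a_u=\emptyset$ and freedom from the empty set is just the marker-hit condition supplied again by $\mJ$.

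The only real verification, and the only place the argument could fail, is the identity $B^{u_j}_{q^{*}}=A\setminus a_u$. I expect this to be purely formal once the definition is unpacked, since in the stryker cascade the set $B^r_u$ omits $a_1$ entirely and is otherwise built from the same rule $\{a_q:q<u,\ r<q_j\}$ as in the cascade case. So no new obstacle arises beyond the notational shift of the base case, and the proof runs parallel to Lemma~\ref{leaf presence} line for line.
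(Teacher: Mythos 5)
Your argument mirrors the paper's intended proof of Lemma \ref{leaf presence stryker}, which the paper dismisses as ``essentially identical'' to that of Lemma \ref{leaf presence}: you shift the base of the induction to $u=2$ (forced because the stryker version of $B^r_u$ omits $a_1$, so $B^r_2=\{a_2\}$), and you run the same inductive step via $A=B^{u_j}_u$ and the inclusion $m_u(\mC)\subset S(f,A,\mM_{u_j},W_{u_j})\cap m_{u-1}(\mC)$. Your explicit unpacking of $q^*$ and the identity $B^{u_j}_{q^*}=A\setminus a_u$ merely fills in what the paper compresses into ``By definition'' in the proof of Lemma \ref{leaf presence}, and the verification does go through since $q^*$ is the largest index $q<u$ with $u_j<q_j$, hence $a_{q^*}$ belongs to both sides and the nesting $m_{u-1}(\mC)\subset m_{q^*-1}(\mC)$ supplies the needed freedom along the correct set.
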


\begin{definition}  \label{packet stryker}  Let $\mC$ be an active stryker cascade.  To each $t\in m_v(\mC)$ corresponds an $v$-tuple $(p_1, \cdots, p_v)$ 
where $p_1$ is the stryker multi-geodesic  for $a_1$ at $t$ and for $i\ge 2$, $p_i$ is the (possibly empty) stryker multi-geodesic for $\tau_i$ at $t$.  Such a $(p_1, \cdots, 
p_v)$ is called a \emph{packet}. There are only finitely many packets, by the 
finiteness of stryker curves.  Thus $m_v(\mC)$ canonically decomposes into a disjoint union of 
closed sets $S_1, \cdots, S_r$ such that each point in a given $S_j$ has the same packet.  Let $\mP=\{P_1, \cdots, P_r\}$ denote the set of 
packets, the last entry in the definition of $\mC$.   \end{definition}

\begin{definition}\label{cascade order stryker}  Use the direct analogy of Definition \ref{cascade order} to put an equivalence relation on the set of stryker cascades based on the same ordered set of simple closed curves and to partially order the classes.  \end{definition}

\begin{proposition}\label{stryker preservation}  Let $f:V\to \PMLEL$ a generic 
PL map, $\dim(V)\le n$ and $\mJ=(\mM_1,W_1), \cdots, (\mM_m, W_m)$ a marker family $\mJ
$ hit by $f$.  Let $\mC$ be an active stryker cascade based on $\{a_1, \cdots, 
a_q\},  C$ a simple closed geodesic 
disjoint from the $a_i$'s and $\tau_j$'s and let $K\subset m_v\cap \finv(B_C)$.  
For $1\le i\le m$, let $B^i_C=\{a_p, p\ge 2|   i<p_j\}\cup\{C\}$.   Assume that   for 
$1\le i\le m$,  $\mM_i$ is $f$-free of $B^i_C$ along $K\cap W_i$.  If $f_1$ is 
obtained from $f$ by a $(C, \delta, K)$ push off homotopy and 
$\delta$ is sufficiently small, then $[\mC(f)]\le [\mC(f_1)] $ and the homotopy 
from $f$ to $f_1$ is $\mJ$-marker preserving.    
\end{proposition}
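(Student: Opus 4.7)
The plan is to mimic the proof of Proposition \ref{preservation}, the only genuinely new wrinkle being that the first coordinate $\mA_1$ of a stryker cascade records geometric intersection data with $a_1$ rather than marker-freedom data, so it must be handled by the stryker-preserving lemmas of \S 6 in place of the marker-preservation step used for the first coordinate in the marker-cascade case.

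First I would invoke Lemma \ref{partial marker preserving} to ensure that for $\delta$ sufficiently small any $(C,\delta,K)$ push off homotopy is $\mJ$-marker preserving; the hypothesis that each $\mM_i$ is $f$-free of $B^i_C$ along $K\cap W_i$ (in particular of $C$, since $C\in B^i_C$) is exactly what Lemma \ref{partial marker preserving} requires, together with the disjointness of $C$ from each $a_i$ and $\tau_j$.

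Second, to verify $\mA_1(f)\le \mA_1(f_1)$: since $K\subset m_v(\mC)\subset m_1(\mC)$, at every $t\in K$ we have $|\phi(f(t))\cap a_1|=g(f,a_1)$, with the stryker curve at $t$ belonging to $\Stryker_1$. Applying Lemma \ref{refined partial stryker preserving} with $A=a_1$ (using $C\cap a_1=\emptyset$), for $\delta$ small any $(C,\delta,K)$ push off $f_1$ gives $g(f_1,a_1)\ge g(f,a_1)$, and if equality holds, the refined conclusion forces every stryker curve at a nearby point of $m(f_1,a_1)$ to be one of the $J_i$'s, so $\Stryker_1(f_1)\subseteq \Stryker_1(f)$. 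This is the only place where the argument genuinely departs from the marker-cascade proof.

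Third, for $u\ge 2$ I would run the four-step argument of Proposition \ref{preservation} verbatim. Assuming inductively that $\mA_j(f)=\mA_j(f_1)$ for $j<u$ (so $m_{u-1}(\mC(f))=m_{u-1}(\mC(f_1))$), Lemma \ref{partial marker preserving} yields $\mA_u(\text{i},f)\le \mA_u(\text{i},f_1)$ once one observes $B^i_u\subset B^i_C$ whenever $i<u_j$; then Lemma \ref{partial tag stryker preserving} with $\tau=\tau_u$ and $Y=m_{u-1}(\mC)$ delivers $\mA_u(\text{ii},f)\le \mA_u(\text{ii},f_1)$; choosing $\eta$ smaller than the minimum separation supplied by Lemma \ref{tag stryker finiteness} between distinct strata $m_{J_i}$ gives $\Stryker_u(f_1)\subseteq \Stryker_u(f)$ when the preceding entries agree; and the same $\eta$ forces $\mP(f_1)\subseteq \mP(f)$ when every $\mA_i$ coincides.

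The hard part is only bookkeeping: one must take $\delta$ smaller than the finitely many thresholds produced by Lemmas \ref{partial marker preserving}, \ref{refined partial stryker preserving}, and \ref{partial tag stryker preserving} applied respectively to each $\mM_i$, to $A=a_1$, and to each $\tau_u$ for $u\ge 2$. Since the set of cascades, markers, tags, and stryker curves in play is finite, no genuine obstacle arises beyond what was already resolved in Proposition \ref{preservation}; the first coordinate decouples cleanly because Lemma \ref{refined partial stryker preserving} applies without reference to marker data, and the inductive agreement of earlier $\mA_j$'s keeps $m_{u-1}(\mC)$ fixed so that the comparison at level $u$ reduces to the marker-cascade case already handled.
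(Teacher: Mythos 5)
Your proposal follows the paper's argument exactly: handle $\mA_1$ with the relative stryker-preservation machinery (the paper cites Lemma~\ref{partial stryker preserving}, applied with $Y=V$, which gives both $g(f_1,a_1)\ge g(f,a_1)$ and, under equality and a suitably small $\eta$, the containment $\Stryker_1(f_1)\subseteq\Stryker_1(f)$), then run the four-step bookkeeping of Proposition~\ref{preservation} verbatim for $u\ge 2$. The one small slip is that you cite Lemma~\ref{refined partial stryker preserving}, which as stated governs $(C,\delta)$ push offs rather than the $(C,\delta,K)$ relative push offs appearing here; the relative analogue you actually need is Lemma~\ref{partial stryker preserving} with $Y=V$, and with that substitution your argument coincides with the paper's.
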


\begin{proof}    The fact $\mA_1(f)\le \mA_1(f_1)$ follows from Lemma \ref{partial stryker preserving}.  The rest of the argument follows as in the proof of Proposition \ref{preservation}.  \end{proof}  

We have the following analogue of Lemma \ref{push finiteness}.

\begin{lemma}  \label{push finiteness stryker} Let  $f_i:V\to \PMLEL$, $i\in \BN$ be 
generic PL maps, $\mJ$ a marker family and let  $\{\mC(f_i)\}$ be active 
stryker cascades based on the same set of simple closed geodesics.  Any sequence 
$[\mC(f_1)]\le [\mC(f_2)]\le \cdots$ with $g(f_1,a_1)=g(f_2, a_1)=\cdots$ has only finitely many terms that are strict 
inequalities.\qed\end{lemma}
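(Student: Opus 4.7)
The plan is to adapt the proof of Lemma~\ref{push finiteness} with only minor modifications to accommodate the different shape of $\mA_1$ in a stryker cascade. Since for $u\ge 2$ the components $\mA_u$ of a stryker cascade are defined exactly as in a marker cascade, the only new coordinates to analyze are $\mA_1(i)=g(f_i,a_1)$ and $\mA_1(ii)=\Stryker_1(f_i)$.

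First I would observe that $\mA_1(i,f_i)=g(f_i,a_1)$ is constant along the sequence by the standing hypothesis $g(f_1,a_1)=g(f_2,a_1)=\cdots$, so this coordinate contributes no strict increases. For $\mA_1(ii)$: each $\Stryker_1(f_i)$ is finite by Lemma~\ref{stryker finiteness}, and the non-decreasing lex order on $\{[\mC(f_i)]\}$, combined with the already-equal coordinate $\mA_1(i)$, forces the projection onto $\mA_1(ii)$ to be non-decreasing in its own partial order. By Definition~\ref{cascade order stryker} (in direct analogy with Definition~\ref{cascade order}) this partial order is reverse inclusion, producing a non-increasing chain $\Stryker_1(f_1)\supset\Stryker_1(f_2)\supset\cdots$. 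Since $\Stryker_1(f_1)$ is finite, this chain can strictly decrease only finitely many times, so $\mA_1(ii)$ strictly increases at most finitely often.

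For the remaining coordinates the argument from Lemma~\ref{push finiteness} carries over unchanged: $\mA_u(i)$ with $u\ge 2$ takes values in the finite set $\mJ\cup\{\infty\}$; once it is fixed, the tag $\tau_u$ is determined and $\mA_u(ii)$ is a non-negative integer uniformly bounded above via Lemma~\ref{tag freedom}; once $\mA_u(i)$ and $\mA_u(ii)$ are both fixed, $\mA_u(iii)=\Stryker_u(f_i)$ forms a non-increasing chain of finite sets by Lemma~\ref{tag stryker finiteness}; and finally $\mP(f_i)$ is a non-increasing chain of finite packet sets. Each coordinate therefore strictly increases only finitely many times, and assembling these along the lex order gives finitely many strict increases overall.

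The only potential pitfall is a bookkeeping one: one must verify that a non-decreasing lex chain projects to a non-decreasing chain on each coordinate so that the finiteness or uniform boundedness established coordinate-by-coordinate actually propagates to the full tuple. This is immediate from the definition of the lex product of partial orders, so no substantive new obstacle arises beyond what is already handled in Lemma~\ref{push finiteness}.
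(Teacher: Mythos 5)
Your proof is correct and takes exactly the approach the paper intends: the lemma is left without an explicit argument because it is the same as Lemma~\ref{push finiteness}, with the added hypothesis $g(f_1,a_1)=g(f_2,a_1)=\cdots$ precisely controlling the one coordinate, $\mA_1(i)=g(f,a_1)$, that a marker cascade would automatically bound but a stryker cascade need not. One small imprecision in your closing paragraph: a non-decreasing lexicographic chain does not project to a non-decreasing chain on \emph{every} coordinate---only on the first; a later coordinate's projection becomes monotone only on the tail where all earlier coordinates have stabilized. Your in-text argument already handles this correctly by conditioning on the earlier coordinates being fixed (``once $\mA_u(i)$ and $\mA_u(ii)$ are both fixed\ldots''), which is the standard ascending-chain induction for a finite lexicographic product, so the imprecision does not affect the proof; just do not present the projection claim as immediate from the definition.
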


\begin{proposition}\label{stryker finished}  Let $h:V\to \PMLEL$ be a generic PL map such that $k=\dim(V)\le n$ where $\dim(\PML)=2n+1$.  Let $\mJ$ be a marker family hit by $h$ and let $\mC$ be an active stryker cascade with $\infty > g(h,a_1)>0$. Then there exists a marker preserving homotopy of $h$ to $h'$ such that $[\mC(h)]< [\mC(h')]$ and either $g(h',a_1)>g(h,a_1)$ or $\mC(h')$ is finished.  The homotopy is a concatenation of relative push offs.  If $L\subset \PML$ is a finite subcomplex of $\CS$, then the homotopy can be chosen to be disjoint from $L$.  \end{proposition}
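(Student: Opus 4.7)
The plan is to mirror the proof of Proposition~\ref{finished}, substituting stryker cascades and their supporting lemmas for marker cascades. It suffices to establish the \emph{single-step improvement}: whenever $\mC(h)$ is active, there is a $\mJ$-marker preserving homotopy from $h$ to $h''$, realized as a concatenation of relative push offs and disjoint from $L$, with $[\mC(h)]<[\mC(h'')]$. The proposition then follows by iteration. Since $g(\cdot,a_1)$ sits in the first lexicographic entry of $[\mC]$, any strict increase in $g$ automatically yields $[\mC(h)]<[\mC(h')]$, so if at some stage $g(\cdot,a_1)$ strictly increases we stop with the ``$g(h',a_1)>g(h,a_1)$'' alternative; otherwise $g(\cdot,a_1)$ is constant throughout, and Lemma~\ref{push finiteness stryker} forces termination in finitely many steps at a finished cascade, with $[\mC(h)]<[\mC(h')]$ by transitivity.

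I establish the single-step improvement by downward induction on $L=\length(\mC)$. Since the $a_i$'s are pairwise disjoint simple closed geodesics, $L\le n+1$. For $L$ sufficiently large the argument of Proposition~\ref{finished} applies: let $R$ be a regular neighborhood of $a_1\cup\cdots\cup a_L\cup\tau_2\cup\cdots\cup\tau_L$ together with all complementary disks, annuli, and pants. For $L$ large, $T=S\setminus\inte(R)=\emptyset$. Genericity and $k\le n$ give $\mA(h(t))\ne\emptyset$, and non-compact leaves are non-proper, so $\mA(h(t))$ meets some $\tau_i$ in infinitely many points, contradicting the finiteness of $\mA_i(ii,h)$ at points of $m_L(\mC)$. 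Therefore $m_L(\mC)=\emptyset$, the cascade is finished, and the single-step improvement is vacuous at the base.

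For the inductive step, assume the improvement for cascades of length greater than $L$, and let $\mC(h_1)$ be active of length $L$. Fix a packet $P=(p_1,\ldots,p_L)\in\mP(h_1)$ with closed set $S_P$, and set $\sigma=p_1\cup\cdots\cup p_L$. By Lemma~\ref{leaf presence stryker} each $a_i$ with $i\ge 2$ is a leaf of $\phi(h_1(t))$ for $t\in S_P$, and each $p_j$ is a sublamination of $\phi(h_1(t))$. Let $Y$ be a regular neighborhood of $a_1\cup\cdots\cup a_L\cup\tau_2\cup\cdots\cup\tau_L\cup\sigma$ together with its complementary disks, annuli, and pants, and let $Z=S\setminus\inte(Y)$. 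As in Proposition~\ref{finished}, $Z\ne\emptyset$: otherwise $\mA(h_1(t))\subset Y$ would cross some $\tau_i$ infinitely often, contradicting stryker finiteness. Pick a simple closed geodesic $C$ isotopic to a component of $\partial Z$. As in Proposition~\ref{finished}, $C$ is neither an $a_i$ (each $a_i$ with $i\ge 2$ is crossed transversely at an interior point by its tag $\tau_i$, and $a_1$ is crossed transversely by $p_1\subset\sigma\subset Y$) nor a component of $\sigma$, and $S_P\subset h_1^{-1}(B_C)$ since every intersection of $C$ with $\phi(h_1(t))$ for $t\in S_P$ would have to lie in $Y$.

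Extend $\mC$ to a stryker cascade $\mC'$ of length $L+1$ by declaring $a_{L+1}=C$. If $\mC'(h_1)$ is active, the inductive hypothesis provides a marker preserving concatenation of relative push offs $h_1\rightsquigarrow h_2$ with $[\mC'(h_1)]<[\mC'(h_2)]$ and either $g(h_2,a_1)>g(h_1,a_1)$ (in which case we are finished immediately) or $\mC'(h_2)$ finished; otherwise set $h_2=h_1$. Restricting to the length-$L$ subcascade gives $[\mC(h_1)]\le[\mC(h_2)]$, and the hypotheses of Proposition~\ref{stryker preservation} are met on a neighborhood $U$ of $S_P(h_2)$ with $K=S_P(h_2)$. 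A small $(C,\delta,K)$ push off $h_2\rightsquigarrow h_3$ is then $\mJ$-marker preserving, disjoint from $L$ by Lemma~\ref{avoiding}, and satisfies $[\mC(h_2)]\le[\mC(h_3)]$. To promote to strict inequality, verify $P\notin\mP(h_3)$ exactly as at the end of the proof of Proposition~\ref{finished}: any hypothetical $t\in S_P(h_3)$ lies in $h_3^{-1}(B_C)$, and the last clause of Definition~\ref{partial pushing}(iii) forces either $\phi(h_3(t))=\phi(h_2(t))$ (impossible at a point of $K$ since then $h_2(t)\in B_C$, contradicting $h_3(K)\cap B_C=\emptyset$) or $\phi(h_2(t))=\phi(h_3(t))\cup C$ (contradicting the freedom of some $\mM_i$ at $t\in U$). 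The main obstacle to watch out for is the asymmetry between $\mA_1$ (governed by the closed geodesic $a_1$, which is \emph{crossed} rather than disjoint from its stryker curve $p_1$ and which need not be a leaf of $\phi(h(t))$) and the later entries (governed by tags with disjoint stryker curves); this affects only the verification that $C$ is not isotopic to $a_1$ and that $S_P\subset h_1^{-1}(B_C)$, both handled above.
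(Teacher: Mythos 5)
Your proposal is correct and follows essentially the same route as the paper: reduce to a single-step improvement of $[\mC]$ via Lemma \ref{push finiteness stryker}, run the downward induction on cascade length exactly as in Proposition \ref{finished}, and treat the length-one peculiarities (where $\mA_1$ is governed by $g(\cdot,a_1)$ and the stryker multi-curve $p_1$ crosses $a_1$) by the same extension-to-length-$(L+1)$ followed by a $(C,\delta,S_P)$ push off. The only cosmetic slip is that for $L=1$ your justification of $Z\neq\emptyset$ cannot invoke a tag $\tau_i$ (there is none); instead one notes that $\mA(h_1(t))$ must be disjoint from $a_1\cup p_1$ --- since $\mA(h_1(t))\cap a_1\neq\emptyset$ would force $|\phi(h_1(t))\cap a_1|=\infty$ --- so it cannot be contained in the disc, annulus, and pants pieces of $Y$.
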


\begin{proof}  As in the proof of Proposition \ref{finished} it suffices to show that given any active stryker cascade $\mC(h_1)$ there exists a marker preserving homotopy of $h_1$ to $h_2$, that is a concatenation of relative push offs, such that $[\mC(h_1)]<[\mC(h_2)]$.  Here we invoke Lemma \ref{push finiteness stryker} instead of Lemma \ref{push finiteness}.  

Again we discuss the k=n case, the easier $k<n$ cases being left to the reader.  The proof by downward induction on the length of the cascade follows essentially exactly that of Proposition \ref{finished} until the step of proving it for length-1 cascades.  So now assume that the Proposition has been proved for cascades of length $\ge 2$.

Let $\mC(h)$ be a length-1 cascade based on $a_1$.  Here $\mP$, the set of packets, consists of the set of stryker multi-curves to $h$ and $a_1$.  Let $P$ be one such multi-curve.  Let $Y'=N(a_1\cup P)$ and $Y$ be the union of $Y'$ and all components of $S\setminus Y'$ that are discs, annuli and pants. Again, genericity and the condition $k=n$ implies that $Y \neq S$.  Let $C$ be a simple closed geodesic isotopic to a component of $\partial Y$.  The condition $g(h,a_1)>0$ implies that $C$ is not isotopic to $a_1$.   Since each component of $P$ non trivially intersects $a_1$, $C$ is not isotopic to any component of $P$.  Let $\mC'$ denote the length-2 stryker cascade based on $\{a_2, C\}$. By induction there exists a marker preserving homotopy of $h$ to $h_1$ that is a concatenation of relative push offs such that $[\mC'(h)]<[\mC'(h_1)]$ and either $g(h_1,a_1)>g(h,a_1)$ or $\mC'(h_1)$ is finished.  In the latter case we have either $ [\mC(h)]< [\mC(h_1)]$ or $[\mC(h)]=[\mC(h_1)]$ and for each $1\le i\le m$, $\mM_i$ is free of $C$ along $m_1(\mC, h_1)$.  Now argue as in the proof of Proposition \ref{finished} that if $h_2$ is the result of a $(C, \delta, K)$ homotopy, where $\delta$ is sufficiently small and $K=S_P$, then either $g(h_2,a_1)>g(h_1, a_1)$ or $g(h_2, a_1)=g(h_1, a_1)$ and  $[\mC(h_1)]<[\mC(h_2)]$.\end{proof}

\noindent\emph{Proof of Proposition \ref{increasing}.}  Let $\mC(f)$ be the length-1 stryker cascade based on $a_1$.  By Proposition \ref{stryker finished} there exists a marker preserving homotopy from $f$ to $f_1$ that is the concatenation of relative push off homotopies such that $\mC(f)<\mC(f_1)$.  Therefore, either $g(f_1,a_1)>g(f,a)$ or equality holds and $|\mP(\mC(f_1))|<|\mP(\mC(f))|$.  After finitely many such homotopies we obtain $f'$ such that $g(f',a_1)>g(f_1,a)$.\qed

\section{$\EL$ is (n-1)-connected}  

\begin{theorem}  \label{marker theorem} Let $V$ be the underlying space of a finite simplicial complex and $W$ that of a subcomplex.  Let $S$ be a finite type hyperbolic surface with $\dim(\PML)=2n+1$.  If $f:V\to \PMLEL$ is a generic PL map, $\finv(\EL)=W$, $\dim(V)\le n$ and $\mJ$ is a marker family hit by $f$, then there exists a map $g:V\to \EL$ such that $g|W=f$ and $g$ hits $\mJ$.  The map $g$ is the  concatenation of (possibly infinitely many) relative push offs.   If $L\subset \PML$ is a finite subcomplex of $\CS$, then the homotopy can be chosen to be disjoint from $L$.\end{theorem}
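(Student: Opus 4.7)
The plan is to construct, by induction on $i \in \BN$, generic PL maps $f_1 = f, f_2, f_3, \ldots : V \to \PMLEL$ together with an increasing sequence of marker families $\mJ = \mJ_1 \subset \mJ_2 \subset \cdots$ so that the hypotheses of the continuity criterion (Proposition~\ref{continuity criterion}) are satisfied by $\{f_i\}$, and then take $g : V \to \EL$ to be the map the criterion produces. Enumerate the simple closed geodesics of $S$ as $C_1, C_2, \ldots$ and fix $\epsilon_i \to 0$ with $\epsilon_i/2 > \epsilon_{i+1}$. The inductive invariants are that $f_i|_W = f|_W$, that $f_i$ hits $\mJ_i$, and that $\mJ_i$ consists of $\mJ$ together with, for each $j < i$, an $\epsilon_j$-marker family $\mE_j$ covering $V$ and a $C_j$-marker family $\mS_j$ covering $V$; moreover each transition from $f_i$ to $f_{i+1}$ is a $\mJ_i$-marker preserving concatenation of relative push offs, which by Corollary~\ref{free} and Proposition~\ref{increasing} can be chosen disjoint from a prescribed finite subcomplex $L \subset \PML$ of $\CS$.

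For the inductive step, I first apply Corollary~\ref{free} to obtain $f_i'$ from $f_i$ by a $\mJ_i$-marker preserving concatenation of relative push offs with $g(f_i', C_i) > 0$. If $g(f_i', C_i)$ is finite, I iterate Proposition~\ref{increasing}, each step strictly raising the intersection number by a further $\mJ_i$-marker preserving concatenation of relative push offs, until reaching $f_i''$ with $g(f_i'', C_i) \geq N(C_i)$, where $N(C_i)$ is the constant of Lemma~\ref{finding c markers}; if $g(f_i', C_i) = \infty$ I take $f_i'' := f_i'$. Now every lamination $\phi(f_i''(t))$ meets $C_i$ in at least $N(C_i)$ transverse points, so Lemma~\ref{finding markers}(ii) yields a $C_i$-marker family $\mS_i$ covering $V$ hit by $f_i''$, and Lemma~\ref{finding markers}(i) yields an $\epsilon_i$-marker family $\mE_i$ covering $V$ hit by $f_i''$. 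I set $f_{i+1} := f_i''$ and $\mJ_{i+1} := \mJ_i \cup \mS_i \cup \mE_i$, preserving the inductive invariants.

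Since every homotopy in the tower is $\mJ_j$-marker preserving for all $j \leq i$, each $f_m$ with $m \geq j$ hits both $\mE_j$ and $\mS_j$. Assigning to each element of the open cover of $V$ by the interiors of the compact sets supporting $\mE_j$ and $\mS_j$ its corresponding $\epsilon_j$- and $C_j$-markers, the sublimit and filling conditions of Proposition~\ref{continuity criterion} hold. That proposition produces a continuous $g : V \to \EL$ with $g|_W = f|_W$, where $g(t)$ is the coarse Hausdorff limit of $\phi(f_m(t))$. To confirm that $g$ hits the original family $\mJ$, fix $(\mM, W_\mM) \in \mJ$ and $t \in W_\mM$ and pass to a subsequence along which $f_{m_j}(t) \to f_\infty(t) \in \PML$; Lemma~\ref{pml one closeness} applied to any diagonal extension of $g(t)$ that is a Hausdorff sublimit of $\phi(f_{m_j}(t))$ gives $\phi(f_\infty(t)) = g(t)$, so that $g(t)$ fully supports a transverse measure, and a measure-concentration argument combined with the fact that $\phi(f_m(t))$ has a positive-measure family of leaves spanning $\mM$ yields a leaf of $g(t)$ that spans $\mM$.

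The principal technical obstacle is to keep every push off marker preserving, since the markers already in $\mJ_i$ need not be free of $C_i$ (or of the auxiliary curves arising inside the proofs of Corollary~\ref{free} and Proposition~\ref{increasing}). Resolving this is exactly the role of the cascade-finishing Propositions~\ref{finished} and~\ref{stryker finished}, and it is the hypothesis $\dim(V) \leq n$, via the nonemptiness of the arational sublamination at each point of a generic PL image in $\PML$, that ultimately forces these cascades to terminate.
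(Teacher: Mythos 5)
Your proposal takes essentially the same route as the paper's own proof: enumerate the simple closed geodesics, use Corollary~\ref{free} to make $g(f_i,C_i)>0$, iterate Proposition~\ref{increasing} until $g(f_i'',C_i)\ge N(C_i)$, harvest a $C_i$-marker cover $\mS_i$ and an $\epsilon_i$-marker cover $\mE_i$ from Lemma~\ref{finding markers}, accumulate these into $\mJ_i$, and feed the resulting tower into Proposition~\ref{continuity criterion}. The Lemma~\ref{avoiding} mechanism for staying off $L$ and the concatenation of the finite push-off homotopies into a single homotopy $F:V\times[0,\infty]\to\PMLEL$ are handled the same way. You are in fact more explicit than the paper in one respect: you insist on including the original $\mJ$ in the preserved families $\mJ_i$, whereas the text of the paper defines $\mJ_i$ only as the union of the $\mE_r$ and $\mS_s$ families; the inclusion is what makes the conclusion that $g$ hits $\mJ$ available, so your version is cleaner.

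The one place I would push back is your closing ``measure-concentration argument.''  The first half of that paragraph — pass to a subsequence along which $\phi(f_{m_j}(t))$ Hausdorff-converges to $\mL'(t)$, invoke Lemma~\ref{pml one closeness} to get $f_{m_j}(t)\to f_\infty(t)\in\phiinv(g(t))$ — is fine, but it only shows that $g(t)$ fully supports a measure (which is automatic for ending laminations anyway) and that $\mL'(t)$, a diagonal extension of $g(t)$, has arcs spanning the closed marker $\bar\mM$.  It does not by itself produce a leaf of $g(t)$ (rather than of $\mL'(t)$) spanning the open $\mM$: the spanning arcs could a priori lie on the finitely many diagonal leaves of $\mL'(t)\setminus g(t)$.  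The argument inside Proposition~\ref{continuity criterion} that pins down a common leaf exploits $\epsilon_i\to 0$, so it doesn't apply to a fixed marker of $\mJ$.  What is actually needed here is a short separate argument — e.g.\ that the diagonal leaves of $\mL'(t)$ spiral into $g(t)$ and spanning arcs have bounded length and are nearly parallel, so at least one lies on a leaf of $g(t)$ — or else the weaker conclusion that $g(t)$ spans the closed marker $\bar\mM$, which is all the paper in fact uses (see the proof of Theorem~\ref{local connectivity}).  The ``positive-measure family of leaves spanning $\mM$'' claim, as stated, is also not obviously available for a single leaf hitting $\mM$ when that leaf is isolated.  Replace the measure-concentration sketch with an explicit limiting argument on spanning arcs and the proposal is sound.
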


\begin{proof} Let $C_1, C_2, \cdots$ be  an enumeration of the simple closed geodesics on $S$.  
It  suffices to find a sequence, $f_0, f_1, f_2, \cdots$ of extensions of $f|W$ and  sequences 
$\{\mE_i\}$, $\{\mS_i\}$ of marker covers, such that $\mE_i$ is a marker covering of $V$ by $1/i
$ markers, $\mS_i$ is a marker covering of $V$ by $ C_i$ markers and for $i\le j, f_j$ hits each 
$\mE_i$ and $\mS_i$ family of markers.  Furthermore, $f_{i+1}$ is obtained from $f_i$ by a finite 
sequence of relative push offs.  Then Proposition \ref{continuity criterion} produces $g$ as the 
limit of the $f_j$'s.   Since $f_{i+1}$ is obtained from $f_i$ by concatenating finitely many push offs, concatenating all these push off homotopies  produces a 
map $F:V\times [0, \infty]\to\PMLEL$ with $F|V\times \infty=g$.  The proof of Proposition \ref
{continuity criterion} shows that $F$ is continuous.  It follows from Lemma \ref{avoiding} that the 
homotopy can be chosen disjoint from $L$.

Suppose that $f_0, f_1, \cdots, f_{j-1}, \mE_1, \cdots, \mE_{j-1}$, and $\mS_1, \cdots, \mS_{j-1}$ have been constructed so that $f_q$ hits each $\mE_p$ and $\mS_p$ family of markers whenever $p\le q\le j-1$.  We will extend the sequence by constructing 
$f_j$, $\mE_j$ and $ \mS_j$ to satisfy the corresponding properties.  The theorem then follows by induction.   In what follows, $\mJ_i$ denotes the marker family that is the union  of all the markers in the $\mE_r$ and $\mS_s$ marker families, where $r,s\le i$.  \vskip 10pt

\noindent \emph{Step 1}.  Construction of $f_j$ and $\mS_j$:

\begin{proof} Let $N(C_j)$ be as in Lemma \ref{finding markers} ii).  Given $f_{j-1}$, obtain $f^1_j$ by applying Corollary \ref{free} to $f_j$ and the marker cover $\mJ_{j-1}$.  Now repeatedly apply Proposition \ref{increasing} to obtain $f^2_j, f^3_j, \cdots, f^{N(C_j)}_j$ with the property that $f^q_j $ is a generic PL map obtained from $f^{q-1}_{j-1}$ via a $\mJ_{j-1}$ marker preserving homotopy such that $g(f^q_j, C)\ge q$.  Let $f_j=f^{N(C_j)}_j$ and apply Lemma \ref{finding markers} ii) to obtain $\mS_j$. \end{proof}

\noindent \emph{Step 2}.  Construction of  $\mE_j$:   
\begin{proof}Apply Lemma \ref{finding markers} i) to the 
generic PL map $f_j$ to obtain  $\mE_j$ a marker cover by $1/j$ markers.\end{proof}

\begin{theorem} \label{connectivity} If $S$ is a finite type hyperbolic surface such that $\dim(\PML)=2n+1$, then $\EL$ is $(n-1)$-connected.\end{theorem}

\noindent\emph{Proof.}   Let $k\le n$ and $g:S^{k-1}\to \EL$ be 
continuous and $f:B^k\to \PMLEL$ be a generic PL extension of $g$, provided by Proposition \ref{extension}.  Now apply Theorem \ref{marker theorem} to extend $g$ to a map of $B^k$ into $\EL$.\end{proof}

\section{$\EL$ is $(n-1)$-locally connected}  

\begin{theorem}  \label{local connectivity}   If $S$ is a finite type hyperbolic surface, then $\EL$ is $(n-1)$-locally connected, where $\dim(\PML)=2n+1$.\end{theorem}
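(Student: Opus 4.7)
The plan is to mimic the proof of Theorem \ref{connectivity}, substituting the local extension result Proposition \ref{local extension} for the global Proposition \ref{extension}. The extra ingredient is the observation that a single marker $\mM$ hit by $\mu$ does double duty: it witnesses that a carefully built PL extension $F$ stays close to $\phiinv(\mu)$, and it is propagated by the push-off procedure of the marker theorem so as to constrain the final map into a prescribed neighborhood of $\mu$ via Lemma \ref{marker closeness}.

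Fix $\mu\in\EL$ and a neighborhood $W$ of $\mu$ in $\EL$. First I would use Lemma \ref{marker closeness} to choose a marker $\mM$ hit by $\mu$ with $U_\mM := \{\mL\in\EL : \mL \text{ hits } \mM\}\subset W$. Since hitting $\mM$ is an open condition in $\PML$ (the corollary following Lemma \ref{markers open}), the set $\mathcal O := \{x\in\PML : \phi(x)\text{ hits }\mM\}$ is an open neighborhood of $\phiinv(\mu)$. Feed $\mathcal O$ into Proposition \ref{local extension} to obtain a $\PML$-neighborhood $V$ of $\phiinv(\mu)$, and then apply Lemma \ref{preimage close} to $V$ to obtain an $\EL$-neighborhood $W''$ of $\mu$ with $\phiinv(W'')\subset V$. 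Set $W' := W''\cap U_\mM$; this open neighborhood of $\mu$ lies in $W$ and will be the witness for local connectivity at $\mu$.

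Now let $g : S^{k-1}\to W'$ be continuous with $k\le n$. Since $\psi(g(t))\in\phiinv(g(t))$ for each $t$, we have $\psi(g(S^{k-1}))\subset\phiinv(W')\subset V$, so Proposition \ref{local extension} yields a generic PL extension $F : B^k\to\PMLEL$ with $F|S^{k-1} = g$ and $F(\inte(B^k))\subset\mathcal O$. Then $\phi(F(t))$ hits $\mM$ for every $t\in B^k$: on the interior because $F(t)\in\mathcal O$, on the boundary because $g(t)\in W'\subset U_\mM$. Consequently the singleton marker family $\mJ := \{(\mM, B^k)\}$ is hit by $F$, and Theorem \ref{marker theorem} produces a continuous extension $g^* : B^k\to\EL$ of $g$ that still hits $\mJ$, so $g^*(B^k)\subset U_\mM\subset W$, establishing $(k-1)$-local connectivity at $\mu$ for every $k\le n$.

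The main obstacle is not in this last argument but is packaged into Proposition \ref{local extension}: controlling the image of the PL extension inside an arbitrary prescribed $\PML$-neighborhood of $\phiinv(\mu)$ is what makes the whole localization work. Given that proposition together with the marker theorem, $(n-1)$-local connectivity reduces to the one-line observation that a single marker $\mM$ hit by $\mu$ suffices both to certify that $F$ lands in $\mathcal O$ and, via Lemma \ref{marker closeness}, to force $g^*$ back into $W$.
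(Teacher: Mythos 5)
Your strategy matches the paper's: choose a marker $\mM$ near $\mu$, build a PL extension of $g$ landing in the open set $\mathcal O\subset\PML$ where $\mM$ is hit, and apply Theorem \ref{marker theorem} to replace it by an extension into $\EL$ constrained by $\mM$. Using Proposition \ref{local extension} rather than unwinding Proposition \ref{extension} together with Remark \ref{convexity} (as the paper does) is a cosmetic difference, and the chain $\mathcal O\to V\to W''\to W'$ is sound.

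The gap is in the final line, where you conclude $g^*(B^k)\subset U_\mM\subset W$ on the grounds that $g^*$ hits $\mJ$. The map $g^*$ is produced via Proposition \ref{continuity criterion} as a coarse Hausdorff limit of the intermediate maps $f_i$, and that proof only shows the limit has spanning arcs for the \emph{closed} markers. So what is actually available is that $g^*(t)$ hits $\bar\mM$, hence $g^*(B^k)$ lies in $\{\mL\in\EL:\mL\text{ hits }\bar\mM\}$, which contains $U_\mM$ and need not lie in $W$ just because $U_\mM$ does. The paper's proof is careful on exactly this point: after fixing $N$ with $U_N\subset U'$, it shrinks the posts of $\mM_N$ to a marker $\mM^*_N$ with $\mu\in U^*_N\subset\bar U^*_N\subset U_N$, arranges that every intermediate $f_i(t)$ hits $\mM^*_N$, and then the limit's hitting $\bar\mM^*_N$ places it in $\bar U^*_N\subset U'$. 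To repair your argument, insert the analogous shrinking step: having chosen $\mM_0$ with $U_{\mM_0}\subset W$, shrink its posts slightly to a marker $\mM$ still hit by $\mu$ with $\{\mL\in\EL:\mL\text{ hits }\bar\mM\}\subset U_{\mM_0}$, and run your construction with this $\mM$; the conclusion then reads $g^*(B^k)\subset U_{\mM_0}\subset W$.
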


\begin{proof}  We need to show that if $\mL\in \EL$, and $U'\subset \EL$ is an open set 
containing $\mL$, then there exists an open set $U\subset U'$ containing $\mL$ so that if $k\le n$, 
$g:S^{k-1}\to \EL  $ and $g(S^{k-1})\subset U$, then $g$ extends to a map $F:B^k\to \EL$ such 
that $F(B^k)\subset U'$.   

Choose a pair of pants decomposition and parametrizations of the the cuffs and pants so that some complete maximal train track  $\tau$  fully carries $\mL$.  Thus $V(\tau)$ is a convex subset of $\ML$ that contains $ \hatphiinv(\mL)$ in its interior.  

Let $\mM_1, \mM_2, \cdots$ be a sequence of  markers such that each $\mM_i$ is a $1/i$-marker that is that is hit by $\mL$.  Let $U_i=\{x\in \EL|
\phi(x)$ hits $\mM_i\}$.    Then each $U_i$ is an open set containing $\mL$ and 
by Lemma \ref{marker closeness} there exists $N\in \BN$ such that if $i\ge N$,  
then $ \hatphiinv (U_i)\subset 
\inte(V(\tau))\cap \hatphiinv(U')$.    Reduce the ends of each post of $\mM_N$ slightly to obtain 
$\mM^*_N$ so that if  $U^*_N=\{x\in \EL|\phi(x)$ hits $\mM^*_N\}$ and $\bar U^*_N=\{x\in \EL|\phi(x)$ hits $\bar
\mM^*_N\}$, then $\mL\in U^*_N\subset\bar U^*_N\subset U_N$.  Let $\hat W \subset \hatphiinv(U^*_N)$ be 
an open convex subset of $\ML$ that contains $\hatphiinv(\mL)$ and is saturated by rays through the 
origin.  By reducing $\hat W$ we can assume that $\hat W=\inte(V(\tau))$ for some train track $\tau$ carrying $\mL$. Next choose  $j$ such that $\hatphiinv(U_j)\subset \hat W$.  Let $U=U_j$ and $\mM=\mM_j$.  

Let $k\le n$ and $g:S^{k-1}\to \EL$ with $g(\Sk)\subset U$.  Since $\hat W$ is  convex and contains $\hatphiinv(U)$, we can apply Proposition \ref{extension} and Remark \ref{convexity} to find a generic PL map $f_0:B^k\to \PMLEL $ extending $g$ 
such that $f_0(\inte(B^k))\subset W$.  Thus for all $t\in B^k, f_0(t)$ hits $\mM$.  Theorem 
\ref{marker theorem} produces  $f:B^k\to \EL$ extending $g$ such 
that for $t\in B^k$ and $i\in \BN, f_i(t)$ hits $\mM$.  Therefore, for each $t\in B^k, f(t)$ hits the closed 
marker $\bar M^*_N$ and hence $f(B^k)\subset \bar U_N\subset U'$.  \end{proof}

%AAAA
\section{$\PML$ and $\EL$ Approximation Lemmas}  

The main technical results of this paper are that under appropriate circumstances any map into $\EL$ can be closely approximated by a map into $\PML$ and vice versa.  In this section we isolate out these results.

We first give a mild extension  of Lemma \ref{pml approximation}, which is about approximating a map into $\EL$ by a map into $\PML$.  The subsequent two results are about approximating maps into $\PML$ by maps into $\EL$.

\begin{lemma}  \label{pml approximation two} Let $K$ be a finite simplicial complex,  $g:K\to \EL$  and $\epsilon>0$.  Then there exists a generic PL map $h:K\to \PML$ such that for each $t\in K$, $\dpts(\phi(h(t)), g(t))<\epsilon$ and $d_{\PML}(h(t), \phiinv(g(t')))<\epsilon$ some $t'\in K$ with $d_K(t, t')<\epsilon$. \end{lemma}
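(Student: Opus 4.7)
The plan is to let $h$ be a small generic PL perturbation of the induced-map construction produced by Lemma~\ref{pml approximation}. The $PT(S)$-closeness $\dpts(\phi(h(t)),g(t))<\epsilon$ is then the content of the third conclusion of that lemma, so the work is in simultaneously arranging the $\PML$-closeness $d_\PML(h(t),\phiinv(g(t')))<\epsilon$ for some $t'\in K$ with $d_K(t,t')<\epsilon$.

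For this I would observe that the proof of Lemma~\ref{pi convexity} actually delivers containment of its convex hulls $C_i$ inside the prescribed $\PML$-neighborhood $V$ of $\phiinv(x)$, not merely inside a $PT(S)$-neighborhood: the $V$ and $V_1$ constructed there are open subsets of $\PML$ of arbitrarily small diameter, with $V$ supplied by Lemma~\ref{preimage close} together with super convergence. Given $\epsilon>0$ and $t^*\in K$, I would apply this strengthened form of Lemma~\ref{pi convexity} with $x=g(t^*)$ and sequence of maps $g_i=g\vert\kappa_i$, where the $\kappa_i$ are shrinking simplices of a nested sequence of subdivisions $K_i$ containing $t^*$. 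This yields local parameters $N_{t^*}\in\BN$, $\delta_{t^*}>0$, and an open neighborhood $U_{t^*}\subset K$ of $t^*$ of diameter less than $\epsilon$ such that, whenever $j\ge N_{t^*}$, $\delta\le\delta_{t^*}$, and $t\in U_{t^*}$, the induced-map value $H(t)$ lies in $N_\PML(\phiinv(g(t^*)),\epsilon/2)$.

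By compactness of $K$, I would extract a finite subcover $\{U_{t_1^*},\ldots,U_{t_m^*}\}$ and take $j\ge\max_k N_{t_k^*}$ and $\delta\le\min_k\delta_{t_k^*}$ large and small enough to satisfy all local constraints simultaneously, along with the third conclusion of Lemma~\ref{pml approximation}. After perturbing the resulting induced map $H$ generically by an amount much smaller than $\epsilon$ to obtain $h$, for any $t\in K$ one picks $k$ with $t\in U_{t_k^*}$ and sets $t'=t_k^*$: then $d_K(t,t')<\epsilon$ and $d_\PML(h(t),\phiinv(g(t')))<\epsilon/2+(\text{perturbation error})<\epsilon$, as required.

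The main technical subtlety is that $y\mapsto\phiinv(y)$ is only upper semi-continuous in $y\in\EL$, so one cannot symmetrically transfer $\PML$-closeness between $\phiinv(g(t^*))$ and $\phiinv(g(t))$ for nearby $t,t^*$. I sidestep this by choosing $t'$ to be the cover center $t_k^*$ rather than $t$ itself, thereby invoking only the one-sided containment that Lemma~\ref{preimage close} actually provides. The new content beyond Lemmas~\ref{pml approximation},~\ref{pi convexity}, and~\ref{preimage close} is just this compactness-plus-cover-centering maneuver.
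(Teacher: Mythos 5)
Your proof is correct, but it takes a genuinely different route from the paper's. The paper's proof is soft and indirect: it takes the maps $h_i$ supplied by Lemma \ref{pml approximation} with $\dpts(\phi(h_i(t)),g(t))<\epsilon/i$, assumes the $\PML$-closeness conclusion fails for every $i$ at some witness $t_i$, uses the density of $h_i^{-1}(\FPML)$ in $K$ to replace the $t_i$ by nearby points with $\phi(h_i(t_i))\in\EL$, and then invokes Lemmas \ref{two closeness}, \ref{one closeness} and \ref{convergent sequence} to force a convergent subsequence $h_i(t_i)\to x_\infty\in\phiinv(g(t_\infty))$ --- contradicting the assumed lower bound $d_{\PML}(h_i(t_i),\phiinv(g(t_i)))>\epsilon/2$, with the witness $t'$ emerging a posteriori as the limit point $t_\infty$. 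You instead argue directly and quantitatively: you observe (correctly) that the proof of Lemma \ref{pi convexity} actually places the convex hulls $C_i$ inside an arbitrary prescribed $\PML$-neighborhood $V$ of $\phiinv(x)$, not merely inside a $PT(S)$-neighborhood, and you convert this into uniform control via a finite subcover, taking $t'$ to be a cover center. Your version buys an explicit, constructive choice of $t'$ and avoids the slightly delicate density-of-$h_i^{-1}(\FPML)$ adjustment; the paper's version is shorter and avoids the Lebesgue-number bookkeeping you elide (ensuring that for $j\ge N_{t^*}$ the entire simplex of $K_j$ containing $t\in U_{t^*}$, not just $t$ itself, lies in the region where $\psi(g(\cdot))$ and $\pi_\delta(\psi(g(\cdot)))$ land in the sets $V_1\subset V$ of Lemma \ref{pi convexity}). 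Both your handling of the upper semi-continuity of $y\mapsto\phiinv(y)$ by centering on $t_k^*$ and your final perturbation to a generic PL map are sound, the latter for the same super-convergence reason the paper cites.
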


\begin{proof}  Given $g$, Lemma \ref{pml approximation} produces mappings $h_i:K\to \PML$ such that $\dpts(\phi(h_i(t)), g(t))<\epsilon/i$.  By super convergence  we can assume that each $h_i$ is a generic PL map satisfying the same conclusion. 

We show that if $i$ is sufficiently large, then the second conclusion holds for $h=h_i$.  Otherwise, there exists a sequence $t_1, t_2, \cdots$ of points in $K$  for which it fails respectively for $h_1, h_2, \cdots$.   Since for each $i$, $h_i^{-1}(\FPML)$ is dense in $K$, it follows from Lemma \ref{convergent sequence} that we can replace the $t_i$'s by another such sequence satisfying $d_{\PML}(h_i(t_i), \phiinv(g(t_i)))>\epsilon/2$,  $t_i\to t_\infty$, $h_i(t_i)\to x_\infty$ and $\phi(h_i(t_i))\in \EL$ all $i\in \BN$.   By Lemmas \ref{two closeness} and \ref{one closeness}, $\phi(h_i(t_i))\to g(t_\infty)$ and hence by Lemma \ref{convergent sequence}, $\phi(x_\infty)=g(t_\infty)$.  Therefore, $\lim_{i\to \infty} d_{\PML}(h_i(t_i), \phiinv(g(t_\infty)))\le  \lim_{i\to \infty} d_{\PML}(h_i(t_i), x_\infty) =0$, a contradiction.\end{proof}

\begin{lemma}  \label{weak el approximation}  Let $V$ be the underlying space of a finite simplicial complex.  Let $W$ be that of a finite subcomplex.  $S$ be a finite type hyperbolic surface with $\dim(\PML)=2n+1$ and let $\epsilon>0$.  If $h:V\to \PMLEL$ is a generic PL map, $\hinv(\EL)=W$ and $\dim(V)\le n$, then there exists $g:V\to \EL$ such that $g|W=h$ and for each $t\in V$, $d_{PT(S)}(g(t), \phi(h(t)))<\epsilon$.\end{lemma}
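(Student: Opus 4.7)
My plan is to derive this lemma from Theorem \ref{marker theorem} by choosing a sufficiently fine marker family. Set $\eta = \epsilon/3$. Using the construction in the proof of Lemma \ref{finding markers}(i), I refine the usual $\eta$-marker cover: for each $t\in V$ I use Lemma \ref{finding epsilon markers} to cover $\phi(h(t))\subset PT(S)$ by finitely many $\eta$-markers $\mM_{t,1},\dots,\mM_{t,k(t)}$ hit by $\phi(h(t))$, chosen so that the tangent directions along their posts form an $\eta$-net for $\phi(h(t))$ in $PT(S)$. The hypothesis $\dim(V)\le n$ together with genericity of $h$ guarantees (as in the proof of Lemma \ref{finding markers}(i)) that $\phi(h(t))$ has a nonempty arational sublamination, providing the non-compact leaves into which such markers can be planted; isolated closed-leaf components are handled similarly by slicing into short geodesic arcs. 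By Lemma \ref{markers open} hitting each $\mM_{t,j}$ is an open condition on $V$, so there is an open neighborhood $U_t$ of $t$ on which $h$ still hits all $\mM_{t,j}$. Compactness of $V$ then yields a finite marker family $\mJ$ hit by $h$.

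I apply Theorem \ref{marker theorem} to $h$ and $\mJ$ to obtain $g:V\to \EL$ with $g|W=h|W$ that hits $\mJ$. To verify the approximation, fix $t\in V$ and pick $t_0$ with $t\in U_{t_0}$, so both $\phi(h(t))$ and $g(t)$ hit each $\mM_{t_0,j}$. Since each $\mM_{t_0,j}$ is an $\eta$-marker, any two spanning arcs are within $\eta$ in $d_{\tilde H}$, and hence within $\eta$ in $PT(S)$ after projecting. Combined with the $\eta$-net property, every tangent vector of $\phi(h(t_0))$ lies within $2\eta$ of some tangent vector of $g(t)$. Since $\phi\circ h$ is locally constant on each open PL-cell of $\PML$ and $h$ is generic, one can shrink the $U_{t_0}$'s a priori so that $\phi(h(t))\subset N_\eta(\phi(h(t_0)))\cup N_\eta(g(t))$ (using super convergence, Proposition \ref{super convergence}), yielding $\phi(h(t))\subset N_{3\eta}(g(t)) = N_\epsilon(g(t))$.

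The main obstacle is the reverse inclusion $g(t)\subset N_\epsilon(\phi(h(t)))$ needed for honest Hausdorff closeness. To handle it I would exploit the structure of the construction in Theorem \ref{marker theorem}: $g$ arises as a coarse Hausdorff limit of generic PL maps $f_0=h,f_1,f_2,\dots$, where each $f_{i+1}$ is obtained from $f_i$ by a finite concatenation of relative push-off homotopies. By Definition \ref{pushing off}(iii) such a push off keeps the homotopy inside a $\delta$-neighborhood of a ray $r(x)$, along which the underlying lamination differs from $\phi(x)$ at most by the single curve $C$ being added or removed. Imposing the additional constraint that the $\delta_i$'s in the proof of Theorem \ref{marker theorem} decrease fast enough that $\sum_i\delta_i<\eta$ (a permissible strengthening, since smaller $\delta$'s only ease the existing genericity and stryker-preservation arguments), super convergence forces $\phi(f_i(t))\subset N_\eta(\phi(h(t)))$ uniformly in $i$. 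Passing to the coarse Hausdorff limit yields $g(t)\subset N_\eta(\phi(h(t)))$, and combining with the previous paragraph produces $d_{PT(S)}(g(t),\phi(h(t)))<\epsilon$.
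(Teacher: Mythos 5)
Your first paragraph is, in essence, the paper's entire proof: genericity plus $\dim(V)\le n$ gives a nonempty arational sublamination of each $\phi(h(t))$, hence an $\epsilon$-marker hit by $\phi(h(t))$; openness of hitting (Lemma \ref{markers open}) and compactness of $V$ give an $\epsilon$-marker family $\mJ$ hit by $h$; Theorem \ref{marker theorem} produces $g$ with $g|W=h|W$ hitting $\mJ$. At that point you are done, and the $\eta$-net refinement is unnecessary. The reason is the meaning of $\dpts$ in this paper (inherited from \cite{G1} and visible in the proofs of Lemmas \ref{marker closeness} and \ref{pml one closeness}): $\dpts(\lambda,\mu)$ is the \emph{minimal} distance in $PT(S)$ between the two laminations, not a Hausdorff distance. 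If $t\in W_i$ then both $g(t)$ and $\phi(h(t))$ have arcs spanning the $\epsilon$-marker $\mM_i$, and by Definition \ref{marker} any two spanning geodesics are within $\epsilon$ in $d_{\tilde H}$; hence $\dpts(g(t),\phi(h(t)))<\epsilon$. This is exactly why the lemma is labelled a \emph{weak} approximation.

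Your second and third paragraphs therefore aim at a strictly stronger, two-sided Hausdorff estimate that the lemma does not assert and that is in fact false in general: $g(t)$ is filling and minimal, while $\phi(h(t))$ need not be (it may live in a proper subsurface together with some closed leaves), so $g(t)\subset N_{PT(S)}(\phi(h(t)),\epsilon)$ cannot hold for small $\epsilon$. The supporting arguments also do not stand up. The claim that $\phi\circ h$ is ``locally constant on each open PL-cell of $\PML$'' is false ($\phi$ is highly nonconstant on the interior of each $P(\tau)$), and the claim that taking $\sum_i\delta_i<\eta$ forces $\phi(f_i(t))\subset N_\eta(\phi(h(t)))$ uniformly is not justified: a $(C,\delta)$ push off moves points only a small distance in $\PML$, but small moves in $\PML$ do not control the underlying laminations in the Hausdorff sense (super convergence, Proposition \ref{super convergence}, gives only one-sided sublimit information for convergent sequences), and the construction in Theorem \ref{marker theorem} concatenates infinitely many such push offs whose cumulative effect on $\phi(f_i(t))$ is not small. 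So you should delete the last two paragraphs and instead record the correct interpretation of $\dpts$; with that, your first paragraph is a complete proof along the same lines as the paper's.
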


\begin{proof}  Since $k\le n$, $\mA(\phi(h(t)))\neq\emptyset$ for every $t\in V$, where $\mA(\phi(h(t)))$ denotes the arational sublamination of $\phi(h(t))$.  Therefore, by Lemma \ref{markers exist} for each $t\in V$, there exists an $ \epsilon$-marker $\mM_t$ that is hit by $\phi(h(t))$.  By Lemma \ref{markers open}, $\mM_t $ is hit by all $ \phi(h(s))$ for all $s$ sufficiently close to $t$.  By compactness of $V$, there exists an $\epsilon$-marker family $\mJ$ hit by $h$.  By Theorem \ref{marker theorem} there exists $g:V\to \EL$ such that $g$ hits $\mJ$.  Since both $h$ and $g$ hit $\mJ$, the conclusion follows.\end{proof}   

\begin{lemma} \label{el approximation}  $S$ be a finite type hyperbolic surface with $\dim(\PML)=2n+1$.   Let $V$ be the underlying space of a finite simplicial complex with $\dim(V)\le n$.  For $i\in \BN$, let $\epsilon_i>0$  and 
$h_i:V\to \PML$ be generic PL maps.   If for each $z\in \EL$ there exists $
\delta_z>0$ so that for $i$ sufficiently large 
$d_{\PML}(h_i(V),\phiinv(z))>\delta_z$, then    there exists $g_1, 
g_2, \cdots :V\to \EL$ such that 

\smallskip  i)  For each $i\in \BN$ and $t\in V, d_{PT(S)}(g_i(t), \phi(h_i(t)))<\epsilon_i$.

\smallskip  ii)  For each $z\in \EL$, there exists a neighborhood $U_z\subset \EL$ of $z$ such that for $i$ sufficiently large $g_i(V)\cap U_z=\emptyset$.\end{lemma}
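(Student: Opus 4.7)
The plan is to reduce to the case $\epsilon_i \to 0$, apply Lemma \ref{weak el approximation} to get (i), and then verify (ii) by a compactness argument driven by super convergence. For the reduction, I will replace each $\epsilon_i$ by $\min(\epsilon_i, 1/i)$: any $g_i$ satisfying the stronger bound automatically satisfies the original bound, so this is harmless and guarantees $\epsilon_i \to 0$. For each $i$, I view $h_i$ as a generic PL map $V \to \PMLEL$ with $h_i^{-1}(\EL) = \emptyset$, and apply Lemma \ref{weak el approximation} with $W = \emptyset$ and parameter $\epsilon_i$; this produces $g_i : V \to \EL$ with $d_{PT(S)}(g_i(t), \phi(h_i(t))) < \epsilon_i$ for all $t \in V$, which is exactly condition (i).

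For (ii), I will argue by contradiction. Suppose some $z \in \EL$ fails the conclusion; then, using the neighborhood basis of Lemma \ref{one closeness}, I can extract sequences $i_j \to \infty$ and $t_j \in V$ with $g_{i_j}(t_j) \to z$ in the coarse Hausdorff topology on $\EL$. By compactness of $V$ and of $\PML$, I pass to subsequences so that $t_j \to t_\infty \in V$ and $h_{i_j}(t_j) \to x_\infty \in \PML$; by compactness of the Hausdorff topology on closed subsets of $PT(S)$, I pass further so that both $\phi(h_{i_j}(t_j))$ and $g_{i_j}(t_j)$ converge Hausdorff to laminations $\mL^{(1)}$ and $\mL^{(2)}$ respectively. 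The hypothesis $d_{\PML}(h_i(V), \phiinv(z)) > \delta_z$ for large $i$ passes to the limit, yielding $d_{\PML}(x_\infty, \phiinv(z)) \ge \delta_z > 0$, so $x_\infty \notin \phiinv(z)$.

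On the other hand, because $\epsilon_i \to 0$, condition (i) forces $\mL^{(1)} = \mL^{(2)}$; coarse Hausdorff convergence of $g_{i_j}(t_j)$ to $z$ forces $\mL^{(2)}$ to be some diagonal extension $z'$ of $z$; and super convergence (Proposition \ref{super convergence}) gives $\phi(x_\infty) \subset \mL^{(1)} = z'$. Now Lemma \ref{filling} says $\phi(x_\infty)$ is a disjoint union of minimal laminations, each of which is a sublamination of $z'$. Since every non-compact leaf added in a diagonal extension of a minimal filling lamination accumulates on $z$, the unique minimal sublamination of $z'$ is $z$ itself, so each minimal component of $\phi(x_\infty)$ equals $z$, and disjointness gives $\phi(x_\infty) = z$. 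Hence $x_\infty \in \phiinv(z)$, contradicting the conclusion of the previous paragraph.

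The only delicate step is the final minimality argument, which is essentially where the hypothesis on $d_{\PML}(h_i(V), \phiinv(z))$ is actually converted into the desired statement about $g_i(V)$ in the coarse Hausdorff topology. Everything else is routine compactness and diagonalization modeled on the proofs of Lemmas \ref{pml one closeness} and \ref{convergent sequence}, and the key conceptual point is that by sharpening $\epsilon_i$ to tend to zero we can force $\EL$-closeness of $g_{i_j}(t_j)$ to $z$ to propagate back through $\phi \circ h_{i_j}$ to $\PML$-closeness of $h_{i_j}(t_j)$ to $\phiinv(z)$.
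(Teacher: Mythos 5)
Your proposal is correct, and part (i) is in substance the same as the paper's (the paper builds the $\epsilon/2$-marker family and invokes Theorem \ref{marker theorem} directly, which is exactly what Lemma \ref{weak el approximation} packages). Where you genuinely diverge is in part (ii). The paper argues constructively: using Lemma \ref{pml one closeness} it extracts, for each $z$, a uniform $\epsilon(z)>0$ and an explicit neighborhood $U_z$ such that any map $\epsilon(z)/2$-close in $PT(S)$ to a $\phi\circ h$ staying $\delta_z$-far from $\phiinv(z)$ must miss $U_z$; it then covers $\EL$ by countably many such $U_{z_j}$ (Lindel\"of) and builds $g_i$ with approximation constant $\min\{\epsilon(z_1),\dots,\epsilon(z_{m_i}),\epsilon_i\}$, so that $g_i$ provably misses $U_{z_1},\dots,U_{z_{m_i}}$ with $m_i\to\infty$. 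You instead decouple the construction from the verification: you only normalize $\epsilon_i\to 0$ and then check (ii) a posteriori by contradiction, extracting $g_{i_j}(t_j)\to z$ via the neighborhood basis of Lemma \ref{one closeness}, passing to Hausdorff-convergent subsequences, and using super convergence plus the ``no proper leaves in the support of a measure'' argument to force $h_{i_j}(t_j)\to x_\infty\in\phiinv(z)$, contradicting the standing hypothesis. Your closing minimality argument is exactly the content of Lemmas \ref{pml one closeness} and \ref{convergent sequence}, so you could shorten it by citing \ref{pml one closeness} once you observe $\dpts(\phi(h_{i_j}(t_j)),z')\le 1/j+\epsilon_{i_j}\to 0$. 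The trade-off: the paper's route yields explicit neighborhoods and explicit thresholds (which $i$ work for which $U_{z_j}$), which is mildly more information than the statement requires; your route is shorter and makes transparent that the only extra input beyond Lemma \ref{weak el approximation} is the normalization $\epsilon_i\to 0$, at the cost of being non-constructive.
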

   
\begin{proof}  Given $z\in \EL$ and $\delta_z>0$, it follows from Lemma \ref{pml one closeness} 
that there exists $\epsilon(z)>0$ such that if $x\in \PML$ and $\dpts(\phi(x),z')\le 
\epsilon(z)$, then $d_{\PML}(x, \phiinv(z))<\delta_z$ and if $y\in \EL$ and  $
\dpts(y,z')\le \epsilon(z)$, then $\phiinv(y)\subset N_{\PML}(\phiinv(z), \delta_z)$.    Here $z'$ is a diagonal extension of $z$.
Let $U_z$ be a neighborhood of $z$ such that $y\in U_z$ implies $y\subset 
N_{PT(S)}(z', \epsilon(z)/2)$ for some diagonal extension $z'$ of $z$.

Now let $f:V\to \PML$ be a generic PL map with $k\le n$.  If $0<\epsilon\le 
\epsilon(z)$ and $d_{\PML}(f(V)$, $\phiinv(z))>\delta_z$, then we now show 
that there exists $g:V\to \EL$ such that for every $ t\in V$, $\dpts(g(t)$, $
\phi(f(t)))<\epsilon$ and $g(V)\cap U_z=\emptyset$.  By definition of $
\epsilon(z)$, for every $t\in V$ and diagonal extension $z'$  of $z$, $
\dpts(\phi(f(t)), z')>\epsilon(z)$ and in  particular $\dpts(A(t),z')>\epsilon$, where 
$A(t)$ is the arational sublamination of $\phi(f(t))$.  By Lemma \ref{finding 
epsilon markers} for each $t\in V$, there exists an $\epsilon/2$ marker hit by $
\phi( f(t))$.  Since hitting markers is an open condition and $V$ is compact 
there exists an $\epsilon/2$-marker family $\mJ$ hit by $f$.  Now apply Theorem 
\ref{marker theorem} to obtain $g:V\to \EL$ such that $g$ hits $\mJ$.  
Therefore for each $t\in V$, $\dpts(g(t), \phi(f(t)))\le\epsilon/2$ and hence $g(t)
\subsetneq N_{PT(S)}(z', \epsilon(z)/2)$ for all diagonal extensions $z' $ of $z$.  
Therefore $g(t)\notin U_z$.  It follows that $g(V)\cap U_z=\emptyset$.

Since $ \EL$ is separable and metrizable it is Lindelof,  hence there exists a countable cover of $\EL$ of the form $\{U_{z_j}\}$.   By hypothesis for every $j\in \BN$, there exists $n_j\in \BN$ such that $i\ge n_j$ implies $d_{\PML}(h_i(V), \phiinv(z_j))>\delta_{z_j}$.  We can assume that $n_1<n_2<\cdots$.  Let $m_i=\max\{j|n_j\le i\}$.

For $i\in \BN$ apply the argument of the second paragraph with $f=h_i$ and $\epsilon=\min\{\epsilon(z_1), \cdots, \epsilon(z_{m_i}), \epsilon_i\}$ to produce $g_i:V\to \EL$ satisfying for all $t\in V$, $\dpts(g_i(t), \phi(h_i(t)))<\epsilon$ and such that $g_i(V)\cap U_{z_j}=\emptyset$ for $ j\le m_i$, assuming that $m_i\neq\emptyset$.  It follows that $g_1, g_2, \cdots$ satisfy the conclusion of the Lemma.  \end{proof}

%CCCC
\section{Good cellulations of $\PML$}  

The main result of this section, Proposition \ref{cellulation} produces a sequence of cellulations of $\PML$ such that each cell is the polytope of a train trace and face relations among cells correspond to carrying among train tracks.   Each cellulation is a subdivision of the previous one, subdivision of cells corresponds to splitting of train tracks and every  train track eventually gets split arbitrarily much.  

\begin{definition}  If $\tau$ is a train track, then let $V(\tau)$ denote the set of measured laminations carried by $ \tau$ and $P(\tau)$ denote the polyhedron that is the quotient of $V(\tau)\setminus 0$, by rescaling.  A train track is \emph{generic} if exactly three edges locally emanate from each switch.  All train tracks in this section are generic.  We say that  $\tau_2$ is obtained from $\tau_1$ by a \emph{single splitting} if $\tau_2$ is obtained by splitting without collision along a single large branch of $\tau_1$. Also $\tau'$ is a \emph{full splitting} of $\tau$ if it is the result of a sequence of single splittings along each large branch of $\tau$.\end{definition}

\begin{remark}  By elementary linear algebra if $\tau_R$ and $\tau_L$ are the train tracks obtained from $\tau$ by a single splitting, then  $V(\tau_R) = V(\tau)$ or $V(\tau_L)= V(\tau)$ or $V(\tau_R)$, $V(\tau_L)$ are obtained by slicing $V(\tau)$ along a codimension-1 plane through the origin.    \end{remark}

\begin{definition}   We say that a finite set $R(\tau)$ of train tracks is 
\emph{descended} from $\tau$, if there exists a 
sequence of sets of train tracks $R_1=\{\tau\}, R_2, \cdots, R_k=R(\tau) $ 
such that $R_{i+1}$ is obtained from $R_i $ by deleting one train 
track $\tau\in R_i $ and replacing it either by the two train tracks 
resulting from  a single splitting of $\tau$ if $P(\tau)$ is split, or 
one of the resulting tracks with polyhedron equal to $P(\tau)$ otherwise.   Finally, if a replacement track is not recurrent, then replace it by its maximal recurrent subtrack.\end{definition}   
 
\begin{remark}  If $R(\tau)$ is descended from $\tau$, then $P(\tau)=\cup_{\kappa \in R(\tau)} P(\kappa)
$, each $P(\kappa)$ is codimension-0 in $P(\tau)$ and any 
two distinct $P(\kappa)$'s have pairwise disjoint interiors.  
Thus,  any set of tracks descended from $\tau$  gives rise to a subdivision of $\tau$'s polyhedron.  A consequence of  Proposition \ref{cellulation} is that after subdivision, the lower dimensional faces of the subdivided $P(\tau)$ are also polyhedra of train tracks.  \end{remark}

\begin{definition}  Let $\Delta$ be a cellulation of $\PML$ and $T$ a finite set of birecurrent generic train tracks.  We say that $T$ is \emph{associated} to $\Delta$ if there exists a bijection between elements of $T$ and cells of $\Delta$ such that if $\sigma\in \Delta$ corresponds to $\tau_\sigma\in T$, then $P(\tau_\sigma)=\sigma$.  \end{definition}

\begin{proposition} \label{cellulation} Let $S$ be a finite type hyperbolic surface.  There exists a sequence of cellulations $\Delta_0, \Delta_1, \cdots$ of $\PML$ such that 
\smallskip

i) Each $\Delta_{i+1}$ is a subdivision of $\Delta_i$.
\smallskip

ii) Each $\Delta_i$ is associated to a set $T_i $ of train tracks.
\smallskip

iii)  If $\sigma_j $ is a face of $\sigma_k$, then $\tau_{\sigma_j}$ is carried by $\tau_{\sigma_k}$.  If $\sigma_p\in \Delta_i, \sigma_q\in \Delta_j, \sigma_p\subset \sigma_q , \dim(\sigma_p)=\dim(\sigma_q) $ and $i>j$, then $\tau_{\sigma_p}$ is obtained from $\tau_{\sigma_q}$ by finitely many single splittings and possibly deleting some branches.  
\smallskip

iv)  There exists a subsequence $\Delta_{i_0}=\Delta_0, \Delta_{i_1}, \Delta_{i_2}, \cdots$ such that each complete $\tau\in T_{i_N}$ is obtained from a complete $\tau'\in T_0$ by $N$ full splittings. \end{proposition}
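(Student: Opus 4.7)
My plan is to construct the sequence by starting with the Penner--Harer standard cellulation and inductively subdividing via single splittings along large branches, scheduling the splittings so that each complete track gets fully split infinitely often. I would take $\Delta_0$ to be the cellulation of $\PML$ whose top-dimensional cells are the polytopes $P(\tau)$ of the complete standard generic train tracks arising from the fixed pants decomposition, with lower-dimensional cells being the polytopes of their recurrent subtracks; the associated set $T_0$ consists of all of these tracks. Properties (i)--(iv) are either vacuous or immediate at this base stage.

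For the inductive step, given $\Delta_i$ and $T_i$, I would pick one pair $(\tau,e)$ with $\tau \in T_i$ complete and $e$ a large branch of $\tau$, and use it to define $\Delta_{i+1}$. At the polytope level, a single splitting corresponds to slicing $V(\tau)$ by the hyperplane $\mu(a)=\mu(d)$, where $a,b$ and $c,d$ are the small branches at the two switches of $e$: one closed half is $V(\tau_L)$, the other $V(\tau_R)$, and the hyperplane meets $V(\tau)$ in $V(\tau_C)$ for the central collision track $\tau_C$ at $e$. I would then replace the top cell $P(\tau)$ by the two new top cells $P(\tau_L), P(\tau_R)$, insert the new codimension-$1$ cell $P(\tau_C)$ when the splitting is nondegenerate, and propagate the subdivision to every cell of $\Delta_i$ whose associated track also has $e$ as a large branch by applying the same local splitting rule. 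The resulting pieces are again polytopes of recurrent birecurrent subtracks of $\tau_L, \tau_R, \tau_C$, so this defines $\Delta_{i+1}$ and $T_{i+1}$.

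Property (i) holds by construction, and property (iii) follows because a split track is carried by its parent track and because subtracks of a split track correspond under the carrying map to split subtracks; the second sentence of (iii) then follows by iterating across the successive single splittings that produce $\sigma_p$ from $\sigma_q$. For property (iv), I would organize the single splittings into rounds: starting at the stage $i_N$, I enumerate all large branches of all complete tracks currently in $T_{i_N}$ and then the subsequent complete tracks those produce, performing one single splitting per intermediate step, so that after finitely many stages (yielding the index $i_{N+1}$) every complete track has been split along each of its original large branches. Any complete $\tau \in T_{i_{N+1}}$ is then obtained from some complete $\tau' \in T_{i_N}$ by a full splitting, and inductively from some $\tau'' \in T_0$ by $N+1$ full splittings.

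The main obstacle I anticipate is verifying that each $\Delta_{i+1}$ is a genuine cellulation, i.e., that the local subdivision rule is globally consistent across shared boundary faces of neighboring top cells. The key point is that the splitting is determined by the branch $e$ itself, not by the ambient complete track: if a shared face between two top cells $P(\tau^1)$ and $P(\tau^2)$ is the polytope of a subtrack $\tau'$ containing $e$ as a large branch, then $\tau'$'s subdivision is intrinsic and coincides from both sides. Working this out requires a careful combinatorial check, especially in the borderline situation where $e$ is large in $\tau^1$ but not in $\tau^2$; in that case one verifies that the would-be subdivision of the shared face is already trivial from the $\tau^2$-perspective, so no inconsistency arises.
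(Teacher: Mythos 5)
Your overall architecture---start with the Penner--Harer standard cellulation, iteratively subdivide by single splittings of one complete track at a time, and schedule the splittings in rounds so that every complete track is fully split infinitely often---matches the paper's. However, there is a genuine gap in the step where you propagate the subdivision of $P(\tau)$ to its lower-dimensional faces.

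You propose to ``propagate the subdivision to every cell of $\Delta_i$ whose associated track also has $e$ as a large branch,'' and in your consistency discussion you treat the shared face as the polytope of a \emph{subtrack} $\tau'$ and argue the subdivision is intrinsic to $(\tau', e)$. But face cells of $\Delta_i$ correspond to tracks $\kappa$ that are merely \emph{carried} by $\tau$, not subtracks, and the carrying map can send $\kappa$ over $e$ in a complicated way. The slicing hyperplane $\mu(a)=\mu(d)$ on $V(\tau)$ can then cut $V(\kappa)$ nontrivially even when $e$ is not a branch of $\kappa$ at all, and it can cut $V(\kappa)$ into pieces that are \emph{not} the result of a single split of $\kappa$. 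This is exactly what Lemma~\ref{shattering} in the paper handles: for $\kappa$ carried by $\tau$ split along $b$, one must find a finite family $R(\kappa)$ \emph{descended} from $\kappa$ by a sequence of single splittings (followed by passing to maximal recurrent subtracks) so that each member is carried by $\tau_R$ or $\tau_L$. The paper proves this by introducing the $b$-core of $\kappa$ inside the fibered neighborhood of $b$ and inducting on the complexity $C(\kappa)$ counting up/down switch crossings there; the empty-$b$-core base case is handled by a normal isotopy argument. Your proposal has no analogue of this lemma, and without it your ``local splitting rule'' on faces is simply not well-defined, nor does it give a cellulation, nor does it justify the ``finitely many single splittings and possibly deleting some branches'' clause of property (iii). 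You would also need the careful downward induction on dimension (the paper's Claim) to subdivide the faces $\sigma_1,\dots,\sigma_p$ of $\sigma=P(\tau)$ compatibly before subdividing $\sigma$ itself, ensuring at each step that only one cell in each dimension is touched.
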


\begin{definition}  A sequence of cellulations $\{\Delta_i\}$ satisfying the conclusions of Proposition \ref{cellulation} is called a \emph{good cellulation sequence}.\end{definition}

\begin{proof}   Let $T_0$ be the set of standard train tracks associated to a parametrized pants decomposition of $S$.  As detailed in \cite{PH}, $T_0$ gives rise to a cellulation $\Delta_0$ of $\PML$ such that each $\sigma\in \Delta_0$ is the polyhedron of a track in  $T_0$.  

The idea of the proof is this.  Suppose we have constructed $\Delta_0, \Delta_1, \cdots, \Delta_i$, and $T_0, T_1, \cdots T_i$ satisfying conditions i)-iii).  Since any full splitting of a train track is the result of finitely many single splittings  it suffices to construct $\Delta_{i+1}$ and $T_{i+1}$ satisfying i)-iii)  such that the complete tracks of $T_{i+1}$ consist of the complete tracks of $T_i$, except that a single complete track $\tau$ of $T_i$ is replaced by $\tau_R$ and/or $\tau_L$.  Let $T_i'$ denote this new set of complete tracks.    The key technical Lemma \ref{shattering} implies that if $\sigma$ is a codimension-1 cell of $\Delta_i$ with associated train track $\tau_\sigma$, then every element of some set  of train tracks descended from $\tau_\sigma$  is carried by an element of $T_i'$.    This implies that $\sigma$  has been subdivided in a manner compatible with the subdivision of the top dimensional cell $P(\tau)$.  Actually, we must be concerned with lower dimensional cells too and new cells that result from any subdivision.      Careful bookkeeping together with repeated applications of Lemma \ref{shattering} deals with this issue.  

\begin{lemma}  \label{shattering} Suppose that $\kappa$ is carried by $\tau$.  Let $\tau_R$ and $\tau_L$ be the  train tracks obtained from a single splitting   of $\tau$ along the large branch $ b$.  Then there exists a set $R(\kappa)$ of train tracks descended from $\kappa$ such that for  each $\kappa'\in R(\kappa)$, $\kappa'$ is carried by one of $\tau_R$ or $\tau_L$. 

If $P(\tau)=P(\tau_R$) (resp. $P(\tau_L)$), then each $\kappa'$ is carried by the maximal recurrent subtrack of $\tau_R$ (resp. $\tau_L$).  \end{lemma}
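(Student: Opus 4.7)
The plan is to induct on a combinatorial complexity measure associated to a fixed carrying map $\phi:\kappa\to\tau$, namely the number of large branches $c$ of $\kappa$ whose image train path in $\tau$ traverses the large branch $b$. Call such branches $b$-\emph{crossing}. The base case is when $\kappa$ has no $b$-crossing large branches. In that case every branch of $\kappa$ whose image uses $b$ does so only as a sub-arc of a carrier branch (i.e., the image train path enters and exits the switches $s_1, s_2$ of $b$ via a single consistent pair of half-branches). This forces $\phi$ to factor through the carrying map $\tau\to\tau_R$ or $\tau\to\tau_L$ determined by that pairing, so $\kappa$ itself already lies in $R(\kappa)=\{\kappa\}$ and is carried by one of $\tau_R, \tau_L$.

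For the inductive step I would pick a $b$-crossing large branch $c$ of $\kappa$ and perform a single splitting of $\kappa$ along $c$. The two descended tracks $\kappa_R, \kappa_L$ (one possibly equal to $\kappa$ when the splitting is degenerate at the level of polytopes) each inherit a carrying map into $\tau$, obtained by rerouting train paths near $c$ according to the new switch-pairing. The key combinatorial claim is that the complexity strictly decreases in each piece: the large branch $c$ is eliminated, and any new large branches produced by the splitting have images that traverse $b$ strictly fewer times (or not at all), since the rerouting at $c$ aligns with one of the two choices at $s_1$ or $s_2$. Finiteness of the initial complexity forces termination in finitely many steps, yielding the desired set $R(\kappa)$ of descended train tracks each carried by $\tau_R$ or $\tau_L$.

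The degeneracy statement follows once the main construction is in hand. If $P(\tau)=P(\tau_R)$, then every measure of $V(\tau)$ is compatible with the $\tau_R$-pairing at $b$; hence in the base case we always land in $\tau_R$, and for each $\kappa'\in R(\kappa)$ the pushforward $\phi_*V(\kappa')\subset V(\tau_R)$ vanishes on exactly those branches of $\tau_R$ which are not in the maximal recurrent subtrack. Deleting those branches therefore produces a carrying of $\kappa'$ into the maximal recurrent subtrack of $\tau_R$, as asserted.

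The main technical obstacle will be verifying that the single splitting of $\kappa$ along $c$ really does reduce the complexity. This requires a local analysis at the switches of $b$ and of $c$: one must check how the image train paths of the branches of $\kappa$ incident to $c$ are rerouted after the splitting, and show that any newly created large branches of $\kappa_R$ or $\kappa_L$ inherit images that are strictly simpler with respect to the $b$-crossing count. The careful bookkeeping of switch identifications — including the case in which $c$'s image traverses $b$ several times, or where the two switches of $c$ are both mapped near $s_1$ or $s_2$ — is the heart of the argument and should be handled by an explicit enumeration of local cases, guided by Penner–Harer's picture of carrying and splitting.
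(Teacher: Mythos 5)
Your plan—induct on a combinatorial complexity associated to the carrying of $\kappa$ near $b$, splitting $\kappa$ to reduce it—is the right general strategy and parallels the paper's argument, but the specific complexity measure and the base case you propose do not quite work, and you flag but do not resolve the key monotonicity step.

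The paper models $\kappa$ inside a fibered neighborhood $N(\tau)$ and looks at $\kappa\cap\piinv(b)$, where it isolates a geometric object called the \emph{$b$-core}: the unique embedded arc (transverse to the ties) running between a ``top'' up-switch $s_T$ and a ``bottom'' down-switch $s_B$ of $\kappa$ inside $\piinv(b)$, when these exist with $h(s_B)<h(s_T)$. Emptiness of the $b$-core is exactly what lets one normally isotope all switches of $\kappa$ out of $\piinv(b)$ and conclude $\kappa$ is carried by $\tau_R$ or $\tau_L$. Your base case --- ``no large branch of $\kappa$ has image train path traversing $b$'' --- is not equivalent to this. Switches of $\kappa$ lying over $b$ need not be endpoints of large branches whose images cross all of $b$; a large branch of $\kappa$ in the $b$-core typically lies entirely inside $\piinv(b)$ and its image is only a subarc of $b$, so it would not be ``$b$-crossing'' in your sense. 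Thus your base case can fail: you could have zero $b$-crossing large branches but a nonempty $b$-core, and then the carrying map does not factor through $\tau_R$ or $\tau_L$.

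The inductive step has a matching gap. Your complexity --- the raw count of $b$-crossing large branches --- has no manifest monotonicity under a single splitting of $\kappa$: splitting can create new large branches whose images enter $b$. You explicitly acknowledge this and defer it to ``a careful local case analysis,'' but this is precisely where a naive count breaks and where the paper's more refined invariant is essential. The paper instead defines $C(\kappa)=\sum_i u(i)$, where $u(i)$ counts up-switches of the $b$-core lying above the $i$-th down-switch $d_i$; this quantity is engineered so that $C(\kappa)=0$ iff the $b$-core is empty, and so that a single splitting along a large branch of $\kappa$ \emph{inside the $b$-core} strictly decreases $C$ for each resulting track (and never increases it under arbitrary finite splittings, by the observation that large branches meeting the $b$-core lie entirely within it). Your measure does not have this designed-in monotonicity, and I see no direct way to repair it without essentially reinventing the $b$-core and $C(\kappa)$.

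Finally, the degenerate case $P(\tau)=P(\tau_R)$ is also mishandled: it does not follow that ``we always land in $\tau_R$ in the base case.'' The paper's argument instead observes that $P(\tau)=P(\tau_R)$ forces $P(\tau_L)\subset P(\tau_R)$ so that $P(\tau_L)=P(\tau_C)$, where $\tau_C$ is the collision split, and then compares maximal recurrent subtracks. Equality of polytopes of measures does not by itself imply equality at the level of carried train tracks, so your inference from measure-compatibility to carrying does not go through.
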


\noindent\emph{Proof.}  Let $N(\tau)$ be a fibered neighborhood of $\tau$.  Being carried by $\tau$, we can assume that $\kappa\subset \inte(N(\tau))$ and is transverse to the ties.   Let $J\times [0,1]$ denote $\piinv(b)$ where $\pi:N(\tau)\to \tau$ is the projection contracting each tie to a point.  Here $J=[0,1]$ and each $J\times t$ is a tie.   Let $\partial_s(J\times i)$, $i\in \{0,1\}$ denote the singular point of $N(\tau)$ in $J\times i$.   The canonical projection $h:J\times [0,1]$ to $[0,1]$ gives a \emph{height} function on $J\times I$.   

We can assume that distinct switches of $\kappa$ inside of $J\times I$ occur at distinct heights and no switch occurs at heights 0 or 1.  Call a switch $s$ a \emph{down} (resp. \emph{up}) switch if two branches emanate from s that lie below (resp. above) $s$.  A down (resp. up) switch is a \emph{bottom} (resp. \emph{top}) switch if the branches emanating below (resp. above) $s$ extend to smooth arcs in $\kappa\cap J\times I$ that intersect $J\times 0\setminus \partial_s(J\times 0)$ (resp. $J\times 1\setminus \partial_s(J\times 1$)) in distinct components.  Furthermore $h(s)$ is minimal (resp. maximal) with that property.  There is at most one top switch and one bottom switch.  Let $s_T$ (resp. $s_B$) denote the top (resp. bottom) switch if it exists.

We define the, possibly empty,  \emph{b-core}  as the unique embedded arc in $\kappa\cap J\times I$ transverse to the ties with endpoints in the top and bottom switches.  We also require that $h(s_B)<h(s_T)$.  Uniqueness follows since $\kappa$ has no bigons.

We now show that if the b-core is empty, then $\kappa$ is carried by one of $\tau_R$ or $\tau_L$.  To do this it suffices to show that after normal isotopy, $\kappa$ has no switches in $J\times I$.  By \emph{normal isotopy} we mean isotopy of $\kappa$ within $N(\tau)$ through train tracks that are transverse to the ties.  It is routine to remove, via normal isotopy, the switches in $J\times I$ lying above $s_T$ and those lying below $s_B$.  Thus all the switches can be normally isotoped out of $J\times I$ if either $s_T$ or $s_B$ do not exist or $h(s_B)>h(s_T)$.  If $s_T$ exists, then since the $b$-core $=\emptyset$ all smooth arcs descending from $s_T$ hit only one component of $J\times 0\setminus \partial_s(J\times 0)$.  Use this fact to  first normally isotope $\kappa$ to  remove from $J\times I$  all the switches lying on smooth arcs from $s_T$ to $J\times 0$ and then to isotope $s_T$ out of $J\times I$.

Now assume that the $b$-core exists.  Let $u_1, \cdots, u_r$, $d_1, \cdots, d_s$ be respectively the up 
and down switches of $\kappa$ that lie on the core.  For $i\in 
\{1,\cdots, s\}$, let $u(i)$ be the number of up switches in the b-core 
that lie above the down switch $d_i$.  Define $C(\kappa)=
\sum_{j=1}^s u(i)$.  Define $C(\kappa)=0$ if the $b$-core is empty.  Note that if $C(\kappa)=1$, then $r=s=1$ and 
$u_1=s_T$ and $d_1=s_B$.  Furthermore splitting $\kappa$ along 
the large branch between $s_T$ and $s_B$ gives rise to tracks 
whose b-cores are empty and hence are carried by one of $\tau_R$ 
or $\tau_S$.  

Assume by induction that the first part of the lemma holds for all train tracks $\kappa$ with  $C(\kappa)<k$.     Let $\kappa$ be a track with $C(\kappa)=k$.  Let $e$ be a large branch of $\kappa$ lying in $\kappa$'s b-core.  The two train tracks obtained by splitting along $e$ have reduced $C$-values.  Therefore, the first part of the lemma follows by induction.

Now suppose that $P(\tau)=P(\tau_R)$.  It follows that $P(\tau_L)\subset P(\tau_R)$ and hence $P(\tau_L)=P(\tau_C)$, where $\tau_C$ is the train track obtained by splitting $\tau$ along $b$ with collision.  Therefore if $\kappa'$ is the result of finitely many simple splittings, and is carried by $\tau_L$, then the maximal recurrent subtrack of $\kappa'$ is carried by $\tau_C$ and hence by the maximal recurrent subtrack of  $\tau_R$.  \end{proof}

\begin{remarks} i)  If each $u_i$ is to the \emph{left} of the b-core  and each $d_j$ is to the \emph{right} of the b-core, then if $\kappa_R$ and $\kappa_L$ are the tracks resulting from a single splitting along a large branch in the b-core, then $C(\kappa_R)= C(\kappa)-1$, while $C(\kappa_L)=0$.  
\smallskip

ii)  Any large branch of $\kappa$ that intersects the b-core is contained in the b-core.  It follows that if $\kappa'$ is the result of  finitely many single splittings of $\kappa$, then $C(\kappa')
\le C(\kappa)$.\end{remarks}

Now assume we have a sequence $(\Delta_0, T_0), \cdots, (\Delta_i, T_i)$ of cellulations and associated train tracks that satisfy i)-iii) of the proposition.  Let $\sigma$ be a cell of $\Delta_i$ with associated train track $\tau$.   Since any full splitting of a track is the concatenation of splittings along large branches, to complete the proof of the proposition, it suffices to prove the following claim.  

\begin{claim}  If $\tau_R$ and $\tau_L$ are the result of a single splitting of $\tau$, then there exists a cellulation $\Delta_{i+1}$ with associated train tracks $T_{i+1}$ extending the sequence and satisfying i)-iii), so that  $\tau$ is replaced with the maximal recurrent subtracks of   $\tau_R$ and/or $\tau_L$   and if $\tau'\in T_i$ is such that  $\dim(P(\tau'))\ge \dim(P(\tau))$ and $\tau'\neq\tau$, then $\tau'\in T_{i+1}$.  \end{claim}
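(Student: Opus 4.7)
The plan is to construct $\Delta_{i+1}$ by refining only those cells of $\Delta_i$ which are faces of $P(\tau)$, leaving all other cells and their associated train tracks unchanged. First consider the degenerate case $V(\tau_R)=V(\tau)$ (the case $V(\tau_L)=V(\tau)$ being symmetric): I set $\Delta_{i+1}=\Delta_i$ and define $T_{i+1}$ by replacing $\tau$ with the maximal recurrent subtrack of $\tau_R$. Properties i) and ii) are then immediate, and iii) follows from the final clause of Lemma \ref{shattering}: every face track of $P(\tau)$ is carried by $\tau$, hence by the maximal recurrent subtrack of $\tau_R$.

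In the non-degenerate case, $V(\tau_R)$ and $V(\tau_L)$ sit on opposite sides of a codimension-one hyperplane in $V(\tau)$, whose intersection corresponds to $\tau_C$, the collision splitting. I would proceed by downward induction on dimension within $P(\tau)$. At the top, replace $P(\tau)$ by $P(\tau_R)$ and $P(\tau_L)$, equipped with the maximal recurrent subtracks of $\tau_R$ and $\tau_L$. Having refined every face $\sigma'$ of $P(\tau)$ of dimension greater than $d$, take a $d$-dimensional face $\sigma$ of $P(\tau)$; since $\tau_\sigma$ is carried by $\tau$ by property iii), Lemma \ref{shattering} furnishes a set $R(\tau_\sigma)$ of train tracks descended from $\tau_\sigma$ whose polyhedra tile $\sigma$, each carried by $\tau_R$ or $\tau_L$. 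Declare these polyhedra to be the new cells in place of $\sigma$, with associated train tracks taken from $R(\tau_\sigma)$.

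The main obstacle is compatibility: the cell $\sigma$ may be a face of several higher-dimensional cells, and the subdivision produced by Lemma \ref{shattering} must match the subdivision already induced on $\sigma$ by its refined super-cells. Because Lemma \ref{shattering} is proven by iterating single splittings along large branches, and the carrying relation $\tau_\sigma \prec \tau_{\sigma'}$ permits any splitting sequence for $\tau_{\sigma'}$ to be pulled back, via the carrying map, to a splitting sequence for $\tau_\sigma$, there is enough freedom in the inductive choice: when subdividing $\sigma$, I begin with the restrictions of the splitting sequences already used on the super-cells $\sigma' \supset \sigma$ and then append further single splittings as needed to push any remaining polyhedron into $P(\tau_R)$ or $P(\tau_L)$. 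This guarantees that the subdivisions of $\partial\sigma$ induced from above and from below agree.

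With $\Delta_{i+1}$ and $T_{i+1}$ constructed this way, properties i) and ii) hold by construction. Property iii) is the crux: face relations between cells of $\Delta_{i+1}$ lying inside $P(\tau)$ correspond to carrying relations among descended train tracks, since descent is a composition of single splittings and deletions of non-recurrent branches, each preserving carrying; face relations at the boundary with cells outside $P(\tau)$ are inherited directly from those in $\Delta_i$, together with the carrying statements supplied by Lemma \ref{shattering}.
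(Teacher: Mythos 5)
Your overall plan — leave every cell outside $\overline{P(\tau)}$ untouched and subdivide the face lattice of $P(\tau)$ by means of Lemma \ref{shattering} — is the paper's plan, and you correctly identify the compatibility of the subdivisions of shared faces as the crux. But your top-down construction does not actually discharge that obligation. A $d$-dimensional face $\sigma$ is in general a face of \emph{several} higher-dimensional cells inside $\overline{P(\tau)}$, and what is needed is not only that the subdivision of $\sigma$ refine the trace of each super-cell's subdivision, but also (for property iii)) that each new cell of $\sigma$ have its train track carried by the track of each new super-cell it bounds — being carried by $\tau_R$ or $\tau_L$ alone is not enough. Your mechanism, "pull back the splitting sequences already used on the super-cells via the carrying map," invokes an operation that is not available off the shelf: Lemma \ref{shattering} converts a single splitting of a carrier into \emph{some} descended set of the carried track (an existence statement obtained from the $b$-core analysis), not a canonical restriction, and iterating it over the splitting sequences of several super-cells still requires an argument that the outputs are mutually consistent and that the resulting tracks are carried by the right cells one level up. As written, "this guarantees that the subdivisions of $\partial\sigma$ induced from above and from below agree" is an assertion, not a proof. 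A smaller but genuine error occurs in your degenerate case: carrying by $\tau$ does not imply carrying by the maximal recurrent subtrack of $\tau_R$ even when $V(\tau_R)=V(\tau)$ (e.g.\ $\tau$ itself is not carried by its own splitting); the final clause of Lemma \ref{shattering} only says that a \emph{descended set} of each face track is so carried, so the faces may still need subdividing.

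The device you are missing is the paper's organization of the whole Claim as an upward induction on $\dim(P(\tau))$ for a \emph{single} splitting. In the inductive step each codimension-one face $\sigma_j$ of $\sigma$ is subdivided into $R(\kappa_j)$ not all at once but one single splitting of $\kappa_j$ at a time, and each such single splitting is itself an instance of the Claim one dimension down; the induction hypothesis therefore returns, after every step, a genuine cellulation satisfying i)--iii) in which all cells of dimension $\ge k-1$ other than the one just split are unchanged. Compatibility among the faces of $\sigma_j$, among faces shared by several cells, and with the cells outside $\overline{P(\tau)}$ is thus automatic and never checked by hand. Only after all the $\sigma_j$ have been subdivided is $\sigma$ itself replaced — by $P(\tau_R)$, $P(\tau_L)$ \emph{and} the new common face $P(\tau_C)$, which your construction omits; Lemma \ref{shattering} then guarantees that every cell of the subdivided $\partial\sigma$ lies in, and has its track carried by, one of $\tau_R$ or $\tau_L$, which is exactly what i)--iii) require.
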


\begin{proof}  Proof by induction on $\dim(\sigma)$.  We will assume that each of $P(\tau_R)$ and $P(\tau_L)$ are proper subcells of $P(\tau)$, for proof in the general case is similar.  The claim is trivial if $\dim(\sigma)=0$.  Now assume that the claim is true if $\dim(\sigma)<k$.  Assuming that $\dim(\sigma)=k$ let $\sigma_1, \cdots, \sigma_p$, be the $(k-1)$-dimensional faces of $\sigma$ with corresponding train tracks $\kappa_1, \cdots, \kappa_p$.  By Lemma \ref{shattering} there exists sets $R(\kappa_1), \cdots, R(\kappa_p)$ descended from the $\kappa_i$'s such that any train track in any of these sets is carried by one of $ \tau_R$ and $\tau_L$.  

If $R(\kappa_1)\neq \{\kappa_1\}$, then there exists a single splitting of $\kappa_1$ into $
\kappa_1^1$ and $\kappa_1^2$ such that $R(\kappa_1)=R(\kappa_1^1)\cup R(\kappa_1^2)$, 
where $R(\kappa_1^i)$ is descended from $\kappa_1^i$. (As usual, only one may be relevant and 
it might have stuff deleted.) By induction there exists a cellulation $\Delta_i^1$ with associated 
$T_i^1$ such that the sequence $(\Delta_0, T_0), \cdots, (\Delta_i, T_i)$, $(\Delta_i^1, T_i^1)$ 
satisfies i)-iii) and in the passage from $T_i$ to $T_i^1$, $P(\kappa_1)$ is the only polyhedron of 
dimension $\ge k-1$ that gets subdivided and it is replaced by $P(\kappa_1^1)$ and $P
(\kappa_1^2)$.    By repeatedly applying the induction hypothesis we obtain the sequence $
(\Delta_0, T_0), \cdots, (\Delta_i, T_i), (\Delta_i^1, T_i^1), \cdots$, $(\Delta_i^p, T_i^p)$ satisfying 
i)-iii) such that in the passage from $\Delta_i$ to $\Delta_i^p$, each $\sigma_j$ is subdivided into 
the polyhedra of $R(\kappa_j)$ and no cells of dimension $\ge k$ are subdivided and their 
associated train tracks are unchanged.    It follows that if $\Delta_{i+1}$ is obtained by subdividing 
$\Delta_i^p$ by replacing $\sigma$ by $P(\tau_R)$, $P(\tau_L)$ and $P(\tau_C)$ and $T_{i+1}$ is 
the set of associated train tracks, then $(\Delta_1, T_1), \cdots, (\Delta_i, T_i), (\Delta_{i+1}, T_{i
+1})$ satisfies i)-iii) and hence the induction step is completed.\end{proof}

\begin{lemma}  \label{sequence properties} Let $\Delta_1, \Delta_2, \cdots$ be a good cellulation sequence.  

i)  If $\sigma$ is a cell of $\Delta_i$, then $\phi(\sigma)\cap \EL$ is closed in $\EL$.

ii) If $\mL=\phi(x)$ some $x\in\PML$, then for each $i\in \BN$ there exists a unique cell $\sigma\in \Delta_i$ such that $\phiinv(\mL)\subset \inte(\sigma)$.

iii) If $U$ is an open set of $\PML$ that is the union of open cells of $\Delta_i$, then $\phi(U)\cap \EL$ is open in $\EL$.

iv) If $\sigma$ is a cell of $\Delta_i$, then $\inte(\sigma)$ is open in $\phi(\sigma)\cap \EL$.

v)  If $\mu\in \EL$ and $\phiinv(\mu)\in \inte(\sigma)$, $\sigma$ a cell of $\Delta_i$, then there exists $\epsilon(\mu)>0$ such that $B(\mu,\epsilon(\mu))=\{x\in \PML|\dpts(\phi(x),\mu)<\epsilon(\mu)\}\subset \hat\sigma$.

 \end{lemma}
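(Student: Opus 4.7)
My plan is to treat the five assertions sequentially, with (ii) as the structural heart from which (iii) and (iv) fall out by elementary set-theoretic arguments, and with (i) and (v) handled by the convergence machinery of the earlier sections.

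For (i), I would take any sequence $\mu_n \in \phi(\sigma)\cap\EL$ limiting to $\mu \in \EL$ and lift to $x_n \in \sigma$ with $\phi(x_n)=\mu_n$; compactness of the cell $\sigma$ provides a subsequential limit $x\in\sigma$, and Lemma \ref{convergent sequence} identifies $\phi(x)=\mu$, so $\mu\in\phi(\sigma)\cap\EL$.  (This can alternatively be read directly from Corollary \ref{closed map}.)  For (ii), the key input is Lemma \ref{preimage simplex}: $\phiinv(\mL)$ is a compact convex cell lying in the \emph{interior} of $P(\tau)$ whenever the train track $\tau$ fully carries $\mL$.  Since $\Delta_i$ is associated to a set of train tracks, each cell $\sigma$ equals $P(\tau_\sigma)$, with $\inte(\sigma)$ parametrizing the measures fully carried by $\tau_\sigma$.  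Thus $\phiinv(\mL)$ either lies entirely in $\inte(\sigma)$ (when $\tau_\sigma$ fully carries $\mL$) or is disjoint from $\inte(\sigma)$; as the open cells partition $\PML$ and $\phiinv(\mL)$ is connected, exactly one interior contains it.

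For (iii), I would write $F = \PML\setminus U$ as a finite union of closed cells of $\Delta_i$; by (i) each $\phi(\sigma)\cap\EL$ is closed in $\EL$, so $\phi(F)\cap\EL$ is closed.  By (ii), every $\mu\in\EL$ has $\phiinv(\mu)$ in the interior of a unique cell, so $\EL$ decomposes as the disjoint union of $\phi(U)\cap\EL$ and $\phi(F)\cap\EL$; the former is therefore open as the complement of a closed set.  Part (iv) follows immediately by applying (iii) to the open set $U = \inte(\sigma)$, since open in $\EL$ implies open in the subspace $\phi(\sigma)\cap\EL$.

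Part (v) is the analytic step.  Reading $\hat\sigma$ as an open neighborhood of $\inte(\sigma)$ in $\PML$ (naturally taken to be the open star of $\sigma$ in $\Delta_i$), I would argue by contradiction: were no such $\epsilon(\mu)$ to work, one could produce a sequence $x_n\in\PML\setminus\hat\sigma$ with $\dpts(\phi(x_n),\mu)\to 0$, pass to a convergent subsequence in the compact space $\PML$, and apply Lemma \ref{pml one closeness} to conclude that the limit lies in $\phiinv(\mu)\subset\inte(\sigma)\subset\hat\sigma$; openness of $\hat\sigma$ then contradicts $x_n\notin\hat\sigma$.  The main obstacle I anticipate is pinning down (ii) cleanly: one must verify that the bijection between cells of $\Delta_i$ and train tracks in $T_i$ furnished by Proposition \ref{cellulation} is genuinely compatible with the distinction between ``fully carried'' and ``carried,'' so that $\inte(P(\tau_\sigma))$ really is exactly the set of measures fully carried by $\tau_\sigma$.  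Once this is confirmed, (i), (iii), (iv), and (v) reduce to routine compactness arguments or direct applications of the convergence lemmas already in hand.
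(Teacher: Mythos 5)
Parts (i), (ii), (iii) and (v) are sound and essentially follow the paper's own route. For (v) you use sequential compactness of $\PML$ together with Lemma \ref{pml one closeness}, where the paper instead runs over the finitely many cells of $\Delta_i$ and invokes Lemma \ref{train track closeness} for each cell whose track does not carry $\mu$; the two arguments are interchangeable. For (ii), note that the identification of $\inte(P(\tau_\sigma))$ with the fully carried measures --- the point you flag as the main obstacle --- can be avoided entirely: the paper only uses that $\tau$ carrying $\mL$ forces $\phiinv(\mL)\subset P(\tau)$, applied to the cell whose open cell contains a given point of $\phiinv(\mL)$ and to any face whose open cell meets $\phiinv(\mL)$.

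The genuine gap is in (iv). You apply (iii) to $U=\inte(\sigma)$, but (iii) requires $U$ to be an open subset of $\PML$, and the relative interior of a cell is open in $\PML$ only when the cell is top-dimensional; the cells for which (iv) has any content beyond (iii) (here, those of dimension between $n+1$ and $2n$) are exactly the ones excluded. Worse, the intermediate conclusion you draw --- that $\phi(\inte(\sigma))\cap\EL$ is open in $\EL$ --- is false in general: if $\sigma$ is a proper face of a top-dimensional cell $\kappa$, an ending lamination with preimage in $\inte(\sigma)$ is a limit in $\EL$ of ending laminations with preimage in $\inte(\kappa)$, so $\phi(\inte(\sigma))\cap\EL$ is not a neighborhood of that point in $\EL$. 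This is precisely why the lemma asserts openness only in the subspace $\phi(\sigma)\cap\EL$. Two correct repairs: (a) the paper's, writing $\phi(\sigma)\cap\EL$ as the disjoint union of $\phi(\inte(\sigma))\cap\EL$ and $\phi(\partial\sigma)\cap\EL$ (disjoint by (ii)) and noting the latter is closed by (i); or (b) apply (iii) to the open star $\hat\sigma$, which genuinely is an open union of open cells, and verify via (ii) that $\phi(\hat\sigma)\cap\phi(\sigma)\cap\EL=\phi(\inte(\sigma))\cap\EL$.
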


\begin{proof}  Conclusion i) follows from Lemma \ref{closed map}.  

Conclusion ii) follows from the fact that if  $\tau$ carries $\mL$, then $\phiinv(\mL)\subset P(\tau)$.

Conclusion iii) follows from Conclusion i) and the fact that $\phi(U)\cap \phi(\PML\setminus U)\cap \EL=\emptyset$.

Conclusion iv) follows from Conclusion i) and the fact that $\phi(\inte(\sigma))\cap \phi(\partial \sigma)\cap \EL=\emptyset$. 

Let $\kappa$ be a cell of $\Delta_i$ with associated train track $\tau$.  If $\tau$ carries $\mu$, then $\kappa\cap\sigma\neq\emptyset$ and hence by ii) $\sigma$ is a face of $\kappa$ and hence $\inte(\kappa)\subset\hat \sigma$.  If $\tau$ does not carry $\mu$, then by Lemma \ref{train track closeness}  $B(\mu, \epsilon)\cap P(\tau)=\emptyset$ for $\epsilon$ sufficiently small.  Since $\Delta_i$ has finitely many cells, the result follows.  \end{proof}

We have the following PML-approximation result for good cellulation sequences.
 
\begin{proposition} \label{pml approximation three}  Let $\Delta_1, \Delta_2, \cdots$ be a good cellulation sequence, $K$ a finite simplicial complex, $i\in \BN$ fixed and $g:K\to \EL$.  Then there exists $h:K\to \PML$ a generic PL map such that for each $t\in K$, there exists $\sigma(t)$ a cell of  $\Delta_i$ such that $h(t)\cup\phiinv(g(t))\subset \hat\sigma(t)$.\end{proposition}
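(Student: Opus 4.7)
The plan is to promote the $\EL$-approximation provided by Lemma \ref{pml approximation two} to an approximation that respects the cellulation $\Delta_i$, by cutting $g(K)$ into finitely many pieces each of which sits deeply inside a single cell. For each $\mu \in \EL$, Lemma \ref{sequence properties}(ii) gives a unique cell $\sigma(\mu) \in \Delta_i$ with $\phiinv(\mu) \subset \inte(\sigma(\mu))$; the target cell $\sigma(t)$ will be of the form $\sigma(g(t_j))$ for a suitable reference point $t_j$ near $t$, not necessarily $\sigma(g(t))$ itself.

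For each $t_0 \in K$, set $\sigma_{t_0} := \sigma(g(t_0))$. Since $\phiinv(g(t_0))$ is a compact convex cell by Lemma \ref{preimage simplex} lying in the open set $\inte(\sigma_{t_0})$, I choose an open $U_{t_0} \subset \PML$ with $\phiinv(g(t_0)) \subset U_{t_0}$ and $\overline{U_{t_0}}$ compact and contained in $\inte(\sigma_{t_0})$. Lemma \ref{preimage close}, applied to the compact set $L = \{g(t_0)\}$ and the open set $U_{t_0}$, then produces an open $V_{t_0} \subset \EL$ containing $g(t_0)$ with $\phiinv(V_{t_0}) \subset U_{t_0}$. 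By continuity of $g$ and compactness of $K$, finitely many pullbacks $g^{-1}(V_{t_j})$, $j=1,\ldots,m$, cover $K$; let $\lambda > 0$ be a Lebesgue number of this cover.

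Next, set
$$\eta := \min_{1\le j\le m} d_{\PML}\bigl(\overline{U_{t_j}},\ \PML\setminus \inte(\sigma_{t_j})\bigr),$$
which is positive because each $\overline{U_{t_j}}$ is compact and contained in the open set $\inte(\sigma_{t_j})$. Apply Lemma \ref{pml approximation two} with $\epsilon := \min(\lambda,\eta)/2$ to obtain a generic PL map $h:K\to \PML$ such that for every $t\in K$ there exists $t'\in K$ with $d_K(t,t')<\epsilon$ and $d_{\PML}(h(t),\phiinv(g(t')))<\epsilon$. Given $t$, the Lebesgue property yields $j$ with $B_K(t,\epsilon) \subset g^{-1}(V_{t_j})$, whence $t,t' \in g^{-1}(V_{t_j})$. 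Therefore $\phiinv(g(t))$ and $\phiinv(g(t'))$ both lie in $U_{t_j}$, and the $\epsilon$-estimate places $h(t)$ in $N_{\PML}(U_{t_j},\eta) \subset \inte(\sigma_{t_j})$. Setting $\sigma(t) := \sigma_{t_j}$ delivers the required containment $h(t)\cup \phiinv(g(t)) \subset \hat\sigma(t)$.

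I do not expect a serious obstacle: the essential analytic content is already packaged in the preceding results, namely Lemma \ref{preimage close} (which transfers open neighborhoods from $\EL$ back to $\PML$) and Lemma \ref{pml approximation two} (the initial generic PL approximation), and what remains is a routine Lebesgue-number argument. The only delicate point is choosing the auxiliary neighborhoods $U_{t_0}$ with closures compactly contained in $\inte(\sigma_{t_0})$, which is immediate from compactness of $\phiinv(g(t_0))$ and the fact that $\inte(\sigma_{t_0})$ is open.
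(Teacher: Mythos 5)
Your overall scheme (finite cover of $g(K)$ obtained from Lemma~\ref{preimage close}, Lebesgue number, then the $d_{\PML}$ estimate from Lemma~\ref{pml approximation two}) is sound and is a legitimately different route from the paper's, which instead runs through Lemma~\ref{sequence properties}~v) and the $\dpts$-closeness Lemmas~\ref{one closeness} and~\ref{two closeness}. However, there is a genuine error at the foundation of your construction: you treat $\inte(\sigma_{t_0})$ as an open subset of $\PML$. The notation $\inte(\sigma)$ denotes the \emph{relative} interior of the cell $\sigma$ in the cellulation $\Delta_i$, and this is \emph{not} open in $\PML$ unless $\sigma$ is top-dimensional. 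The cell $\sigma_{t_0}$ characterized by Lemma~\ref{sequence properties}~ii), namely the $P(\tau)$ for the track $\tau\in T_i$ that \emph{fully} carries $g(t_0)$, need not be top-dimensional (for instance, $\tau$ may have a square complementary region even though $g(t_0)$ is filling; cf.\ the lower bound $\dim(\sigma)\ge n+1$ in Lemma~\ref{dimension n+1}, which is well below $2n+1$). When $\sigma_{t_0}$ is lower-dimensional there is no nonempty open $U_{t_0}\subset\PML$ with $\overline{U_{t_0}}\subset \inte(\sigma_{t_0})$, so your $U_{t_0}$ and hence $\eta$ do not exist, and the deduction $h(t)\in N_{\PML}(U_{t_j},\eta)\subset\inte(\sigma_{t_j})$ cannot even get started. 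It is presumably no accident that the statement you are proving asks for containment in the open star $\hat\sigma(t)$ rather than in $\inte(\sigma(t))$.

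The repair is short and leaves the rest of your argument intact: replace $\inte(\sigma_{t_0})$ with the open star $\hat\sigma_{t_0}$ throughout. By Lemma~\ref{contractible} the open star $\hat\sigma_{t_0}$ \emph{is} open in $\PML$, and since $\phiinv(g(t_0))\subset\inte(\sigma_{t_0})\subset\hat\sigma_{t_0}$ is compact you may choose $U_{t_0}$ open with $\overline{U_{t_0}}$ compact and contained in $\hat\sigma_{t_0}$. Set
\[
\eta := \min_{1\le j\le m}\ d_{\PML}\bigl(\overline{U_{t_j}},\ \PML\setminus\hat\sigma_{t_j}\bigr)>0,
\]
run the same Lebesgue-number and $d_{\PML}$-closeness argument, and conclude $h(t)\cup\phiinv(g(t))\subset\hat\sigma_{t_j}=\hat\sigma(t)$ directly, which is exactly the required conclusion. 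With that correction your proof is valid, and it is a somewhat more metric/elementary alternative to the paper's proof: the paper instead fixes, for each $\mu\in g(K)$, the radius $\epsilon(\mu)$ from Lemma~\ref{sequence properties}~v) (which already bakes in the fact that $\dpts$-balls around $\mu$ land in $\hat\sigma_\mu$), couples it with Lemmas~\ref{one closeness} and~\ref{two closeness} to get uniform $\delta_\mu$-neighborhoods, and then only needs the first (the $\dpts$) conclusion of Lemma~\ref{pml approximation two}; yours uses the second (the $d_{\PML}$) conclusion together with Lemma~\ref{preimage close}. Both are finite-cover compactness arguments, but they push the approximation through different metrics and different preliminary lemmas.
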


\begin{proof}  Given $\mu\in \EL$, let $\sigma$ be the cell of $\Delta_i$ such that $\phiinv(\mu)\subset \inte(\sigma)$.  Let $\epsilon(\mu)$ be as in Lemma \ref{sequence properties} v).  By Lemmas \ref{two closeness} and \ref{one closeness} there exists a neighborhood $U_\mu\subset \EL$ of $\mu$ and $\delta_\mu>0$ such that $\dpts(\mL, z)<\delta_\mu, z\in U_\mu, \mL\in \LS$ implies that $\dpts(\mL, \mu)<\epsilon(\mu)$.  

The compactness of $g(K)$ implies that there exists a finite open cover $U_{\mu_1}, \cdots, U_{\mu_k}$ of $g(K)$.  Let $\delta=\min\{\delta_{\mu_1}, \cdots, \delta_{\mu_k}\}$.  Then any $\delta$ PML-approximation as in Proposition \ref{pml approximation two} satisfies the conclusion of the proposition.\end{proof}

\begin{lemma} \label{carries diagonal} If $\sigma=P(\tau)$ is a cell of $
\Delta_i$, $\sigma'$ is a face of $
\sigma$ and $\mu\in \phi(\sigma')\cap\EL$, then $\tau$ fully carries a 
diagonal extension of $\mu$.\end{lemma}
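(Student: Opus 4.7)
The plan is to reduce the statement to the already-established Lemma \ref{carries diagonal extension}, which says that whenever a train track carries an ending lamination it in fact fully carries some diagonal extension of it. So the only real content is to verify that $\tau$ carries $\mu$ (not merely that some train track associated to a subcell does).

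First, I would unpack what membership in $\sigma'$ means. Since $\sigma' = P(\tau_{\sigma'})$ for some train track $\tau_{\sigma'}$ associated to $\Delta_i$ (or to some $\Delta_j$ containing $\sigma'$ as a cell, but for the purpose of this lemma $\sigma'$ is a face in $\Delta_i$, hence $\tau_{\sigma'} \in T_i$), any $x \in \sigma'$ lifts to a measure in $V(\tau_{\sigma'})$. Picking $x \in \phiinv(\mu) \cap \sigma'$, the track $\tau_{\sigma'}$ carries $\mu$.

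Next, I would invoke property (iii) of Proposition \ref{cellulation}: because $\sigma'$ is a face of $\sigma$, the train track $\tau_{\sigma'}$ is carried by $\tau = \tau_\sigma$. Carrying is transitive (compose the two carrying maps into fibered neighborhoods), so $\tau$ carries $\mu$.

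Finally, apply Lemma \ref{carries diagonal extension}: since $\tau$ carries the ending lamination $\mu$, and the restriction of $\tau$ to each closed complementary region of $\mu$ can be completed by diagonal leaves of $\mu$ to a lamination fully carried by $\tau$, we obtain a diagonal extension $\mu'$ of $\mu$ fully carried by $\tau$. No step here is an obstacle; the lemma is essentially a bookkeeping consequence of the construction of good cellulation sequences together with the earlier observation about diagonal completions, and the only thing to be careful about is citing the correct face-carrying statement from Proposition \ref{cellulation}(iii) rather than confusing $\tau_{\sigma'}$ with some splitting of $\tau$.
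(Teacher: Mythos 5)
Your proof is correct and follows exactly the paper's route: the paper's own proof is the one-line "Apply Lemma \ref{carries diagonal extension} and Proposition \ref{cellulation} iii)," and your write-up simply fills in the same two steps (face relation implies $\tau_{\sigma'}$ is carried by $\tau$, hence $\tau$ carries $\mu$, then pass to a fully carried diagonal extension). Nothing further is needed.
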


\begin{proof} Apply Lemma \ref{carries diagonal extension} and Proposition \ref{cellulation} iii).\end{proof}

\begin{lemma} \label{fine splitting}  Let $\Delta_1, \Delta_2, \cdots$ be a good cellulation sequence.  Given $\epsilon>0$ there exists $N\in \BN$ such that if $i\ge N$, $\mL_1\in \EL$, $\mL_2\in \mL(S)$ and $\mL_1, \mL_2$ are carried by  $\tau$ some $\tau\in T_i, i\ge N$, then $\dpts(\mL_1, \mL_2)<\epsilon$.\end{lemma}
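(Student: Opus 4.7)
My plan is by contradiction. Suppose the conclusion fails for some $\epsilon>0$. Then I can extract sequences $i_n\to\infty$, $\tau_n\in T_{i_n}$, $\mL_1^n\in\EL$ and $\mL_2^n\in\mL(S)$, both carried by $\tau_n$, with $d_{PT(S)}(\mL_1^n,\mL_2^n)\ge\epsilon$. Using Proposition~\ref{cellulation}(iii), after replacing $\tau_n$ by a complete (top-dimensional) train track in $T_{i_n}$ whose cell contains $P(\tau_n)$ as a face, I may assume $\tau_n$ is complete. Properties (i), (iii) and (iv) of Proposition~\ref{cellulation} together show that for $n$ large, $\tau_n$ is obtained from some complete standard track $\kappa^{(n)}\in T_0$ by at least $N_n$ full splittings, with $N_n\to\infty$; since $T_0$ is finite I pass to a subsequence where $\kappa^{(n)}=\kappa_0$ is fixed.

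Each full splitting of a given track has only finitely many outcomes, so by König's lemma I pass to a further subsequence along a single infinite full-splitting path $\kappa_0=\sigma_0,\sigma_1,\sigma_2,\ldots$, with $\tau_n$ a descendant of $\sigma_{M_n}$ and $M_n\to\infty$. Then $\tau_n$ is carried by $\sigma_{M_n}$, so both $\mL_1^n$ and $\mL_2^n$ are carried by $\sigma_{M_n}$.

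Next I choose unit-length $\hat\mL_1^n\in V(\sigma_{M_n})$ with $\phi(\hat\mL_1^n)=\mL_1^n$. By compactness of the unit-length slice of $V(\sigma_0)$, pass to a subsequence with $\hat\mL_1^n\to\hat\mu$; since the $V(\sigma_k)$ form a nested sequence of closed cones and $\hat\mL_1^n\in V(\sigma_k)$ for every $n$ with $M_n\ge k$, one has $\hat\mu\in V(\sigma_k)$ for all $k$. My goal is to produce a single $\mu^\infty\in\EL$ that is carried by every $\sigma_k$. Once this is in hand, Lemma~\ref{splitting} applied with $\kappa=\sigma_0$ and $\mu=\mu^\infty$ yields $N^\ast$ such that any lamination carried by $\sigma_k$ for some $k\ge N^\ast$ is $(\epsilon/3)$-close to $\mu^\infty$. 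Taking $n$ large with $M_n\ge N^\ast$, both $\mL_1^n$ and $\mL_2^n$ are $(\epsilon/3)$-close to $\mu^\infty$, contradicting $d_{PT(S)}(\mL_1^n,\mL_2^n)\ge\epsilon$.

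The main obstacle is producing $\mu^\infty\in\EL$ with the required carrying property. The naive candidate $\phi(\hat\mu)$ need not be minimal or filling. To get around this I pass to a minimal filling sublamination of the Hausdorff limit $\mL_1^{(H)}$ of the filling laminations $\mL_1^n$, using that filling is preserved under Hausdorff limits of filling laminations, that $\phi(\hat\mu)\subseteq\mL_1^{(H)}$ by super convergence (Proposition~\ref{super convergence}), and that each $\mL_1^n$ has only boundedly many diagonal extensions (bounded in terms of the topology of $S$) so a pigeonhole argument stabilizes their combinatorial type and preserves the carrying under the limit. Equivalently, one can bypass Lemma~\ref{splitting} and apply Lemma~\ref{unzipping} directly to the full unzipping sequence equivalent to $\sigma_0,\sigma_1,\ldots$ constructed in the proof of Lemma~\ref{splitting}, once one has arranged that this sequence fully carries a diagonal extension of $\mu^\infty$.
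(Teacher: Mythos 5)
Your scaffolding (contradiction, K\"onig's lemma to find an infinite full-splitting path $\sigma_0,\sigma_1,\ldots$ carrying all the $\tau_n$ up to depth $M_n$, then Lemma~\ref{splitting} applied to a single $\mu^\infty\in\EL$ carried at every stage) is sensible, but the crucial step --- producing $\mu^\infty$ --- does not go through as written. You propose to take a minimal filling sublamination of the Hausdorff limit $\mL_1^{(H)}$, using that $\mL_1^{(H)}$ is filling and that $\phi(\hat\mu)\subset\mL_1^{(H)}$ by super convergence. Both observations are correct, but neither produces a point of $\EL$: a filling geodesic lamination need not contain any minimal sublamination that is itself filling. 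A filling lamination can have several minimal components, each filling only a proper subsurface, joined by finitely many isolated non-compact leaves that spiral onto those components and cross the dividing curves; in that situation no minimal piece lies in $\EL$. Likewise $\phi(\hat\mu)$, which by Lemma~\ref{filling} is a disjoint union of minimal laminations, need not be filling. The pigeonhole on diagonal extensions does not repair this --- stabilizing the diagonal-extension type of the $\mL_1^n$ says nothing about minimality or fillingness of the limit. In short, nothing in your construction forces the path $\sigma_0,\sigma_1,\ldots$ to carry one fixed element of $\EL$ at every stage, which is exactly the hypothesis that Lemma~\ref{splitting} (and its engine Lemma~\ref{unzipping}) requires.

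It is also worth noting what the paper's one-line proof, ``Apply Propositions~\ref{cellulation} and~\ref{convergence},'' actually delivers: the statement with $N$ allowed to depend on $\mL_1$. For a fixed $\mu\in\EL$, Proposition~\ref{convergence} gives $N_0$ (depending on $\mu$ and $\epsilon$), and Proposition~\ref{cellulation}(iv) translates ``$N_0$ full splittings of a standard track'' into ``$\tau\in T_i$ for $i\ge i_{N_0}$.'' The downstream uses of this lemma (Lemma~\ref{basis}, and similarly Lemma~\ref{refinement2}) both fix $\mu$ before choosing $N$, so this weaker form suffices. Your argument is aimed at the literally-uniform reading of the statement (one $N$ for all $\mL_1\in\EL$ simultaneously), which is genuinely stronger than what the cited propositions yield, and the $\mu^\infty$ step is precisely where that extra strength would have to be paid for. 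If you instead hold $\mL_1$ fixed across the contradiction sequence, your argument collapses to something essentially identical to the paper's: take $\mu^\infty=\mL_1$ outright (it is carried by every $\sigma_k$ by construction), and what remains is a re-derivation of Proposition~\ref{convergence}.
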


\begin{proof}  Apply Propositions \ref{cellulation} and \ref{convergence}.\end{proof}

\begin{definition}  If $\sigma$ is a cell of $\Delta_i$, then define the \emph{open star} $\hat\sigma=\cup_{\eta\in \Delta_i|\sigma\subset  \eta}\inte(\eta)$.  \end{definition}

\begin{lemma}  \label{contractible} If $\sigma$ is a cell of $\Delta_i$, then $\hat\sigma$ is open in $\PML$ and contractible.  Indeed, it deformation retracts to $\inte(\sigma)$.\qed\end{lemma}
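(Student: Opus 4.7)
My plan is to handle openness and the deformation retraction separately, the first being essentially immediate and the second requiring the convex polytopal structure of the cells.

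For openness, I would observe that
\[
\PML \setminus \hat\sigma \;=\; \bigcup\{\bar\eta : \eta \in \Delta_i,\ \sigma \not\subset \eta\}.
\]
Each closed cell $\bar\eta = p(V(\tau_\eta) \setminus 0)$ is closed in $\PML$, and $\Delta_i$ has only finitely many cells (Proposition \ref{cellulation}), so this complement is a finite union of closed sets and hence closed. Thus $\hat\sigma$ is open.

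For the deformation retraction, I would exploit the convex structure of the cells lifted to $\ML$. By Proposition \ref{cellulation}(iii), whenever $\sigma \subset \eta$, the track $\tau_\sigma$ is carried by $\tau_\eta$, so $V(\tau_\sigma)$ sits inside the convex cone $V(\tau_\eta)$ as a face. Fix a lift $\hat p \in \inte(V(\tau_\sigma))$ of some chosen $p \in \inte(\sigma)$. For each $x \in \hat\sigma$, let $\eta(x)$ be the unique open cell containing $x$, lift $x$ to $\hat x \in V(\tau_{\eta(x)})$ under a fixed normalization (e.g.\ total weight $1$), and define
\[
H_t(x) \;=\; p\bigl((1-t)\hat x + t\hat p\bigr).
\]
Convexity of $V(\tau_{\eta(x)})$ keeps the combination in that cone, so $H_t(x) \in \hat\sigma$. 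Continuity at cell boundaries follows from the compatibility of the inclusions $V(\tau_{\eta'}) \hookrightarrow V(\tau_\eta)$ whenever $\eta' \subset \eta$: the normalized lift of a point approaching the lower cell agrees with its lift in the lower cell. Since $H_0 = \id$ and $H_1 \equiv p \in \inte(\sigma)$, this already establishes contractibility of $\hat\sigma$.

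To upgrade the contraction to a genuine deformation retraction onto $\inte(\sigma)$ (pointwise fixing $\inte(\sigma)$), I would construct compatible linear retractions $r_\eta : V(\tau_\eta) \to V(\tau_\sigma)$ for each $\eta \supset \sigma$, fixing $V(\tau_\sigma)$ and satisfying $r_{\eta'} = r_\eta|_{V(\tau_{\eta'})}$ whenever $\eta' \subset \eta$. These exist by induction on cell dimension: at each step one picks a linear complement to the affine span of $V(\tau_\sigma)$ inside that of $V(\tau_\eta)$, compatibly with previously chosen $r_{\eta'}$ on faces, and projects. The projectivization assembles into a retraction $r : \hat\sigma \to \inte(\sigma)$, and the straight-line homotopy from $\hat x$ to $r(\hat x)$ inside each $V(\tau_\eta)$ descends to the desired deformation retraction. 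The main technical point is the simultaneous compatibility of the $r_\eta$'s across all faces, but this is routine given the polytopal structure of $\Delta_i$ and the face relations supplied by Proposition \ref{cellulation}(iii).
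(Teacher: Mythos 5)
Your argument is correct and fills in what the paper leaves unproved: the lemma is asserted with an immediate $\qed$, being the standard fact that in a regular polyhedral complex the open star of a cell is open and deformation retracts to the open cell. One slip worth fixing: the parenthetical choice of normalization ``total weight $1$'' is \emph{not} continuous across cell boundaries. If $\tau'$ is carried by $\tau$, the inclusion $V(\tau')\hookrightarrow V(\tau)$ is given by the carrying matrix, which does not in general preserve the sum of weights, so the per-cell total-weight-one sections disagree where cells meet. Replace it by any globally continuous section of $p\colon\ML\setminus 0\to\PML$ --- for instance the length-$1$ section $i\colon\PML\to\ML$ the paper already fixes --- and the compatibility of lifts you invoke at cell boundaries becomes automatic. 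With that correction the rest is sound: $\hat p\in\inte(V(\tau_\sigma))\subset V(\tau_{\eta(x)})$ (by Proposition~\ref{cellulation}(iii)) and $\hat x\in\inte(V(\tau_{\eta(x)}))$, so $(1-t)\hat x+t\hat p\in\inte(V(\tau_{\eta(x)}))$ for $0\le t<1$, keeping $H_t(x)$ in $\hat\sigma$. Note also that the straight-line contraction to $\hat p$ already gives contractibility, which is all the paper actually uses (Lemma~\ref{controlled homotopy}); the compatible linear retractions $r_\eta$ are only needed for the stronger ``deformation retracts onto $\inte(\sigma)$'' clause.
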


\begin{lemma}  If $\sigma$ is a face of $\kappa$, then $\hat\kappa\subset \hat\sigma$.   \qed\end{lemma}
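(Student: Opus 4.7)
The plan is to unwind the definition of the open star and use transitivity of the face relation. Concretely, suppose $x \in \hat\kappa$. Then by definition there exists a cell $\eta \in \Delta_i$ with $\kappa \subset \eta$ such that $x \in \inte(\eta)$. Since $\sigma$ is a face of $\kappa$ and $\kappa$ is a face of $\eta$, transitivity of the face relation in the cellulation $\Delta_i$ gives $\sigma \subset \eta$. Hence $\inte(\eta)$ is one of the sets whose union defines $\hat\sigma$, and so $x \in \hat\sigma$.

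The only thing to check beyond this one-line set-theoretic observation is that the face relation in $\Delta_i$ is indeed transitive, which follows from Proposition~\ref{cellulation}(iii): face inclusions among cells correspond to carrying relations among the associated train tracks, and carrying is transitive, so if $\tau_\sigma$ is carried by $\tau_\kappa$ and $\tau_\kappa$ is carried by $\tau_\eta$, then $\tau_\sigma$ is carried by $\tau_\eta$, which in turn translates back to the statement that $\sigma$ is a face of $\eta$.

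There is no genuine obstacle here; the lemma is really just the standard fact from combinatorial topology that $\hat{(\cdot)}$ is order-reversing with respect to the face partial order. I would expect the proof in the paper to be one or two sentences.
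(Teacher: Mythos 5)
Your first paragraph is exactly the right (and essentially only) argument, and it matches what the paper takes to be immediate, since the paper offers no proof beyond \qed. However, your second paragraph is an unnecessary detour: the paper's definition of the open star literally reads $\hat\sigma=\bigcup_{\eta\in \Delta_i,\ \sigma\subset\eta}\inte(\eta)$, so the condition indexing the union is set-theoretic inclusion $\sigma\subset\eta$, not some abstractly defined face relation that might fail to be transitive. Transitivity of $\subset$ is automatic, and there is no need to pass through Proposition~\ref{cellulation}(iii), train tracks, or carrying at all; invoking them adds nothing and slightly obscures how elementary the statement is.
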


\begin{definition}  Define $U(\sigma)=\phi(\hat\sigma)\cap \EL$.\end{definition}

\begin{lemma}  \label{basis}$\mB= \{U(\sigma)|   \phi(\inte(\sigma))\cap\EL\neq
\emptyset $ and $\sigma\in \Delta_i$  for some $i\in \BN\}$ is a neighborhood basis of $
\EL$.\end{lemma}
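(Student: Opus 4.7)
The plan is to verify the two defining properties of a neighborhood basis: each $U(\sigma) \in \mB$ is open in $\EL$ and contains a specified point, and every open neighborhood of a point in $\EL$ contains some $U(\sigma)$.

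Openness of $U(\sigma)$ is immediate from Lemma \ref{sequence properties} iii), since $\hat\sigma$ is open in $\PML$ and is a union of open cells of the cellulation in which $\sigma$ lives. So I only need to prove the approximation property: given $\mu \in \EL$ and any neighborhood $W$ of $\mu$ in $\EL$, I must produce some $U(\sigma) \in \mB$ with $\mu \in U(\sigma) \subset W$.

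By Lemma \ref{one closeness}, it suffices to handle $W = W_\epsilon(\mu)$ for arbitrary $\epsilon > 0$. Let $N = N(\epsilon)$ be the integer supplied by Lemma \ref{fine splitting}, and fix any $i \ge N$. By Lemma \ref{sequence properties} ii) there is a unique cell $\sigma_i \in \Delta_i$ with $\phiinv(\mu) \subset \inte(\sigma_i)$; in particular $\mu \in \phi(\inte(\sigma_i)) \cap \EL$, so $U(\sigma_i) \in \mB$ and $\mu \in U(\sigma_i)$. I claim $U(\sigma_i) \subset W_\epsilon(\mu)$, which finishes the proof.

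To verify the claim, take $z \in U(\sigma_i)$, so $z = \phi(x)$ for some $x \in \hat\sigma_i$. By the definition of $\hat\sigma_i$ there is a cell $\eta \in \Delta_i$ having $\sigma_i$ as a face and with $x \in \inte(\eta)$; hence the associated train track $\tau_\eta \in T_i$ carries $z$. Because $\mu \in \phi(\sigma_i) \cap \EL$ and $\sigma_i$ is a face of $\eta$, Lemma \ref{carries diagonal} tells us that $\tau_\eta$ fully carries some diagonal extension $\mu'$ of $\mu$. Thus $\tau_\eta$ carries both $z \in \EL$ and $\mu' \in \mL(S)$, so Lemma \ref{fine splitting} yields $\dpts(z, \mu') < \epsilon$, i.e. $z \in W_\epsilon(\mu)$. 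The only delicate point is ensuring that the approximating train track $\tau_\eta$ actually fully carries a diagonal extension of $\mu$, which is exactly what Lemma \ref{carries diagonal} has been set up to provide via the descent property iii) of a good cellulation sequence.
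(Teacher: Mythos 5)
Your proof is correct and follows the paper's own route: reduce via Lemma \ref{one closeness} to showing $U(\sigma)\subset W_\epsilon(\mu)$, place $\phiinv(\mu)$ in $\inte(\sigma)$ by Lemma \ref{sequence properties} ii), and invoke Lemma \ref{fine splitting} once $i$ is large. The one place you diverge is the diagonal-extension detour: after establishing that $z$ and $\mu$ are both carried by $\tau_\eta$ (the former because $\phiinv(z)\subset\inte(\eta)$, the latter because $\phiinv(\mu)\subset\inte(\sigma_i)\subset P(\tau_\eta)$), you could have applied Lemma \ref{fine splitting} directly with $\mL_1=z$ and $\mL_2=\mu$, since $\mu\in\EL\subset\mL(S)$ and $\mu$ is trivially a diagonal extension of itself. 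Instead you route through Lemma \ref{carries diagonal} to get a diagonal extension $\mu'$ fully carried by $\tau_\eta$. This is harmless but unnecessary: Lemma \ref{fine splitting} only requires ``carried,'' not ``fully carried,'' and $W_\epsilon(\mu)$ already accepts $\mu'=\mu$. The paper's proof omits this step and is one move shorter; otherwise the two arguments coincide.
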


\begin{proof}  By Lemma  \ref{one closeness} it suffices to show that for each $\mu\in \EL$ and $\epsilon>0$, there exists a neighborhood $U(\sigma)$ of $\mu$  such that $z\in U(\sigma)$ implies $\dpts(z,\mu)<\epsilon$.  Choose $N$ such that the conclusion of Lemma \ref{fine splitting} holds.  Let $\sigma$ be the simplex of $\Delta_N$ such that $\phiinv(\mu)\subset \inte(\sigma)$.  If $z\in U(\sigma)$, then $\phiinv(z)\in P(\tau)$ where $\sigma$ is a face of $\tau$.  Therefore, $z$ and $\mu$ are carried by $\tau$ and hence $\dpts(\mu,z)<\epsilon$. \end{proof}

\begin{lemma}  \label{refinement2}  Let $\mU$ be an open cover of $\EL$. Then there exists a refinement $\mU_2$ of $\mU$ by elements of $\mB$, maximal with respect to inclusion, such that for each $U(\sigma_2)\in \mU_2$ there exists  $U\in \mU$ and  $\delta(U(\sigma_2))>0$ with the following property.   If $x\in \hat\sigma_2$ and $z\in \EL$ are such that $\dpts(\phi(x),z)<\delta(U(\sigma_2))$, then $z\in U$.\end{lemma}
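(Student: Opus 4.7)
The plan is to define $\mU_2$ to consist of \emph{every} $U(\sigma)\in\mB$ for which there exist $U\in\mU$ and $\delta>0$ witnessing the stated buffer condition. Maximality under inclusion is then automatic. Applying the buffer condition with $z=\phi(x)\in U(\sigma)\cap\EL$ (so that $\dpts(\phi(x),z)=0<\delta$) shows $U(\sigma)\subset U$, so $\mU_2$ refines $\mU$ provided it covers $\EL$. The real content is this covering statement.

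To prove coverage, fix $\mu\in\EL$ and choose $U\in\mU$ with $\mu\in U$. By Lemma \ref{one closeness}, pick $\epsilon>0$ so that $W_{3\epsilon}(\mu)\subset U$, and let $N=N(\epsilon)$ be supplied by Lemma \ref{fine splitting}. Let $\sigma\in\Delta_N$ be the unique cell with $\phiinv(\mu)\subset\inte(\sigma)$, provided by Lemma \ref{sequence properties}(ii). I claim that $U(\sigma)\in\mU_2$ with $\delta(U(\sigma))=\epsilon$, which completes the proof.

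For the buffer condition, take any $x\in\hat\sigma$; then $x\in\inte(\eta)$ for some cell $\eta$ of $\Delta_N$ having $\sigma$ as a face. By Proposition \ref{cellulation}(iii), $\tau_\sigma$ is carried by $\tau_\eta$, and by Lemma \ref{carries diagonal} applied to the face $\sigma$ of $\eta$ and to $\mu\in\phi(\sigma)\cap\EL$, the train track $\tau_\eta$ fully carries a diagonal extension of $\mu$; in particular $\tau_\eta$ carries $\mu$. Since $x\in P(\tau_\eta)$, it also carries $\phi(x)$. Lemma \ref{fine splitting} applied to the pair $\mu\in\EL$, $\phi(x)\in\mL(S)$ both carried by $\tau_\eta\in T_N$ yields $\dpts(\mu,\phi(x))<\epsilon$. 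For any $z\in\EL$ with $\dpts(\phi(x),z)<\epsilon$, the triangle inequality for Hausdorff distance in $PT(S)$ gives $\dpts(\mu,z)<2\epsilon<3\epsilon$; taking $\mu'=\mu$ as the diagonal extension, $z\in W_{3\epsilon}(\mu)\subset U$, as required.

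The main obstacle is obtaining uniform $\dpts$-control of $\phi$ over the entire open star $\hat\sigma$, not merely over the interior $\inte(\sigma)$ where $\mu$ lives. This is precisely what the good cellulation structure is designed to deliver: the face carrying relation of Proposition \ref{cellulation}(iii) together with Lemma \ref{carries diagonal} places $\mu$ and $\phi(x)$ simultaneously in $V(\tau_\eta)$ for every higher-dimensional $\eta\supset\sigma$, and Lemma \ref{fine splitting} then bounds their $\dpts$-distance uniformly by $\epsilon$ across all such $\eta$ at once. Without this compatibility between splitting, carrying, and the cellular face relation, one would have to deal separately with the finitely many $\tau_\eta$ and could lose the uniformity, so the strength of Proposition \ref{cellulation} is what makes the argument go through cleanly.
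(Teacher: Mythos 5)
Your proof is correct and follows essentially the same route as the paper: pick $\epsilon$ via Lemma \ref{one closeness}, take $\sigma$ to be the cell of a fine enough $\Delta_N$ containing $\phiinv(\mu)$, and observe that every point of $\hat\sigma$ maps under $\phi$ to a lamination carried by a track that also carries $\mu$. The only cosmetic difference is that you invoke Lemma \ref{fine splitting} to bound $\dpts(\mu,\phi(x))$ and then close the gap with the Hausdorff triangle inequality (valid here since the first distance is measured directly from $\mu$, not from a diagonal extension), whereas the paper cites Proposition \ref{convergence}, whose ``two-step'' conclusion already packages that triangle-inequality step; since Lemma \ref{fine splitting} is itself a corollary of Proposition \ref{convergence}, the two arguments are essentially identical.
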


\begin{proof}  It suffices to show that if $\mu\in \EL$ and $U\in \mU$, then there exists $\sigma_2$ and $\delta(\sigma_2)$ such that the last sentence of the lemma holds.    By Lemma \ref{one closeness} there exists $\delta_1>0$ such that if $z\in \EL$ and $\dpts(z,\mu)<\delta_1$, then $z\in U$.     By Proposition \ref{convergence} there exists $N_1>0$ and $\delta>0$ such that if $\tau$ is obtained by fully splitting any one of the train tracks associated to the top dimensional cells of $\Delta_1$  at least $N_1$ times, $\mu$ is carried by $\tau, \mL\in \mL(S)$ is carried by $\tau, z\in \EL$ with $\dpts(z,\mL)<\delta$, then $\dpts(z,\mu)<\delta_1$ and hence $z\in U$.  Therefore, if $\sigma$ is the cell of $\Delta_{N_1}$ such that $\inte(\sigma)$ contains $\phiinv(\mu)$, then let $\sigma_2=\sigma$ and $\delta(U(\sigma_2))=\delta$. \end{proof}

%DDDD 

\section{Bounds on the dimension of $\EL$}

In this section we give an upper bound for $\dim(\EL)$ for any finite type hyperbolic surface $S$.  When  $S$ is either a punctured sphere or torus we give lower bounds for $\dim(\EL)$.  For punctured spheres these bounds coincide.   We conclude that if $S$ is the $n+4$ punctured sphere, then $\dim(\EL)=n$.

To compute the lower bounds we will show that  $\pi_n(\EL)\neq 0$ for $S=S_{0,4+n}$ or $S_{1, 1+n}$.  Since $\EL$ is $(n-1)$-connected and $(n-1)$-locally connected Lemma \ref{dranishnikov} applies. 

To start with, $\EL$ is a separable metric space and hence  covering dimension, inductive dimension and cohomological dimension  of $\EL$ all coincide \cite{HW}.  By \emph{dimension} we mean any of these equal values.

The next lemma is key to establishing the upper bound for  $\dim(\EL)$.

\begin{lemma} \label{dimension n+1} Let $S=S_{g,p}$, where $p> 0$ and let 
$\{\Delta_i \}$ be a good cellulation sequence.  If $\sigma$ is a cell of $\Delta_i$ and $
\phi(\sigma)\cap \EL\neq\emptyset$, then $\dim(\sigma)\ge n+1-g$, where $
\dim(\PML)=2n+1$.

If $S=S_{g,0}$ and $\sigma$ is as above, then $\dim(\sigma)\ge n+2-g$.\end{lemma}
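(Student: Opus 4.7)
The plan is to read off $\dim \sigma = \dim P(\tau_\sigma)$ directly from the combinatorics of the associated train track $\tau = \tau_\sigma$, using only that $\tau$ carries some ending lamination $\mu$.

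First I would verify that every complementary region of $\tau$ in $S$ (component of $S \setminus N(\tau)$) is either a disk or a once-punctured disk. Any essential simple closed curve contained in such a region would be disjoint from $N(\tau) \supset \mu$, contradicting that $\mu$ fills $S$; the classification of inessential subsurfaces then forces each region to be a disk or once-punctured disk. A further essential-curve argument rules out a region containing two or more punctures, so the regions split into exactly $p$ once-punctured disks (one per puncture) plus some number $d \ge 0$ of puncture-free disks.

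Next I would compute the Euler characteristic. Since $\tau$ is generic trivalent, with $v$ switches and $e = 3v/2$ branches, $\chi(N(\tau)) = \chi(\tau) = -v/2$, and inclusion--exclusion along $\partial N(\tau)$ (a disjoint union of circles, contributing $0$) gives
$$d \;=\; \sum_R \chi(R) \;=\; \chi(S) + v/2 \;=\; 2 - 2g - p + v/2,$$
so $v = 2d + 4g + 2p - 4$. The $v$ switch conditions impose at most $v$ independent linear constraints on the ambient weight space $\BR^e$, so
$$\dim \sigma \;=\; \dim V(\tau) - 1 \;\ge\; e - v - 1 \;=\; \frac{v}{2} - 1 \;=\; d + 2g + p - 3.$$

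Finally, when $p \ge 1$ the inequality $d \ge 0$ immediately yields $\dim \sigma \ge 2g + p - 3 = n + 1 - g$. When $p = 0$ the surface $S$ is closed, so at least one complementary region exists and, having no puncture, must be a disk; hence $d \ge 1$ and $\dim \sigma \ge 2g - 2 = n + 2 - g$. The main technical obstacle is the disk / once-punctured-disk classification of the complementary regions of $\tau$; once that is in hand, the Euler-characteristic count and the linear-algebra lower bound on $\dim V(\tau)$ are routine.
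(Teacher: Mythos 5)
Your proposal is correct and follows essentially the same route as the paper's proof: both deduce from the filling property that complementary regions are disks or once-punctured disks, both compute $\dim V(\tau) \ge e - v = v/2$ via the switch conditions, and both close with the same Euler-characteristic count (your $d$ is the paper's $f$ minus $p$, so $d\ge 0$ is exactly the paper's $f\ge p$ and, for $p=0$, your $d\ge 1$ is the paper's $f\ge 1$). The only difference is cosmetic bookkeeping.
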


\begin{proof}  We first consider the case $p\neq 0$.  By Proposition \ref{cellulation}, $
\sigma=P(\tau)$ for some train track $\tau$.  It suffices to consider the case that $\tau$ is 
generic.  A generic track with e edges has $2e/3$ switches, hence $
\dim(V(\tau))\ge e-2e/3=e/3$, e.g. see page 116 \cite{PH}.  Since $\tau$ carries an element of $\EL$, 
all complementary regions are discs with at most one puncture.   After 
filling in the punctures, $\tau$ has say $f$ complementary regions all of 
which are discs, thus  $2-2g=\chi(S_g)=2e/3-e+f$ and hence $
\dim(V(\tau))\ge e/3=f-2+2g$.     Therefore,  $\dim(P(\tau))=
\dim(V(\tau))-1\ge f-3+2g\ge p-3+2g= n+1-g$, since $2n+1=6g+2p-7$.

If $p=0$, then $f\ge 1$ and hence the above argument shows  $\dim(P(\tau))=
\dim(V(\tau))-1\ge f-3+2g\ge 1-3+2g=n+2-g$. \end{proof}

\begin{corollary}  If $S=S_{g,p}$ with $p> 0$ (resp. $p=0$), then for each $m\in \BZ$, $\EL=U(\sigma_1)\cup\cdots\cup U(\sigma_k)$, where $\{\sigma_1, \cdots, \sigma_k\}$ are the cells of $\Delta_m$ of dimension $\ge n+1-g$ (resp. $\ge n+2-g$).\qed\end{corollary}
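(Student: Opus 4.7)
The plan is to combine the dimension bound from Lemma \ref{dimension n+1} with the neighborhood-basis description of $U(\sigma)$ and the membership statement of Lemma \ref{sequence properties} ii). The point is that every $\mu\in\EL$ sits in the interior of a unique cell of $\Delta_m$, and the lemma just proved forces that cell to have dimension at least $n+1-g$ (or $n+2-g$ in the closed case).

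More concretely, fix $m\in\BN$ and $\mu\in\EL$. By Lemma \ref{sequence properties} ii), there is a unique cell $\sigma\in\Delta_m$ with $\phiinv(\mu)\subset\inte(\sigma)$. In particular $\phi(\sigma)\cap\EL\ne\emptyset$ since it contains $\mu$, so the hypothesis of Lemma \ref{dimension n+1} applies to $\sigma$, giving $\dim(\sigma)\ge n+1-g$ when $p>0$ and $\dim(\sigma)\ge n+2-g$ when $p=0$. Therefore $\sigma$ is among the cells enumerated as $\sigma_1,\dots,\sigma_k$ in the statement. Since $\inte(\sigma)\subset\hat\sigma$, we have $\mu=\phi(\phiinv(\mu))\subset\phi(\hat\sigma)\cap\EL=U(\sigma)$. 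This shows $\EL\subset\bigcup_{i=1}^k U(\sigma_i)$, and the reverse inclusion is immediate from the definition $U(\sigma_i)=\phi(\hat\sigma_i)\cap\EL\subset\EL$.

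There is no real obstacle here: the corollary is essentially a repackaging of Lemma \ref{dimension n+1} together with the fact (already established in Lemma \ref{sequence properties}) that every preimage $\phiinv(\mu)$ lives in the interior of a single cell of each $\Delta_m$. The only thing to be a little careful about is that the statement quantifies over all cells of the appropriate dimension, not just those whose image meets $\EL$; but this is harmless because cells with $\phi(\sigma)\cap\EL=\emptyset$ simply contribute $U(\sigma)=\emptyset$ to the union and do not affect the covering property.
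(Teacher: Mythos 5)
Your proof is correct and is exactly the argument the paper intends: the corollary is stated with no proof as an immediate consequence of Lemma \ref{dimension n+1} combined with Lemma \ref{sequence properties} ii), which is precisely how you derive it. The only slip is your parenthetical claim that $\phi(\sigma)\cap\EL=\emptyset$ forces $U(\sigma)=\emptyset$ (false in general, since $\hat\sigma$ contains the interiors of higher-dimensional cells having $\sigma$ as a face), but this is immaterial because you have already justified the reverse inclusion correctly via $U(\sigma_i)=\phi(\hat\sigma_i)\cap\EL\subset\EL$.
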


\begin{proposition}  \label{upper dim bound} Let $S=S_{g,p}$.  If $p> 0$ (resp. $p=0$), then $
\dim(\EL)\le 4g+p-4=n+g$ (resp. $\dim(\EL)\le 4g-5=n+g-1)$, where $
\dim(\PML)=2n+1$. \end{proposition}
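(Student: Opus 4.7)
The plan is to bound the covering dimension of $\EL$ directly: for any open cover $\mathcal{U}$ of $\EL$, I would construct a refinement whose multiplicity is at most $n+g+1$ (respectively $n+g$ when $p=0$), which gives the claimed bound since $\EL$ is separable and metrizable.

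First, using the basis $\mathcal{B}$ of Lemma \ref{basis} together with the distance-controlled refinement of Lemma \ref{refinement2}, I would replace $\mathcal{U}$ by a refining cover $\mathcal{V}=\{U(\sigma_\alpha)\}$ of basic open sets. Applying the subdivision property of good cellulation sequences (Proposition \ref{cellulation}), together with Lemma \ref{fine splitting} and Lemma \ref{sequence properties}(v), I would pass to a single sufficiently fine cellulation $\Delta_N$ so that each $\sigma_\alpha$ may be taken as a cell of $\Delta_N$ (possibly after shrinking $U(\sigma_\alpha)$ via the subdivision), with the distance-controlled property of Lemma \ref{refinement2} still in force.

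The central observation is that $\mu\in U(\sigma)$ if and only if $\sigma$ is a face of the unique cell $\eta(\mu)\in\Delta_N$ whose interior contains $\phi^{-1}(\mu)$. By Lemma \ref{dimension n+1}, every EL-cell (cell with $\phi(\inte(\sigma))\cap \EL\neq\emptyset$) has dimension at least $n+1-g$ (resp.\ $n+2-g$), while $\dim(\PML)=2n+1$. Hence any chain $\sigma_1\subsetneq\sigma_2\subsetneq\cdots$ of EL-faces of $\eta(\mu)$ has length at most $(2n+1)-(n+1-g)+1=n+g+1$ (resp.\ $n+g$). I would then refine $\mathcal{V}$ via a barycentric-style construction: rather than using every EL-face of $\eta(\mu)$, select one EL-cell per dimension level according to a combinatorial rule (e.g.\ a lexicographic order on the cells of $\Delta_N$). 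The resulting cover still covers $\EL$, and at each $\mu$ the number of sets containing it is bounded by the number of distinct dimensions of EL-faces of $\eta(\mu)$, namely at most $n+g+1$ (resp.\ $n+g$).

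The main obstacle is making this per-point selection yield a genuine open refinement of $\mathcal{V}$. Because the cells of $\Delta_N$ are polytopes $P(\tau)$ rather than simplices, the combinatorial selection must be adapted so that it is compatible with the topology of $\EL$. I expect this to require a secondary subdivision $\Delta_M$ with $M\gg N$, chosen fine enough that neighborhoods of each $\mu$ sit inside a single open star $\hat\eta$ (via Lemma \ref{sequence properties}(v)), so that the selection rule is locally constant and thus defines open sets. The distance-control from Lemma \ref{refinement2} and the closedness of $\phi$ (Corollary \ref{closed map}) ensure that the final refinement remains subordinate to $\mathcal{U}$.
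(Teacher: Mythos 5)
Your approach differs from the paper's: you try to bound the \emph{covering} dimension directly by producing a low-order open refinement, whereas the paper works with the \emph{small inductive} dimension and the countable sum theorem, inducting on the skeleta-filtration $\mE_r=\{\mu\in\EL : \phi^{-1}(\mu)\subset\Delta_i^{r}\text{ some }i\}$ and bounding $\dim(\mE_{n+1+k})\le k$ by exhibiting small basic neighborhoods whose frontiers land in the lower stratum. Your numerology is right: the key facts you invoke ($\mu\in U(\sigma)\Leftrightarrow\sigma$ is a face of $\eta(\mu)$, and the length bound $n+g+1$ on chains of EL-faces forced by Lemma \ref{dimension n+1}) are exactly the same combinatorial input the paper uses.

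However, there is a genuine gap in your multiplicity argument. Selecting, for each $\mu$, one EL-face of $\eta(\mu)$ per dimension level does not control the order of the resulting cover, because the selections made at different points of $\EL$ interfere. Concretely: if $\eta(\mu)=\kappa$ has many EL-faces $\sigma_1,\dots,\sigma_r$ of the same dimension, then although $\mu$ itself ``selects'' only one of them (say $\sigma_1$), every other $\sigma_i$ is selected by some $\mu_i$ with $\eta(\mu_i)=\sigma_i$ (for such $\mu_i$, $\sigma_i$ is the only EL-face of $\eta(\mu_i)$ in its top dimension, so it must be chosen). Since $\mu\in U(\sigma_i)$ for every face $\sigma_i$ of $\kappa$, the point $\mu$ lies in all of $U(\sigma_1),\dots,U(\sigma_r)$, and your selection has removed nothing from the multiplicity at $\mu$. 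Making the selection rule locally constant via a finer $\Delta_M$, as you propose, does not help: local constancy affects which cells are chosen near $\mu$ but not which chosen $U(\sigma)$ happen to contain $\mu$. To make a direct covering-dimension argument work one would have to \emph{shrink} the sets $U(\sigma)$ (not merely select among them), e.g.\ by cutting along barycentric-type strata so that each shrunken set meets only the cells incident to $\sigma$ near its barycenter — this is essentially the nerve theorem for CW complexes and is substantially more delicate than what you have written, especially since $\EL$ is only a quotient image of the relevant subset of the cell complex. The paper's inductive-dimension argument with the sum theorem (Theorem III~2 of \cite{HW}) avoids this bookkeeping entirely: it only needs, at each point, \emph{one} basic neighborhood $U(\sigma)\cap X$ whose frontier $\subset(\cup\phi(\partial\sigma_i))\cap\EL$ drops into the lower skeleton stratum $\mE_{n+m}$, then closes the induction.
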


\begin{proof} To minimize notation we give the proof for the $g=0$ case.  The general case follows similarly, after appropriately shifting dimensions and indices.  

Let $\mE_r$ denote those elements of $\mu\in \EL$ such that $\mu=\phi(x)$ for some $x$ in the $r$-skeleton $\Delta_i^r$ of some $\Delta_i$.  We will show that for each $i$, $\dim(\mE_i)\le \max\{i-(n+1),-1\}$.  We use the convention that $\dim(X)=-1$ if $X=\emptyset$.  

By Lemma \ref{dimension n+1} $ \Delta^r_i\cap \EL=\emptyset$ for all $i\in \BN$ and $r\le n$ and hence $\mE_n=\emptyset$.  Now suppose by induction that for all $k<m$, $\dim(\mE_{n+1+k})\le k$.  We will show that $\dim(\mE_{n+1+m})\le m$.  Now $\mE_{n+1+m}=\cup_{i\in \BN} \phi(\Delta_i^{n+1+m})\cap \EL$, each $\phi(\Delta_i^{n+1+m})\cap \EL$ is closed in $\EL$ by Lemma \ref{closed map} and $\EL$ is separable metric, hence to prove that $\dim(\mE_{n+1+m})\le m$ it suffices to show by the sum theorem (Theorem III 2 \cite{HW}) that $\dim(\phi(\Delta_i^{n+1+m})\cap\EL)\le m$ for all $i\in \BN$.  

Let $X=\phi(\Delta_i^{n+1+m})\cap \EL$.  To show that the inductive dimension of $X$ is $\le m$ it suffices to show that if $U$ is open in $X$, $\mu\in U$, then there exists $V$ open in $X$ with $\mu\in V$ and $\partial V\subset \mE_{n+m}$.

Let $\sigma\in \Delta_j$, some $j\ge i$, such that $\mu\in U(\sigma)\cap X\subset U$.  Such a $ \sigma$ exists by Lemma \ref{basis}.  Now $\hat\sigma=\cup_{u\in J} \inte(\sigma_u)$ where the union is over all cells in $\Delta_j$ having $\sigma$ as a face.  Since $\Delta_i^{n+1+m}\subset \Delta_j^{n+1+m}$ it follows that after reindexing, $U(\sigma)\cap X\subset \phi(\inte(\sigma_1)\cup\cdots\cup \inte(\sigma_q))\cap\EL$, where $\sigma_1, \cdots, \sigma_q$ are those $\sigma_u$'s such that $\dim(\sigma_i)\le n+m+1$.  

By Lemma \ref{closed map} $(\cup_{i=1}^q \phi(\sigma_i))\cap \EL$ is closed in $\EL$ and hence restricts to a closed set in $X$.  It follows that $\partial (U(\sigma)\cap X)\subset (\cup_{i=1}^q \phi(\partial \sigma_i))\cap \EL\subset \mE_{n+m}.$\end{proof}

To  establish our lower bounds on $\dim(\EL)$ we will use the following basic result.

\begin{lemma}  \label{dranishnikov} Let $X$ be a separable metric space such that $X$ is $(n-1)$-connected and $(n-1)$-locally connected.  If $\pi_n(X)\neq 0$, then $\dim(X)\ge n$.\end{lemma}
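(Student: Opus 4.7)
The plan is to prove the contrapositive: assuming $\dim(X)\le n-1$, we deduce that $\pi_n(X)=0$.

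The first step is to invoke a standard consequence of dimension theory: a separable metric space that is $(n-1)$-locally connected and has dimension $\le n-1$ is an \emph{absolute neighborhood retract} (the Kuratowski--Dugundji--Hanner theorem, as developed in Hurewicz--Wallman and Borsuk's theory of retracts). Thus, under our hypotheses, $X$ is an ANR.

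Next, since $X$ is an ANR, by West's theorem $X$ has the homotopy type of a CW complex, so the Hurewicz theorem applies in its usual form: because $X$ is $(n-1)$-connected, the Hurewicz homomorphism $\pi_n(X)\to H_n(X;\mathbb{Z})$ is an isomorphism. It therefore suffices to show $H_n(X;\mathbb{Z})=0$. But for a separable metric ANR of dimension $\le n-1$, Cech cohomology vanishes in all degrees $\ge n$ (this is the Alexandroff characterization of dimension for separable metric spaces); because ANRs are locally contractible, Cech and singular cohomology of $X$ agree, so singular cohomology also vanishes in those degrees. The universal coefficient theorem then yields $H_n(X;\mathbb{Z})=0$, so $\pi_n(X)=0$ as required, proving the contrapositive.

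The main obstacle, if one wishes a self-contained proof, is the ANR theorem itself: showing that $(n-1)$-local connectivity combined with $\dim X\le n-1$ forces $X$ to be an ANR. This is nontrivial and is proved by extending maps inductively over the skeleta of a fine triangulation of a neighborhood in which $X$ is embedded (in, say, a Nobeling or Hilbert-cube-type space), with the dimension bound ensuring that one only needs to extend over cells of dimension $\le n-1$---which is precisely what $(n-1)$-local connectivity makes possible. Granting this classical result, the remainder of the argument reduces to the standard algebraic-topological apparatus (Hurewicz, universal coefficients, and the Cech--singular comparison theorem for ANRs).
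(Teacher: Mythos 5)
Your proof takes a genuinely different route from the paper's. The paper argues directly with an essential map $f:S^n\to X$: the image $f(S^n)$ is a compact set of dimension at most $n-1$, which by Bothe's theorem embeds in a compact metric absolute retract $Y$ of dimension at most $n$; Hu's extension theorem (which is exactly where the $(n-1)$-connectedness and $(n-1)$-local connectedness are used) then extends the inclusion $f(S^n)\hookrightarrow X$ to a map $g:Y\to X$, so $f$ factors through the contractible space $Y$ and is null-homotopic, a contradiction. Your argument instead first upgrades $X$ to an ANR via Kuratowski--Dugundji, then runs through CW homotopy type, Hurewicz, cohomological dimension, and the universal coefficient theorem. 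The paper's route never needs to know that $X$ is an ANR or has CW homotopy type, and it uses the local connectedness hypothesis exactly once, inside Hu's theorem.

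Your write-up, however, has two gaps as stated. First, for $n=1$ the Hurewicz map $\pi_1(X)\to H_1(X;\mathbb{Z})$ is only abelianization, not an isomorphism, so the identification $\pi_n\cong H_n$ breaks down; you would need to handle $n=1$ separately (the lemma is vacuously true there, since a connected separable metric space of covering dimension $0$ is a point). Second, the final UCT step is too quick: from $H^n(X;\mathbb{Z})=H^{n+1}(X;\mathbb{Z})=0$ one only gets $\mathrm{Hom}(H_n(X),\mathbb{Z})=0$ and $\mathrm{Ext}(H_n(X),\mathbb{Z})=0$, which does not obviously force $H_n(X)=0$ when $H_n(X)$ is not finitely generated --- this is Whitehead-problem territory. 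What you need is the stronger fact that $H^n(X;G)=0$ for \emph{every} coefficient group $G$, which is indeed available since covering dimension bounds cohomological dimension over any coefficient group; taking $G=H_n(X)$ in UCT then kills the identity homomorphism and hence $H_n(X)$. The paper's geometric argument sidesteps both of these subtleties entirely.
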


\begin{proof}  (Dranishnikov \cite{Dr})  Let $f:S^n\to X$ be an essential map.  If $\dim(X)\le n-1$, then by \cite{HW} $\dim(f(S^n))\le \dim(X)$ and hence $\dim(f(X))$ is at most $n-1$. By Bothe \cite{Bo} there exists a compact, metric, absolute retract $Y$ such $\dim(Y)\le n$ and $f(S^n)$ embeds in $Y$.  By Theorem 10.1 \cite{Hu}, since $Y$ is  metric and $X$ is $(n-1)$-connected and $(n-1)$-locally connected, the inclusion of $f(S^n)$ into $ X$ extends to a map $g:Y\to X$.  Now $Y$ is contractible since it is an absolute retract. (The  cone of $Y$ retracts to $Y$.)  It follows that $f$ is homotopically trivial, which is a contradiction.\end{proof}

We need the following controlled homotopy lemma for $\PML$ approximations that are very close to a given map into $\EL$.

\begin{lemma} \label{controlled homotopy}  Let $K$ be a finite simplicial complex.  Let $g:K\to \EL$.  Let  $U\subset \PML$ be a neighborhood of $\phiinv(g(K))$.  There exists $\delta>0$ such that if $f_0, f_1:K\to \PML$ and for every $t\in K$ and $i\in \{0,1\}$, $\dpts(\phi(f_i(t))$, $g(t))<\delta$, then there exists a homotopy from $f_0$ to $f_1$ supported in $U$.  \end{lemma}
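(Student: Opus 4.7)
The plan is to define the homotopy $H:K\times I\to\PML$ as a straight-line interpolation in the ambient $\ML$, after lifting $f_0,f_1$ via a continuous section of $p:\ML\setminus 0\to\PML$, and to verify that $H$ stays inside $U$ for $\delta$ sufficiently small by exploiting the convex cell structure on $\PML$ furnished by a good cellulation sequence.

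Set $L=\phiinv(g(K))$. By Corollary~\ref{preimage compact}, $L$ is a compact subset of $\PML$, and by hypothesis $L\subset U$. By Lemma~\ref{preimage close}, there is an open neighborhood $V\subset\EL$ of $g(K)$ with $\phiinv(V)\subset U$. Fix a good cellulation sequence $\{\Delta_i\}$ from Proposition~\ref{cellulation}. Using Lemma~\ref{basis} together with the compactness of $g(K)$, choose an index $i$ and finitely many cells $\sigma_1,\ldots,\sigma_k\in\Delta_i$ such that the basis neighborhoods $U(\sigma_1),\ldots,U(\sigma_k)$ cover $g(K)$, each $U(\sigma_j)\subset V$, and each open star $\hat\sigma_j\subset U$. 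Apply Lemma~\ref{refinement2} to arrange that there is a uniform $\eta>0$ with the following property: whenever $\dpts(\phi(x),z)<\eta$ for some $z\in U(\sigma_j)$, the point $x$ lies in $\hat\sigma_j$. By Lemma~\ref{pml one closeness} applied uniformly over the compact set $g(K)$, we may shrink $\eta$ further so that every such $x$ satisfies $d_{\PML}(x,\phiinv(z))<\epsilon$ for a prescribed small $\epsilon>0$, where the choice of $\epsilon$ will be made at the end.

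Fix a continuous section $\iota:\PML\to\ML$ (for instance, normalize each projective measured lamination to have total length one) and set $\hat f_r=\iota\circ f_r$ for $r\in\{0,1\}$. Define
\[
H(t,s)\;=\;p\bigl(s\,\hat f_1(t)+(1-s)\,\hat f_0(t)\bigr),\qquad t\in K,\ s\in[0,1].
\]
For $\delta<\eta$ and any $t\in K$, pick $j(t)$ with $g(t)\in U(\sigma_{j(t)})$; then $\hat f_0(t),\hat f_1(t)\in\pinv(\hat\sigma_{j(t)})$. By Lemma~\ref{preimage simplex}, the lifted preimage $\iota(\phiinv(g(t)))$ is a compact convex subset of the open cone $V(\tau_{\sigma_{g(t)}})\setminus 0$, where $\sigma_{g(t)}$ denotes the (unique) cell of $\Delta_i$ whose interior contains $\phiinv(g(t))$, a face of $\sigma_{j(t)}$'s costar. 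By the second half of the previous paragraph, $\hat f_0(t)$ and $\hat f_1(t)$ lie within Euclidean distance $\epsilon$ (in a fixed lift to $\ML$) of this compact convex set, and hence, by convexity, so does every point $s\hat f_1(t)+(1-s)\hat f_0(t)$ of the straight-line segment between them. Choosing $\epsilon$ small enough that the $\epsilon$-neighborhood of $\iota(L)$ in $\ML$ projects into $\bigcup_j\hat\sigma_j\subset U$, we conclude $H(t,s)\in\hat\sigma_{j(t)}\subset U$. The continuity of $H$ is immediate from that of $\iota$, $p$, $f_0$, and $f_1$.

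The main obstacle is the uniform geometric estimate that converts nearness in the $\dpts$ metric into nearness in $\ML$ to a common convex set, so that the straight-line segment in $\ML$ does not stray outside the open star of the relevant cell. Even though $f_0(t)$ and $f_1(t)$ may lie in different top-dimensional cells of $\Delta_i$ meeting along a common face that contains $\phiinv(g(t))$, convex combinations cannot escape the $\epsilon$-neighborhood of that face because convex combinations respect proximity to a convex set. Uniformity in $t$ follows from the compactness of $g(K)$, combined with Lemma~\ref{pml one closeness} and Lemma~\ref{sequence properties}(v), which together guarantee that nearness in $\dpts$ to a filling lamination forces nearness in $\PML$ to its convex preimage fiber.
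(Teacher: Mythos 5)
The central formula $H(t,s)=p\bigl(s\,\hat f_1(t)+(1-s)\,\hat f_0(t)\bigr)$ is not well defined, and this is a genuine gap rather than a cosmetic one. The space $\ML$ is a piecewise-linear cone, not a convex subset of a vector space: the linear combination $s\,\hat f_1(t)+(1-s)\,\hat f_0(t)$ is a measured lamination only when $\hat f_0(t)$ and $\hat f_1(t)$ are carried by a common train track $\tau$, so that both lie in a single convex cone $V(\tau)$. In general the sum of two transversely intersecting measured laminations is not a measured lamination, and a straight segment in any ambient embedding of $\ML$ between points of two adjacent top-dimensional cones will leave $\ML$. Your setup only guarantees that $f_0(t)$ and $f_1(t)$ lie in the open star $\hat\sigma_{j(t)}$, i.e.\ in the interiors of cells of $\Delta_i$ all having $\sigma_{j(t)}$ as a common face; two such cells need not be faces of a single top-dimensional cell (think of two top cells meeting only along a low-dimensional face $\sigma_{j(t)}$, exactly the situation that arises when $\phiinv(g(t))$ is a lower-dimensional cell). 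The observation that ``convex combinations respect proximity to a convex set'' is correct in a fixed ambient vector space, but it does not produce a point of $\ML$, so $p$ is simply not defined on your segment. Said differently, Remark~\ref{convexity} in the paper makes precise when straight-line interpolation is permitted: only when there is a single train track $\tau$ carrying everything in sight.

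The paper's own proof is specifically engineered to avoid this. After a preparatory step very close in spirit to yours (shrinking neighborhoods, using Lemmas~\ref{preimage close}, \ref{basis}, \ref{pts separation}, \ref{pml one closeness}), it subdivides $K$ and assigns to each simplex $\kappa$ a cell $\sigma(\kappa)\in\Delta_i$ with $f_0(\kappa)\cup f_1(\kappa)\subset\hat\sigma(\kappa)$, and — this is the key bookkeeping — arranges the assignment to be order-reversing: $\kappa'\subset\kappa$ implies $\sigma(\kappa)\subset\sigma(\kappa')$, hence $\hat\sigma(\kappa')\subset\hat\sigma(\kappa)$. The homotopy is then built inductively over the skeleta of $K$: on a vertex $v$ one chooses any path in the contractible open star $\hat\sigma(v)$ (Lemma~\ref{contractible}); at the inductive step, the boundary data on $\partial(\kappa\times I)$ already lies in $\hat\sigma(\kappa)$ by the order-reversing property, and contractibility of $\hat\sigma(\kappa)$ supplies a filler. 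This never invokes a global linear structure on $\ML$ and is robust against $f_0(t)$ and $f_1(t)$ falling into different top-dimensional cells. To repair your argument you would need to reduce, for each $t$, to a situation where a single $V(\tau)$ contains the entire segment, which is exactly what the inductive subdivision-and-contractibility construction accomplishes.
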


\begin{proof}  Let $\Delta_1, \Delta_2, \cdots$ be a good cellulation sequence of $\PML$.  Given $x\in \EL$ and $i\in \BN$ let $\sigma_x^i$ denote the cell of $\Delta_i$ such that $\phiinv(x)\subset \inte(\sigma_x^i)$.  Such a $\sigma_x^i$ exists since each cell of $\Delta_i$ is associated to a train track.  By Lemma \ref{preimage close}, 
given $x\in \EL$ there exists a neighborhood $W_x$ of $x$ such that $\phiinv(W_x)\subset U$ and lies in a small neighborhood of the cell $\phiinv(x)$, i.e. that only intersects $\sigma_x^i$ and higher dimensional cells that have $\sigma_x^i$ as a face.  Therefore,  $y\in W_x$ implies that $\sigma_x^i$ is a face of  $\sigma_y^i$ and hence $\hat \sigma_y^i\subset \hat \sigma_x^i$.

Let $U'$ be a neighborhood of $\phiinv(g(K))$ such that $\bar U'\subset U$.  
By Lemmas \ref{basis} and \ref{preimage close}, for $i$ sufficiently large, $
\phiinv(U(\sigma_x^i))\subset U'$.  This implies that $\hat \sigma_x^i\subset U
$, since $\FPML$ is dense in $\PML$.     Let $U_i=\cup_{x\in g(K)}\hat 
\sigma_x^i$.  By compactness of $g(K)$ and the previous paragraph it follows 
that for $ i$ sufficiently large $U_i\subset U$.  Fix such an $i$.  Let $\Sigma=\{\sigma\in \Delta_i|$ some $\sigma_x^i$ is a face of $\sigma.\}$.

There exists $\delta>0$ and a function $W:g(K)\to \Sigma$ such that if $y\in \PML, x\in g(K)$ and $\dpts(x, \phi(y))<\delta$, then $y\in \hat W(x)$.  Furthermore, if $\dpts(x_1,\phi(y))<\delta, \cdots, \dpts(x_u, \phi(y))<\delta$, then after reindexing the $x_i$'s, $W(x_1)\subset \cdots W(x_u)$. Modulo the epsilonics (to find $\delta$) which follow from  Lemmas \ref{pts separation} and \ref{pml one closeness} (i.e. super convergence), $W$ is defined as follows.  Define $d:g(K)\to \BN$ by $d(x)=\dim(\sigma_x^i)$.  Let $p$ be the minimal value of $d(g(K))$.  Define $W(x)=\sigma_x^i$ if $d(x)=p$.  Next define $W(x')=\sigma_x^i$ if $\dpts(x, x')$ is very small, where $d(x)=p$.  To guarantee  that the second sentence holds, we require  that if both $\dpts(x_1, x')$ and $\dpts(x_2,x')$ are very small, then $W(x_1)=W(x_2)$.  If $W(x)$ has not yet been defined and $d(x)=p+1$, then define $W(x)=\sigma_x^i$.  Next define $W(x')= \sigma_x^i$ if $W(x')$ hasn't already been defined and $\dpts(x, x')$ is very very small, where $d(x)=p+1$.  Again we require that if $\dpts(x_1, x')$ and $\dpts(x_2, x')$ are very very small,  then after reindexing $W(x_1)\subset W(x_2)$. Inductively, continue to define $W$ on all of $g(K)$.  Since  $\dim(\Delta_i)$ is finite this process eventually terminates.   Take $\delta$ to be the minimal value used to define smallness.

Next subdivide $K$ such that the following holds.  For every simplex $\kappa$ of $K$, there exists $t\in \kappa$ such that  for all $s\in \kappa$,  $ \dpts(\phi(f_0(s))$, $g(t))<\delta$ and $\dpts(\phi(f_1(s))$, $g(t))<\delta$.  Thus by the previous paragraph, for each simplex $\kappa$ of $K$, $f_0(\kappa)\cup f_1(\kappa)\subset \hat W(g(t))$ for some $t\in \kappa$ where $t$ satisfies the above property.  Let $\sigma(\kappa)$ denote the maximal dimensional simplex of $\Sigma$ with these properties.  Note that by the second sentence of the previous paragraph, $\sigma(\kappa)$ is well defined and if $\kappa'$ is a face of $\kappa$, then $\sigma(\kappa)$ is a face of $\sigma(\kappa')$.  

We now construct the homotopy $F:K\times I\to \PML$ from $f_0$ to $f_1$.   Assume that $K$ has been subdivided as in the previous paragraph.   If $v$ is a vertex of $K$, then both $f_0(v), f_1(v)\in \hat\sigma(v)$ which is contractible by Lemma \ref{contractible}.  Thus $F$ extends over $v\times I$ such that $F(v\times I)\subset \hat\sigma(v)$.  Assume by induction that if $\eta$ is a simplex of $K$ and $\dim(\eta)<m$, then $F$ has been extended over $\eta\times I$  with $F(\eta)\subset \hat\sigma(\eta)$.   If $\kappa$ is an $m$-simplex, then the contractibility of $\hat\sigma(\kappa)$ enables us to extend $F$ over $\kappa\times I$, with $F(\kappa\times I)\subset \hat \sigma(\kappa)$.\end{proof}

\begin{theorem}  \label{homotopy group} If $S=S_{0,4+n}$ or $S_{1, 1+n}$, then $\pi_n(\EL)\neq 0$. \end{theorem}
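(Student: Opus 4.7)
The plan is to exhibit an explicit essential map $f : S^n \to \EL$, so that $\pi_n(\EL) \neq 0$. The strategy is to use Alexander duality inside $\PML \cong S^{2n+1}$ applied to a subcomplex of $\UPML$, together with the $\PML$--$\EL$ approximation results of \S 13 to transfer null-homotopies between $\PML$ and $\EL$.

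First I would construct, inside $\PML$, an $n$-dimensional subcomplex $X \subset \UPML$ together with an embedded $n$-sphere $\Sigma \subset \FPML$ that links $X$ nontrivially, i.e.\ represents a nonzero class in $H_n(\PML \setminus X)$.  For $S_{0, 4+n}$, a pants decomposition consists of $n+1$ disjoint simple closed curves $C_0, \dots, C_n$, and the projective measures $\lambda_{C_0}, \dots, \lambda_{C_n}$ span an $n$-simplex $\Delta_P$ in $\UPML$ via the map $I : \CS \to \PML$ of Definition \ref{curve complex}. Varying $P$ over flip-related pants decompositions assembles an $n$-dimensional subcomplex $X \subset \UPML$ whose top cohomology $\tilde H^n(X)$ is nontrivial (a known fact about the topology of $\CS$ for these surfaces). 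Since $\dim(\PML) - \dim(X) = n+1$, one can take $\Sigma$ to be a small generic linking $n$-sphere of a top cell of $X$, obtained as the boundary of a transverse $(n+1)$-disc guaranteed by the neighborhood structure of Remark \ref{cone neighborhood}; $\Sigma$ then sits in $\FPML$, and $f := \phi|_\Sigma : \Sigma \to \EL$ is our candidate map.

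Next I would show that $f$ is essential by contradiction. Suppose $f$ extends to a continuous $G : B^{n+1} \to \EL$. The compact set $L := G(B^{n+1}) \subset \EL$ has preimage $\phiinv(L) \subset \FPML$ disjoint from $X \subset \UPML$, so Lemma \ref{preimage close} provides an open $V \subset \EL$ containing $L$ whose preimage $\phiinv(V) \subset \PML$ is disjoint from $X$. By Lemma \ref{pml approximation two} applied to $G$, there is a generic PL map $H : B^{n+1} \to \PML$ with $\dpts(\phi(H(t)), G(t)) < \epsilon$ for every $t$; choosing $\epsilon$ small using Lemma \ref{pts separation}, we may assume $H(B^{n+1}) \subset \phiinv(V)$, and in particular $H(B^{n+1}) \cap X = \emptyset$. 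By Lemma \ref{controlled homotopy}, $H|_{\partial B^{n+1}}$ is homotopic in $\phiinv(V)$ to the inclusion $\Sigma \hookrightarrow \PML$ (both being $\PML$-approximations to the same $\EL$-map $f$), so that homotopy is also disjoint from $X$. Concatenating with $H$ yields a null-homotopy of $\Sigma$ in $\PML \setminus X$, contradicting the nontriviality of $[\Sigma] \in H_n(\PML \setminus X)$.

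The main obstacle is the first step: constructing the subcomplex $X$ and verifying that $\tilde H^n(X) \neq 0$. One direct route invokes Harer's theorem that $\CS(S_{0, 4+n})$ (respectively $\CS(S_{1, 1+n})$) is homotopy equivalent to a wedge of $n$-spheres, combined with a comparison showing that the image of $I : \CS \to \UPML$ of Definition \ref{curve complex} carries the required class. A more elementary alternative is to use the good cellulation sequence of Proposition \ref{cellulation} to describe $\UPML$ as a union of polytopes of train tracks carrying a simple closed curve as a leaf, and to verify the homology computation directly on that combinatorial model.
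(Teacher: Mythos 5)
Your overall architecture --- Harer's theorem for $\CS$, Alexander--Sitnikov linking against an $n$-cycle in $I(\CS)\subset\PML\cong S^{2n+1}$, the $\PML$-approximation lemma and the controlled-homotopy lemma --- is the same as the paper's, and your argument in the second half (a null-homotopy $G:B^{n+1}\to\EL$ would, via Lemma~\ref{pml approximation two}, Lemma~\ref{pts separation}, and Lemma~\ref{controlled homotopy}, yield a null-homotopy of $\Sigma$ in $\PML\setminus X$) tracks Step~2 of the paper's proof of Lemma~\ref{homotopy} closely.

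There is, however, a genuine gap in your construction of $\Sigma$. You assert that a small generic $n$-sphere linking a top cell of $X$ can be chosen in $\FPML$. Transversality does let $\Sigma$ miss the $n$-dimensional complex $X$, since $\dim X+\dim\Sigma = 2n<2n+1$; but $\FPML$ is the complement of $\UPML$, and $\UPML$ is a union of $2n$-balls $B_C$, i.e.\ has codimension one in $\PML$. A generic $n$-sphere in $S^{2n+1}$ meets a $2n$-dimensional stratum it comes near in a set of dimension $n-1$, which is nonempty for $n\ge 1$ and not removable by small perturbation. So $\Sigma$ cannot be placed in $\FPML$ by genericity, and if $\Sigma\not\subset\FPML$ then $\phi|_\Sigma$ is not even defined as a map into $\EL$. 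Whether a sphere in $\FPML$ linking a cell of $I(\CS)$ exists at all is essentially a higher connectivity question about $\FPML$, one the paper explicitly leaves open in \S 19.

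This is precisely the obstruction that Lemma~\ref{homotopy} is designed to overcome, and it is where the marker machinery of \S\S 3--10 enters. Step~1 of that proof does not produce a sphere inside $\FPML$: it begins with a generic PL $f_0:S^n\to\PML\setminus L$ (which does meet $\UPML$), and applies Theorem~\ref{marker theorem} in a skeletal induction over a triangulation of $S^n$ to construct $F:S^n\times I\to\PMLEL$ with $F|_{S^n\times 0}=f_0$, $F(S^n\times[0,1))\subset\PML\setminus L$, and $F|_{S^n\times 1}\subset\EL$. The intermediate slices $F|_{S^n\times s}$, $s<1$, still meet $\UPML$; continuity of the limit $g=F|_{S^n\times 1}$ into $\EL$ is guaranteed by the marker-controlled convergence criterion of Proposition~\ref{continuity criterion} in the non-Hausdorff $\PMLEL$ topology of Definition~\ref{ml cup el}, not by $g$ lifting to $\FPML$. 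Your proposal must incorporate this step, or an argument of comparable strength, since the remaining lemmas you invoke do not by themselves yield an essential map $S^n\to\EL$.
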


\begin{proof}  Let $S$ denote either $S_{0,4+n}$ or $S_{1, 1+n}$.  In either case $\dim(\PML)=2n
+1$.   By Harer \cite{Hr} (see also Ivanov \cite{Iv}, \cite{IJ}) the curve complex $\CS$ is homotopy 
equivalent to a non trivial wedge of $n$-spheres.  Thus, there exists an essential map $h:S^n\to 
\CS$.  We can assume that $h(S^n)$ is a simplicial map with image a finite subcomplex $L$ of 
the $n$-skeleton of $\CS$ and hence $0\neq [h_*([S^n])]\in H_n(L)$.  We abuse notation by 
letting $L$ denote the image of $L$ under the natural embedding, given in Definition \ref{curve complex}.  Let 
$Z$ be a simplicial $n$-cycle that represents $[h_*([S^n])]$. Let $\sigma$ be a $n$-simplex of $L$ 
in the support of $Z$.  Let $f_0:S^n\to \PML\setminus L$ be a PL embedding which links $\sigma
$, i.e.  there exists an extension  $F_0:B^{n+1}\to \PML$ of $f_0$ such that $F_0$ is transverse to 
$L$ and intersects $L$ at a single point in $\inte(\sigma)$.  It follows that $f_0[S^n]$ has non 
trivial linking number with $Z$ and hence $f_0$ is homotopically non trivial as a map into $\PML
\setminus L$.   The theorem is now a consequence of  the following lemma.\end{proof}
%XXXX
\begin{lemma}  \label{homotopy} Let $S$ be a finite type hyperbolic surface with $\dim(\PML)=2n+1$.  Let $L$ be an $n$-dimensional subcomplex of $\CS$ that is naturally embedded in $\PML$.    If  $\pi_n(\PML\setminus L)\neq 0$, then $\pi_n(\EL)\neq 0$.  Indeed, if $f:S^n\to \PML\setminus L$ is essential, then there exists a map $F: S^n\times I \to \PMLEL\setminus L$ such that $F|S^n\times 0=f$, $F(S^n\times [0,1))\subset \PML\setminus L$ and $0\neq [F|S^n\times 1]\in \pi_n(\EL)$.\end{lemma}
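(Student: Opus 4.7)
The plan is to construct $F$ by using the marker machinery of Section~11 to produce $g := F|_{S^n\times\{1\}} : S^n\to\EL$ close to $f$, and then to show $[g]\neq 0$ in $\pi_n(\EL)$ by contradiction: any null-homotopy $G:B^{n+1}\to\EL$ of $g$ would, via Lemma~\ref{pml approximation two} and the controlled homotopy of Lemma~\ref{controlled homotopy}, yield a null-homotopy of $f$ inside $\PML\setminus L$, contradicting essentialness of $f$.

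First I would perturb $f$ to a generic PL map into $\PML\setminus L$ and invoke Lemma~\ref{finding markers}(i) to obtain an $\epsilon$-marker family $\mE$ hit by $f$, with $\epsilon>0$ to be fixed small later. Theorem~\ref{marker theorem}, applied with $V=S^n$, $W=\emptyset$, marker family $\mE$, and the finite subcomplex $L\subset\CS$ to be avoided, then yields $g:S^n\to\EL$ hitting $\mE$, realised as the terminal map of a concatenation of relative push-off homotopies $F:S^n\times[0,1]\to\PMLEL$ with $F|_{S^n\times\{0\}}=f$, $F(S^n\times[0,1))\subset\PML$, and $F$ disjoint from $L$. The closeness argument in the proof of Lemma~\ref{weak el approximation}, applied to $f$ and $g$ which both hit the common $\epsilon$-marker family $\mE$, gives $\dpts(g(t),\phi(f(t)))<\epsilon$ throughout $S^n$.

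Now assume for contradiction that $g$ extends to $G:B^{n+1}\to\EL$. Apply Lemma~\ref{pml approximation two} to $G$ with a small parameter $\eta>0$ to obtain a generic PL map $h:B^{n+1}\to\PML$ with $\dpts(\phi(h(t)),G(t))<\eta$ and $d_{\PML}(h(t),\phiinv(G(t')))<\eta$ for some nearby $t'\in B^{n+1}$. The main obstacle is to arrange $h(B^{n+1})\subset\PML\setminus L$: since $\phiinv(G(B^{n+1}))\subset\FPML$ is compact by Corollary~\ref{preimage compact} and disjoint from $L\subset\UPML$, Lemma~\ref{preimage close} supplies a neighborhood $W\subset\EL$ of $G(B^{n+1})$ with $\phiinv(W)\subset\PML\setminus L$; for $\eta$ sufficiently small the $d_{\PML}$-closeness of $h$ to $\phiinv(G(B^{n+1}))$ then forces $h(B^{n+1})\subset\phiinv(W)\subset\PML\setminus L$.

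Finally, apply Lemma~\ref{controlled homotopy} with $K=S^n$, the map $g:S^n\to\EL$, and $U=\PML\setminus L$ to extract a threshold $\delta>0$. Choosing $\epsilon,\eta<\delta$ makes both $f$ and $h|_{S^n}$ into maps $\delta$-close to $g$ in $\dpts$, hence homotopic inside $\PML\setminus L$. Composing this homotopy with $h:B^{n+1}\to\PML\setminus L$ produces a null-homotopy of $f$ in $\PML\setminus L$, contradicting $[f]\neq 0$. Hence $[g]\neq 0$, and the $F$ built in the first step satisfies all the stated properties. I expect the most delicate step to be simultaneously satisfying the smallness constraints on $\epsilon$ and $\eta$ that enter through the neighborhood condition $h(B^{n+1})\subset\phiinv(W)$ and the threshold $\delta$ coming from Lemma~\ref{controlled homotopy}.
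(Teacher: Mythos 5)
Your strategy mirrors the paper's two-step structure and invokes the same key lemmas (Theorem~\ref{marker theorem}, Lemma~\ref{pml approximation two}, Lemma~\ref{controlled homotopy}). Your Step 1, applying Theorem~\ref{marker theorem} directly to $V = S^n$, $W = \emptyset$ with an $\epsilon$-marker family, is a slight streamlining of the paper's simplex-by-simplex induction and is fine.

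There is, however, a genuine quantifier-order gap in Step 2 that you flag as delicate but do not resolve. You fix $\epsilon$, build $g$ (which depends on $\epsilon$), extract $\delta$ from Lemma~\ref{controlled homotopy} (which depends on $g$, hence on $\epsilon$), and then demand $\epsilon < \delta$. Nothing guarantees that $\epsilon < \delta(g_\epsilon)$ can be arranged; a priori $\delta(g_\epsilon)$ could shrink faster than $\epsilon$ as $\epsilon \to 0$. The paper avoids this by homotoping $h|_{S^n}$ not to $f = F|_{S^n\times 0}$ but to an intermediate slice $F|_{S^n\times s}$ with $s$ near $1$: since $F$ is continuous at $s = 1$, $\dpts\bigl(\phi(F(\cdot,s)), g(\cdot)\bigr) \to 0$ uniformly as $s \to 1$, so one may first fix $g$, then extract $\delta$, and only afterwards choose $s$ close enough to $1$ that $F|_{S^n\times s}$ is $\delta$-close to $g$. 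Concatenating the resulting Lemma~\ref{controlled homotopy} homotopy from $F|_{S^n\times s}$ to $h|_{S^n}$ in $\PML\setminus L$ with the homotopy $F|_{S^n\times [0,s]}$ (already in $\PML\setminus L$) and with the null-homotopy $h$ of $h|_{S^n}$ produces the contradiction. Once this fix is in place, the $\epsilon$-marker closeness of $f$ to $g$ is no longer used at all — which is why the paper's Step 1 does not introduce a marker family in the first place.
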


\begin{proof} View $f$ as a map into $\PMLEL$ with image in $\PML$.  
\vskip 10pt
\noindent\emph{Step 1}.   $ f $ extends to a map $f:S^n\times I \to \PMLEL$ such that $f(S^n\times [0,1)\subset \PML\setminus L$ and $f(S^n\times 1)\subset \EL.$

\vskip 10 pt

\noindent\emph{Proof of Step 1}.  Let $\Sigma$ be a triangulation of $S^n$.  It 
suffices to consider the case that $f$ is a generic PL map.  Thus 
$f(\Sigma^0)\subset \FPML$.  Extend $f|\Sigma^0\times 0$ to $\Sigma^0\times I
$ by $f(t,s)=f(t,0)$ where $f(t,1)$ is viewed as an element of $\EL$. Now for some 
$u\le n$, assume that $ f$ extends as desired to $\Sigma^u\times I \cup S^n
\times 0$.  If $\sigma$ is a $(u+1)$-simplex of $\Sigma$, then $f|\sigma\times 
0\cup \partial \sigma\times I$ maps into $\PMLEL$ with $f(\partial \sigma\times 
1)\subset \EL$ and the rest mapping into $\PML\setminus L$.  By Lemma \ref{marker theorem} $f$ 
extends to $\sigma\times I$ such that $f(\sigma\times 1)\subset \EL$ and $f(\sigma\times [0,1))\subset \PML\setminus L$.  Step 1 now 
follows by  induction.\qed

\vskip 10 pt
\noindent\emph{Step 2}  If $g=f|S^n\times 1$, then $g$ is an essential map into $\EL$.

\vskip 10pt

\noindent\emph{Proof of Step 2}.  Otherwise there exists $G:B^{n+1}\to \EL$ extending $g$.  Since $L\subset \CS$, $L\cap \phiinv(G(B^{n+1}))=\emptyset$.  Let $F:B^{n+1}\to \PML$ be a $\delta-\PML$ approximation of $G$.  By Lemma \ref{pml approximation two} $F(B^{n+1})\cap L=\emptyset$ for $\delta$ sufficiently small.  Now $F|S^n$ and $f|S^n\times s$ are respectively $\delta$, $\delta'$-$\PML$ approximations of $g$, where $\delta'\to 0$ as $s\to 1$.  Since  $\delta$ can be chosen arbitrarily small it follows from Lemma \ref{controlled homotopy} that $F$ can be chosen such that for $s$ sufficiently large, $f|S^n\times s$ and $F|S^n$ are homotopic via a homotopy supported in $\PML\setminus L$.  By concatenating this homotopy with $F$, we conclude that $f$ is homotopically trivial via a homotopy supported in $\PML\setminus L$; which is a contradiction.\end{proof}

\begin{proposition}  \label{lower bound}   If $S=S_{0,p}$, then $\dim(\EL)\ge p-4$.  If $S=S_{1,p}$, then $\dim(\EL)\ge p-1$.  In either case if $\dim(\PML)=2n+1$, then $\dim(\EL)\ge n$.  \end{proposition}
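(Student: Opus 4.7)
The plan is to derive this immediately from the results already assembled in the paper. First I would verify the arithmetic: for $S = S_{0,p}$ we have $\dim(\PML) = 6g + 2p - 7 = 2p - 7$, so $2n+1 = 2p-7$ gives $n = p-4$; for $S = S_{1,p}$ we have $\dim(\PML) = 2p - 1$, so $n = p-1$. Hence both specific lower bounds $p-4$ and $p-1$ coincide with the common statement $\dim(\EL) \geq n$, and it suffices to establish the latter.

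Next I would invoke Lemma \ref{dranishnikov}, which gives the implication: for any separable metric space $X$ that is $(n-1)$-connected and $(n-1)$-locally connected, $\pi_n(X) \neq 0$ forces $\dim(X) \geq n$. To apply this with $X = \EL$, I would check each hypothesis in turn. Separability and metrizability of $\EL$ are Remark \ref{separable and complete}. The $(n-1)$-connectivity is Theorem \ref{connectivity} and the $(n-1)$-local connectivity is Theorem \ref{local connectivity}. Finally, Theorem \ref{homotopy group} asserts precisely that $\pi_n(\EL) \neq 0$ in the two cases $S = S_{0,4+n}$ and $S = S_{1,1+n}$, which by the arithmetic above are exactly the two cases appearing in the statement.

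Putting these pieces together, Lemma \ref{dranishnikov} applied to $\EL$ yields $\dim(\EL) \geq n$, which in the $S_{0,p}$ case is $\dim(\EL) \geq p-4$ and in the $S_{1,p}$ case is $\dim(\EL) \geq p-1$. Since every ingredient has already been established in earlier sections, there is no substantive obstacle here; the proposition is essentially a packaging statement. The only conceptual work that was genuinely required lies in Theorem \ref{homotopy group} (producing a nontrivial element of $\pi_n$ via the essential spherical classes in $\CS$ guaranteed by Harer and the extension-to-$\EL$ argument of Lemma \ref{homotopy}) and in the connectivity and local connectivity theorems of Sections \S11 and \S12, all of which we are entitled to assume at this stage.
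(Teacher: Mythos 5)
Your proposal is correct and follows exactly the route the paper takes: cite Theorems \ref{connectivity}, \ref{local connectivity}, and \ref{homotopy group} to satisfy the hypotheses of Lemma \ref{dranishnikov}, and conclude $\dim(\EL)\ge n$. The only additions are the explicit arithmetic check that $n=p-4$ (resp.\ $n=p-1$) and the mention of separability/metrizability, both of which the paper leaves implicit.
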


\begin{proof}  By Theorems \ref{connectivity} and \ref{local connectivity}, $\EL$ is $(n-1)$-connected and $(n-1)$-locally connected.  By Theorem \ref{homotopy group}, $\pi_n(\EL)\neq 0$.  Therefore by Lemma \ref{dranishnikov}, $\dim(\EL)\ge n$.\end{proof}

By  Proposition \ref{lower bound} and Proposition \ref{upper dim bound}  we obtain;

\begin{theorem}  \label{punctured sphere dimension} If $S$ is the $(4+n)$-punctured sphere, then $\dim(\EL)=n$.\qed  \end{theorem}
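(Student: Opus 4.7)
The plan is to deduce this theorem by directly combining the two dimension bounds already established in the paper, specialized to the case $g=0$ and $p=4+n$. First I would verify the arithmetic: for $S = S_{0,4+n}$ we have $\dim(\PML) = 6g+2p-7 = 2(4+n)-7 = 2n+1$, so the integer $n$ appearing in the statement of the theorem agrees with the parameter $n$ used throughout the upper- and lower-bound propositions.

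Next, I would invoke Proposition \ref{upper dim bound} in the case $p > 0$, which gives $\dim(\EL) \le 4g+p-4$. Plugging in $g=0$ and $p=4+n$ yields $\dim(\EL) \le n$.

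Then I would invoke Proposition \ref{lower bound} in the case $S = S_{0,p}$, which gives $\dim(\EL) \ge p-4 = n$. Combining the two inequalities gives $\dim(\EL) = n$, as required.

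Since both bounds have already been established, there is no genuine obstacle in this final step; the theorem is a bookkeeping corollary. The only thing to be careful about is that the lower bound proposition relies on Theorems \ref{connectivity} and \ref{local connectivity} together with Theorem \ref{homotopy group} and the classical fact (Lemma \ref{dranishnikov}) that a $(n-1)$-connected, $(n-1)$-locally connected separable metric space with nontrivial $\pi_n$ has dimension at least $n$; and that the upper bound proposition uses the cellulation machinery of \S14 together with Lemma \ref{dimension n+1} to show that all cells of $\Delta_i$ meeting $\phi^{-1}(\EL)$ have dimension at least $n+1$, whence the inductive (equivalently covering) dimension of $\EL$ is bounded above by $n+g = n$ via the sum theorem. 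Both inputs are fully in hand, so the proof reduces to citing them.
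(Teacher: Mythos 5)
Your proposal is correct and is exactly the paper's argument: the theorem is stated as an immediate consequence of Proposition \ref{upper dim bound} (giving $\dim(\EL)\le 4g+p-4=n$ when $g=0$) and Proposition \ref{lower bound} (giving $\dim(\EL)\ge p-4=n$). The arithmetic check and the summary of what feeds into each bound are accurate, so nothing is missing.
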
 

\begin{theorem}  If $S$ is the $(n+1)$-punctured torus, then $n+1\ge\dim(\EL)\ge n$.\qed\end{theorem}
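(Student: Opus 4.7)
The plan is to observe that this theorem follows immediately by specializing Propositions~\ref{upper dim bound} and~\ref{lower bound} to the case $S = S_{1, n+1}$, so the bulk of the work has already been done and no new ideas are needed. First I would verify the dimension bookkeeping: for $g=1$ and $p=n+1$ we have $\dim(\PML) = 6g+2p-7 = 2n+1$, so the $n$ in the theorem statement agrees with the $n$ used in both propositions.

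For the upper bound, I would invoke Proposition~\ref{upper dim bound}. Since $p = n+1 > 0$, it yields $\dim(\EL) \le 4g+p-4 = n+g = n+1$. There is nothing to unpack beyond substitution into the formula $n+g$.

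For the lower bound, I would invoke Proposition~\ref{lower bound}, whose second assertion reads: if $S = S_{1,p}$, then $\dim(\EL) \ge p-1$. Substituting $p = n+1$ gives $\dim(\EL) \ge n$, exactly the bound desired.

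There is no real obstacle here; both halves follow by direct citation. The only sanity check worth making is that the $n$ appearing in the statement of the current theorem (parameterizing the number of punctures via $p = n+1$) is the same $n$ that appears in the hypotheses of the cited propositions (where $\dim(\PML) = 2n+1$), and the computation $6(1) + 2(n+1) - 7 = 2n+1$ confirms this. Thus the two inequalities $n+1 \ge \dim(\EL) \ge n$ combine to give the conclusion.
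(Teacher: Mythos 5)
Your proof is correct and is exactly what the paper intends, as signalled by the $\qed$ placed directly in the theorem statement: the upper bound is Proposition~\ref{upper dim bound} with $g=1$, $p=n+1$, and the lower bound is the $S_{1,p}$ case of Proposition~\ref{lower bound}. Your dimension check $6(1)+2(n+1)-7=2n+1$ confirming the two uses of $n$ agree is the one small verification worth stating, and you did so.
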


\begin{remark}  In a future paper we will show that if $S=S_{g,p}$ with $p>0$, then $\dim(\EL)\le n+g-1$.\end{remark}

\section{Nobeling spaces} 

The $n$-dimensional \emph{Nobeling space} $\BR^{2n+1}_n$ is the space of points in $\BR^{2n+1}$ with at most $n$ rational coordinates.  The goal of the next two sections is to complete the proof of the following theorem.

\begin{theorem}\label{nobeling}  If $S$ is the $n+p$ punctured sphere, then $\EL$ is homeomorphic to the $n$-dimensional Nobeling space.\end{theorem}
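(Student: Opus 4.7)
The plan is to invoke Nagorko's topological characterization of Nobeling spaces, stated in \S16, and verify each of its axioms for $\EL$. Recall from the introduction that this characterization requires that the space be a separable complete metric space of dimension exactly $n$, that it be $(n-1)$-connected and $(n-1)$-locally connected, and that it satisfy the locally finite $n$-discs property. Most of these ingredients have already been assembled earlier in the paper, and the bulk of the remaining work goes into the discs property.

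First, I would collect the easy ingredients. Separability and complete metrizability of $\EL$ are established in Remark \ref{separable and complete}: separability comes from $\FPML$ being a dense subset of the sphere $\PML$ (complement of countably many codimension-one PL cells), and completeness is obtained from Bonk--Schramm applied to the Gromov boundary description of Klarreich. For $S=S_{0,n+p}$ (so that $\dim(\PML)=2n+1$), Theorem \ref{punctured sphere dimension} gives $\dim(\EL)=n$. The $(n-1)$-connectivity is Theorem \ref{connectivity}, and the $(n-1)$-local connectivity is Theorem \ref{local connectivity}; both apply directly since $S$ is a punctured sphere with $3g+p\ge 5$.

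The remaining and main item is the locally finite $n$-discs property, which is proved in \S17 (Proposition \ref{m-discs} in the paper's notation). The strategy there, as foreshadowed in the introduction, is to use the PL and EL approximation theorems (\S13): any generic PL map $f\colon B^k\to\PML$ with $k\le n$ can be $\epsilon$-approximated by a map $g\colon B^k\to\EL$ via Lemma \ref{weak el approximation} and Lemma \ref{el approximation}, and conversely any map $g\colon B^k\to\EL$ can be $\epsilon$-approximated by a generic PL map $h\colon B^k\to\PML$ via Lemma \ref{pml approximation two} and Proposition \ref{pml approximation three}. Given a countable collection of maps $f_i\colon B^n\to \EL$ to be perturbed into position, one pushes each $f_i$ back to $\PML$, perturbs generically in $\PML$ (where the ambient PL manifold structure allows a standard general-position argument) to achieve local finiteness and pairwise disjointness of the images, then pushes the result back into $\EL$ using marker-controlled relative pushoffs. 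The controlled homotopy Lemma \ref{controlled homotopy} ensures that these approximations are close enough to be homotopic to the originals, so the perturbation is indeed a small homotopy of the $f_i$.

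The hard part will be organizing the simultaneous control of the countably many discs so that local finiteness survives the PL-to-EL transition. In $\PML$ one has a manifold and general position is routine, but the EL approximation step introduces new, uncontrolled intersections; the trick is to exploit the good cellulation sequence $\{\Delta_i\}$ of \S14 together with the marker machinery of \S3--\S10 to arrange that each $f_i$ is supported in stars of cells of a progressively finer $\Delta_i$ that separate the targets $\phi^{-1}(g_i(B^n))$. Lemma \ref{basis} (that $\{U(\sigma)\}$ is a neighborhood basis of $\EL$) and Lemma \ref{refinement2} (refinement of open covers with built-in $\delta$-slack) let one do the bookkeeping needed so that the PL-side perturbations, once pulled back by the EL-approximation, remain locally finite in $\EL$. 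This is carried out in \S17, and once completed, Nagorko's characterization immediately identifies $\EL$ with the $n$-dimensional Nobeling space, finishing the proof.
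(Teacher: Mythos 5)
Your proposal is correct and follows essentially the same route as the paper: the proof of Theorem \ref{nobeling} is exactly the assembly of Remark \ref{separable and complete}, Theorems \ref{connectivity}, \ref{local connectivity} and \ref{punctured sphere dimension} into Nagorko's characterization, with the remaining work being the locally finite $n$-discs property carried out in \S17 via the attracting grid, the dual complex of dimension $\le n$, and the $\PML$/$\EL$ approximation lemmas.
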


By Luo \cite{Luo}, $\mC(S_{2,0})$ is homeomorphic to $\mC(S_{0,6})$ and thus by Klarreich \cite{K} $\mE\mL(S_{2,0})$ is homeomorphic  to $\mE\mL(S_{0,6})$.

\begin{corollary} \label{genus two} If $S$ is the closed surface of genus-2, then $\EL$ is homeomorphic to the Nobeling surface, i.e. the 2-dimensional Nobeling space.\end{corollary}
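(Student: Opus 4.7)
The plan is to obtain this as a direct consequence of Theorem \ref{nobeling} together with the homeomorphism type preservation of ending lamination spaces under isomorphisms of curve complexes. First I would invoke Luo's theorem \cite{Luo} (quoted in the paragraph preceding the corollary) which gives a homeomorphism $\mC(S_{2,0}) \cong \mC(S_{0,6})$. Since the curve complex is Gromov hyperbolic (Masur--Minsky) and its Gromov boundary, by Klarreich's theorem \cite{K}, is canonically homeomorphic to $\EL$ in the coarse Hausdorff topology (see Remark \ref{separable and complete}), any homeomorphism of curve complexes induces a homeomorphism of the corresponding ending lamination spaces. Hence $\mE\mL(S_{2,0})$ is homeomorphic to $\mE\mL(S_{0,6})$.

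Second, I would apply Theorem \ref{nobeling} to $S_{0,6}$. Taking $n=2$ and $p=4$ so that $n+p=6$, this theorem asserts that $\mE\mL(S_{0,6})$ is homeomorphic to the $2$-dimensional Nobeling space. Composing the two homeomorphisms yields $\mE\mL(S_{2,0}) \cong \mathbb{R}^5_2$, the Nobeling surface.

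There is essentially no obstacle here: the corollary is a formal consequence of Luo's curve complex isomorphism, Klarreich's identification of $\EL$ with the Gromov boundary, and the main Nobeling space theorem of the paper. The only minor point worth verifying is that the dimension indexing matches, which it does by Theorem \ref{punctured sphere dimension}: $\dim(\mE\mL(S_{0,6})) = 2$, consistent with $S_{2,0}$ having $\dim(\PML)=6g-7=5=2n+1$ with $n=2$.
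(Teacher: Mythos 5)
Your proposal is correct and follows exactly the paper's own argument: invoke Luo's isomorphism $\mC(S_{2,0})\cong\mC(S_{0,6})$, transfer to ending lamination spaces via Klarreich's identification of $\EL$ with the Gromov boundary of $\CS$, and then apply Theorem \ref{nobeling} to the six-punctured sphere. One small precision worth noting (which the paper glosses over as well): what transfers Gromov boundaries is a quasi-isometry, not an arbitrary topological homeomorphism of the curve complexes, so one should say that Luo's map is a \emph{simplicial} isomorphism and hence a quasi-isometry.
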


\begin{remarks}  \label{s12} Using \cite{G1}, Sabastian Hensel and Piotr Przytycki earlier proved that the ending lamination space of the 5-times punctured sphere is homeomorphic to the Nobeling curve.  They used Luo \cite{Luo} and Klarreich \cite{K} to show that  $\mE\mL(S_{1,2})$ is also homeomorphic to the Nobeling curve.

Hensel and Przytycki boldly conjectured that if $\dim(\PML)=2n+1$, then $\EL$ is homeomorphic to $\BR^{2n+1}_n$.  Theorem \ref{nobeling} gives a positive proof of their conjecture for punctured spheres.  

Independently, in 2005, Ken Bromberg and Mladen Bestvina asked \cite{BB} if ending lamination spaces are Nobeling spaces.  \end{remarks}

\begin{remarks}  Historically, the $m$-dimensional Nobeling  space was called the \emph{universal Nobeling space of dimension $m$} and a Nobeling space was one that is locally homeomorphic to the universal Nobeling space.  In 2006, S. Ageev \cite{Ag}, Michael Levin \cite{Le} and Andrzej Nagorko \cite{Na}  independently showed that any two connected Nobeling spaces of the same dimension are homeomorpic.  The 0 and 1-dimensional versions of that result were respectively given in \cite{AU} 1928 and \cite{KLT} 1997.

These spaces were named after Georg Nobeling who showed \cite{N} in 1930 that any $m$-dimensional separable metric space embeds in an $m$-dimensional Nobeling space.  
This generalized a result by Nobeling's mentor, Karl Menger, who defined  \cite{Me} in 1926 the Menger compacta and showed that any $1$-dimensional compact metric space embeds in the Menger curve.    A topological characterization of m-dimensional Menger compacta was given in \cite{Be} by Mladen Bestvina in 1984.\end{remarks}

The following  equivalent form of the topological  characterization of $m$-dimensional Nobeling spaces is due to Andrzej Nagorko.  

\begin{theorem} (Nagorko \cite{HP}) \label{nobeling characterization} A topological space X is homeomorphic to the    
m-dimensional Nobeling space if and only if the following conditions hold.

i) X is separable

ii) X supports a complete metric

iii)  X is $m$-dimensional

iv)  X is $(m-1)$-connected 

v) X is $(m-1)$-locally connected

vi) X satisfies the locally finite m-discs property\end{theorem}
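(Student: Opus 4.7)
The plan is to treat this as a translation result between Nagorko's concise axiomatization and the deeper Ageev--Levin--Nagorko characterization of Nobeling spaces, rather than to rebuild the classification from scratch. The forward direction, that the $m$-dimensional Nobeling space $\mathbb{R}^{2m+1}_m$ satisfies (i)--(vi), is a direct verification: separability, completeness, and $m$-dimensionality are standard; $(m-1)$-connectivity and $(m-1)$-local connectivity follow from the explicit coordinate description (any map of a $k$-sphere with $k<m$ can be pushed off the complement of at most $m$ rational coordinate hyperplanes at a time); and the locally finite $m$-discs property is established by perturbing a given family of maps $f_\alpha:B^m\to \mathbb{R}^{2m+1}_m$ using the fact that the rational coordinate slices are codimension-one and general position on $m$-discs is achievable by small wiggles in $\mathbb{R}^{2m+1}$.

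For the nontrivial direction, I would quote the Ageev--Levin--Nagorko characterization: a Polish space of dimension $m$ is homeomorphic to $\mathbb{R}^{2m+1}_m$ if and only if it is $(m-1)$-connected, $(m-1)$-locally connected, and satisfies a suitable general position property (the ``strong discrete approximation'' or ``$Z_m$-set'' style condition). The first step is to set up the Bestvina-style machinery of resolutions: given $X$ satisfying (i)--(vi), I would build an inverse sequence of ``partial Nobeling manifolds'' whose limit is $X$, using (iv)--(v) to lift maps of spheres up to dimension $m-1$ at each stage. The second step is to feed the locally finite $m$-discs property into this inverse system to guarantee that at the top dimension the bonding maps can be chosen to be near-homeomorphisms with controlled fibers, so that the limit has the correct local structure.

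The technical heart, and the step I expect to be the main obstacle, is the equivalence between Nagorko's locally finite $m$-discs property and the general position axiom used in the ALN characterization. Concretely, one must show that the ability to approximate an arbitrary countable family of maps $B^m\to X$ by a locally finite family is enough to realize the stronger disjoint-$m$-disks property in the inverse limit, and conversely that disjoint $m$-disks in a Polish ANR-like space produce locally finite approximations. This is where Nagorko's reformulation is doing all the work, and I would follow his argument in \cite{HP}: use (iv)--(v) and a Mayer-Vietoris / cone patching to extend a locally finite family over higher skeleta, then invoke the Z-set unknotting theorem for Nobeling manifolds to identify the inverse limit with $\mathbb{R}^{2m+1}_m$.

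Finally, once the characterization is in hand, the remaining sections of the paper need only verify (i)--(vi) for $\EL$ when $S=S_{0,n+p}$: separability and completeness were noted in Remark~\ref{separable and complete}, $n$-dimensionality is Theorem~\ref{punctured sphere dimension}, the connectivity conditions are Theorems~\ref{connectivity} and~\ref{local connectivity}, and the locally finite $n$-discs property is the content of the subsequent section~17.
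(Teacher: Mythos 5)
The paper offers no proof of this statement: it is quoted as an external result from Nagorko's appendix to \cite{HP}, which recasts the Ageev--Levin--Nagorko characterization (\cite{Av}, \cite{Le}, \cite{Na}) in the form stated, and the paper's only job is to verify hypotheses (i)--(vi) for $\EL$. Your proposal correctly identifies that division of labor, and your outline of the hard (``if'') direction --- reduce to the ALN axioms and show that the locally finite $m$-discs property, together with $(m-1)$-connectivity and $(m-1)$-local connectivity, substitutes for their general position/discrete approximation axioms --- is the route Nagorko actually takes; as written it is a sketch deferring to the literature, which is the same level of detail the paper itself supplies. One substantive correction to your easy direction: verifying the locally finite $m$-discs property for $\BR^{2m+1}_m$ is not a matter of general position ``wiggles.'' General position in $\BR^{2m+1}$ makes finitely many $m$-discs disjoint from a fixed $m$-complex, but local finiteness of an infinite sequence of discs requires that the images eventually exit every neighborhood of every point; Nagorko achieves this by retracting the discs toward an \emph{attracting grid} built from the rational coordinate hyperplanes, so that the $i$-th disc is pushed into an ever-thinner neighborhood of the grid. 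This is not a pedantic point for this paper: \S 17 proves the locally finite $n$-discs property for $\EL$ by transplanting precisely that mechanism, with the dual complex $Y_i$ of the participating cells playing the role of the codimension-$(n+1)$ obstruction and $\Gamma_i$ the role of the attracting grid, so describing the model case as mere general position obscures the argument you will need later.
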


%NNNN

\begin{definition} \label{m-discs property}  \cite{HP}   The space $X$ satisfies the \emph{locally finite $m$-discs property} if for each open cover $\{\mU\}$ of $X$ and each sequence $f_i:B^m\to X$, there exists a sequence $g_i:B^m$ to $X$ such that 

i)  for each $x\in X$ there exists a neighborhood $U$ of $x$ such that $g_i(B^m)\cap U=\emptyset$ for $i$ sufficiently large.

ii)  for each $t\in B^m$, there is a $U\in \mU$ such that $f_i(t), g_i(t)\in U$.\end{definition}

\noindent\emph{Proof of Theorem \ref{nobeling}.}  By Remark \ref{separable and complete} $\EL$ is separable and supports a complete metric.  If $S=S_{0, n+4}$, then conditions iii)-v) of Theorem \ref{nobeling characterization} respectively follow from Theorems \ref{connectivity}, \ref{local connectivity} and \ref{punctured sphere dimension}.

To complete the proof of Theorem \ref{nobeling} it suffices to show that $\mE\mL(S_{0, n+4})$ satisfies the locally finite $n$-discs property.

\section{The locally finite n-discs property}

\begin{proposition}  \label{n-discs}  If $S$ is the $(n+4)$-punctured sphere, then $\EL$ satisfies the locally finite $n$-discs property.\end{proposition}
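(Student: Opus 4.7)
The plan is to combine the $\PML$- and $\EL$-approximation results of Section 13 with a general-position argument in $\PML$, exploiting the dimension identity $\dim(\PML)=2n+1$. Throughout, let $S=S_{0,n+4}$, let $\mU$ be the given open cover of $\EL$, and let $f_i:B^n\to\EL$ be the given sequence.

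First, I would refine $\mU$ by applying Lemma \ref{refinement2}, producing a refinement $\mU_2$ by basic sets $U(\sigma_2)$, each equipped with a threshold $\delta(U(\sigma_2))>0$: any $z\in\EL$ with $\dpts(z,\phi(x))<\delta(U(\sigma_2))$ for some $x\in\hat\sigma_2$ lies in a common element of $\mU$. For each $i$, pick a scale $\epsilon_i>0$ smaller than every $\delta(U(\sigma_2))$ whose $U(\sigma_2)$ meets $f_i(B^n)$. Then, using Lemma \ref{pml approximation two}, produce a generic PL map $h_i:B^n\to\PML$ with $\dpts(\phi(h_i(t)),f_i(t))<\epsilon_i/2$ for every $t\in B^n$, and moreover with $h_i(B^n)\subset\hat\sigma_2$ whenever this makes sense.

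Second, the crux, I would perturb each $h_i$ to a generic PL map $h_i':B^n\to\PML$ satisfying the hypothesis of Lemma \ref{el approximation}: for every $z\in\EL$ there exists $\delta_z>0$ and $N_z\in\BN$ with $d_{\PML}(h_i'(B^n),\phiinv(z))>\delta_z$ whenever $i\ge N_z$. The key numerical input is that on the $(n+4)$-punctured sphere every minimal filling lamination supports at most $n+1$ ergodic transverse measures (the Katok-Veech bound $3g+p-3$), so by Lemma \ref{preimage simplex} the convex cell $\phiinv(z)$ has dimension at most $n$. Since $\dim B^n+\dim\phiinv(z)\le 2n<2n+1=\dim\PML$, a generic PL perturbation of $h_i$ makes $h_i'(B^n)\cap\phiinv(z)=\emptyset$ for any single fixed $z$, with a definite positive distance. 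Using separability of $\EL$ (Remark \ref{separable and complete}), enumerate a countable dense set $\{z_j\}\subset\EL$ and diagonalize: at stage $i$, arrange $d_{\PML}(h_i'(B^n),\phiinv(z_j))>1/j$ for all $j\le i$, while keeping $\dpts(\phi(h_i'(t)),f_i(t))<\epsilon_i$. To promote this countable avoidance to the required condition for every $z\in\EL$, I would invoke Lemma \ref{preimage close}: given $z$, there is an $\EL$-neighborhood $V$ of $z$ with $\phiinv(V)$ lying in any prescribed $d_{\PML}$-neighborhood of $\phiinv(z)$; picking $z_j\in V$, the diagonal condition at $z_j$ forces $d_{\PML}(h_i'(B^n),\phiinv(z))>\delta_z$ for $i$ large enough.

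Third, I would apply Lemma \ref{el approximation} to the sequence $h_i'$ with $\epsilon_i'=\epsilon_i/2$. This yields $g_i:B^n\to\EL$ such that (a) $\dpts(g_i(t),\phi(h_i'(t)))<\epsilon_i/2$ for every $t\in B^n$, and (b) for each $z\in\EL$ there is a neighborhood $U_z\subset\EL$ of $z$ with $g_i(B^n)\cap U_z=\emptyset$ for all sufficiently large $i$. Conclusion (b) is precisely condition (i) of Definition \ref{m-discs property}, and combining (a) with the triangle inequality gives $\dpts(f_i(t),g_i(t))<\epsilon_i$, which by the choice of $\epsilon_i$ places $f_i(t)$ and $g_i(t)$ in a common element of $\mU_2$ and hence of $\mU$, verifying condition (ii).

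The main obstacle is the perturbation in the second step. Establishing $\dim\phiinv(z)\le n$ is surface-specific and rests on the classical ergodic-measure bound, and while a single-$z$ perturbation is a routine dimension count, the diagonal argument must simultaneously control countably many $\phiinv(z_j)$ at geometrically decreasing thresholds $1/j$ without violating the $\dpts$-closeness $<\epsilon_i$ to $f_i$; this requires choosing the perturbation scales to decay fast enough compared with the $\epsilon_i$ and compared with the sizes of the cells $\sigma_z$ whose interiors contain the various $\phiinv(z_j)$.
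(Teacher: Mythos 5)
Your outline matches the paper's at the large scale: refine $\mU$ via Lemma \ref{refinement2}, $\PML$-approximate the $f_i$, perturb so the hypotheses of the $\EL$-approximation Lemma \ref{el approximation} are met, and then apply that lemma. But the entire difficulty of the proposition is concentrated in the perturbation step, and your Step~2 does not work as written. There are two concrete problems.

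First, the diagonalization conflicts with the approximation. By Lemma \ref{pml approximation two}, your $h_i$ satisfies $d_{\PML}(h_i(t),\phiinv(f_i(t')))<\epsilon_i$ for some $t'$ near $t$. Nothing prevents all the $f_i$ from passing through a fixed point $z_1$ (indeed, that is exactly the scenario the locally finite $n$-discs property is designed to handle). Then $\phiinv(z_1)\subset\phiinv(f_i(B^n))$ for every $i$, so $d_{\PML}(h_i(B^n),\phiinv(z_1))\le\epsilon_i\to 0$. A generic PL perturbation of $h_i$ of scale comparable to $\epsilon_i$ can remove intersections with $\phiinv(z_1)$ (the dimension count is correct), but it cannot create a gap of size $1$ bounded away from $0$ independently of $i$, which is what $d_{\PML}(h_i'(B^n),\phiinv(z_1))>1/1$ demands. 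Achieving the avoidance requires a \emph{large} $d_{\PML}$-displacement of $h_i$, and that displacement must be organized so that the eventual $\EL$-map still lands in the right element of $\mU$. This is exactly what the paper's retraction $p$ onto a neighborhood of the attracting grid $\Gamma_{n_i}$ accomplishes: it moves $h_i'(t)$ far (in $d_{\PML}$) from $\phiinv(f_i(t))$ while keeping both in the same open star $\hat\sigma_j$, and the open-star condition together with Lemma \ref{refinement2} is what actually certifies the cover condition in Definition \ref{m-discs property}~(ii) — not $d_{\PML}$-closeness, and not $\dpts$-closeness to $f_i$ alone.

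Second, the promotion from the countable dense set $\{z_j\}$ to all $z\in\EL$ runs the wrong direction. Lemma \ref{preimage close} gives a neighborhood $V$ of $z$ with $\phiinv(V)\subset N_{\PML}(\phiinv(z),\eta)$; picking $z_j\in V$ then puts $\phiinv(z_j)$ \emph{inside} a small neighborhood of $\phiinv(z)$, not the other way around. If $z$ is non-uniquely ergodic so that $\phiinv(z)$ is a positive-dimensional convex cell, a nearby uniquely ergodic $z_j$ has $\phiinv(z_j)$ a single point; $h_i'(B^n)$ being $1/j$-far from that point says nothing about its distance to the rest of $\phiinv(z)$. The paper sidesteps this entirely: by its Claim~(i), $h_i(B^n)$ lies in the $1/i$-neighborhood of the fixed compact set $\Gamma_{n_i}$, and since the nested $\Gamma_i$ are eventually disjoint from the compact $\phiinv(z)$ for each fixed $z$, a single uniform $\delta_z>0$ works for all large $i$ at once. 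The attracting grid $\Gamma_i$, the dual complex $Y_i$ (dimension $\le n$, so $B^n$ can be pushed off it), and the retraction $p$ are the ingredients your argument is missing; they are not bookkeeping but the essential mechanism.
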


	Our proof is modeled on the proof, due to Andrzej Nagorko \cite{HP}, that $\BR^{2n+1}_n$ satisfies the locally finite n-discs property.  In particular, we modify his notions of \ \emph{participates} and \emph{attracting grid}.    
	
	From now on $S$ will denote the (n+4)-punctured sphere.   Let $\mU$ be an open cover of $\EL$.  Let $\mU_2$ denote the refinement of $\mU$ produced by Lemma \ref{refinement2} and let $\Delta_1, \Delta_2, \cdots$ denote a good cellulation sequence.

\begin{definition}  We say that $\sigma\in \Delta_i$ \emph{participates} in $\mU_2$ if $U(\sigma')\in \mU_2$ for some face $\sigma'$ of $\sigma$.

Define $A_i=\{\sigma\in \Delta_k, k\le i|\sigma\ \textrm{participates in } \mU_2\}$ and define  $\Gamma_i=\PML\setminus\cup _{\sigma\in A_i} \hat\sigma.$
\end{definition}

\begin{remarks}  i)  By Lemma \ref{dimension n+1} and Definition \ref{basis} each cell of $A_i$ has dimension $\ge n+1$.

\smallskip\noindent ii) Note that $\Gamma_i $ is obtained from $
\Gamma_{i+1}  $ by attaching the cells of $A_{i+1}\setminus A_{i}$.  
\end{remarks}

\begin{definition}  Call $\Gamma_i $ the \emph{i'th approximate attracting grid} and $\cap_{i=1}^{\infty}\Gamma_i$ the \emph{attracting grid}.\end{definition}

\begin{definition}  Let $Y_i\subset \PML\setminus \Gamma_i$ denote the \emph{dual cell complex} to $A_i$.  Abstractly it is a simplicial complex with vertices the elements of $A_i$ and $\{v_0, v_1, \cdots, v_k\}$ span a simplex if for all $i$, $v_i\subset \partial v_{i+1}$.  \end{definition}

\begin{remark}  Since $\dim(\PML)=2n+1$ and each cell of $A_i$ has dimension at least $n+1$ it follows that  $\dim(Y_i)\le n$.  This is the crucial fact underpinning the proof of Proposition \ref{n-discs}.\end{remark}

The next result follows by standard PL topology.

\begin{lemma}  \label{retraction} For every $a>0$ there exists $N(\Gamma_i)$ a regular neighborhood of $\Gamma_i$ in $\PML$ such that $N(\Gamma_i)\subset N_{\PML}(\Gamma_i, a)$.  Furthermore, there exists a homeomorphism 

$$q:\partial(N(\Gamma_i))\times [0,1)\to \PML\setminus (\inte(N(\Gamma_i))\cup Y_i)$$ 
such that $q|\partial N(\Gamma_i)\times 0$ is the canonical embedding and a retraction $$p:\PML\setminus Y_i\to N(\Gamma_i)$$ 
that is the identity on $N(\Gamma_i)$, quotients  $[0,1)$ fibers to points and has the following additional property.  If $\sigma\in A_i$ and $x\in \inte(\sigma)\setminus Y_i$, then $p(x)\in \inte(\sigma)$. \qed\end{lemma}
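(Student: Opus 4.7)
The plan is to perform a standard PL regular-neighborhood construction adapted to the cellulation $\Delta_i$. Observe that $\PML$ is a PL sphere of dimension $2n+1$, that $\Gamma_i$ is the underlying space of a closed subcomplex of $\Delta_i$ (the union of closed cells no face of which lies in $A_i$), and that $Y_i$ is a subcomplex of the first barycentric subdivision of $\Delta_i$ lying in $\PML\setminus\Gamma_i$. Passing to a sufficiently fine common refinement $T$ of $\Delta_i$ and $Y_i$, and then to the second barycentric subdivision $T''$, I would take $N(\Gamma_i)$ to be the simplicial regular neighborhood of $\Gamma_i$ in $T''$. By choosing the initial refinement fine enough, $N(\Gamma_i)\subset N_{\PML}(\Gamma_i,a)$; by general PL regular-neighborhood theory, $N(\Gamma_i)$ is a compact codimension-$0$ PL submanifold with bicollared frontier $\partial N(\Gamma_i)$.

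Next, I would build the product structure $q$ and the retraction $p$ cell by cell over the cells of $\Delta_i$ outside $\Gamma_i$. For each such cell $\sigma$, the slice $Y_i\cap\sigma$ has a clean description: if $\sigma\in A_i$, then $Y_i\cap\sigma$ is the cone $\{b(\sigma)\} * (Y_i\cap\partial\sigma)$ with apex $b(\sigma)\in\inte(\sigma)$, while if $\sigma\notin A_i$ then no vertex of $Y_i$ lies in $\inte(\sigma)$, so $Y_i\cap\sigma=Y_i\cap\partial\sigma$. The key PL claim is that $\sigma\setminus\inte(N(\Gamma_i))$ is a regular neighborhood in $\sigma$ of $Y_i\cap\sigma$, and therefore admits a mapping-cylinder collar $(\partial N(\Gamma_i)\cap\sigma)\times[0,1)$ collapsing toward $Y_i\cap\sigma$. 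Working inductively on $\dim\sigma$, and using uniqueness of regular neighborhoods, I would arrange the cell-by-cell collars to agree along common faces. Gluing these collars yields the desired homeomorphism $q$; then $p$ is defined by the identity on $N(\Gamma_i)$ and by $p(q(y,t))=y$ on the complement.

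The additional property would then follow directly from the cellular construction. Let $\sigma\in A_i$ and $x\in\inte(\sigma)\setminus Y_i$. If $x\in N(\Gamma_i)$ then $p(x)=x\in\inte(\sigma)$. Otherwise, because $Y_i\cap\sigma$ is a cone with apex $b(\sigma)\in\inte(\sigma)$, the cell-by-cell collar structure has its $[0,1)$-fiber through $x$ entirely contained in $\inte(\sigma)$, so $p(x)=y\in\partial N(\Gamma_i)\cap\inte(\sigma)\subset\inte(\sigma)$.

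The principal technical hurdle is the inductive matching of cell-by-cell collars across face identifications, so that the locally defined product structures assemble into a single global homeomorphism $q$. This is routine but fiddly PL topology, carried out by induction on cell dimension and adjusting the choice of collar inside each cell $\sigma$ to agree with the previously chosen collars on $\partial\sigma$; the uniqueness of regular neighborhoods up to ambient PL isotopy supplies the required flexibility.
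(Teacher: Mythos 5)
The paper provides no argument for this lemma: it simply states ``The next result follows by standard PL topology.'' Your write-up fills in that gap with the standard regular-neighborhood/dual-complex argument, and it is essentially correct. The crucial hidden point, which you identify when you describe $Y_i\cap\sigma$ as a cone from $b(\sigma)$, is that $A_i$ is closed under the relation ``has a face in $A_i$'' (since participation is inherited from faces), so that $Y_i$ coincides with the ordinary dual complex $\Gamma_i^*$ in the first barycentric subdivision of $\Delta_i$. This is what makes the classical decomposition $\PML=N(\Gamma_i)\cup N(Y_i)$, with collar $N(Y_i)\setminus Y_i\cong\partial N\times[0,1)$, applicable. Given that, your cell-by-cell induction and the uniqueness of regular neighborhoods do produce $q$, and the argument for the additional property is sound: since the collar on $\sigma$ is a homeomorphism extending the collar on $\partial\sigma$, a fiber starting at $y\in\partial N(\Gamma_i)\cap\inte(\sigma)$ cannot meet $\partial\sigma$ without violating injectivity, hence stays in $\inte(\sigma)$.

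One wrinkle worth tightening: passing to a ``sufficiently fine common refinement $T$ of $\Delta_i$ and $Y_i$'' is unnecessary and slightly misleading, because $Y_i$ already sits inside the first barycentric subdivision $\Delta_i'$, and the dual complex of $\Gamma_i$ computed in a finer refinement $T$ is a priori a different (though isotopic) complex, not $Y_i$ itself; if one then isotopes it back to $Y_i$, the compatibility with the cells of $\Delta_i$ that you need for the additional property requires an extra argument. The cleaner route is to work directly with the second barycentric subdivision of $\Delta_i$, where the join structure on each simplex $b(\tau_0)*\cdots*b(\tau_k)$ (split into the $\Gamma_i$-part and the $Y_i$-part of the chain) gives $N(\Gamma_i)$, the collar, and the retraction $p$ explicitly, manifestly preserving each $\inte(\sigma)$; and then to obtain $N(\Gamma_i)\subset N_{\PML}(\Gamma_i,a)$, shrink $N(\Gamma_i)$ radially along the collar rather than by re-subdividing.
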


\vskip10pt
\noindent\emph{Proof of Proposition \ref{n-discs}}.   Let $f_i:B^n\to \EL, i\in \BN$ be a sequence of continuous maps.  Being compact $f_i(B^n)$ is covered by finitely many elements $U(\sigma^i_{1}), \cdots, U(\sigma^i_{i_k})$ of $\mU_2$ and hence $\phiinv(f_i(B^n))\subset \PML\setminus \Gamma_{n_i}$ for some $n_i\in \BN$. 

Let $\epsilon_i=\min\{\delta(U(\sigma_j))|\sigma_j \in A_{n_i}\}$ where $\delta(U(\sigma_j))$ is as in Lemma \ref{refinement2}.

Since both $\Gamma_{n_i}$ and $\phiinv(f_i(B^n))$ are compact and disjoint it follows that \\$\dpml(\Gamma_{n_i}, \phiinv(f_i(B^n)))=a_i>0$.

\begin{claim}  It suffices to show that for each $i\in \BN$ there exists $h_i:B^n\to \PML$, a generic PL map, such that for every $t\in B^n$

\smallskip i) $ \dpml(\Gamma_{n_i}, h_i(t))<1/i$ and

\smallskip ii)  $h_i(t)\cup\phiinv(f_i(t))\subset \hat\sigma_j$ for some $\sigma_j\in A_{n_i}$.\end{claim}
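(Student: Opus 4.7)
The plan is to derive the locally finite $n$-discs property for the maps $f_i$ by feeding the hypothetical $h_i$'s and the already-chosen parameters $\epsilon_i$ into Lemma \ref{el approximation}. The resulting maps $g_i:B^n\to\EL$ will automatically witness the local-finiteness clause of Definition \ref{m-discs property}, and the ``common element of $\mU$'' clause will follow from condition (ii) of the claim together with the controlled refinement built into $\mU_2$ by Lemma \ref{refinement2}. The reduction is essentially bookkeeping once these two approximation lemmas are available.

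First I would verify the hypothesis of Lemma \ref{el approximation}, namely that for each $z\in\EL$ there is $\delta_z>0$ with $\dpml(h_i(B^n),\phiinv(z))>\delta_z$ for $i$ sufficiently large. Pick $U(\sigma')\in\mU_2$ containing $z$, with $\sigma'\in\Delta_k$ for some fixed $k$. We may assume (by relabeling) that $n_i\to\infty$, so $\sigma'\in A_{n_i}$ once $n_i\ge k$; then $\phiinv(z)\subset\hat\sigma'$ is disjoint from $\Gamma_{n_i}$, and by compactness together with the nested decreasing nature of $\{\Gamma_{n_i}\}$ there is $\delta_z>0$ with $\dpml(\phiinv(z),\Gamma_{n_i})>2\delta_z$ for all large $i$. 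Combined with condition (i) of the claim, which places $h_i(B^n)$ within $1/i$ of $\Gamma_{n_i}$, this gives $\dpml(h_i(B^n),\phiinv(z))>\delta_z$ as soon as $1/i<\delta_z$.

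Applying Lemma \ref{el approximation} with the $\epsilon_i$'s then produces $g_i:B^n\to\EL$ with $\dpts(g_i(t),\phi(h_i(t)))<\epsilon_i$, and whose local-finiteness conclusion is precisely the first clause of Definition \ref{m-discs property}. For the second clause fix $t\in B^n$ and use condition (ii) of the claim to choose $\sigma_j\in A_{n_i}$ with $h_i(t)\cup\phiinv(f_i(t))\subset\hat\sigma_j$. Since $\sigma_j$ participates in $\mU_2$, some face $\sigma_j'$ satisfies $U(\sigma_j')\in\mU_2$, and the inclusion $\hat\sigma_j\subset\hat\sigma_j'$ places $h_i(t)\in\hat\sigma_j'$ and $f_i(t)\in U(\sigma_j')$. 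Lemma \ref{refinement2} supplies $U\in\mU$ with $U(\sigma_j')\subset U$ and threshold $\delta(U(\sigma_j'))\ge\epsilon_i$ (since $\sigma_j'\in A_{n_i}$) such that $\dpts(\phi(x),z)<\delta(U(\sigma_j'))$ for $x\in\hat\sigma_j'$, $z\in\EL$ forces $z\in U$. Applying this with $x=h_i(t)$, $z=g_i(t)$ places $g_i(t)\in U$, while $f_i(t)\in U(\sigma_j')\subset U$ is automatic.

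The genuine obstacle is not this reduction but the claim's hypothesis itself: manufacturing generic PL maps $h_i:B^n\to\PML$ that both lie within $1/i$ of $\Gamma_{n_i}$ and cellwise share an open star with $\phiinv(f_i(t))$. I expect that construction to rest on the dimension inequality $\dim Y_i\le n$ implicit in Lemma \ref{dimension n+1} (so that a generic $B^n$-image can be pushed off the dual $n$-complex $Y_i$ and then radially collapsed toward $\Gamma_{n_i}$ via the retraction of Lemma \ref{retraction}), while the open-star tracking requires a careful application of a $\PML$-approximation result such as Proposition \ref{pml approximation three} or Lemma \ref{pml approximation two}.
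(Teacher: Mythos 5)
Your argument is correct and follows essentially the same route as the paper's own proof: verify the hypothesis of Lemma \ref{el approximation} from condition (i) and the compactness of $\phiinv(z)$ and $\Gamma_{n_i}$, extract the $g_i$'s, get local finiteness from conclusion (a), and combine condition (ii) with Lemma \ref{refinement2} (via a participating face $\sigma_j'$ with $U(\sigma_j')\in\mU_2$ and $\epsilon_i\le\delta(U(\sigma_j'))$) to place $f_i(t)$ and $g_i(t)$ in a common element of $\mU$. Your handling of the face $\sigma_j'$ is in fact slightly more careful than the paper's, and your closing remarks correctly identify the construction of the $h_i$'s as the remaining content of the proposition.
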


\begin{proof}  To start with Condition i) implies that for every $z\in \EL$, there exists $\delta_z>0$ such that $ \dpml(h_i(B^n)$, $\phiinv(z))>\delta_z $ for $i$ sufficiently large.  This uses the fact that for each $z\in \EL$, $\phiinv(z)$ is compact and disjoint from  $\Gamma_i$ for $i$ sufficiently large.  

Now apply the $\mE\mL$-approximation Lemma \ref{el approximation} to the $\{\epsilon_i\}$ and $\{h_i\}$ sequences to obtain maps $g_i:B^n\to \EL, i\in \BN$ such that 

\smallskip a)  For every $z\in \EL$ there exists $U_z$ open  in $\EL$ such that $g_i(B^n)\cap U_z=\emptyset$ for $i$ sufficiently large, and 

\smallskip b) if $t\in B^n$, then  $\dpts(g_i(t), \phi(h_i(t)))<\epsilon_i$.
\smallskip

Conclusion a) gives the local finiteness condition.
By ii) of the Claim, if $t\in B^n$, then $h_i(t)$, $\phiinv(f_i(t))$ lie in the same $\hat \sigma_j$.  Since $\epsilon_i<\delta(U(\sigma_j))$, Lemma \ref{refinement2} implies that $g_i(t)$ lies in $U\in \mU$ for some  $U(\sigma_j)\subset U$.  To apply that lemma, let $x=h_i(t)$ and $z=g_i(t)$.  Since $f_i(t)\in U(\sigma_j)$ the approximation condition holds too.\end{proof}

Fix $i>0$.  Let $a_i>0$ and $\epsilon_i$ be as above.  Let $a=\min\{1/2i, a_i/2\}$.  Using this $a$, let $p:\PML\setminus Y_{n_i}\to N(\Gamma_{n_i})$ be the retraction given by Lemma \ref{retraction}.     

By the PML-approximation Lemma \ref{pml approximation three}, there exists $h_i':B^n\to \PML$ such that for each $t\in B^n$, $\dpml(h_i'(t), \phiinv(f_i(t)))\subset \hat \sigma$ for some cell $\sigma$ of $\Delta_{n_i}$.  Since $\dim(B^n)+\dim(Y_{n_i})\le 2n<2n+1$ we can assume that $h_i'(B^n)\cap Y_{n_i}=\emptyset$ and this property still holds.  Let $h_i=p\circ h_i' $ perturbed slightly to be a generic PL map.

By construction $p(h_i'(B^n))\subset p(\PML\setminus Y_{n_i})\subset N(\Gamma_{n_i}) \subset \Npml(\Gamma_{n_i}, a)\subset \Npml(\Gamma_{n_i}, 1/2i)$.  Thus Condition i) of the Claim holds for $h_i$, if it is obtained by a sufficiently small perturbation of $p\circ h_i'$.

By Lemma \ref{retraction} and Lemma \ref{sequence properties} ii), $p\circ h_i'(t)$ and $\phiinv(z)$ lie in the 
same $\hat \sigma_j$.  Thus Condition ii) holds for $p\circ h_i'$. 
Since each $\hat\sigma_j$ is open, this condition holds for any 
sufficiently small perturbation of $p\circ h_i'$ and so it holds for $h_i
$.  This completes the proof of Proposition \ref{n-discs} and hence Theorem \ref{nobeling}. \qed
\vskip 10pt

After appropriately modifying dimensions, the proof of Proposition \ref{n-discs} generalizes to a proof of the following.

\begin{proposition}  \label{m-discs} Let $S=S_{g,p}$.  Then $\EL$ satisfies the locally finite $k$-discs property for all $k\le m$, where $m=n-g$, if $p\neq 0$ and $m=n-(g-1)$, if $p=0$.\qed\end{proposition}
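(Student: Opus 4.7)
The plan is to adapt the proof of Proposition \ref{n-discs} essentially verbatim, with the dimensional bookkeeping adjusted to accommodate positive genus and the closed case. Given an open cover $\mU$ of $\EL$, we refine to $\mU_2$ via Lemma \ref{refinement2}, choose a good cellulation sequence $\{\Delta_i\}$, and define participating cells, approximate attracting grids $\Gamma_i$, and their dual cell complexes $Y_i$ exactly as before. Given $f_i : B^k \to \EL$, we use the $\PML$-approximation (Proposition \ref{pml approximation three}) to obtain $h_i' : B^k \to \PML$ with $h_i'(t)$ and $\phiinv(f_i(t))$ in a common open star $\hat\sigma$, then compose with the retraction $p : \PML \setminus Y_{n_i} \to N(\Gamma_{n_i})$ supplied by Lemma \ref{retraction}, and finally apply the $\EL$-approximation (Lemma \ref{el approximation}) to obtain the desired locally finite sequence $g_i : B^k \to \EL$.

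The only step that requires new input is the general position adjustment that puts $h_i'(B^k)$ off of $Y_{n_i}$, so that $p \circ h_i'$ makes sense. In the punctured sphere argument this used the bound $\dim(B^n) + \dim(Y_{n_i}) \le 2n < 2n+1 = \dim(\PML)$. For general $S_{g,p}$, Lemma \ref{dimension n+1} says that every participating cell has dimension at least $n+1-g$ if $p > 0$, and at least $n+2-g$ if $p = 0$. Since the simplices of the dual complex $Y_i$ correspond to chains of participating cells sitting inside $\PML$ (of dimension $2n+1$), this yields
\[
\dim(Y_i) \;\le\; (2n+1) - (n+1-g) \;=\; n+g \qquad (p > 0),
\]
and $\dim(Y_i) \le n+g-1$ when $p = 0$. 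Consequently, a generic PL perturbation can push $h_i'(B^k)$ off $Y_{n_i}$ precisely when $k + \dim(Y_{n_i}) \le 2n$, i.e.\ when $k \le n-g$ in the punctured case and $k \le n-g+1$ in the closed case. These are exactly the bounds defining $m$ in the statement, so for $k \le m$ the argument goes through unchanged.

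With that perturbation in hand, the rest of the proof is pure bookkeeping identical to the $(n+4)$-punctured sphere case: property (i) of the locally finite $k$-discs property follows from the fact that $\Gamma_{n_i}$ eventually separates any fixed $\phiinv(z)$ from $h_i(B^k)$, giving the neighborhood $U_z$; property (ii) follows from Lemma \ref{retraction}, which guarantees that $p \circ h_i'(t)$ stays in the same open star $\hat\sigma_j$ as $\phiinv(f_i(t))$, so that Lemma \ref{refinement2} places $g_i(t)$ in the same element of $\mU$ as $f_i(t)$. The potential obstacle, namely whether the perturbation step has enough codimension, is precisely what dictates the cutoff $k \le m$; beyond that cutoff the dual complex $Y_i$ becomes too large to avoid and the argument breaks down, explaining why the proposition is not stated for all $k$.
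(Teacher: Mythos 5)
Your proposal is correct and is exactly the argument the paper intends: the paper gives no written proof beyond the remark that one "appropriately modifies dimensions" in the proof of Proposition \ref{n-discs}, and you have supplied precisely the right bookkeeping — Lemma \ref{dimension n+1} gives $\dim(Y_i)\le n+g$ (resp.\ $n+g-1$ for $p=0$), and the general position step pushing $h_i'(B^k)$ off $Y_{n_i}$ requires $k+\dim(Y_{n_i})\le 2n$, which yields exactly the stated cutoffs $m=n-g$ and $m=n-(g-1)$. Note also that $k\le m$ implies $k\le n$ for every hyperbolic $S_{g,p}$, so the marker/approximation machinery (Lemma \ref{el approximation}, Proposition \ref{pml approximation three}) applies as needed.
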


\section{Applications}

By Klarreich \cite{K} (see also \cite{H1}), the Gromov boundary $\partial C(S)$ of the curve complex $C(S)$ is homeomorphic to $\EL$. We therefore obtain the following results.

\begin{theorem} Let $ C(S)$ be the curve complex of the surface $S$ of genus $g$ and $p$ punctures.  Then $\partial C(S)$ is $(n-1)$-connected and $(n-1)$-locally connected.  If $g=0$, then $\partial C(S)$ is homeomorphic to the $n$-dimensional Nobeling space.  Here $n=3g+p-4$.
Also $\partial \mC(S_{2,0})=\BR^5_2$ and $\partial \mC(S_{1,2})=\BR^3_1$.  \end{theorem}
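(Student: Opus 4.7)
The plan is to deduce the theorem as essentially a direct translation, via Klarreich's theorem, of results already established in the paper. Klarreich \cite{K} shows that the Gromov boundary $\partial \mC(S)$ of the curve complex is canonically homeomorphic to $\EL$ endowed with the coarse Hausdorff topology (see also Remark \ref{separable and complete} and \cite{H1}), so every topological statement about $\EL$ transports verbatim to $\partial \mC(S)$.

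First I would verify the numerology: the paper's convention is $2n+1 = \dim(\PML) = 6g+2p-7$, which gives $n = 3g+p-4$, matching the statement. Then the $(n-1)$-connectivity of $\partial \mC(S)$ is immediate from Theorem \ref{connectivity} applied to $\EL$, and the $(n-1)$-local connectivity is immediate from Theorem \ref{local connectivity}, since both properties are preserved under homeomorphism. When $g=0$, Theorem \ref{nobeling} (whose proof is completed via Proposition \ref{n-discs} and the Nagorko characterization Theorem \ref{nobeling characterization}) gives $\EL \cong \BR^{2n+1}_n$, and transporting via Klarreich yields $\partial \mC(S) \cong \BR^{2n+1}_n$.

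For the two sporadic identifications, I would invoke Luo's theorem (see \cite{Luo}, as already cited in the paper in the discussion surrounding Corollary \ref{genus two} and Remark \ref{s12}) which provides simplicial isomorphisms $\mC(S_{2,0}) \cong \mC(S_{0,6})$ and $\mC(S_{1,2}) \cong \mC(S_{0,5})$. Simplicial isomorphisms of Gromov-hyperbolic complexes induce homeomorphisms of their Gromov boundaries, hence $\partial \mC(S_{2,0}) \cong \partial \mC(S_{0,6})$ and $\partial \mC(S_{1,2}) \cong \partial \mC(S_{0,5})$. For $S_{0,6}$ we have $n = 3(0)+6-4 = 2$, so the punctured sphere case of the theorem yields $\partial \mC(S_{2,0}) \cong \BR^5_2$; for $S_{0,5}$ we have $n=1$, yielding $\partial \mC(S_{1,2}) \cong \BR^3_1$.

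There is no main obstacle: the entire theorem is a corollary assembly. The only thing to be careful about is ensuring the homeomorphism type of the Gromov boundary is a simplicial-isomorphism invariant of $\mC(S)$ (which it is, as the boundary is canonically defined from the large-scale geometry), and that the indexing conventions for Nobeling dimension in the main theorems match the conventions in the statement here; a quick dimension check $n = 3g+p-4$ against $2n+1 = 6g+2p-7$ suffices.
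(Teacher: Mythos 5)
Your proposal is correct and matches the paper's own argument: the paper proves this theorem as an immediate application of Klarreich's identification $\partial\mC(S)\cong\EL$, citing Theorem \ref{connectivity}, Theorem \ref{local connectivity}, and Theorem \ref{nobeling}, and uses Luo's simplicial isomorphisms $\mC(S_{2,0})\cong\mC(S_{0,6})$, $\mC(S_{1,2})\cong\mC(S_{0,5})$ exactly as you do (as already indicated in Corollary \ref{genus two} and Remark \ref{s12}). The only delta is that you spell out the dimension check and the invariance of the Gromov boundary under quasi-isometry slightly more explicitly than the paper does.
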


\begin{remark}  The cases of $S_{0,5}$ and $S_{1,2}$ were first proved in \cite{HP}.\end{remark}

\begin{theorem}   Let $C(S)$ be the curve complex of the $p+n$-punctured sphere, then the asymptotic dimension of $C(S)$ is equal to $n+1$.\end{theorem}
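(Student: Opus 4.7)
The plan is to derive $\operatorname{asdim}(C(S))=n+1$ as a direct consequence of the preceding theorem, combined with standard results relating the asymptotic dimension of a Gromov hyperbolic space to the topological (and capacity) dimension of its boundary. By Masur--Minsky, $C(S)$ is Gromov hyperbolic; by Klarreich, its Gromov boundary is homeomorphic to $\EL$; and by Theorem \ref{nobeling}, for the $(n+4)$-punctured sphere $\EL\cong \BR^{2n+1}_n$, a space of topological dimension exactly $n$. The game is then to sandwich $\operatorname{asdim}(C(S))$ between $n+1$ and $n+1$.

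For the lower bound, I would invoke the Buyalo--Lebedeva inequality: any visual Gromov hyperbolic space $X$ satisfies $\operatorname{asdim}(X)\ge \dim(\partial X)+1$. The curve complex is visual because the mapping class group acts cocompactly by simplicial isometries, so every vertex lies a uniformly bounded distance from a biinfinite geodesic whose ideal endpoints are ending laminations. Applying the inequality and using $\dim(\partial C(S))=n$ yields $\operatorname{asdim}(C(S))\ge n+1$.

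For the upper bound, I would appeal to Buyalo's companion inequality $\operatorname{asdim}(X)\le \operatorname{cdim}(\partial X)+1$, where $\operatorname{cdim}$ denotes linearly controlled (capacity) dimension. It then suffices to show $\operatorname{cdim}(\partial C(S))=n$. Since visual metrics on $\partial C(S)$ are all quasi-symmetrically equivalent and $\operatorname{cdim}$ is a quasi-symmetric invariant, one is free to choose any convenient visual metric on $\BR^{2n+1}_n$. A natural strategy is to exploit the good cellulation sequence of \S14: by Lemma \ref{dimension n+1} every cell of $\Delta_i$ meeting $\EL$ has dimension $\ge n+1$, so the nerve of $\{U(\sigma)\}_{\sigma\in \Delta_i}$ has multiplicity at most $n+1$. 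The basis from Lemma \ref{basis}, refined through successive $\Delta_i$'s, gives open coverings of $\partial C(S)$ of bounded multiplicity whose elements are linked by a controlled carrying relation between train tracks.

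The main obstacle is converting this combinatorial control into genuine Lipschitz control under a visual metric: one needs to show that the coverings $\{U(\sigma)\}_{\sigma\in\Delta_i}$ have diameters shrinking like a geometric series and Lebesgue numbers that decay comparably, uniformly in $i$. Concretely, the required estimate is that the visual diameter of $U(\sigma)$ for $\sigma\in T_{i_N}$ (the subsequence of $N$-fold full splittings in Proposition \ref{cellulation} iv)) is comparable to $e^{-\epsilon N}$ for an appropriate visual parameter $\epsilon$, which should follow from Lemma \ref{fine splitting} together with a comparison between the coarse Hausdorff topology and the Gromov visual metric. Once this linear control is in place, the coverings certify $\operatorname{cdim}(\partial C(S))\le n$, matching the obvious lower bound $\operatorname{cdim}\ge\dim=n$, and hence $\operatorname{asdim}(C(S))\le n+1$.
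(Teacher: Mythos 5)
Your proposal reaches the right conclusion, but it takes a longer road than the paper and leaves an acknowledged gap on that road. The paper's proof is a two-liner: $C(S)$ is a Gromov-hyperbolic geodesic space by Masur--Minsky, and it is cobounded because the mapping class group acts cocompactly on it, so by Buyalo--Lebedeva one has the \emph{equality} $\operatorname{asdim}(C(S)) = \dim(\partial C(S)) + 1$ outright; plugging in $\dim(\partial C(S)) = \dim(\EL) = n$ from Theorem \ref{punctured sphere dimension} finishes it. You instead split the equality into two inequalities, citing Buyalo--Lebedeva only for the lower bound and trying to re-derive the upper bound yourself via $\operatorname{asdim}(X) \le \operatorname{cdim}(\partial X) + 1$.

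The gap you yourself flag is real: you still need to prove that $\operatorname{cdim}(\partial C(S)) = n$, i.e.\ that there exist coverings by the $U(\sigma)$'s with both multiplicity $\le n+1$ \emph{and} linear control (geometrically decaying mesh with comparably decaying Lebesgue number) in some visual metric. The paper's \S 14 machinery (Lemma \ref{dimension n+1}, Lemma \ref{fine splitting}) gives you the multiplicity and shrinking-mesh statements in the \emph{coarse Hausdorff topology}, but translating these into quantitative estimates for a Gromov visual metric on $\partial C(S)$ is a genuine piece of geometry — roughly, it requires a uniform comparison between the number of full train-track splittings and distance in $C(S)$ — and nothing in the paper carries this out. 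You do not need to do this work: Buyalo--Lebedeva already establish the upper bound (equivalently, that $\operatorname{cdim} = \dim$) for boundaries of cobounded hyperbolic geodesic spaces by exploiting the approximate self-similarity that the cocompact isometric action provides. So the correct fix is not to fill in the capacity-dimension estimate by hand, but to cite the Buyalo--Lebedeva equality directly, as the paper does.

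One smaller point: for the lower bound you attribute $\operatorname{asdim}(X) \ge \dim(\partial X)+1$ to "visuality." The Buyalo--Lebedeva lower-bound hypothesis is coboundedness of the geodesic hyperbolic space, not visuality per se (visuality is what is used on the \emph{upper}-bound side). For $C(S)$ both hold, since the mapping class group action is cocompact, so the conclusion is unaffected, but it is worth keeping the hypotheses straight, especially as $C(S)$ is not proper and one cannot invoke the versions of these results stated only for locally compact spaces.
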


\begin{proof}  As $C(S)$ is a Gromov-hyperbolic space \cite{MM}, its asymptotic dimension is equal to $1+\dim(\partial C(S))$ by Buyalo and Lebedeva \cite{BL}.  Now apply Theorem \ref{punctured sphere dimension}.\end{proof}

Let $S$ be a finite type hyperbolic surface.  Let $DD(S)$ denote the space of doubly degenerate marked hyperbolic structures on $S\times \BR$.  These are the complete hyperbolic structures with limit set all of $S^2_\infty$ whose parabolic locus corresponds to the cusps of S.  It is topologized with the algebraic topology.  See \S 6 \cite{LS} for more details.  As a consequence of many major results in hyperbolic 3-manifold geometry, Leininger and Schleimer proved the following.

\begin{theorem} \cite{LS} $ DD(S)$ is homeomorphic to $\EL\times \EL\setminus \Delta$, where $\Delta$ is the diagonal.\end{theorem}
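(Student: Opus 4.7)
The plan is to construct the homeomorphism $\Phi \colon DD(S) \to \EL \times \EL \setminus \Delta$ sending a doubly degenerate hyperbolic structure on $S \times \BR$ to the pair of ending laminations attached to its two ends, and then to verify bijectivity and bicontinuity. Well-definedness combines the tameness theorem (Agol, Calegari--Gabai \cite{Ag}, \cite{CG}) with Thurston--Bonahon \cite{Th4}, \cite{Bon}: each of the two degenerate ends is geometrically tame and carries an ending lamination in $\EL$. The laminations at the two ends must be distinct, for otherwise the convex core would be compact modulo cusps and the structure would not be doubly degenerate, so the image lies in $\EL \times \EL \setminus \Delta$.

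For bijectivity I would invoke the ending lamination theorem of Brock--Canary--Minsky \cite{BCM}: the hyperbolic structure is determined up to marked isometry by its pair of end invariants, giving injectivity. Surjectivity follows from Bers density and the double limit theorem: given $(\mu_+, \mu_-) \in \EL\times \EL \setminus \Delta$, choose $\lambda^n_\pm \in \FPML$ with $\phi(\lambda^n_\pm) \to \mu_\pm$ in the coarse Hausdorff topology, realize these as ending data of quasi-Fuchsian structures $Q(\lambda^n_+,\lambda^n_-)$ via simultaneous uniformization, and take an algebraic limit; the double limit theorem gives convergence (after subsequence) to a doubly degenerate structure whose ending laminations are $(\mu_+,\mu_-)$ by the Brock--Canary--Minsky theorem.

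Continuity of $\Phi$ proceeds through pleated surfaces: if $M_n \to M_\infty$ algebraically in $DD(S)$, pleated surfaces exiting the $+$ end of $M_\infty$ persist in $M_n$ for large $n$, and their pleating loci, which converge to the ending lamination of $M_\infty$ by Thurston's original construction, provide simple closed geodesics in $M_n$ whose projective classes in $\PML$ accumulate on $\phiinv(\mu_+^\infty)$. By Lemma~\ref{pmlel topology}v) and Lemma~\ref{convergent sequence}, this forces coarse Hausdorff convergence of ending laminations. Continuity of $\Phi^{-1}$ is the dual argument: given $(\mu_\pm^n) \to (\mu_\pm^\infty)$ in $\EL \times \EL \setminus \Delta$, lift to $\lambda^n_\pm \in \FPML$ whose projective classes stay in a compact set of $\PML \times \PML$ minus the ``non-binding'' set, apply the double limit theorem to the quasi-Fuchsian approximants sharing these end invariants, and use the ending lamination theorem to identify every subsequential algebraic limit with $\Phi^{-1}(\mu_+^\infty,\mu_-^\infty)$.

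The hardest step will be continuity of $\Phi^{-1}$: the coarse Hausdorff topology on $\EL$ is strictly weaker than any natural topology on $\FPML$, so one must verify that all measured-lamination lifts of $\mu_\pm^n$ produce the same algebraic limit. The key input is the a priori length bounds of Minsky \cite{M}, which show that the geometry of an end depends only on the underlying lamination and not on its transverse measure, together with the fact (Lemma~\ref{pml one closeness}) that any $\PML$-accumulation point of lifts lies in $\phiinv(\mu_\pm^\infty)$, ensuring the binding hypothesis of the double limit theorem is preserved in the limit.
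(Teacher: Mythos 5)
This statement is not proved in the paper at all: it is quoted verbatim from Leininger and Schleimer \cite{LS} and used only as a black box to derive the corollary about $DD(S)$ for punctured spheres.  There is therefore no ``paper's own proof'' to compare against.  That said, your sketch follows the same broad outline as the argument in \cite{LS} (tameness and the ending lamination theory give well-definedness, the ending lamination theorem \cite{BCM} gives injectivity, the double limit theorem plus Bers density give surjectivity, and the real work is bicontinuity in the algebraic versus coarse Hausdorff topologies), so as a reconstruction of the cited result it is reasonable in shape.

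One step as you have written it is incorrect.  You claim the two ending laminations must be distinct ``for otherwise the convex core would be compact modulo cusps and the structure would not be doubly degenerate.''  That inference is backwards: a doubly degenerate structure has both ends geometrically infinite, so its convex core is never compact, regardless of whether the two ending laminations coincide.  Nothing about equal end invariants would make the convex core compact.  The actual reason equal ending laminations cannot occur is deeper and must be imported (e.g.\ via the ending lamination theorem together with a symmetry or fibering argument, or via the binding hypothesis in the double limit theorem); it is not something you can read off from compactness of the convex core.  The continuity discussion is also only a sketch of a genuinely delicate step in \cite{LS} --- in particular the claim that pleating loci converge and the uniformity across all measured-lamination lifts of the $\mu_\pm^n$ both require substantive work with Minsky's a priori bounds and the model manifold that you are waving at rather than carrying out.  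Since the paper under review does not prove the theorem, none of this is a gap in the paper, but it is a gap in your proposal if taken as a self-contained proof.
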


\begin{corollary}  If $S$ is the $(n+p)$-punctured sphere, then $DD(S)$ is homeomorphic to $\BR^{2n+1}_n\times \BR^{2n+1}_n\setminus\Delta$.  In particular $DD(S)$ is $(n-1)$-connected and $(n-1)$-locally connected.\end{corollary}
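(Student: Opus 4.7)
\noindent\emph{Proof proposal.} The homeomorphism statement is immediate by composition: Leininger--Schleimer gives $DD(S)\cong \EL\times\EL\setminus\Delta$, and Theorem \ref{nobeling} identifies $\EL$ with the $n$-dimensional Nobeling space $\BR^{2n+1}_n$ for the $(n+p)$-punctured sphere. So the first sentence requires no work beyond citing these two results.

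The substantive content is the connectivity assertion, which I would deduce as a corollary of the $(n-1)$-connectivity and $(n-1)$-local connectivity of $\EL$ itself (Theorems \ref{connectivity} and \ref{local connectivity}), together with a general-position style argument. Given $g=(g_1,g_2):S^k\to \EL\times\EL\setminus\Delta$ with $k\le n-1$, I would first extend each component $g_i$ to a map $F_i:B^{k+1}\to \EL$ using $(n-1)$-connectivity of $\EL$, and then argue that $(F_1,F_2)$ can be homotoped rel $S^k$ to a map whose image avoids the diagonal. For the local connectivity statement, I would use $(n-1)$-local connectivity of $\EL$ to choose the extensions $F_i$ inside prescribed neighborhoods, and then perform the same diagonal-avoidance argument inside those neighborhoods.

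The one genuine obstacle is the diagonal-avoidance step: we need that two maps $F_1,F_2:B^{k+1}\to \EL$ that are pointwise distinct on $S^k$ can be homotoped rel $S^k$ to maps that are pointwise distinct everywhere. Since $k+1\le n=\dim(\EL)$ and $\EL$ is the $n$-dimensional Nobeling space, this is a standard consequence of the disjoint $n$-discs property that characterizes Nobeling spaces (it is the two-variable analogue of Proposition \ref{n-discs}, and also follows directly from Nagorko's embedding-approximation theorem cited in the introduction, applied to the map $t\mapsto (F_1(t),F_2(t))$ of $B^{k+1}$ into $\EL\times\EL$, after noting that the diagonal has the same dimension $n$ as the factor). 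In a fully detailed write-up, I would either invoke Nagorko's result directly or adapt the attracting grid construction of Section 17 to the product target, using the cellulation $\Delta_i\times\Delta_j$ and its $n$-skeleton to push $(F_1,F_2)$ off of $\Delta$ while staying within a controlled neighborhood. Once this single diagonal-avoidance lemma is in place, both the connectivity and local connectivity assertions follow by the extension arguments above.
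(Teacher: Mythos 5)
Since the paper states this as a bare corollary of the Leininger--Schleimer theorem and Theorem \ref{nobeling} with no written proof, the homeomorphism part of your argument (citation of those two results) matches the paper exactly; the substance of the review concerns your treatment of the ``in particular.''

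For $(n-1)$-\emph{local} connectivity you are working harder than necessary, and in doing so you import the diagonal-avoidance step into a place where it is not needed. Given $(x_1,x_2)\in\EL\times\EL\setminus\Delta$ and a neighborhood $U$ of it, choose \emph{disjoint} open sets $U_1\ni x_1$, $U_2\ni x_2$ in $\EL$ with $U_1\times U_2\subset U$; then $(U_1\times U_2)\cap\Delta=\emptyset$ automatically. Applying $(n-1)$-local connectivity of $\EL$ in each factor to produce $V_i\subset U_i$, any map $S^k\to V_1\times V_2$ extends componentwise to $B^{k+1}\to U_1\times U_2\subset \EL\times\EL\setminus\Delta$. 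No diagonal avoidance is required; this is a strict simplification of your plan.

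For global $(n-1)$-connectivity your diagonal-avoidance lemma is indeed the crux, and your justification of it has a real gap. First, the paper establishes the \emph{locally finite} $n$-discs property (Proposition \ref{m-discs}), not the \emph{disjoint} $n$-discs property; these are different statements, and the relative, rel-$S^k$, homotopy form of the disjoint $n$-discs property that you actually need (two maps of $B^{k+1}$ already pointwise distinct on $S^k$ can be made pointwise distinct everywhere by a homotopy rel $S^k$) is not a formal consequence of LF$n$DP, nor is it proved or cited anywhere in the paper. Second, your appeal to Nagorko's embedding-approximation theorem is off-target: that theorem concerns maps into a Nobeling space $\BR^{2m+1}_m$, and you would be applying it with target $\EL\times\EL\cong\BR^{2n+1}_n\times\BR^{2n+1}_n$, which is not of that form --- indeed, the paper explicitly lists the topological characterization of spaces $\BR^p_q$ as an open problem in \S 19, so this identification cannot be assumed. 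What you actually need is that $\Delta$ is a $Z_{n}$-set (or that a relative disjoint $n$-cells property holds) in $\EL\times\EL$; this is plausible and follows from the standard theory of Nobeling manifolds once Theorem \ref{nobeling} is in hand, but it should be stated and cited as such rather than attributed to LF$n$DP or to Nagorko's embedding theorem. As written, your argument asserts the key step without a correct source and invokes a theorem that does not apply.
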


The subspace of marked hyperbolic structures on $S\times \BR$ which have the geometrically finite structure $Y$ on the $ -\infty$ relative end is known as the Bers slice $B_Y$. As in \cite{LS}, let $\partial_0B_Y(S)$ denote the subspace of the Bers slice whose hyperbolic structures on the $\infty$ relative end are degenerate.  

\begin{theorem}\cite{LS}  $\partial_0B_Y(S)$ is homeomorphic to $\EL$.\end{theorem}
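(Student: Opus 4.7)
The plan is to exhibit the \emph{ending lamination map} $\Phi\colon \partial_0 B_Y(S)\to \EL$ as the desired homeomorphism. For $\rho\in\partial_0B_Y(S)$, the quotient manifold $\BH^3/\rho(\pi_1(S))$ has one geometrically finite end carrying the conformal invariant $Y$ and, by definition of $\partial_0B_Y$, a unique degenerate end; by Thurston \cite{Th4} and Bonahon \cite{Bon}, this degenerate end has a well-defined ending lamination which we set to be $\Phi(\rho)\in\EL$.

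Bijectivity of $\Phi$ rests on two standard inputs. Injectivity is immediate from the Ending Lamination Theorem of Brock--Canary--Minsky \cite{BCM}: two points of $\partial_0B_Y(S)$ with the same image share both end invariants (the geometrically finite invariant $Y$ and the ending lamination), hence are conjugate in the deformation space. Surjectivity follows from the Bers-type density principle: given $\mu\in\EL$, pick a sequence $\rho_n\in B_Y(S)$ of quasi-Fuchsian groups whose Bers parameter on the $+\infty$ side leaves every compact set of Teichm\"uller space along a sequence projectively converging to $\mu$; after extracting an algebraic limit $\rho_\infty\in \partial_0B_Y(S)$, the classification in \cite{BCM} forces $\Phi(\rho_\infty)=\mu$.

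For continuity of $\Phi$, suppose $\rho_n\to\rho$ algebraically inside $\partial_0B_Y(S)$. Combining Thurston's continuity theorem with the bi-Lipschitz model manifold of Minsky for $\rho$, any Hausdorff subsequential limit of $\Phi(\rho_n)$ is a diagonal extension of $\Phi(\rho)$; by Lemma \ref{continuity1} this is precisely coarse Hausdorff convergence in $\EL$. For continuity of $\Phi^{-1}$, given $\mu_n\to\mu$ in $\EL$, set $\rho_n=\Phi^{-1}(\mu_n)$; because the $-\infty$ invariant is fixed at $Y$, a double limit / Bers slice compactness argument produces an algebraic subsequential limit $\rho_\infty\in\partial_0B_Y(S)$, and the already-proved continuity of $\Phi$ together with injectivity identifies $\rho_\infty$ with $\Phi^{-1}(\mu)$.

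The principal obstacle is continuity of $\Phi^{-1}$: it requires upgrading coarse Hausdorff convergence of ending laminations into algebraic convergence of representations while controlling the possibility of escape to a geometric limit or appearance of accidental parabolics. This is genuinely deep and is carried out via the bi-Lipschitz model manifold machinery of Minsky and Brock--Canary--Minsky \cite{BCM}, which converts combinatorial information on $\mu_n$ into geometric proximity of the corresponding convex cores; this is exactly the heart of the Leininger--Schleimer argument in \cite{LS}, and the proof proposed here is the natural one-ended specialization of their doubly-degenerate parametrization.
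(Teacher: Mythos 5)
This theorem carries the tag \cite{LS} in the paper and is invoked as a black-box input from Leininger--Schleimer; the paper gives no proof of its own, so there is no in-paper argument against which to check your proposal. What you have written is a recognizable outline of the strategy one expects from \cite{LS}: build the ending-lamination map $\Phi$ from the end invariant of the degenerate end, get injectivity from the Ending Lamination Theorem \cite{BCM}, get surjectivity from a Bers-slice limiting argument, and get continuity in both directions from the geometry of Minsky's model manifolds. At the level of a roadmap this is correct and matches what one would expect the cited source to do.

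That said, a few steps in your sketch are passing over exactly the points that make the theorem hard, and you should not leave them as throwaway lines. In the surjectivity argument, extracting an algebraic limit $\rho_\infty$ of a sequence in $B_Y(S)$ does not by itself place $\rho_\infty$ in $\partial_0 B_Y(S)$: one must rule out accidental parabolics and rule out that $\rho_\infty$ is geometrically finite, and one must actually compute that the degenerate end invariant of $\rho_\infty$ is $\mu$ rather than merely invoke the classification. Both points require genuine input (e.g.\ Thurston's double limit theorem and continuity of end invariants), not just compactness of the Bers slice. Similarly, in the continuity of $\Phi^{-1}$, producing an algebraic limit of $\rho_n=\Phi^{-1}(\mu_n)$ again needs a precompactness argument that controls these same degenerations; you name the obstacle correctly but do not discharge it. Finally, note that coarse Hausdorff convergence is phrased here in terms of diagonal extensions (as in Lemma \ref{continuity1}), so the ``any Hausdorff subsequential limit is a diagonal extension of $\Phi(\rho)$'' step should be stated and verified with care: one must know the subsequential limit contains $\Phi(\rho)$ as its unique minimal filling piece, not merely that it is close to it. These are the places where the actual work of \cite{LS} lives, and any write-up should either carry them out or cite where they are carried out.
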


\begin{corollary}  If $S$ is a hyperbolic surface of genus-$g$ and $p$-punctures, then $\partial_0B_Y(S)$ is $(n-1)$-connected and $(n-1)$-locally connected.  If $g=0$, then $\partial_0B_Y(S)$  is homeomorphic to the $n$-dimensional Nobeling space.  Here $n=3g+p-4$.\end{corollary}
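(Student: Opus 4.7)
The plan is to observe that this corollary is an immediate consequence of the Leininger--Schleimer homeomorphism $\partial_0 B_Y(S) \cong \EL$ combined with the main topological results about $\EL$ already established in the paper. Since homeomorphisms preserve $(n-1)$-connectivity, $(n-1)$-local connectivity, and the property of being homeomorphic to the $n$-dimensional Nobeling space, essentially no new work is needed.

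Concretely, I would first invoke the Leininger--Schleimer theorem cited immediately above the corollary to identify $\partial_0 B_Y(S)$ with $\EL$ as topological spaces. Next, Theorem \ref{main} (proved as Theorem \ref{connectivity} in \S11 and Theorem \ref{local connectivity} in \S12) gives that $\EL$ is $(n-1)$-connected and $(n-1)$-locally connected, where $2n+1 = \dim(\PML) = 6g+2p-7$, i.e.\ $n = 3g+p-4$, matching the statement of the corollary. Transporting this along the homeomorphism yields the first conclusion for $\partial_0 B_Y(S)$.

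For the second conclusion, when $g = 0$, Theorem \ref{nobeling} provides a homeomorphism $\EL \cong \mathbb{R}^{2n+1}_n$, the $n$-dimensional Nobeling space. Composing this with the Leininger--Schleimer homeomorphism gives $\partial_0 B_Y(S) \cong \mathbb{R}^{2n+1}_n$, as required.

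There is no genuine obstacle; the corollary is purely a matter of translating via the cited homeomorphism, and all substantive content has already been established in \S11, \S12, \S17 and the quoted result of Leininger--Schleimer. The only thing to verify is the consistency of the index $n$: the paper uses the convention $\dim(\PML) = 2n+1 = 6g+2p-7$, which gives $n = 3g+p-4$ in agreement with the statement.
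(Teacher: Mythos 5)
Your proposal is correct and is exactly the argument the paper intends: the corollary follows immediately by transporting Theorems \ref{connectivity}, \ref{local connectivity}, and \ref{nobeling} across the Leininger--Schleimer homeomorphism $\partial_0B_Y(S)\cong\EL$ stated just above it. No further comment is needed.
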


%CCCC

\section{Problems and Conjectures}

To start with we restate the following long-standing classical question.

\begin{problem}  Find a topological characterization of the Nobeling type space $\BR^p_q$, the space of points in $\BR^p$ with at most $q$ rational coordinates.  \end{problem}

\begin{remarks}  i)  The fundamental work of Ageev \cite{Av}, Levin \cite{Le} and Nagorko \cite{Na} solve this problem for $p=2m+1$ and $q=m$.

ii)  Of particular interest are the spaces of the form  $\BR^{2n+1}_m$, where $m\ge n$.

iii) In analogy to Theorem \ref{nobeling characterization}, does it suffice that $X$ be separable, complete metric, $q$-dimensional, $(q-1)$-connected, $(q-1)$-locally connected, and satisfy the locally finite $(p-q-1)$-discs property?\vskip 8pt

\end{remarks}

We offer the following very speculative;

\begin{conjecture}  Let $S$ be a $p$-punctured surface of genus-$g$.  Then $\EL$ is homeomorphic to $\BR^{2n+1}_{n+k}$, where $n=3g+p-4$ and  $k=0$ if $g=0$, $k=g-1$ if both $p\neq 0$ and $g\neq 0$, and $k=g-2$ if $p=0$.\end{conjecture}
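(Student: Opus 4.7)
The plan is to verify the six hypotheses of Nagorko's characterization (Theorem~\ref{nobeling characterization}) for $X = \EL$ with dimension parameter $m = n+k$. Separability and topological completeness hold by Remark~\ref{separable and complete} for all $(g,p)$, so four conditions remain to be checked: (a)~$\dim(\EL) = n+k$; (b)~$\EL$ is $(n+k-1)$-connected; (c)~$\EL$ is $(n+k-1)$-locally connected; (d)~$\EL$ satisfies the locally finite $(n+k)$-discs property. The three-case formula for $k$ matches the defect in the dimension estimate $\dim(P(\tau)) \ge f - 3 + 2g$ of Lemma~\ref{dimension n+1}, where $f$ is the number of complementary regions of a maximal generic train track $\tau$: for punctured surfaces one has $f \ge p$ with equality exactly when every region is a single once-punctured disc, while for closed surfaces $f \ge 1$ gives the further unit saving.

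For (a), I would first establish the upper bound $\dim(\EL) \le n+k$ by sharpening Proposition~\ref{upper dim bound} by one. The $\mu \in \EL$ whose preimage $\phiinv(\mu)$ sits in the interior of a participating cell of extremal minimal dimension $d_{\min} = n+1-g$ (resp.\ $n+2-g$) should form a subspace of $\EL$ of strictly smaller dimension: these cells correspond to maximal train tracks realizing the extremal complementary-region count, and the ending-lamination locus inside $\inte(P(\tau))$ is expected to be cut out by codimension-one constraints arising from the peripheral cuffs (resp.\ a handle). A sum-theorem induction as in Proposition~\ref{upper dim bound} then yields the bound. For the matching lower bound, I would invoke the Harer--Ivanov theorem that $\CS(S_{g,p})$ has the homotopy type of a wedge of $(2g+p-3)$-spheres for $p>0$ (with a parallel statement one dimension higher for $p=0$), pick an essential $S^{n+k} \to L$ into a top-dimensional subcomplex $L$ of the natural image of $\CS$ in $\PML$, and apply Lemma~\ref{homotopy} together with Lemma~\ref{dranishnikov}, contingent on (b).

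For (b) and (c), the plan is to extend the marker and stryker cascade machinery of \S\S 7--10 from the regime $\dim(V) \le n$ to $\dim(V) \le n+k$. The dimensional hypothesis enters Propositions~\ref{finished}, \ref{stryker finished}, and~\ref{increasing} only through the non-emptiness of the arational sublamination $\mA(\phi(f(t)))$ for generic $f$. In the higher-dimensional regime this can fail, but for $\dim(V) \le n+k$ the generic $\phi(f(t))$ still carries a non-peripheral minimal sublamination, of complexity bounded below in terms of $g$. Redefining markers to be hit by such sublaminations and re-running the cascade inductions should preserve all of the finiteness statements (Lemmas~\ref{push finiteness} and~\ref{push finiteness stryker}), because stryker-curve finiteness is an intrinsic combinatorial feature of $\PML$ independent of arationality. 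Local connectivity then follows by the convex-hull-in-a-train-track argument of Proposition~\ref{local extension}, applied to the strengthened version of Theorem~\ref{marker theorem}.

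The main obstacle is step (d). The generic-dimension argument used in Proposition~\ref{n-discs} requires $\dim(B^{n+k}) + \dim(Y_i) < \dim(\PML)$, i.e.\ $d_{\min} \ge n+k+1$, whereas the sharpened bound from (a) gives only $d_{\min} \ge n+2-g$---far short of the required $n+g$ when $g \ge 2$. The naive single-stage attracting-grid retraction therefore fails. The most plausible remedy appears to be a stratified retraction scheme: decompose the grid by cell-dimension strata and, in $g$ successive stages, radially push each PML-approximation off the highest-codimension stratum still present, so that each stage effectively raises the controlling $d_{\min}$ by one. Arranging that the intermediate maps remain $\PML$-approximations of the original $\EL$-maps, that the stages fit together continuously, and that the final $\mE\mL$-approximation via Lemma~\ref{el approximation} respects the locally-finite condition across all stages, is the key technical challenge. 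I expect this multi-stage retraction---paired with the refined train-track analysis needed in (a)---to form the heart of any proof of the conjecture.
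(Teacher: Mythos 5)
What you have set out to prove is stated in \S 19 as a \emph{conjecture} which the author explicitly describes as ``very speculative''; the paper offers no proof of it, so there is no paper argument to compare against, and the proposal must be judged on its own merits. The first and decisive problem is a misapplication of the characterization theorem at the very outset. Nagorko's Theorem~\ref{nobeling characterization} characterizes only the \emph{standard} $m$-dimensional Nobeling space $\BR^{2m+1}_m$. The conjecture's target is $\BR^{2n+1}_{n+k}$, and whenever $k>0$ (i.e.\ $g\ge 2,\ p\ne 0$, or $g\ge 3,\ p=0$) this is \emph{not} the standard $(n+k)$-dimensional Nobeling space $\BR^{2(n+k)+1}_{n+k}$; it is one of the ``Nobeling type spaces'' $\BR^p_q$ whose topological characterization is posed as an open problem in Problem~19.1. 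Indeed the paper's own conjectured axiomatization (Remark~(iii) following Problem~19.1) calls for the locally finite $(p-q-1)$-discs property, which for $p=2n+1,\ q=n+k$ is the $(n-k)$-discs property, not the $(n+k)$-discs property your step~(d) is targeting. If you succeeded in verifying all six of Nagorko's conditions with dimension parameter $m=n+k$, you would have shown $\EL\cong\BR^{2(n+k)+1}_{n+k}$, which for $k>0$ is a different space and would contradict rather than establish the conjecture. Any proof of the conjecture therefore has to begin with a characterization theorem for $\BR^{2n+1}_{n+k}$, and no such theorem is currently known.

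Even setting that aside, the subsidiary steps you describe are themselves open and your remedies do not close them. Theorems~\ref{connectivity} and~\ref{local connectivity} give only $(n-1)$-connectivity and $(n-1)$-local connectivity; the cascade machinery of \S\S 7--10 uses $\dim(V)\le n$ not merely to keep $\mA(\phi(f(t)))\neq\emptyset$ but also in the termination step of Proposition~\ref{finished} (``$\mC(h)$ is finished if $L\ge n$''), where the count of pairwise disjoint simple closed geodesics is what makes the downward induction bottom out. Your suggestion to ``redefine markers to be hit by non-peripheral minimal sublaminations'' does not address what happens at parameters $t$ where the generic image lamination has no minimal non-simple-closed-curve piece at all, which genuinely occurs once $\dim(V)>n$. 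Finally, Proposition~\ref{m-discs} gives only the locally finite $(n-g)$-discs property for $p\ne 0$ (and $(n-g+1)$ for $p=0$), which still falls short even of the corrected $(n-k)$-target by one; your proposed ``stratified retraction scheme'' for step~(d) is an idea, not an argument, and it is precisely here that the widening gap in the dimension count (proportional to $g$) would have to be closed.
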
  

The value of $k$ in this conjecture is motivated by the following duality conjecture.

\begin{conjecture} (Duality Conjecture) \label{duality}  If $S$ is  the $p$-punctured surface of genus-$g$, then  
\vskip 8pt

\noindent 1)  $\dim(EL)+ h\dim(\CS) +1 =\dim(\PML)$, where $h(\dim(\CS))$ is the homological dimension of the curve complex.
\vskip 8pt

If  $\dim(\PML)=2n+1$, then 

\vskip 8pt

\noindent 2)  for every $m$, there is a natural injection  $H_m^{st}(\EL) \to H^{2n-m}(\CS)$ and 
\vskip 8pt

\noindent 3)  for every $m$, there is a natural injection of $H_m(\CS) \to \check H^{2n-m}(\EL).$

 \end{conjecture}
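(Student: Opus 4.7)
The governing principle should be Alexander duality in $\PML \cong S^{2n+1}$, treating $I(\CS) \subset \UPML$ and $\EL$ (the ``boundary at infinity'' of $\FPML$) as loosely dual subsets of the sphere. Section \S15, in particular Lemma \ref{homotopy} combined with Harer--Ivanov's computation of $\tilde H_n(\CS)$, already performs this duality in top degree: it produces an injection $\tilde H_n(\CS) \hookrightarrow \pi_n(\EL)$ detected by linking numbers in $\PML$. The plan is to promote this ad hoc top-degree argument to a functorial \v{C}ech/Steenrod duality in every degree simultaneously, and to read off the dimensional identity as a consequence.

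\textbf{Parts (2) and (3).} Fix a finite subcomplex $L \subset \CS$ of dimension $q$, viewed in $\PML$ via $I$. Classical Alexander duality in $S^{2n+1}$ gives $\tilde H_m(L) \cong \tilde H^{2n-m}(\PML \setminus L)$. The construction I would pursue is a natural ``passage to infinity'' map $\alpha_L : \check H^{2n-m}(\PML \setminus L) \to \check H^{2n-m}(\EL)$ obtained by packaging the $\EL$-approximation machinery: Theorem \ref{marker theorem} together with Lemmas \ref{weak el approximation} and \ref{el approximation} pushes any generic PL cycle in $\PML \setminus L$ into $\EL$, while Lemma \ref{controlled homotopy} together with Lemma \ref{pml approximation three} shows that two such pushes differ by a homotopy supported in a prescribed neighborhood, so the composite descends to \v{C}ech cohomology. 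Composition with Alexander duality yields $H_m(L) \to \check H^{2n-m}(\EL)$; taking the direct limit over finite $L \subset \CS$ (and using that $\phi : \FPML \to \EL$ has contractible fibers by Lemma \ref{preimage simplex}, to identify \v{C}ech cohomology of $\EL$ with an appropriate limit on the $\PML$ side) assembles these into the map of part (3). Injectivity follows from the linking-detection argument underlying Theorem \ref{homotopy group}: a nonzero class in $H_m(\CS)$ is carried by some finite $L$ whose Alexander-dual class links the push-off in $\PML$, a quantity preserved under the push into $\EL$. Part (2) is strictly dual: a Steenrod/stable class of $\EL$ is represented by compatible approximations $f_k : K_k \to \EL$ that, via Lemma \ref{pml approximation three}, lift to generic PL maps into $\PML$ missing a growing sequence of finite subcomplexes of $I(\CS)$; Alexander duality in each complement and compatibility across the direct system (again using Lemma \ref{controlled homotopy}) produces a class in $H^{2n-m}(\CS)$ that detects the original by linking.

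\textbf{Part (1).} The identity $\dim(\EL) + h\dim(\CS) + 1 = 2n+1$ is intertwined with (2) and (3). The upper bound on $\dim(\EL)$ should refine Proposition \ref{upper dim bound} by sharper Euler-characteristic bookkeeping on train-track carriers ---the bound $\dim(\EL) \le n+g-1$ for $p > 0$ that the author announces is exactly what duality predicts once $h\dim(\CS)$ is fed in via Harer. The matching lower bound falls out of part (3): nonvanishing of $H_{h\dim(\CS)}(\CS)$ (a wedge of spheres of the appropriate dimension, by Harer) injects into $\check H^{2n-h\dim(\CS)}(\EL)$ by (3), and combined with the $(n-1)$-connectivity and $(n-1)$-local connectivity of $\EL$ (Theorems \ref{connectivity} and \ref{local connectivity}) this forces $\dim(\EL) \ge 2n - h\dim(\CS)$ via (a suitable generalization of) Lemma \ref{dranishnikov}. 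Thus (1), (2), (3) should all emerge from the same duality machinery in one stroke.

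\textbf{Main obstacle.} The hard part is making the map $\alpha_L$ genuinely well-defined on \v{C}ech cohomology and independent of choices. The $\EL$-approximation produced by Theorem \ref{marker theorem} is a concatenation of countably many relative push-off homotopies that depend on an enumeration of the simple closed geodesics, an auxiliary sequence of marker families (\S3), and an auxiliary good cellulation sequence (\S14); one must verify that different choices produce \v{C}ech-cohomologous outputs rather than merely homotopic individual maps. This is delicate because $\EL$ is neither an ANR nor locally compact, so its \v{C}ech and singular cohomologies are expected to disagree in positive degree, and standard ANR-style arguments are unavailable. A secondary obstacle is the mismatch between the simplicial topology on $\CS$ and its subspace topology $\CSsub$ from $\PML$ (Remark \ref{sub is coarser}): the direct-limit argument on the $\CS$ side must be reconciled with the inverse-limit \v{C}ech structure on the $\EL$ side, and the finite subcomplexes used must be chosen to respect both. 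Resolving these two points is, in my view, exactly what a complete proof would require, and is presumably why the conjecture is phrased in terms of \v{C}ech and Steenrod-type theories rather than singular ones.
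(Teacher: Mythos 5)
The statement you are proving is labeled a \emph{conjecture} in \S 19 of the paper; the paper contains no proof of it, and in fact explicitly reduces parts 2) and 3) to a further open conjecture (that the bijections $I:\CS\to\CSsub$ and $I:\UPMLcw\to\UPML$ induce injections in Steenrod homology and \v{C}ech cohomology). So there is nothing to compare your argument against, and your text is, by your own admission in the final paragraph, a strategy sketch with unresolved obstacles rather than a proof. The honest verdict is that the statement remains open and your proposal does not close it.

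That said, it is worth contrasting your route with the paper's partial progress, because the paper gets further with less machinery. The paper applies Sitnikov--Alexander duality once, to the complementary pair $\FPML$, $\UPML$ in $\PML=S^{2n+1}$ (Theorem \ref{phi isomorphism}), and then the Vietoris--Begle theorem to the closed, cell-fibered map $\phi:\FPML\to\EL$ (Theorem \ref{spanier}); this yields unconditional isomorphisms $H_m^{st}(\EL)\cong \check H^{2n-m}(\UPML)$ and $\check H^m(\EL)\cong H_{2n-m}^{st}(\UPML)$ (Corollary \ref{spanier sitnikov}) and isolates the entire remaining difficulty in the single comparison map $\UPMLcw\to\UPML$. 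Your construction of $\alpha_L$ by pushing cycles of $\PML\setminus L$ into $\EL$ is both unnecessary for producing the maps --- restriction $\check H^{2n-m}(\PML\setminus L)\to\check H^{2n-m}(\FPML)$ followed by $(\phi^*)^{-1}$ already does this naturally, with no well-definedness issues --- and unavailable in the generality you need, since Theorem \ref{marker theorem} and the approximation lemmas require the domain to have dimension at most $n$, whereas $2n-m$ exceeds $n$ for the relevant $m$ on higher-genus surfaces. The genuine gap, which you correctly sense but do not resolve, is injectivity in the direct limit over finite subcomplexes $L$: a class dual to $L$ may die upon restriction to $\FPML$ or upon enlarging $L$, and controlling this is exactly the Hawaiian-earring phenomenon of Remarks \ref{sub is coarser} and \ref{coarser} that the paper packages as its final conjecture. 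Your linking argument only certifies injectivity in the single degree $m=n$ handled by Lemma \ref{homotopy}. Finally, for part 1) your appeal to Lemma \ref{dranishnikov} is the wrong tool (it concerns homotopy groups in the range covered by $(n-1)$-connectivity); the correct deduction from a hypothetical part 3) would be via the coincidence of cohomological and covering dimension for finite-dimensional separable metric spaces, but this again presupposes part 3).
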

 
 \begin{remark}  Harer \cite{Hr} computed the homological dimension of $\CS$, hence duality conjecture 1) is equivalent to the conjecture that $\dim(\EL)=n$ if $g=0$; $\dim(\EL)=n+(g-1)$ if $p\neq 0$ and $g\neq 0$; and $\dim(\EL)=n+(g-2)$ if $p=0$.\end{remark}

\begin{remarks}  In the above conjectures, homology is Steenrod and cohomology is Cech. Both coincide with the corresponding singular theory for CW-complexes.  When $m=0$ or $2n-m=0$, the above is reduced homology or cohomology.

Except for a single dimension all the maps and groups should be trivial.

The conjecture is motivated by the observation that $\EL$ and $\CS$ \emph{almost live} in $\PML=S^{2n+1}$ as complementary objects and hence Alexander - Sitnikov duality \cite{Sit} applies.  Indeed, since $\PML$ is the disjoint union of $\FPML$ and $\UPML$ the following holds by Alexander - Sitnikov duality.\end{remarks}

\begin{theorem}  \label{phi isomorphism} If $S$ is a finite type surface and $\dim(\PML)=2n+1$, then  
\vskip 8pt

\noindent 1)  $H_m^{st}(\FPML)$ is isomorphic to  $\check H^{2n-m}(\UPML)$ and 
\vskip8pt

\noindent 2) $H_m^{st}(\UPML)$ is isomorphic to  $\check H^{2n-m}(\FPML)$.\qed\end{theorem}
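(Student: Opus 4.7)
The plan is to apply the Sitnikov form of Alexander duality directly to the decomposition $\PML = \FPML \sqcup \UPML$, which is precisely the hint in the paragraph immediately preceding the statement. Since $\PML$ is PL-homeomorphic to $S^{2n+1}$ (via the cellulation by polyhedra of standard train tracks used throughout \S1 and \S14), we have an ambient sphere $S^N$ with $N=2n+1$ in which each of $\FPML$ and $\UPML$ is the complement of the other.

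First I would record the identification $\PML \cong S^{2n+1}$, noting that this is purely a PL (or topological) statement about the ambient space and requires no input beyond Thurston's structure theorem for $\PML$. At this point both conclusions become instances of the same duality statement applied to the two complementary subsets.

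Next I would invoke the Alexander--Sitnikov duality theorem \cite{Sit} in the form that applies to an arbitrary (separable metric) subset $A \subseteq S^N$: there is a natural isomorphism
\[
H_m^{st}(A) \;\cong\; \check H^{N-m-1}(S^N \setminus A).
\]
With $N=2n+1$ the exponent on the right becomes $2n-m$, matching the statement. Taking $A=\FPML$, so that $S^N\setminus A=\UPML$, gives conclusion 1). Taking $A=\UPML$, so that $S^N\setminus A=\FPML$, gives conclusion 2). Because the duality isomorphism is natural in $A$, both isomorphisms are canonical.

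The main subtlety, and the point that should be addressed explicitly, is verifying that the version of Alexander--Sitnikov duality being quoted genuinely applies here: neither $\FPML$ nor $\UPML$ is closed in $\PML$, since by Remark~\ref{separable and complete} $\UPML$ is a countable union of codimension-one PL cells, so $\FPML$ is $G_\delta$ and $\UPML$ is $F_\sigma$ without being closed. The right framework is therefore Sitnikov's generalization, which holds for arbitrary separable metric subspaces of $S^N$ once one uses Steenrod--Sitnikov homology on the one side and \v{C}ech cohomology on the other; that this is exactly the combination appearing in the statement is the reason the conclusion can be read off as a direct application rather than requiring any further argument.
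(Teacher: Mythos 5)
Your proof is correct and follows exactly the paper's own (one-line) argument: $\PML\cong S^{2n+1}$, $\FPML$ and $\UPML$ are complementary subsets, and Alexander--Sitnikov duality gives the two isomorphisms. You also correctly identify why the Sitnikov version (Steenrod homology against \v{C}ech cohomology, valid for arbitrary subsets of the sphere) is the form needed, since neither $\FPML$ nor $\UPML$ is closed.
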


\begin{remarks} \label{coarser} Again we use reduced homology and cohomology for $m=0$ or $2n-m=0$.

Now $\UPML$ has a CW-structure $\UPML_{cw}$ and there is an inclusion $\CS\to \UPMLcw$ which induces a homotopy equivalence \cite{G2}.  However, the topology on $\UPMLcw$ is finer than that of $\UPML$ for essentially the same reason that the topology on $\CS$ is finer than that of $\CS_{sub}$.  See Remark \ref{sub is coarser}.  There is the analogy that $\CS$ is to $\CS_{sub}$ as the infinite wedge of circles is to the Hawaiian earings.  Note that the inclusion of the former into the latter induces proper injections of $H_1$ and $H^1$.  For that reason the maps in Conjecture \ref{duality} are  injections rather than isomorphisms.

The map $\phi:\FPML\to \EL$ is a closed map (Corollary \ref{closed map}) and point inverses are cells (Lemma \ref{preimage simplex}); hence by  \cite{Sp} or \cite{Sk} we obtain the following result.\end{remarks}

\begin{theorem}  \label{spanier} Let $S$ be a finite type surface.  Then for all $m$
\vskip 8pt

\noindent 1)  $\phi_*:H_m^{st}(\FPML)\to H_m^{st}(\EL)$ is an isomorphism and
\vskip 8pt

\noindent 2) $\phi^*:\check H^m(\EL)\to \check H^m(\FPML)$ is an isomorphism.\end{theorem}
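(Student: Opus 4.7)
The plan is to recognize this statement as a direct application of a Vietoris--Begle type mapping theorem to the map $\phi:\FPML\to\EL$, and to verify each hypothesis carefully using the results already recorded in Section 1.

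First I would assemble the properties of $\phi$ needed as input. The map $\phi$ is a continuous surjection by the very definition of $\EL$ (every ending lamination underlies some fully supported projective measured lamination). By Corollary \ref{closed map} the map $\phi$ is closed. By Lemma \ref{preimage simplex}, for each $z\in\EL$ the fiber $\phi^{-1}(z)=\sigma_z$ is a compact convex cell; in particular each fiber is contractible, and hence has trivial reduced \v Cech cohomology and trivial reduced Steenrod--Sitnikov homology in every degree. I would also note that both $\FPML$ (as a subspace of the metric sphere $\PML$) and $\EL$ (separable and metrizable by Remark \ref{separable and complete}) are paracompact Hausdorff, which is the ambient regularity hypothesis required by the theorems we will invoke.

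For conclusion (2), I would invoke the Vietoris--Begle mapping theorem for \v Cech cohomology in the form given by Spanier \cite{Sp}: if $f:X\to Y$ is a closed continuous surjection of paracompact Hausdorff spaces whose fibers have trivial reduced \v Cech cohomology in all degrees, then $f^{*}:\check H^{m}(Y)\to\check H^{m}(X)$ is an isomorphism for every $m$. Plugging in $f=\phi$ together with the three bullets of the previous paragraph yields part (2) immediately. For conclusion (1), I would apply Sklyarenko's homological analogue \cite{Sk} for Steenrod--Sitnikov homology: under the same hypotheses (closed surjection with Steenrod-acyclic fibers between separable metric spaces) the induced map $f_{*}:H^{st}_{m}(X)\to H^{st}_{m}(Y)$ is an isomorphism in every degree. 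Again every hypothesis is already in hand.

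I expect no real obstacle; the only point requiring a moment's thought is making sure that the ``acyclic fiber'' hypothesis is used in the correct (reduced, \v Cech or Steenrod) sense, and that the paracompactness/metrizability assumptions of the two Vietoris--Begle statements are both explicitly verified. Since the fibers are compact convex cells they are contractible, which is the strongest possible form of the acyclicity hypothesis and handles both theories simultaneously, so the verification reduces to citing Corollary \ref{closed map}, Lemma \ref{preimage simplex}, and Remark \ref{separable and complete}, then quoting \cite{Sp} and \cite{Sk}.
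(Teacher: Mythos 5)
Your proposal is correct and matches the paper's argument, which is exactly the observation that $\phi$ is a closed surjection (Corollary \ref{closed map}) with cellular, hence acyclic, point inverses (Lemma \ref{preimage simplex}), followed by an appeal to the Vietoris--Begle theorems of \cite{Sp} and \cite{Sk}. Your version simply spells out the paracompactness/metrizability and reduced-acyclicity hypotheses more explicitly than the paper does.
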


\begin{corollary}  \label{spanier sitnikov} Let $S$ be a finite type surface and $\dim(\PML)=2n+1$,   then for all $m$ there are natural isomorphisms
\vskip 8pt

\noindent 1)  $ \check H^{m}(\EL) \cong H_{2n-m}^{st}(\UPML)$ and
\vskip 8pt

\noindent 2) $H_m^{st}(\EL) \cong \check H^{2n-m}(\UPML)$.\end{corollary}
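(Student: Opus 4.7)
The plan is to derive this corollary by simply chaining together the two preceding theorems, namely the Alexander--Sitnikov style duality between $\FPML$ and $\UPML$ (Theorem \ref{phi isomorphism}) and the Spanier--Skljarenko style isomorphism between the (co)homology of $\FPML$ and $\EL$ induced by the forgetful map $\phi$ (Theorem \ref{spanier}). Since $\PML$ is a sphere of dimension $2n+1$ and the disjoint union of the open set $\FPML$ and the closed set $\UPML$, Theorem \ref{phi isomorphism} immediately identifies $H^{st}_m(\FPML)$ with $\check H^{2n-m}(\UPML)$ and $H^{st}_m(\UPML)$ with $\check H^{2n-m}(\FPML)$, while Theorem \ref{spanier} identifies (co)homology of $\FPML$ with (co)homology of $\EL$.

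For part (1), I would compose the isomorphisms
\[
\check H^m(\EL)\ \xrightarrow{\phi^*}\ \check H^m(\FPML)\ \xrightarrow{\cong}\ H^{st}_{2n-m}(\UPML),
\]
where the first isomorphism is Theorem \ref{spanier}(2) and the second is Theorem \ref{phi isomorphism}(2) applied with $m$ replaced by $2n-m$. For part (2), I would compose
\[
H^{st}_m(\EL)\ \xleftarrow{\phi_*}\ H^{st}_m(\FPML)\ \xrightarrow{\cong}\ \check H^{2n-m}(\UPML),
\]
where the first isomorphism is Theorem \ref{spanier}(1) and the second is Theorem \ref{phi isomorphism}(1). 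In each case the composite is manifestly natural in $\UPML$ and $\EL$ because both ingredients are: Alexander--Sitnikov duality is natural with respect to the pair $(\FPML,\UPML)\subset\PML$, and the Spanier--Skljarenko maps are induced by the continuous closed surjection $\phi\colon\FPML\to\EL$, so the naturality statement in the corollary is automatic.

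There is essentially no obstacle here since both input theorems have already been established: Theorem \ref{spanier} depends on $\phi$ being closed (Corollary \ref{closed map}) with cell point-inverses (Lemma \ref{preimage simplex}), as noted in Remark \ref{coarser}, and Theorem \ref{phi isomorphism} is just Alexander--Sitnikov duality applied to the decomposition $\PML=\FPML\sqcup\UPML$ of the $(2n+1)$-sphere. The only minor bookkeeping is to use reduced (co)homology in the boundary cases $m=0$ or $2n-m=0$, exactly as flagged in the preceding remarks, so that the duality statements remain valid.
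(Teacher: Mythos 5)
Your proof is correct and is exactly the route the paper intends: the corollary is presented as an immediate consequence of composing Theorem \ref{spanier} (the Vietoris--Begle/Skljarenko isomorphisms induced by $\phi$) with Theorem \ref{phi isomorphism} (Alexander--Sitnikov duality for $\FPML\sqcup\UPML$ inside the sphere $\PML$), with the degree bookkeeping exactly as you have it. The remark about reduced (co)homology in the extreme degrees matches the conventions already flagged in the paper.
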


\begin{remark} It follows from Corollary \ref{spanier sitnikov} and  Remarks \ref{coarser} that to prove parts 2) and 3) of the duality conjecture it suffices to show that the bijective map $I:\UPMLcw\to \UPML$ induces inclusions in Steenrod homology and Cech cohomology.  That will also be a big step towards proving 1), since it implies that  $\dim(\EL)$ is at least as large as the conjectured value.  \end{remark}

\begin{conjecture}  Let $S$ be a finite type surface.  The bijective maps $I:\CS\to \CSsub$ and $I:\UPMLcw\to \UPML$ induce inclusions in Steenrod homology and Cech cohomology.\end{conjecture}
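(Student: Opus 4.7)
The plan is to exploit the observation that $I$ restricts to a homeomorphism on every compact subset of the finer topology. Indeed, a continuous bijection from a compact Hausdorff space to a Hausdorff space is automatically a homeomorphism, so if $K$ is a finite subcomplex of $\CS$ (or of $\UPMLcw$), then $I|K \colon K \to I(K)$ is a homeomorphism, and $I|K$ induces isomorphisms on all (co)homology theories. Hence the two topologies differ only in how accumulations of vertices/cells are handled, and every local question near a compactum reduces to the same question in the coarser topology.

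For injectivity on Steenrod homology, say $I_\ast \colon H_m^{st}(\CS) \to H_m^{st}(\CSsub)$ (the $\UPMLcw$ case being parallel), the approach would be to use that $\CS$ is a finite-dimensional countable simplicial complex, so $H_m^{st}(\CS) = H_m^{sing}(\CS) = \varinjlim_n H_m(K_n)$ for an exhaustion $K_1 \subset K_2 \subset \cdots$ of $\CS$ by finite subcomplexes. A class is represented by a singular cycle $z$ supported in some $K_n$, and $I_\ast [z]$ is the Steenrod class of the corresponding cycle in the compact subspace $I(K_n) \subset \CSsub$. Suppose $I_\ast[z]=0$. Via the Milnor $\varprojlim^1$ exact sequence computing Steenrod homology from compact neighborhoods, the null-homology of $z$ is witnessed in some compact $L \subset \CSsub$ containing $I(K_n)$. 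The key geometric step is to promote $L$ into $I(K_m)$ for some $m \geq n$: the extra points of a compact $L \subset \CSsub$ not lying in $I(\CS)$ are genuine $\PML$-limits of vertices, disjoint from $I(K_n)$, and a shape-theoretic deformation using the good cellulations of \S14 should push the bounding chain off these accumulations into a finite subcomplex. The homeomorphism $I|K_m$ then transports the null-homology back to $\CS$, forcing $[z] = 0$.

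For injectivity on \v Cech cohomology, say $I^\ast \colon \check H^m(\CSsub) \to \check H^m(\CS)$, the plan is to use the nerve description. Any open cover $\mathcal V$ of $\CSsub$ is also an open cover of $\CS$ (with identical underlying sets, hence identical nerve), but the $\CS$-topology admits strictly finer refinements. A class $\alpha\in\check H^m(\CSsub)$ maps to zero precisely when there is an $\CS$-refinement $\mathcal U$ of some representing cover $\mathcal V$ on which the cocycle becomes a coboundary. The plan is to manufacture, from such a $\mathcal U$, a refinement of $\mathcal V$ in the $\CSsub$-topology whose nerve still supports the same coboundary, by using open neighborhoods in $\PML$ coming from the stars of the cells of some $\Delta_i$ (Proposition \ref{cellulation}) and exploiting density of $\FPML$ in $\PML$ to control how simple-closed-curve vertices cluster.

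The main obstacle in both parts is that accumulations of vertices in $\PML$ create genuinely new topological phenomena in $\CSsub$ and $\UPML$ absent from $\CS$ and $\UPMLcw$, and one must show these phenomena contribute only to cokernels rather than kernels. This is the same principle that makes $H_1(\bigvee_\infty S^1) \hookrightarrow H_1(\mathrm{Hawaiian\ earring})$ an injection despite large cokernel: the wild loops added are never identifications of old classes. The technical heart of a proof will be a careful cellwise analysis, using the train-track machinery of \S14 together with Lemma \ref{sequence properties}, to show that every bounding chain or refining cover the coarser topology produces can, after a small adjustment pushing it away from the accumulation locus $\PML \setminus I(\CS)$ (resp.\ $\UPML \setminus I(\UPMLcw)$), be realized by data already present in the finer topology.
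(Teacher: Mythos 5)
First, a point of order: the statement you have attempted is stated in \S 19 of the paper as a \emph{conjecture}, not a theorem. The paper offers no proof of it --- only the motivating analogy (Remarks \ref{coarser}) that $\CS$ is to $\CSsub$ as the infinite wedge of circles is to the Hawaiian earring, together with the observation that the wedge-to-earring map induces proper injections on $H_1$ and $H^1$. So there is no argument in the paper to compare yours against; what you have written is a proposed attack on an open problem, and it should be judged as such.

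On its own terms, your proposal correctly orients the two maps and correctly isolates the only easy fact, namely that $I$ restricted to a finite subcomplex is a homeomorphism. But both of your ``key steps'' are precisely the open content of the conjecture and are asserted rather than proved. For the Steenrod homology direction, a compact set $L\subset \CSsub$ witnessing a null-homology is in general \emph{not} contained in, or deformable into, any finite subcomplex: compactness in $\CSsub$ permits infinitely many vertices accumulating on points of $I(\CS)$ (this is exactly Remark \ref{sub is coarser}), and the accumulation pattern in $\PML$ is governed by the cells $B_C$ and their joins rather than by the simplicial structure of $\CS$. The ``shape-theoretic deformation using the good cellulations of \S 14'' that would push a bounding Steenrod chain off the accumulation locus into some $I(K_m)$ is the missing theorem; Proposition \ref{cellulation} builds cellulations of $\PML$ by train-track polyhedra but constructs no retraction of a neighborhood of the closure of an infinite vertex set onto a finite subcomplex, and a Steenrod cycle in such a compactum can be genuinely wild. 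Symmetrically, in the \v{C}ech argument the step ``manufacture from a $\CS$-refinement a $\CSsub$-refinement supporting the same coboundary'' is exactly where the two topologies differ, and no mechanism is supplied. The guiding principle that wild phenomena ``contribute only to cokernels, never to kernels'' is the conjecture itself, not a lemma one may invoke: even in the model case the injectivity of $H_1(\bigvee_\infty S^1)$ into $H_1$ of the Hawaiian earring requires a real argument, and here the local structure at the accumulation locus is far more complicated. To turn your outline into a proof you would need, at minimum, a controlled procedure replacing an arbitrary compactum of $\CSsub$ (resp.\ an arbitrary $\CSsub$-open cover) by data supported on a finite subcomplex without changing the relevant class --- and that is the open problem.
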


\begin{question}  Let $S$ be a finite type surface.  Is $\EL$ $m$-connected (resp. $m$-locally connected) if and only if $\FPML$ is $m$-connected (resp. $m$-locally connected).\end{question}

\begin{remark}  It follows from Theorem \ref{fpml connectivity} that $\EL$ is connected if and only if $\FPML$ is connected.\end{remark}

\begin{definition}  Let $X$ be a regular cell complex.  We say that $X$ is \emph{super symmetric} if $X/ \Isom(X)$ is a finite complex.\end{definition}

\begin{example}  Examples of such spaces are the curve complex and arc complex of a finite type surface and the disc complex of a handlebody. \end{example}

\begin{question}  What spaces arise  as the boundaries of  super symmetric Gromov-hyperbolic regular cell complexes?  Under what conditions does the $m$-dimensional Nobeling space arise?   More generally, under what conditions does a Nobeling type space arise.\end{question}  

\begin{remark}  This is a well known question, when the cell complex is a Caley graph of a finitely generated group.  Indeed, Kapovich and Kleiner give an essentially complete answer when  $\partial G$ is 1-dimensional  \cite{KK}.  The Gromov boundaries of locally compact Gromov hyperbolic spaces are locally compact, thus the point of this question is to bring attention to non locally finite complexes.  Note that Gromov and Champetier \cite{Ch} assert that the \emph{generic} finitely presented Gromov-hyperbolic group has the Menger curve as its boundary.\end{remark}

\begin{question}  Let $x\in \EL$ be non uniquely ergodic.  Does $\phiinv(x)$ arise as in the construction of  Theorem 9.1 \cite{G2} or as limits of cells of the form $B_{a_1}\cap\cdots\cap B_{a_k}$ where $a_1, \cdots, a_k$ are pairwise disjoint simple closed curves?\end{question}

\newpage

\enddocument